\renewenvironment{proof}[1][\proofname] {\par\pushQED{\qed}\normalfont\topsep6\p@\@plus6\p@\relax\trivlist\item[\hskip\labelsep\bfseries#1\@addpunct{.}]\ignorespaces}{\popQED\endtrivlist\@endpefalse}
\newtheoremstyle{mattthm}{}{}{\itshape}{}{\bfseries}{.}{ }{}
\theoremstyle{mattthm}
\newtheorem{lemma}{Lemma}[section]
\newtheorem{propn}[lemma]{Proposition}
\newtheorem{thm}[lemma]{Theorem}
\newtheorem{cory}[lemma]{Corollary}
\newtheorem{conj}[lemma]{Conjecture}
\newtheoremstyle{mattdef}{}{}{}{}{\bfseries}{.}{ }{}
\theoremstyle{mattdef}
\newtheorem*{rmk}{Remark}
\newtheorem*{defn}{Definition}
\newtheorem*{eg}{Example}
\begin{document}

\newenvironment{pfenum}{\begin{proof}\indent\begin{enumerate}\vspace{-\topsep}}{\qedhere\end{enumerate}\end{proof}}

\setlength\fboxsep{1pt}
\newcommand\arrowover[1]{\scriptstyle\overrightarrow{\displaystyle #1}}
\newcommand\tub[2]{_{\lan #1,#2\ran}}
\newcommand\agree[2]{\genfrac{|}{|}{0pt}{}{#1}{#2}}
\newcommand\ten{10}
\newcommand\eleven{11}
\newcommand\dd{\yo{\bxy d}}%{\raisebox{-2pt}{\small \vbox{\hrule height.3pt\hbox{\vrule height8.8pt width.3pt depth2.2pt\hbox to 11pt{\hfil$d$\hfil}\vrule height8.8pt width.3pt depth2.2pt}\hrule height.3pt}}}
\newcommand\ba{\yo{\bxy a}}%{\raisebox{-2pt}{\small \vbox{\hrule height.3pt\hbox{\vrule height8.8pt width.3pt depth2.2pt\hbox to 11pt{\hfil$a$\hfil}\vrule height8.8pt width.3pt depth2.2pt}\hrule height.3pt}}}
\newcommand\bb{\yo{\bxy b}}%{\raisebox{-2pt}{\small \vbox{\hrule height.3pt\hbox{\vrule height8.8pt width.3pt depth2.2pt\hbox to 11pt{\hfil$b$\hfil}\vrule height8.8pt width.3pt depth2.2pt}\hrule height.3pt}}}
\newcommand\pp{\raising{\small$\begin{tikzpicture}[scale=0.42]\draw[fill=white](0,0)rectangle(1.5,1);\draw(0.75cm,0.5cm)node[fill=white,inner sep=0.5pt]{$\vphantom1\smash{\dpo}$};\end{tikzpicture}$}}
\newcommand\mo{{-}1}
\newcommand\reg{\mathcal{R}}
\newcommand\la\lambda
\newcommand\binoq[2]{\genfrac{[}{]}{0pt}{}{#1}{#2}}
\newcommand\gauss[2]{\genfrac{[}{]}{0pt}{}{#1}{#2}}
\newcommand\mbinoq[2]{\Bigl[\medmath{\genfrac{}{}{0pt}{}{#1}{#2}}\Bigr]}
\newcommand{\lan}{\langle}
\newcommand{\ran}{\rangle}
\newcommand\cala{\mathcal{A}}
\newcommand\calb{\mathcal{B}}
\newcommand\calf{\mathcal{F}}
\newcommand\calh{\mathcal{H}}
\newcommand\cali{\mathcal{I}}
\newcommand\call{\mathcal{L}}
\newcommand\calp{\mathcal{P}}
\newcommand\calq{\mathcal{Q}}
\newcommand\cals{\mathcal{S}}
\newcommand\calt{\mathcal{T}}
\newcommand\calr{\mathcal{R}}
\newcommand\calu{\mathcal{U}}
\newcommand\calla{\mathtt{A}}
\newcommand\callb{\mathtt{B}}
\newcommand\callc{\mathtt{C}}
\newcommand{\thmlc}[3]{\textup{\textbf{(\!\! #1 \cite[#3]{#2})}}}
\newcommand{\thmlabel}[1]{\textup{\textbf{#1}}\ }
\newcommand{\dom}{\trianglerighteqslant}
\newcommand{\doms}{\vartriangleright}
\newcommand{\ndom}{\ntrianglerighteqslant}
\newcommand{\ndoms}{\not\vartriangleright}
\newcommand{\domby}{\trianglelefteqslant}
\newcommand{\domsby}{\vartriangleleft}
\newcommand{\ndomby}{\ntrianglelefteqslant}
\newcommand{\ndomsby}{\not\vartriangleleft}
\newcommand{\subs}[1]{\subsection{#1}\indent}
\newcommand{\subsubs}[1]{\subsubsection{#1}\indent}
\newcommand{\nchar}{\operatorname{char}}
\newcommand{\thmcite}[2]{\textup{\textbf{\cite[#2]{#1}}}\ }
\newcommand\sra[2]{\stackrel{#1;\,#2}{\longrightarrow}}
\newcommand\ars[2]{\stackrel{#1;\,#2}{\longleftarrow}}
\newcommand{\bbc}{\mathbb{C}}
\newcommand{\bbf}{\mathbb{F}}
\newcommand{\bbg}{\mathbb{G}}
\newcommand{\bbn}{\mathbb{N}}
\newcommand{\bbq}{\mathbb{Q}}
\newcommand{\bbr}{\mathbb{R}}
\newcommand{\bbz}{\mathbb{Z}}
\newcommand\zo{\bbn_0}
\newcommand{\gs}{\geqslant}
\newcommand{\ls}{\leqslant}
\renewcommand{\geq}{\geqslant}
\renewcommand{\leq}{\leqslant}
\newcommand{\sect}[1]{\section{#1}}
\newcommand{\clam}{\begin{description}\item[\hspace{\leftmargin}Claim.]}
\newcommand{\prof}{\item[\hspace{\leftmargin}Proof.]}
\newcommand{\malc}{\end{description}}
\newcommand\ppmod[1]{\ (\operatorname{mod}\ #1)}
\renewcommand\hom{\operatorname{Hom}}
\newcommand\ehom{\operatorname{EHom}}
\newcommand\im{\operatorname{im}}
\newcommand\cha{\operatorname{char}}
\newcommand\inc{\mathfrak{A}}
\newcommand\fsl{\mathfrak{sl}}
\newcommand\sgn{\operatorname{sgn}}
\newcommand\lra\longrightarrow
\newcommand\lexg{>_{\operatorname{lex}}}
\newcommand\lexgs{\gs_{\operatorname{lex}}}
\newcommand\lexl{<_{\operatorname{lex}}}
\newcommand\tru[1]{{#1}_-}
\newcommand\ste[1]{{#1}_+}
\newcommand\out{^{\operatorname{out}}}
\newcommand\lad{\mathcal{L}}
\newcommand\hsl{\widehat{\mathfrak{sl}}}
\newcommand\fkh{\mathfrak{h}}
\newcommand\GG{H}
\newcommand\lset[2]{\left\{\left.#1\,\right|\,#2\right\}}
\newcommand\rset[2]{\left\{#1\,\left|\,#2\right.\right\}}
\newcommand\bset[1]{\mathcal{Q}(#1)}
%commands for putting multi-character entries in Young tableaux
\newcommand\kpo{k\hspace{-0.02em}{+}\hspace{-0.06em}1}
\newcommand\kpt{k\hspace{-0.02em}{+}\hspace{-0.06em}2}
\newcommand\dpo{d\hspace{-0.02em}{+}\hspace{-0.06em}1}
\newcommand\apo{a\hspace{-0.02em}{+}\hspace{-0.06em}1}
\newcommand\ipo{i\hspace{-0.02em}{+}\hspace{-0.06em}1}
\newcommand\apt{a\hspace{-0.02em}{+}\hspace{-0.06em}2}
\newcommand\ipt{i\hspace{-0.02em}{+}\hspace{-0.06em}2}
\newcommand\aph{a\hspace{-0.02em}{+}\hspace{-0.06em}3}
\newcommand\bmo{b\hspace{-0.02em}{-}\hspace{-0.06em}1}
\newcommand\bmt{b\hspace{-0.02em}{-}\hspace{-0.06em}2}
\newcommand\jmt{j\hspace{-0.02em}{-}\hspace{-0.06em}2}
\newcommand\jmo{j\hspace{-0.02em}{-}\hspace{-0.06em}1}
\newcommand\imo{i\hspace{-0.02em}{-}\hspace{-0.06em}1}
\newcommand\gmo{g\hspace{-0.02em}{-}\hspace{-0.06em}1}

\newcommand\yo[1]{\raising{\small{$\begin{tikzpicture}[scale=0.42]#1\end{tikzpicture}$}}}
\newcommand\yw[1]{\raising{\footnotesize{$\begin{tikzpicture}[xscale=1.3,scale=0.47]#1\end{tikzpicture}$}}}

\newcommand{\h}{\mathcal{H}}
\newcommand{\sym}{\mathfrak{S}} 
\newcommand{\C}{\mathbb{C}}
\newcommand{\au}{a^\ast(\la)}
\newcommand{\auc}{a^\ast(\la')}
\newcommand{\ad}{a_\ast(\la)}
\newcommand{\adc}{a_\ast(\la')}
\newcommand{\bu}{b^\ast(\la)}
\newcommand{\buc}{b^\ast(\la')}
\newcommand{\bd}{b(\la)}
\newcommand{\bdc}{b(\la')}
\newcommand{\mone}{^-\hspace{-1pt}1}
\newcommand{\mtwo}{^-\hspace{-1pt}2}
\newlength\raiser
\newcommand\raising[1]{\settoheight\raiser{#1}\setlength\raiser{0.5\raiser}\addtolength\raiser{-.5pt}\raisebox{-\raiser}{\raisebox{3pt}{#1}}}

\newcommand\bx[3]{\draw[fill=white](#1,#2)--++(0,1)--++(1,0)--++(0,-1)--cycle;\draw(#1+0.5,#2+0.5)node[fill=white,inner sep=0.5pt]{$\vphantom1\smash{#3}$};}
\newcommand\blx[2]{\fill[white](#1-0.2,#2-0.2)rectangle++(1.4,1.4);}
\newcommand\hdb[3]{\draw(#1,#2)--++(#3,0);\draw(#1,#2+1)--++(#3,0);\draw[thick,dotted](#1+0.2*#3,#2+0.5)--++(0.6*#3,0);}
\newcommand\vdb[3]{\draw(#1,#2)--++(0,#3);\draw(#1+1,#2)--++(0,#3);\draw[thick,dotted](#1+0.5,#2+0.2*#3)--++(0,0.6*#3);}
\newcommand\hda[3]{\draw[dashed](#1,#2)--++(#3,0);}
\newcommand\vda[3]{\draw[dashed](#1,#2)--++(0,#3);}

\newcommand\hdbxy[1]{\draw(\xpos,\ypos)--++(#1,0);\draw(\xpos,\ypos+1cm)--++(#1,0);\draw[thick,dotted](\xpos+0.2*#1cm,\ypos+0.5cm)--++(0.6*#1,0);}
\newcommand\hdbxyr[1]{\hdbxy{#1}\addtolength\xpos{#1cm}}
\newcommand\vdbxy[1]{\draw(\xpos,\ypos)--++(0,#1);\draw(\xpos+1cm,\ypos)--++(0,#1);\draw[thick,dotted](\xpos+0.5cm,\ypos+0.2*#1cm)--++(0,0.6*#1);}
\newcommand\vdbxyu[1]{\vdbxy{#1}\addtolength\ypos{#1cm}}

\newlength\xpos\newlength\ypos\setlength\xpos{0cm}\setlength\ypos{0cm}
\newcommand\setxy{\setlength{\xpos}{0cm}\setlength{\ypos}{0cm}}
\newcommand\bxy[1]{\draw[fill=white](\xpos,\ypos)--++(0,1)--++(1,0)--++(0,-1)--cycle;\draw(\xpos+0.5cm,\ypos+0.5cm)node[fill=white,inner sep=0.5pt]{$\vphantom1\smash{#1}$};}
\newcommand\bxu[1]{\bxy{#1}\addtolength{\ypos}{1cm}}
\newcommand\bxd[1]{\bxy{#1}\addtolength{\ypos}{-1cm}}
\newcommand\bxr[1]{\bxy{#1}\addtolength{\xpos}{1cm}}
\newcommand\bxl[1]{\bxy{#1}\addtolength{\xpos}{-1cm}}

\title[Reducible Specht modules]{The reducible Specht modules for the Hecke algebra $\h_{\mathbb{C},{-}1}(\sym_n)$}

\author[M.~Fayers]{Matthew Fayers}
\address{Queen Mary University of London, Mile End Road, London E1 4NS, UK}
\email{m.fayers@qmul.ac.uk}
\author[S.~Lyle]{Sin\'ead Lyle}
\address{School of Mathematics \\ University of East Anglia \\ Norwich NR4 7TJ \\ UK}
\email{s.lyle@uea.ac.uk}
 \subjclass[2000]{20C08, 20C30, 05E10}
\keywords{Hecke algebras, Specht modules, homomorphisms.}

\begin{abstract}
The reducible Specht modules for the Hecke algebra $\h_{\bbf,q}(\sym_n)$ have been classified except when $q={-}1$.  We prove one half of a conjecture which we believe classifies the reducible Specht modules when $q={-}1$ and $\bbf$ has characteristic 0.
\end{abstract}
\maketitle

\maketitle

\section{Introduction} \label{S:Intro}
Fix a field $\bbf$ of characteristic $p \gs 0$ and an element $q \in \bbf^\times$.  For $n\gs 0$, the Hecke algebra $\h_n=\h_{\bbf,q}(\sym_n)$ of the symmetric group $\sym_n$ is defined to be the unital associative $\bbf$-algebra with generators $T_1,\ldots,T_{n-1}$ subject to the relations
\begin{align*}
(T_i-q)(T_i+1)& =0 & &\text{for } 1 \ls i \ls n-1, \\
T_jT_{j+1}T_j& =T_{j+1}T_jT_{j+1} &&\text{for } 1 \ls j \ls n-2, \\
T_iT_j &=T_jT_i &&\text{for } 1 \ls i<j-1\ls n-2.
\end{align*}
Note that if $q=1$ then $\h_n \cong \bbf\sym_n$.  For each partition $\la$ of $n$, Dipper and James defined an $\h_n$-module $S^\la$ known as a Specht module.  
An important open problem in representation theory is to determine the decomposition matrices of the Hecke algebras; this is equivalent to determining the composition factors of the Specht module $S^\la$ for each partition $\la$.  An interesting special case of this problem is the question of which Specht modules are irreducible.  For the symmetric group algebra $\bbf\sym_n$, the answer to this question is completely known, and for the Hecke algebra $\h_n$, the answer is known except in the case where $q={-}1$~\cite{JM2,L1,F1,F2,JLM,L2,F3}.  In this paper we prove one half of a conjecture (Conjecture~\ref{C:Main} below) that describes the irreducible Specht modules when $q={-}1$ and $p=0$, and we give a conjecture for the case of positive characteristic.

The layout of the paper is as follows.  In Section~\ref{S:Main} we give some background on partitions and Specht modules, and state the main result of this paper, Theorem~\ref{T:Main}.  In Section~\ref{S:Proof} we describe some results and techniques for proving reducibility of Specht modules, and use these to prove Theorem~\ref{T:Main} subject to the proof of Proposition~\ref{P:MH}; this is a technical result on homomorphisms, which requires a long proof.  In Section~\ref{S:Hom} we give detailed background on homomorphisms between Specht modules and prove Proposition~\ref{P:MH}.

\section{The main theorem} \label{S:Main}
Throughout Section~\ref{S:Main}, we assume that $q={-}1$ and that $\bbf$ has characteristic $p \gs 0$. Recall that a \emph{composition} of $n$ is a sequence $\la=(\la_1,\la_2,\ldots)$ of non-negative integers such that $\sum_{i=1}^\infty \la_i =n$.  If in addition  $\la_1 \gs \la_2 \gs \cdots$, we say that $\la$ is a \emph{partition} of $n$.  When writing a partition, we usually omit zeroes, and group together equal positive parts with a superscript.  We let $\ell(\la)$ denote the number of non-zero parts of $\la$, and we write $|\la|$ to mean $\sum_{i=1}^\infty\la_i$.

The \emph{Young diagram} of $\la$ is the set
\[\rset{(r,c)}{1 \ls c \ls \la_r} \subset \mathbb{N}^2,\]
whose elements we call the \emph{nodes} of $\la$. Throughout this paper, we identify $\la$ with its Young diagram; so for example we may write $\la\subseteq\mu$ to mean that $\la_i\ls\mu_i$ for all $i$.  We use the English convention for drawing Young diagrams, in which the first coordinate increases down the page and the second increases from left to right.

A node $\mathfrak{u} \in \la$  is said to be \emph{removable} if $\la \setminus \mathfrak{u}$ is a partition and a node $\mathfrak{v} \notin \la$  is said to be \emph{addable} if $\la \cup \mathfrak{v}$ is a partition.
The \emph{$2$-residue} of a node $(r,c) \in \mathbb{N}^2$, which we shall simply call the residue, is defined to be $(c-r) \ppmod 2$.  
The partition $\la$ is said to be \emph{$2$-regular} if $\la_i>\la_{i+1}$ for all $1 \ls i <\ell(\la)$ and is said to be \emph{$2$-restricted} if $\la_i-\la_{i+1}\ls 1$ for all $i\gs1$.  If $\la$ is not $2$-regular, we will say it is \emph{$2$-singular}.

If $\la$ is a partition, we write $S^\la$ for the \emph{Specht module}, as defined by Dipper and James~\cite{DJ}.  If $\la$ is $2$-regular then $S^\la$ has a unique irreducible quotient $D^\la$, and the set $\{D^\la \mid \la \text{ is $2$-regular}\}$ is a complete set of non-isomorphic irreducible $\h_n$-modules.  The \emph{conjugate} $\la'$ of a partition $\la$ is defined to be the partition whose Young diagram is given by $\{(c,r) \mid 1 \ls c \ls \la_r\}$. Conjugation is useful in this paper because of the following result.

\begin{lemma}\thmcite{FL}{Corollary~3.3}\label{L:Conj}
Suppose $\la$ is a partition of $n$.  Then $S^\la$ is irreducible if and only if $S^{\la'}$ is irreducible.  
\end{lemma}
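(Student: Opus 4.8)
The statement to be proved is Lemma~\ref{L:Conj}: for a partition $\la$ of $n$, the Specht module $S^\la$ is irreducible if and only if $S^{\la'}$ is irreducible, when $q = {-}1$. Since the excerpt cites this as \thmcite{FL}{Corollary~3.3}, the plan is to reconstruct how such a statement would be derived, presumably from a stronger module-theoretic comparison between $S^\la$ and $S^{\la'}$.

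The natural approach is via the sign representation twist. Let me recall that for the Hecke algebra $\h_n = \h_{\bbf,q}(\sym_n)$ there is an $\bbf$-algebra automorphism $\#$ (or an involution) sending $T_i \mapsto -q T_i^{-1}$, equivalently $T_i \mapsto q - 1 - T_i$; twisting a module $M$ by this automorphism gives a new module $M^\#$. A standard result (due to Dipper–James, and refined by work on the sign-twisted Specht modules) states that $(S^\la)^\#$ is isomorphic to the dual of $S^{\la'}$, i.e.\ $(S^\la)^\# \cong (S^{\la'})^\ast$. The first step would be to invoke this isomorphism. The key point is that when $q = {-}1$ we have $q = q^{-1}$, so the automorphism $\#$ is particularly well-behaved, and one does not need to worry about the usual subtleties distinguishing $S^\la$ from its dual that arise for generic $q$.

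The second step is purely formal: twisting by an algebra automorphism preserves irreducibility (a submodule of $M$ is the same thing as a submodule of $M^\#$ after relabelling the action), and dualising also preserves irreducibility (the lattice of submodules of $M^\ast$ is the opposite of that of $M$, hence $M^\ast$ is irreducible iff $M$ is). Chaining these: $S^\la$ irreducible $\iff$ $(S^\la)^\#$ irreducible $\iff$ $(S^{\la'})^\ast$ irreducible $\iff$ $S^{\la'}$ irreducible. This gives the lemma.

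The main obstacle is establishing the input isomorphism $(S^\la)^\# \cong (S^{\la'})^\ast$ cleanly at $q = {-}1$, since the literature statements of this fact sometimes carry hypotheses (e.g.\ on $\la$ being $2$-regular, or involving a twist by a one-dimensional module rather than an automorphism) that need to be checked to apply in full generality here. One would need to confirm that the Dipper–James construction of $S^\la$ behaves well under the $\#$ automorphism for \emph{all} partitions $\la$, not just $2$-regular ones; this is where a careful reading of~\cite{DJ} and~\cite{FL} is required. Once that isomorphism is in hand, the rest is immediate formal nonsense about submodule lattices, so I expect the bulk of the work (and the reason this is stated as a citation rather than proved inline) to lie in that single structural identification.
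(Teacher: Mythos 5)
The paper does not prove this lemma itself; it simply cites \cite[Corollary~3.3]{FL}, which (as you correctly anticipated) rests on exactly the sign-twist argument you outline. Your chain $S^\la$ irreducible $\iff (S^\la)^\#$ irreducible $\iff (S^{\la'})^\ast$ irreducible $\iff S^{\la'}$ irreducible is sound: $\#$ is an algebra automorphism (so twisting is an autoequivalence of $\h_n$-mod), dualization via the anti-automorphism $T_w\mapsto T_{w^{-1}}$ reverses the submodule lattice, and the identification $(S^\la)^\#\cong(S^{\la'})^\ast$ is the standard Dipper--James fact holding over any field and for every partition $\la$ (no $2$-regularity hypothesis is needed). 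One small inaccuracy worth flagging: you suggest that the case $q={-}1$ is ``particularly well-behaved'' and avoids subtleties about $S^\la$ versus its dual, but nothing special happens at $q={-}1$ here --- the isomorphism $(S^\la)^\#\cong(S^{\la'})^\ast$ is unconditional, and in fact at $q={-}1$ the Specht modules are usually \emph{not} self-dual, so one cannot drop the $\ast$; your argument doesn't need self-duality and is fine as written, but the aside is misleading. In short, the proposal is correct and matches the underlying argument behind the cited result.
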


\subs{Irreducible Specht modules in characteristic zero}

We now discuss the problem of classifying irreducible Specht modules.  In this section we assume that $\bbf$ has characteristic zero.

The classification of irreducible Specht modules labelled by $2$-regular partitions is well known.  In characteristic zero this takes the following simple form.

\begin{propn}\thmcite{JM2}{Theorem~4.15} \label{P:Carter} 
Let $\la$ be a partition of $n$ and suppose that $\la$ is $2$-regular.  Then $S^\la$ is irreducible if and only if $\la_i - \la_{i+1}$ is odd for all $1 \ls i < \ell(\la)$. 
\end{propn}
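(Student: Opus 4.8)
The plan is to deduce this from the general theory of Specht module composition factors, using the characteristic-zero reduction to known combinatorics. Since $q = {-}1$ and $\la$ is $2$-regular, $S^\la$ has a unique irreducible quotient $D^\la$, so $S^\la$ is irreducible if and only if $[S^\la : D^\mu] = 0$ for every $2$-regular $\mu \ne \la$, equivalently if and only if the only composition factor of $S^\la$ is $D^\la$ (with multiplicity one). First I would invoke the fact that for $q = {-}1$ and characteristic zero, the decomposition numbers of $\h_n$ are governed by the canonical basis of the level-one Fock space for $\widehat{\mathfrak{sl}}_2$ (the Lascoux--Leclerc--Thibon theorem, proved in this setting by Ariki), so that $[S^\la : D^\mu] = d_{\la\mu}(1)$ where $d_{\la\mu}(v)$ is the corresponding $v$-decomposition polynomial. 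Thus irreducibility of $S^\la$ is equivalent to $d_{\la\mu}(v) = \delta_{\la\mu}$ for all $2$-regular $\mu$.

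The combinatorial heart is then to identify exactly when this happens for $2$-regular $\la$. Here I would appeal to the known description of the $v$-decomposition numbers in terms of the James--Mathas result already cited: the condition ``$\la_i - \la_{i+1}$ is odd for all $1 \ls i < \ell(\la)$'' is precisely the statement that between any two consecutive nonzero rows the residues alternate in the ``right'' way, which forces the only partition $\mu$ that can appear to be $\la$ itself. Concretely, one shows: (i) if every gap $\la_i - \la_{i+1}$ is odd, then in the crystal/LLT computation the only standard path leading to the vertex $\la$ is the obvious one, so $S^\la$ is irreducible; and (ii) conversely, if some gap $\la_i - \la_{i+1}$ is even, one exhibits an explicit partition $\mu \doms \la$ (obtained by moving a suitable ribbon of nodes of alternating residue) with $[S^\la : D^\mu] \ne 0$, witnessing reducibility. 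The cleanest route to (ii) is probably to produce a nonzero homomorphism $S^\mu \to S^\la$ or to use a Jantzen-sum-formula / Carter-criterion style argument on hook lengths modulo $2$, which is exactly the form in which James and Mathas phrase their theorem; so in practice this proposition is a direct quotation of \cite[Theorem~4.15]{JM2} specialised to $e = 2$, $p = 0$, and I would simply spell out that specialisation.

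The main obstacle, were one to prove this from scratch rather than cite it, is direction (ii): constructing a genuine composition factor $D^\mu$ with $\mu \ne \la$ when some gap is even. Showing a Specht module is reducible is generally harder than showing it is irreducible, since one must actually locate a submodule or a nonzero homomorphism rather than merely rule them out by a dimension or linkage count; this is the same asymmetry that the present paper grapples with in the $2$-singular case. Since Proposition~\ref{P:Carter} is stated with a citation to \cite{JM2}, however, no new argument is needed here: it suffices to observe that Theorem~4.15 of \cite{JM2}, specialised to the Hecke algebra at a primitive square root of unity over a field of characteristic zero, asserts exactly that a $2$-regular Specht module $S^\la$ is irreducible precisely when all consecutive part-differences of $\la$ are odd.
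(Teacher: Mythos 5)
Your proposal is correct and matches the paper's treatment: Proposition~\ref{P:Carter} carries no proof in the paper but is quoted directly from \cite[Theorem~4.15]{JM2}, specialised to $e=2$ and characteristic zero, exactly as you say. The additional sketch of how one would establish it from scratch (via Ariki/LLT or the Jantzen--Schaper sum formula that James and Mathas actually prove) is accurate background, but no new argument is required or supplied by the paper here.
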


We say that $\la$ is \emph{alternating} if it satisfies the condition of Proposition \ref{P:Carter}. Using this proposition and Lemma \ref{L:Conj}, it remains only  to classify the irreducible Specht modules $S^\la$ when $\la$ and $\la'$ are both $2$-singular; we call such a partition \emph{doubly-singular}.  A conjecture for this classification has been given by the first author and Mathas.  First we need to make a definition.

\begin{defn}
Let $\la$ be a doubly-singular partition of $n$. Set
\begin{itemize}
\item $a$ to be maximal such that $\la_a-\la_{a+1} \gs 2$,
\item $b$ to be maximal such that $\la_b=\la_{b+1}\gs 1$, and
\item $c$ to be maximal such that $\la_{a+c}>0$.
\end{itemize}
Say that $\la$ is an \emph{FM-partition} if the following conditions all hold.
\begin{itemize}
\item $\la_i-\la_{i+1} \ls 1$ for all $i \neq a$.
\item $\la_b \gs a-1 \gs b$.
\item $\la_1>\dots>\la_c$.
\item If $c=0$ then all addable nodes of $\la$ except possibly those in the first row and first column have the same residue.
\item If $c>0$, then all addable nodes of $\la$ have the same residue. 
\end{itemize}
\end{defn}

\begin{conj} \label{C:Main}
Let $\h_n=\h_{\bbf,{-}1}(\sym_n)$ where $\cha(\bbf)=0$, and let $\la$ be a doubly-singular partition of $n$.  The $\h_n$-module $S^\la$ is irreducible if and only if $\la$ or $\la'$ is an FM-partition. 
\end{conj}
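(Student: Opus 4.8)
Being a biconditional, the statement splits into two implications of quite different flavour, and throughout I would use Lemma~\ref{L:Conj} to work, for each conjugate pair $\{\la,\la'\}$, with whichever member is combinatorially more convenient.

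\emph{The implication ``$S^\la$ irreducible $\Rightarrow$ $\la$ or $\la'$ is an FM-partition''.} I would prove the contrapositive: if $\la$ is doubly-singular and neither $\la$ nor $\la'$ is an FM-partition, then $S^\la$ is reducible. The plan is to exhibit, for such $\la$, a second composition factor of $S^\la$ besides $D^{\la^{\mathrm R}}$ (where $\la^{\mathrm R}$ is the $2$-regularization of $\la$, which is always a composition factor of $S^\la$). First I would enumerate the ways the FM conditions can fail: there is a short list of ``defect types'', one per bullet of the definition (a second row with part-difference $\gs 2$; the inequality $\la_b\gs a-1\gs b$ failing on the left or on the right; a repeated part among the first $c$ rows; or two addable nodes of differing residue, excluding the first row and column when $c=0$), and since both $\la$ and $\la'$ are non-FM a defect is available whichever of the two I choose to analyse. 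For each defect type I would produce a partition $\mu$ obtained from $\la$ by moving a bounded number of nodes together with a nonzero non-isomorphism between $S^\mu$ and $S^\la$, from which an extra composition factor of $S^\la$ is read off. Most defect types can be handled with standard tools, which I would collect in a preliminary section: Carter--Payne homomorphisms, the row- and column-removal theorems, $i$-induction and $i$-restriction applied to Specht modules of smaller Hecke algebras, and known homomorphism constructions from the literature. One family, however, admits no off-the-shelf homomorphism; there I would build the map directly inside the Dipper--James combinatorial model for homomorphisms out of a Specht module (signed sums over row-standard tableaux), reducing its non-vanishing to an identity modulo $2$ among binomial coefficients indexed by a multi-parameter family of tableaux. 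This last step is what the paper isolates as Proposition~\ref{P:MH}; granting it, this implication follows.

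\emph{The implication ``$\la$ or $\la'$ an FM-partition $\Rightarrow$ $S^\la$ irreducible''.} By Lemma~\ref{L:Conj} it suffices to treat an FM-partition $\la$. Since $D^{\la^{\mathrm R}}$ occurs in $S^\la$ with multiplicity one, irreducibility is equivalent to $\dim S^\la=\dim D^{\la^{\mathrm R}}$, i.e.\ to the vanishing of every decomposition number $d_{\la\mu}$ with $\mu\neq\la^{\mathrm R}$. In characteristic zero with $q=-1$ these $d_{\la\mu}$ are the values at $v=1$ of the LLT polynomials for the basic representation of $U_v(\widehat{\fsl}_2)$ (equivalently, parabolic affine Kazhdan--Lusztig polynomials), so the claim becomes a statement about the canonical basis element labelled by $\la^{\mathrm R}$ in the level-one Fock space: in the weight space containing $\la$ it involves no standard basis vector other than $\la^{\mathrm R}$ once $v$ is specialised to $1$. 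To prove this I would exploit the rigidity forced by the FM conditions — almost all part-differences are at most $1$, the single large step is tightly pinned by $\la_b\gs a-1\gs b$, and the addable-node condition forces a nearly uniform residue pattern — to control the relevant ladders and crystal structure, and then reduce by Scopes equivalences and row/column removal to a small number of base blocks whose decomposition numbers are known explicitly (weight-one and weight-two Hecke blocks, and the like), where the statement can be verified directly.

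\emph{The main obstacle.} Essentially all the difficulty lies in the second implication, and it is the one the paper does not establish: converting the ``extremal'' combinatorics of an FM-partition into global vanishing of Kazhdan--Lusztig decomposition numbers is exactly the kind of cancellation that the local homomorphism techniques settling the first implication cannot detect, which is why the result is offered here as a conjecture with only Theorem~\ref{T:Main} proved. Within the first implication the delicate point is the non-vanishing in Proposition~\ref{P:MH}: because the homomorphism is forced to exist only modulo $2$, one must track signs and $2$-adic valuations of binomial coefficients across a family of row-standard tableaux with several free parameters, and that bookkeeping is the technical heart of the argument.
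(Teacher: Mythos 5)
You have correctly diagnosed the logical status of this statement: it is a conjecture, not a theorem of the paper, so there is no ``paper's own proof'' to compare against; the paper proves only the forward implication (as Theorem~\ref{T:Main}) and offers the converse as open. Your account of the proved direction is broadly faithful to the paper's strategy: pass to the contrapositive, reduce by conjugation, split into cases according to which reducibility criterion applies, and handle the recalcitrant family by constructing a bespoke homomorphism in the Dipper--James framework and verifying it lands in the Specht module by a long tableau computation (this is precisely Proposition~\ref{P:MH}). One inaccuracy worth flagging: you fold all the tools for the forward direction into ``homomorphism constructions'' plus induction/restriction, but the paper's case analysis in \S\ref{S:Classify} also leans essentially on a Fock-space argument for \emph{re}ducibility (the LLT-reducible case, Proposition~\ref{P:LLTRed}, which comes from Ariki's theorem and the $N(\la,\mu)$ invariant of Lemma~\ref{L:LLT}, not from a homomorphism). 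You only invoke LLT/Kazhdan--Lusztig technology on the converse side. You also omit the broken-ladder criterion (Proposition~\ref{P:BrokeRed}), though that is arguably subsumed into ``known constructions.'' Your picture of the case split as ``one defect per bullet of the FM definition'' does not match the paper's actual organisation, which is by available tool (broken ladder / CP / MH / LLT / inductive) rather than by failed clause; the correspondence between the two is indirect and occupies most of \S\ref{S:Classify}.

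On the converse implication your sketch is a sensible plan --- reduce to canonical-basis vanishing in the level-one Fock space, then use Scopes equivalence and row/column removal to drive down to small-weight base blocks --- and it is an honest plan rather than a proof. As you say yourself, none of this is carried out either by you or by the paper, so there is nothing to compare. It would be worth noting, though, that the paper already gives a partial reduction you could cite in this direction: Theorem~\ref{2pcore} shows in positive characteristic that only $2p$-cores can possibly yield irreducible doubly-singular Specht modules, and the analogous characteristic-zero machinery (James's regularisation theorem, Lemma~\ref{L:Reg}) is what reduces the problem to the single decomposition-number vanishing $d_{\la\mu}=0$ for $\mu\ne\la^\reg$ that you describe.

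In short: not a proof, and not claimed to be one; the summary of the proved half is substantially correct modulo the omission of the Fock-space reducibility criterion and a somewhat idealised picture of the case analysis; the open half is correctly identified as open.
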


The main result of this paper is the proof of half of this conjecture.

\vspace{\topsep}

\noindent\hspace{-3pt}\fbox{\ \parbox{\textwidth}{\vspace{-\topsep}
\begin{thm} \label{T:Main}
Let $\h_n=\h_{\bbf,{-}1}(\sym_n)$ where $\cha(\bbf)=0$, and let $\la$ be a doubly-singular partition of $n$.  If the $\h_n$-module $S^\la$ is irreducible then $\la$ or $\la'$ is an FM-partition. 
\end{thm}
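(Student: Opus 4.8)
The plan is to prove the contrapositive: assuming $\la$ is doubly-singular and neither $\la$ nor $\la'$ is an FM-partition, we exhibit a proper nonzero submodule (or quotient) of $S^\la$, hence $S^\la$ is reducible. Since by Lemma~\ref{L:Conj} reducibility of $S^\la$ is equivalent to that of $S^{\la'}$, we are free to replace $\la$ by $\la'$ at any point, which is what makes the ``$\la$ or $\la'$'' phrasing natural. The overall strategy will be to find, for each partition that fails the FM conditions, either a non-split extension witnessed by a homomorphism from $S^\la$ into another Specht module (or from another Specht module into $S^\la$), or an explicit composition factor $D^\mu$ of $S^\la$ with $\mu \neq \la^{\textup{reg}}$, so that $S^\la$ has more than one composition factor.

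The key steps, in order, would be: (1) In Section~\ref{S:Proof}, assemble the available reducibility criteria — most importantly the existence of a nonzero $\h_n$-homomorphism $S^\la \to S^\mu$ with $\mu \neq \la$ forces $S^\la$ to be reducible (a homomorphism into a Specht module labelled by a different partition cannot be surjective onto all of $S^\mu$ and its image is a proper quotient of $S^\la$), together with row- and column-removal reductions and the Jantzen--Schaper estimate for decomposition numbers, all specialised to $q={-}1$, $p=0$. (2) Organise the doubly-singular partitions $\la$ with $\la,\la'$ non-FM into a manageable list of shapes according to which defining condition fails: the ``staircase'' condition $\la_i-\la_{i+1}\ls 1$ for $i\neq a$; the inequality $\la_b \gs a-1 \gs b$; the strict-decrease condition $\la_1>\dots>\la_c$; and the residue conditions on addable nodes. (3) For each failure type, construct the required homomorphism or composition factor. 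The homomorphism constructions are where the real content lies, and these are deferred to Proposition~\ref{P:MH}, proved in Section~\ref{S:Hom} via the combinatorics of semistandard homomorphisms and the Carter--Payne / one-node-at-a-time machinery. (4) Combine the cases: show that whenever both $\la$ and $\la'$ are non-FM, at least one of the constructions applies to $\la$ or to $\la'$, and conclude reducibility.

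The main obstacle is clearly step (3), and specifically Proposition~\ref{P:MH}: constructing an explicit nonzero homomorphism between Specht modules at $q={-}1$ in characteristic zero. Unlike the generic case, at $q={-}1$ the bilinear form degenerates in subtle ways and the standard ``place $i$'s in a row'' homomorphisms may vanish or fail to extend; one must track the signs and the $2$-residue constraints carefully, which is why the residue conditions appear in the definition of an FM-partition. Handling the boundary cases — partitions where $a$, $b$, or $c$ are extreme ($c=0$, or $a=b$, or $\ell(\la)$ small) — will require separate, somewhat delicate arguments, and ensuring the homomorphism's image is genuinely proper (rather than all of $S^\mu$ with $S^\mu$ itself irreducible, which would not immediately help) may need the Jantzen--Schaper bound as a backstop. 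I expect the column-removal reduction to be essential for cutting down the general shape to a base configuration where the homomorphism can be written down and checked directly.
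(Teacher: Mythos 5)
Your high-level plan (argue the contrapositive, split by which FM condition fails, construct homomorphisms, invoke Carter--Payne, defer the hard construction to Proposition~\ref{P:MH}) matches the paper in spirit, but there are two genuine problems with the details, and a third structural ingredient you have not accounted for.

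First, your stated reducibility criterion is wrong as it stands. You claim that a nonzero homomorphism $\theta:S^\la\to S^\mu$ with $\mu\neq\la$ forces $S^\la$ to be reducible, because ``its image is a proper quotient of $S^\la$.'' That does not follow: if $S^\la$ were irreducible then $\ker\theta$ would be zero and $\theta$ would be an embedding $S^\la\hookrightarrow S^\mu$, with image isomorphic to all of $S^\la$ --- perfectly possible. The criterion actually used (Corollary~\ref{C:Hom}) runs the other way and crucially uses dominance: if $\hom(S^\mu,S^\la)\neq 0$ and $\la^\reg\ndom\mu$, then by James's regularisation theorem (Lemma~\ref{L:Reg}) every composition factor $D^\nu$ of $S^\mu$ satisfies $\nu\dom\mu^\reg\dom\mu\ndomby\la^\reg$, so $S^\la$ shares with $S^\mu$ a composition factor different from $D^{\la^\reg}$, hence has at least two composition factors. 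Without the dominance constraint your argument collapses, and your proposed ``Jantzen--Schaper backstop'' is vague precisely where precision is needed; the paper does not use Jantzen--Schaper here but rather this regularisation bound.

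Second, you are missing two of the paper's principal reducibility tools: the broken-ladder criterion (Proposition~\ref{P:BrokeRed}) and, more importantly, the Fock-space / LLT criterion (Proposition~\ref{P:LLTRed} via Lemma~\ref{L:LLT}), which uses Ariki's theorem to detect non-trivial canonical-basis coefficients. Neither Carter--Payne homomorphisms nor Proposition~\ref{P:MH} alone cover all the non-FM shapes. Third, and structurally central, is the induction you have not mentioned: the proof proceeds by induction on $|\la|$ using the induction/restriction result (Proposition~\ref{P:IndRed}), which says that reducibility of $S^{\la^{(i)}}$ (where $\la^{(i)}$ is $\la$ with all removable $i$-nodes stripped) implies reducibility of $S^\la$. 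The case analysis in Proposition~\ref{P:Classify} is only tractable because any $\la$ escaping the broken-ladder, CP, MH and LLT criteria is shown to be ``inductively reducible,'' i.e.\ handled by recursion on a smaller partition. Without this mechanism, organising the non-FM partitions by ``which condition fails,'' as you propose, would leave an unbounded family of shapes to treat by explicit homomorphisms, which is not feasible.
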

\vspace{-\topsep}}\ }

\Needspace*{5em}
\subs{Irreducible Specht modules in positive characteristic}

We make some brief comments on the case where $\bbf$ has prime characteristic $p$ (and $q={-}1$).  In this case, the classification of irreducible Specht modules remains unsolved, but here we conjecture a solution.

For the case of Specht modules labelled by $2$-regular or $2$-restricted partitions, a more complicated version of Proposition \ref{P:Carter} (also covered by \cite[Theorem~4.15]{JM2}) holds, so the difficulty lies with doubly-singular partitions.  In this case, the theory of decomposition maps shows that Theorem \ref{T:Main} still holds; however, there are FM-partitions which label reducible Specht modules in positive characteristic.

Recall that if $\la$ is a partition and $(r,c)$ is a node of $\la$, then the \emph{$(r,c)$-hook length} of $\la$ is the integer
\[
h_{r,c}(\la)=\la_r-r+\la'_c-c+1.
\]
Given a positive integer $s$, we say that $\la$ is an \emph{$s$-core} if none of the hook lengths of $\la$ is divisible by $s$.  Now we have the following result, proved by the first author in \cite{F3}.

\begin{thm}\label{2pcore}
Suppose $\bbf$ has characteristic $p$ and $q={-}1$.  If $\la$ is a doubly-singular partition which is not a $2p$-core, then the $\h_n$-module $S^\la$ is reducible.
\end{thm}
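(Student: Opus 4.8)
The goal is to exhibit a proper nonzero $\h_n$-submodule of $S^\la$. The combinatorial input is the standard fact, read off from the $2$-runner abacus, that a partition has a hook length divisible by $2p$ if and only if it has one equal to $2p$, if and only if one of the two components of its $2$-quotient fails to be a $p$-core; so the hypothesis ``$\la$ is not a $2p$-core'' says exactly that $\la$ has a removable rim hook of length $2p$, equivalently that its $2$-quotient is not a pair of $p$-cores. Such a rim $2p$-hook picks out a pair of rows of $\la$ (or, after conjugating $\la$ --- harmless by Lemma~\ref{L:Conj}, and legitimate because being a $2p$-core is preserved under conjugation --- a pair of columns) with the feature that moving an appropriate number of nodes from the lower to the upper of them produces a partition $\mu\doms\la$, and the divisibility of the hook length by $2p$ is precisely what is needed to make the arithmetic side-conditions in a homomorphism-existence theorem of Carter--Payne type for $\h_{\bbf,{-}1}(\sym_n)$ come out true; one moreover arranges the move so that $\mu$ is $2$-regular and strictly less dominant than the $2$-regularisation $\la^{\mathrm{reg}}$ of $\la$. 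The outcome is a nonzero $\varphi\in\hom_{\h_n}(S^\mu,S^\la)$.

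Granting such a $\varphi$, its image is a nonzero submodule of $S^\la$ which, being a nonzero quotient of $S^\mu$ with $\mu$ $2$-regular (so that $S^\mu$ has a unique maximal submodule, contained in every proper submodule), has $D^\mu$ in its head; hence $[S^\la:D^\mu]\neq 0$. Since also $[S^\la:D^{\la^{\mathrm{reg}}}]=1$ by the regularisation theorem, and $\mu\neq\la^{\mathrm{reg}}$ by construction, the modules $D^\mu$ and $D^{\la^{\mathrm{reg}}}$ are distinct composition factors of $S^\la$, so $S^\la$ is reducible. One point to keep in mind is that, because $\la$ and $\la'$ are both $2$-singular, the natural bilinear form on $S^\la$ vanishes identically (the relevant factor being $1+q^{-1}=0$), so $S^\la$ has no simple head $D^\la$ and reducibility cannot be detected by the cheaper route of a dimension count against such a head --- the homomorphism must be produced by hand. (An alternative to $\varphi$ would be to run the Jantzen--Schaper sum formula for $S^\la$, isolate the contributions coming from hook lengths divisible by $2p$, and show by a most-dominant-term argument that they cannot all be cancelled; but the homomorphism keeps the reducibility witness explicit.)

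The main obstacle is the construction of $\varphi$ and, above all, the proof that it does not vanish: homomorphisms between Specht modules for the Hecke algebra at $q={-}1$ are delicate, the available existence theorems carry technical $2$-adic and $p$-adic hypotheses, and one has to check both that the configuration of rows extracted from the rim $2p$-hook meets those hypotheses and that the map so obtained is genuinely nonzero, has $2$-regular source, stays short of the regularisation, and lies in the block of $S^\la$. A secondary and more routine task is the verification of the dominance relations $\mu\doms\la$ and $\la^{\mathrm{reg}}\doms\mu$ for the partition $\mu$ that is produced, together with the reduction, via conjugation, to the case in which the rim $2p$-hook is oriented so that the row (rather than column) version of the homomorphism theorem applies.
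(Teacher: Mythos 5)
The paper itself gives no proof of this theorem; it is quoted from \cite{F3}. So I can only assess your proposal on its own terms, and it has two genuine problems.

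The less serious one is that the dominance condition is backwards. You want $\mu$ $2$-regular with $\hom_{\h_n}(S^\mu,S^\la)\ne 0$, so that $D^\mu$ occurs in $S^\la$; but Lemma~\ref{L:Reg} then forces $\mu\dom\la^{\reg}$, so together with $\mu\ne\la^{\reg}$ you get $\mu\doms\la^{\reg}$ — strictly \emph{more} dominant, the opposite of what you stipulate. In fact a $2$-regular $\mu$ strictly below $\la^{\reg}$ admits no nonzero map to $S^\la$ at all, again by Lemma~\ref{L:Reg}. The paper's Corollary~\ref{C:Hom} sidesteps $2$-regularity of $\mu$ and needs only $\la^{\reg}\ndom\mu$; that is the cleaner framework for what you are attempting.

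The serious gap is the assertion that ``the divisibility of the hook length by $2p$ is precisely what is needed to make the arithmetic side-conditions in a homomorphism-existence theorem of Carter--Payne type come out true.'' This is not established, and it is not a routine translation. A rim $2p$-hook of $\la$ records that a single node has hook length $2p$; a Carter--Payne datum consists of a pair of rows between which one moves a prescribed number of nodes, subject to a divisibility condition on an axial distance together with a bound on how many nodes are moved. These are different pieces of combinatorics, and you do not show — nor is it obvious — that one can always extract a valid Carter--Payne row pair from a $2p$-rim-hook, still less one yielding a partition $\mu$ with $\la^{\reg}\ndom\mu$. Moreover, the $q$-analogue of Carter--Payne that the paper actually uses, \cite[Theorem~4.1.1]{L2} via the notion of CP-reducibility, carries a purely $e=2$ residue/ladder condition with no $p$-adic refinement; a version sensitive to $2p$ (an analogue of the classical $p$-adic Carter--Payne hypothesis, but now for $\h_{\bbf,-1}$ in characteristic $p$) would have to be proved separately before your argument could get off the ground. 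You flag this as ``the main obstacle''; it is in fact the entire content of the theorem, and without it the proposal is a restatement rather than a proof.
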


The results of \cite{F3} also show that for a given prime $p$ there are only finitely many FM-partitions which are also $2p$-cores.  So in order to complete the classification of irreducible Specht modules in a given non-zero characteristic, there are only finitely many Specht modules to consider.  Based on computer calculations, we now make the following conjecture.

\begin{conj}
Let $\h_n=\h_{\bbf,{-}1}(\sym_n)$ where $\cha(\bbf)=p>0$, and let $\la$ be a doubly-singular partition of $n$.  The $\h_n$-module $S^\la$ is irreducible if and only if $\la$ is a $2p$-core and $\la$ or $\la'$ is an FM-partition. 
\end{conj}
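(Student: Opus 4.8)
if $\la$ is doubly-singular and neither $\la$ nor $\la'$ is an FM-partition, then $S^\la$ is reducible. By Lemma~\ref{L:Conj} it is enough to exhibit, for each such $\la$, some evidence of reducibility — either a nonzero homomorphism $S^\mu \to S^\la$ or $S^\la \to S^\mu$ that is not an isomorphism, or a composition factor count obstruction. The natural strategy is to set up the combinatorics of doubly-singular partitions relative to the triple $(a,b,c)$ from the definition, and then to show that violating any one of the five FM-conditions (for both $\la$ and $\la'$ simultaneously) forces one of a short list of explicit ``reducibility configurations'' to appear somewhere in $\la$.

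First I would recall the standard toolkit from Section~\ref{S:Proof} (which we may assume): the criteria of James--Mathas and of Lyle/Fayers that detect reducibility of $S^\la$ via the existence of certain homomorphisms, via removable/addable node residue patterns, and via restriction to parabolic subalgebras $\h_m \otimes \h_{n-m}$. Since $p=0$, decomposition numbers are governed by the crystal/LLT combinatorics at $q=-1$, and irreducibility of $S^\la$ is equivalent to $S^\la$ having a single composition factor; this lets us use the branching rule and work inductively on $n$. The main structural step is a case analysis: among $\la, \la'$ neither is FM, so for each of the two partitions at least one of the five bullet conditions fails. I would organize the argument around which condition fails for $\la$ (and note that if $\la$ itself already fails to have $\la_i - \la_{i+1} \ls 1$ for some $i \neq a$, i.e. two ``large'' row-gaps, then $\la$ is ``far'' from any FM shape and a large-hook/restriction argument gives reducibility directly, cf.\ Theorem~\ref{2pcore} in the $p>0$ flavour but done here by an explicit submodule).

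The heart of the proof is the homomorphism construction, which is exactly what gets deferred to Proposition~\ref{P:MH}. Concretely: when the residue conditions (bullets four and five) fail — say two addable nodes of $\la$ outside the first row/column have different residues — one builds a nonzero non-surjective (or non-injective) map between $S^\la$ and a neighbouring Specht module using a one-node or two-node ``Carter--Payne'' style homomorphism, and checks it is genuinely a proper map by a dimension or row-standard-tableau count. When instead the inequality $\la_b \gs a-1 \gs b$ fails, or the condition $\la_1 > \dots > \la_c$ fails, one is in a situation where $\la$ contains a sub-configuration on which a small explicit Specht module is already known to be reducible (e.g.\ a hook-plus-box or a two-row-ish piece), and one lifts that reducibility to $S^\la$ by a restriction argument. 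Throughout, Lemma~\ref{L:Conj} is used to assume WLOG that the ``worse'' of $\la, \la'$ is the one we work with.

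I expect the main obstacle to be precisely Proposition~\ref{P:MH}: constructing the required homomorphism between Specht modules and, crucially, verifying it is nonzero. At $q=-1$ the combinatorics of homomorphisms between Specht modules (in the sense of Dipper--James / the Carter--Payne framework) is delicate — the relevant semistandard-homomorphism spaces can be hard to pin down, signs and $2$-adic valuations of binomial-type coefficients intervene, and one must rule out that the candidate map vanishes. This is the ``long proof'' the introduction flags, and in the plan it would be isolated as a self-contained technical statement (Proposition~\ref{P:MH}) and proved separately in Section~\ref{S:Hom}, so that the case analysis above reduces cleanly to invoking it. The bookkeeping that each of the finitely many failure-of-FM cases really does land in the hypothesis of Proposition~\ref{P:MH} (or of a known small-reducibility result) is routine but lengthy, and I would present it as a table of configurations rather than prose.
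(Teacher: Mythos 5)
The statement you were asked to address is the positive-characteristic \emph{conjecture} near the end of Section~\ref{S:Main}, and the paper does not prove it: the authors explicitly present it as a conjecture supported only by the $p=2$ case (James--Mathas) and by machine verification for $p=3,5,7$. Your proposal cannot be compared against ``the paper's own proof'' because there is none; more to the point, it cannot be a proof of the stated conjecture, for the following concrete reasons.

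First, you have silently swapped in the wrong hypothesis: the statement assumes $\cha(\bbf)=p>0$, but midway through your plan you write ``Since $p=0$, decomposition numbers are governed by the crystal/LLT combinatorics at $q=-1$.'' The LLT/Ariki machinery computes decomposition numbers only in characteristic zero; in characteristic $p$ it gives upper bounds, not equalities, and $D^{\la^\reg}$ may coincide with other factors in ways that change irreducibility. So the entire Fock-space leg of the argument is unavailable here as you have stated it.

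Second, your plan only attempts the ``only if'' direction (reducibility when $\la$ is far from FM), and even there it omits the extra condition that is the whole point of the positive-characteristic version: the $2p$-core requirement. A doubly-singular FM-partition which is not a $2p$-core still labels a reducible Specht module (Theorem~\ref{2pcore}), and this is what forces the stronger hypothesis in the conjecture; your case analysis never touches hook lengths or $2p$-cores, so it cannot recover this direction. The paper handles this part by decomposition-map arguments transferring Theorem~\ref{T:Main} from characteristic $0$ together with Theorem~\ref{2pcore}, a route you do not mention.

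Third, and most fundamentally, the ``if'' direction of the conjecture --- that $S^\la$ is \emph{irreducible} whenever $\la$ is a $2p$-core and $\la$ or $\la'$ is an FM-partition --- is not addressed at all by your plan, and it is the genuinely open content. Exhibiting a nonzero homomorphism or a restriction pattern can only ever prove reducibility; establishing irreducibility requires showing $S^\la$ has a single composition factor, which needs either explicit decomposition numbers or a sum-formula/adjustment-matrix argument. None of the tools in Sections~\ref{S:Proof}--\ref{S:Hom} is of that type. In short, the proposal conflates the conjecture in question with Theorem~\ref{T:Main}, drops the $2p$-core condition, uses a characteristic-zero tool under a $p>0$ hypothesis, and has no mechanism for the irreducibility half of an ``if and only if.''
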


The case $p=2$ of this conjecture (which amounts to the classification of irreducible Specht modules for the symmetric group in characteristic $2$) is the main result of \cite{JM3}.  Using computer programs written in GAP \cite{gap}, the first author has been able to verify the conjecture also for $p=3,5$ and $7$.

\section{The proof of Theorem~\ref{T:Main}} \label{S:Proof}
Throughout Section~\ref{S:Proof}, we assume that $q={-}1$ and that $\bbf$ is a field of characteristic 0.  Our aim is to prove Theorem~\ref{T:Main}, that is, if $\la$ is a doubly-singular partition of $n$ and the $\h_{\bbf,{-}1}(\sym_n)$-module $S^\la$ is irreducible then $\la$ or $\la'$ is an FM-partition.  

\subs{Techniques for proving reducibility}\label{S:Techniques}

We begin by describing some methods -- some well-known and some new -- which can be used to prove the reducibility of a Specht module.

\subsubs{Ladders}

For $k \gs 1$, the $k$th \emph{ladder} in $\bbn^2$ is defined to be the set of nodes
\[\mathcal{L}_k = \rset{(i,j) \in \mathbb{N}^2}{i+j=k+1}.\]
We say that $\call_l$ is a \emph{longer} ladder than $\call_k$ if $l>k$.

The $k$th ladder of a partition $\la$ is the intersection of $\call_k$ with the Young diagram of $\la$.  We say that the $k$th ladder of $\la$ is \emph{broken} if the nodes it contains are not consecutive in $\call_k$; that is, there exist $1\ls r<s<t\ls k$ such that $(r,k+1-r)$ and $(t,k+1-t)$ lie in $[\la]$ but $(s,k+1-s)$ does not.

The following proposition is the main result of~\cite{FL}.

\begin{propn}\thmcite{FL}{Theorem~2.1}\label{P:BrokeRed}
Suppose that $\la$ has a broken ladder.  Then $S^\la$ is reducible.  
\end{propn}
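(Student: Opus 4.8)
The plan is to prove that $S^\la$ has at least two composition factors, which is exactly what is needed: in characteristic $0$ with $q={-}1$, the module $S^\la$ is irreducible if and only if $S^\la\cong D^{\la^\reg}$, where $\la^\reg$ is the $2$-regularisation of $\la$, and by the (Hecke-algebra analogue of James's) regularisation theorem $D^{\la^\reg}$ is always a composition factor of $S^\la$, of multiplicity one. Two preliminary observations shape the argument. First, a $2$-regular partition has no broken ladder: if $\call_k\cap\la$ occupies rows $r<s<t$ with row $s$ empty, then $\la_s\ls k-s$ and $\la_t\gs k+1-t$, so, the parts $\la_s>\la_{s+1}>\dots>\la_t$ being strictly decreasing, $k-s\gs\la_s\gs\la_t+(t-s)\gs k+1-s$, which is absurd. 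Since ladders are preserved by conjugation, it follows that a partition with a broken ladder, and its conjugate, are both $2$-singular; thus the known classifications for $2$-regular or $2$-restricted partitions (Proposition~\ref{P:Carter} and its relatives) do not apply, and a genuine argument is required. Secondly, let $\call_k$ be the \emph{shortest} broken ladder of $\la$. The sub-partition of $\la$ consisting of the nodes in $\call_1\cup\dots\cup\call_k$ is again a partition, and its $k$th ladder is the same (still broken) ladder; an induction on $|\la|$, using standard first-row and first-column removal reductions, should let us replace $\la$ by this sub-partition, so that we may assume $\call_k$ is the last occupied ladder of $\la$.

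The second composition factor will come from a homomorphism out of $S^\la$. Because each ladder has a single residue, $\la^\reg$ is obtained from $\la$ by a sequence of moves, each sliding a single node one step up a ladder, and each such move is a Carter--Payne move; composing the associated homomorphisms gives a homomorphism $\theta\colon S^\la\to S^{\la^\reg}$, and the crux of the proof is to show that $\theta\neq0$ (or, where this composite does vanish, to exhibit a substitute homomorphism). Granting $\theta\neq0$, we conclude as follows. If $\theta$ is not injective, then $\ker\theta$ is a proper nonzero submodule of $S^\la$, so $S^\la$ is reducible. If $\theta$ is injective, then $S^\la$ is isomorphic to a submodule of $S^{\la^\reg}$, so every composition factor of $S^\la$ is one of $S^{\la^\reg}$; were $D^{\la^\reg}$ the only composition factor of $S^\la$, then $S^\la=D^{\la^\reg}$ would lie in the socle of $S^{\la^\reg}$ as well as in its head, and since $[S^{\la^\reg}:D^{\la^\reg}]=1$ this forces $S^{\la^\reg}=D^{\la^\reg}$, hence $\dim_\bbf S^\la=\dim_\bbf S^{\la^\reg}$. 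So in the injective case it is enough to show $\dim_\bbf S^\la\neq\dim_\bbf S^{\la^\reg}$, a statement purely about numbers of standard tableaux. Either way, $S^\la$ is reducible.

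The two points at which real work is needed -- and the main obstacles -- are the following. First, showing $\theta\neq0$: a Carter--Payne homomorphism is nonzero in characteristic $0$, but a composite of such maps may vanish, so this requires precise information about homomorphisms between Specht modules, of exactly the kind developed in Section~\ref{S:Hom}. Secondly, showing $\dim_\bbf S^\la\neq\dim_\bbf S^{\la^\reg}$ when $\la$ has a broken ladder: numbers of standard tableaux are not monotone along the dominance order, and can even coincide for a partition and its regularisation when no ladder is broken, so this needs a genuine combinatorial argument -- for example, isolating a hook length that is even for $\la$ but odd for $\la^\reg$, or constructing an explicit injection of tableaux. A possible alternative to the whole homomorphism argument is to evaluate the Jantzen--Schaper expression $\sum_{i\gs1}[S^\la J_i]$ and show it is nonzero in the Grothendieck group by isolating its dominance-maximal term, which no other term can cancel, the broken ladder being what forces the corresponding Jantzen--Schaper coefficient to be nonzero; this exchanges the homomorphism-theoretic difficulty for the (still delicate) problem of controlling cancellation among Jantzen--Schaper coefficients.
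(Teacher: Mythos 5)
This proposition is not proved in the paper you were given: it is quoted verbatim from \cite{FL} (Theorem~2.1 there), where it is the main theorem of the earlier article and its proof occupies most of that paper. So there is no ``paper's own proof'' here to match; I will assess the proposal on its own merits and against the toolkit the present paper deploys.

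Your proposal is a plan rather than a proof, and you are candid about this: the two things you label ``the two points at which real work is needed'' are the entire content of the result, and you supply an argument for neither. Beyond that honest admission, there are two structural concerns. First, the natural way to use homomorphisms to prove reducibility of $S^\la$ in this setting is Corollary~\ref{C:Hom}: exhibit a partition $\mu$ with $\la^\reg\ndom\mu$ and a \emph{nonzero} homomorphism $S^\mu\to S^\la$. Any such map immediately forces a second composition factor of $S^\la$, with no case analysis on injectivity. This is exactly what the Carter--Payne theorem (and the extended versions in Section~\ref{S:Hom}) produce: maps \emph{into} $S^\la$ from less dominant Specht modules. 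Your plan instead tries to build a map \emph{out of} $S^\la$ into the more dominant $S^{\la^\reg}$, and then does the injective/non-injective dichotomy. That dichotomy is logically sound (your socle argument in the injective case is correct), but it commits you to proving two separate hard facts -- $\theta\neq0$, and then a dimension inequality -- rather than one, and neither is easy. Secondly, the specific construction you propose for $\theta$ (compose Carter--Payne one-step maps along the ladders) cannot work uniformly: as you note yourself, composites of nonzero Carter--Payne maps frequently vanish, and more basically there is no reason to expect $\hom(S^\la,S^{\la^\reg})\neq0$ at all for a general $2$-singular $\la$. Your fallback -- comparing $\dim S^\la$ with $\dim S^{\la^\reg}$ -- is also not the right quantity; what you would actually need is $\dim S^\la\neq\dim D^{\la^\reg}$, and this is a priori unknown.

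For comparison, the argument in \cite{FL} is case-driven and uses the full toolkit described in Section~\ref{S:Techniques}: row and column removal, induction and restriction (Proposition~\ref{P:IndRed}) to reduce the size of $\la$, and then -- for the stubborn base cases where the homomorphism technique gives nothing -- the Fock-space comparison of canonical-basis coefficients that appears here as Lemma~\ref{L:LLT} (the quantity $N(\la,\mu)$). Your preliminary observations (a broken ladder forces $\la$ and $\la'$ both $2$-singular; the shortest broken ladder can in principle be made the last occupied one) are correct and do match the reduction step of the actual proof; but the real content lies precisely in the parts you defer, and the uniform $\theta$ you hope for does not exist.
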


A more helpful description of the condition in Proposition \ref{P:BrokeRed} is as follows: $\la$ has a broken ladder if there exist $1 \ls a<b$ such that $\la_{a}- \la_{a+1} \gs 2$ and $\la_b=\la_{b+1}>0$.

\subsubs{Regularisation and homomorphisms}

Recall that the \emph{dominance order} $\dom$ on partitions is defined by saying that $\mu \dom \la$ if and only if
\[\sum_{i=1}^l \mu_i \gs \sum_{i=1}^l \la_i\tag*{for all $l \gs 1$.}\]

If $\la$ is a partition, let $\la^\reg$ denote the partition whose Young diagram is obtained by moving the nodes of $\la$ as high as possible in their ladders.  It is easy to see that $\la^\reg$ is a $2$-regular partition, and that $\la^\reg\dom\la$.  We also have $\la^\reg=(\la')^\reg$ for any $\la$.

For example, if $\la=(3,2^3)$, then $\la^\reg=(5,3,1)$; this can be seen from the following diagrams, in which we label the nodes of these two partitions with the numbers of the ladders in which they appear.
\[
%\gyoung(;1;2;3:::::;1;2;3;4;5,;2;3::::::;2;3;4,;3;4::::::;3,;4;5)
\yo{\bxd3\bxd4\bxl5\bxu4\bxu3\bxu2\bxr1\bxr2\bxr3\addtolength\xpos{5cm}\bxr1\bxr2\bxr3\bxr4\bxd5\addtolength\xpos{-2cm}\bxl4\bxl3\bxd2\bxr3}
\]
The importance of regularisation lies in the following result.

\begin{lemma}\thmcite{J}{Theorem~6.21} \label{L:Reg}
Let $\la$ be a partition of $n$.  Then $D^{\la^\reg}$ occurs as a composition factor of $S^\la$ with multiplicity $1$. If $D^\nu$ is a composition factor of $S^\la$ then $\nu \dom \la^\reg$.  
\end{lemma}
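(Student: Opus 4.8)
The plan is to recover James's regularisation theorem in the Hecke-algebra setting. The proof is an induction that strips away one ladder at a time, so I would begin with the combinatorial observation that there is a chain of partitions of $n$,
\[\la=\la^{(0)},\ \la^{(1)},\ \dots,\ \la^{(N)}=\la^\reg,\]
in which each $\la^{(i+1)}$ is obtained from $\la^{(i)}$ by sliding a single node to a higher row within its ladder, with $\la^{(i+1)}\doms\la^{(i)}$; note that $(\la^{(i)})^\reg=\la^\reg$ throughout. It then suffices to prove both assertions for $\la$ given that they hold for $\mu:=\la^{(1)}$, by induction on $N$. The base case $N=0$ is the case where $\la$ is $2$-regular: then $D^\la=S^\la/\operatorname{rad}\langle\,,\,\rangle$ is the irreducible head of $S^\la$ (so occurs with multiplicity $1$), and every other composition factor $D^\nu$ satisfies $\nu\doms\la$. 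This is the usual unitriangularity of the decomposition matrix, which follows from the submodule theorem applied to $M^\la$ together with the vanishing of $\hom_{\h_n}(S^\nu,M^\la)$ whenever $\nu\ndom\la$ (there being no semistandard $\nu$-tableaux of content $\la$).

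For the inductive step one must understand how a single ladder slide taking $\la$ to $\mu$ (with $\mu\doms\la$) affects composition series. The engine is James's construction of an explicit nonzero $\h_n$-homomorphism attached to the slide: one takes the semistandard homomorphism into $M^\la$ indexed by the tableau the slide singles out, and shows --- this is the long, delicate computation, carried out with Garnir relations --- that its image lies inside the Specht module $S^\la$. Iterating this along the chain produces a nonzero map $S^{\la^\reg}\to S^\la$ whose image has head $D^{\la^\reg}$, so $D^{\la^\reg}$ is a composition factor of $S^\la$ and $[S^\la:D^{\la^\reg}]\gs1$. A more careful look at the same homomorphism, using the structure of the bilinear form on $M^\la$ and the fact that $S^\la$ embeds in $M^\la$ --- whose Specht filtration has $S^\la$ with multiplicity $1$ and all other subquotients of the form $S^\kappa$ with $\kappa\doms\la$ --- should then pin the multiplicity of $D^{\la^\reg}$ at exactly $1$ and force every composition factor $D^\nu$ of $S^\la$ to satisfy $\nu\dom\la^\reg$, by feeding in the inductive hypothesis for the more dominant Specht modules that occur.

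The decisive and genuinely difficult step is the Garnir-relation calculation showing that the semistandard homomorphism attached to a ladder slide has image inside the Specht module; establishing that the multiplicity of $D^{\la^\reg}$ is exactly $1$ --- not merely nonzero --- is the subtler half of this, since composition factors are neither gained nor lost monotonically along a single slide and regularisation is not order-preserving for $\dom$, so the bookkeeping cannot be reduced to a naive inequality between decomposition numbers. Everything else --- the combinatorial reduction, the $2$-regular base case, and the dominance bookkeeping via the Specht filtration of $M^\la$ --- is routine. It is precisely the hypothesis $q=-1$ that makes the scheme run: since all nodes in a fixed ladder share a single $2$-residue, a ladder slide is exactly a move for which the required semistandard homomorphism between Specht modules is available.
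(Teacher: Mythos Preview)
The paper does not prove this lemma at all: it is quoted verbatim as \cite[Theorem~6.21]{J} and used as a black box. So there is no ``paper's own proof'' to compare against; your task here was only to recognise that the result is being imported from the literature.

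That said, your sketch is not a proof, and it has a genuine gap. You assert that for every single ladder slide $\la^{(i)}\to\la^{(i+1)}$ there is a nonzero $\h_n$-homomorphism $S^{\la^{(i+1)}}\to S^{\la^{(i)}}$, obtained by checking that a particular semistandard homomorphism lands in the Specht submodule. This is exactly the kind of statement that is \emph{not} automatic: the Carter--Payne theorem and its $q$-analogues give such maps only under residue/hook conditions, and an arbitrary one-node ladder slide need not satisfy them. You acknowledge this is ``the decisive and genuinely difficult step'', but you give no argument for it, and I do not believe it is true in the generality you need. Even granting the maps, you would need to know that the composite $S^{\la^\reg}\to S^\la$ is nonzero, which does not follow from each factor being nonzero.

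James's actual proof does not proceed by building a chain of Specht homomorphisms. It works inside the permutation module $M^\la$: one shows directly, by an explicit calculation with the generator $z_\la$ of $S^\la$ and the bilinear form, that a certain vector associated with the regularised tableau lies in $S^\la$ and spans a quotient isomorphic to $D^{\la^\reg}$, while any composition factor $D^\nu$ forces $\nu\dom\la^\reg$ via a counting argument on ladders. The inductive bookkeeping you propose (on the number of slides) is replaced by a single direct analysis. If you want to reconstruct the argument, that is the shape to aim for; the homomorphism-chain approach is a dead end.
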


This result is particularly useful when classifying irreducible Specht modules, since it implies that if $S^\la$ is irreducible, then $S^\la\cong D^{\la^\reg}$.  One application of this is as follows.

\begin{cory} \label{C:Hom}
Suppose $\la$ and $\mu$ are partitions of $n$, such that $\la^\reg \ndom \mu$ and $\hom_{\h_n}(S^\mu,S^\la) \neq0$.  Then $S^\la$ is reducible.
\end{cory}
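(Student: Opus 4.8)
The plan is to argue by contradiction, combining Lemma \ref{L:Reg} with the observation that a nonzero homomorphism transports composition factors. Suppose that $S^\la$ is irreducible. By Lemma \ref{L:Reg}, the unique composition factor of $S^\la$ is $D^{\la^\reg}$, so $S^\la \cong D^{\la^\reg}$. Now consider the nonzero homomorphism $\phi \colon S^\mu \to S^\la$. Its image is a nonzero submodule of $S^\la \cong D^{\la^\reg}$, hence equals $D^{\la^\reg}$ (as $D^{\la^\reg}$ is irreducible). Therefore $D^{\la^\reg}$ is a quotient of $S^\mu$, so in particular $D^{\la^\reg}$ is a composition factor of $S^\mu$.

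Next I would apply the second part of Lemma \ref{L:Reg} to the partition $\mu$: every composition factor $D^\nu$ of $S^\mu$ satisfies $\nu \dom \mu^\reg$. Taking $\nu = \la^\reg$ gives $\la^\reg \dom \mu^\reg$. Since $\mu^\reg \dom \mu$ always holds (as noted in the excerpt just before Lemma \ref{L:Reg}), transitivity of the dominance order yields $\la^\reg \dom \mu$. This contradicts the hypothesis $\la^\reg \ndom \mu$, and the contradiction completes the proof.

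The argument is essentially formal once Lemma \ref{L:Reg} is in hand; there is no real obstacle, only the need to chain together the three facts ($S^\la\cong D^{\la^\reg}$ when $S^\la$ is irreducible, $D^{\la^\reg}$ appears in $S^\mu$, and hence $\la^\reg\dom\mu^\reg\dom\mu$). The one point worth stating carefully is why the image of $\phi$ must be all of $D^{\la^\reg}$: this uses both that $S^\la$ has $D^{\la^\reg}$ as its \emph{only} composition factor (so $S^\la$ is in fact simple, not merely of length one in some looser sense) and that $\phi\neq 0$. If one wished to avoid invoking irreducibility of $S^\la$ as a running assumption, one could instead phrase it directly: a nonzero map $S^\mu\to S^\la$ forces some composition factor of $S^\mu$ to coincide with a composition factor of $S^\la$; by Lemma \ref{L:Reg} every composition factor $D^\nu$ of $S^\la$ has $\nu\dom\la^\reg$, and every composition factor $D^\nu$ of $S^\mu$ has $\nu\dom\mu^\reg\dom\mu$, so some $\nu$ satisfies both $\nu\dom\la^\reg$ and $\nu\dom\mu$ — but then if $S^\la$ were irreducible we would have $\nu=\la^\reg$ and hence $\la^\reg\dom\mu$, again contradicting the hypothesis. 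Either formulation gives a short, self-contained proof.
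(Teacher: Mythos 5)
Your argument is correct and rests on the same two ingredients as the paper's proof: Lemma~\ref{L:Reg} and the fact that a nonzero homomorphism forces $S^\mu$ and $S^\la$ to share a composition factor. The paper phrases it directly (the shared factor $D^\nu$ satisfies $\nu\dom\mu^\reg\dom\mu$, hence $\nu\neq\la^\reg$, so $S^\la$ has at least two composition factors) rather than by contradiction, but that is a cosmetic difference; your parenthetical reformulation at the end is essentially the paper's proof verbatim.
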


\begin{proof}
Since $\hom_{\h_n}(S^\mu,S^\la) \neq 0$, the $\h_n$-modules $S^\mu$ and $S^\la$ have a common composition factor, $D^\nu$ say.  By Lemma~\ref{L:Reg} we have $\nu \dom \mu^\reg \dom \mu$, so $\nu \neq \la^\reg$.  So $S^\la$ has at least two composition factors.    
\end{proof}

We shall apply Corollary \ref{C:Hom} using two different explicit constructions of homomorphisms.  The first is a $q$-analogue, due to the second author, of the `one-node homomorphisms' constructed by Carter and Payne in \cite{cp}.

\begin{defn} 
Say that a partition $\la$ is \emph{CP-reducible} if $\la$ has
\begin{itemize} 
\item an addable node lying in ladder $\mathcal{L}_m$, and
\item a removable node lying in ladder $\mathcal{L}_l$,
\end{itemize}
where $m >l$ and $l \equiv m \ppmod 2$.
\end{defn}
\cite[Theorem 4.1.1]{L2} shows that if $\la$ is a CP-reducible partition of $n$, then there is a partition $\mu$ of $n$ with $\mu\ndom\la^\reg$, and a non-zero $\h_n$-homomorphism from $S^\mu$ to either $S^\la$ or $S^{\la'}$.  Hence by Lemma \ref{L:Conj} and Corollary \ref{C:Hom} we have the following.

\begin{propn}\thmcite{FL}{Proposition~4.6} \label{P:CPRed}
Suppose that $\la$ is CP-reducible.  Then $S^\la$ is reducible.   
\end{propn}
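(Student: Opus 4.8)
\textbf{Proof proposal for Proposition~\ref{P:CPRed}.}

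The plan is to reduce the statement entirely to the cited result \cite[Theorem~4.1.1]{L2} together with the tools already assembled in the excerpt, so that essentially nothing new needs to be proved. Suppose $\la$ is CP-reducible, so that $\la$ has an addable node in some ladder $\call_m$ and a removable node in some ladder $\call_l$ with $m>l$ and $l\equiv m\ppmod 2$. The first step is simply to quote \cite[Theorem~4.1.1]{L2}, which produces a partition $\mu$ of $n$ with $\mu\ndom\la^\reg$ together with a non-zero $\h_n$-homomorphism $S^\mu\to S^\la$ or a non-zero $\h_n$-homomorphism $S^\mu\to S^{\la'}$. (The Carter--Payne style construction there moves a single node down from the removable node's ladder to the addable node's ladder; the congruence condition $l\equiv m\ppmod2$ is exactly what is needed for the residues to match up so that the homomorphism is non-zero, and the resulting $\mu$ fails to dominate $\la^\reg$ because a node has genuinely been moved downward out of its ladder.)

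The second step handles the two cases symmetrically. In the first case, $\hom_{\h_n}(S^\mu,S^\la)\neq0$ with $\mu\ndom\la^\reg$, i.e.\ $\la^\reg\ndom\mu$ is not what we have --- so here I should be slightly careful: what Corollary~\ref{C:Hom} requires is $\la^\reg\ndom\mu$. In fact $\mu\ndom\la^\reg$ combined with the fact (Lemma~\ref{L:Reg}) that any composition factor $D^\nu$ of $S^\mu$ satisfies $\nu\dom\mu^\reg\dom\mu$ is precisely what the proof of Corollary~\ref{C:Hom} uses, so the same argument applies verbatim: the common composition factor $D^\nu$ of $S^\mu$ and $S^\la$ has $\nu\dom\mu$, and since $\mu\ndom\la^\reg$ we cannot have $\nu=\la^\reg$; hence $S^\la$ has at least the two distinct composition factors $D^\nu$ and $D^{\la^\reg}$, so $S^\la$ is reducible. (If one prefers to invoke Corollary~\ref{C:Hom} as a black box, one notes $\mu\ndom\la^\reg$ forces $\la^\reg\neq\mu$, and one re-runs the dominance comparison; the point is that $\nu\dom\mu$ while $\la^\reg\ndom\mu$, so $\nu\neq\la^\reg$.) In the second case, $\hom_{\h_n}(S^\mu,S^{\la'})\neq0$ with $\mu\ndom\la^\reg=(\la')^\reg$, so the same argument shows $S^{\la'}$ is reducible; then Lemma~\ref{L:Conj} gives that $S^\la$ is reducible as well. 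This completes the proof.

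The only genuine subtlety --- and the step I would flag as the main obstacle if one had to prove the proposition from scratch rather than citing \cite{L2} --- is the construction of the homomorphism and the verification that its source $\mu$ fails to dominate $\la^\reg$. Constructing a non-zero Carter--Payne homomorphism at $q={-}1$ requires care: one must exhibit an explicit element of $S^\la$ (a suitable linear combination of polytabloids, or a semistandard homomorphism in the Dipper--James dual-Specht picture) annihilated by the relevant Garnir relations, and check non-vanishing modulo the radical, which is where the residue condition $l\equiv m\ppmod2$ enters. Since the excerpt explicitly allows us to assume \cite[Theorem~4.1.1]{L2}, this difficulty is entirely offloaded, and the proof of Proposition~\ref{P:CPRed} reduces to the two-line dominance-and-conjugation argument above.
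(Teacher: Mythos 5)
Your overall strategy is exactly the paper's: quote \cite[Theorem~4.1.1]{L2} to obtain a partition $\mu$ and a non-zero homomorphism $S^\mu\to S^\la$ or $S^\mu\to S^{\la'}$, and then finish with Corollary~\ref{C:Hom} and Lemma~\ref{L:Conj}.  The paper gives precisely this as a one-line derivation immediately before stating Proposition~\ref{P:CPRed}.

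The one place you go wrong is in the paragraph where you try to reconcile the condition ``$\mu\ndom\la^\reg$'' (taken from the paper's précis of \cite{L2}) with the hypothesis ``$\la^\reg\ndom\mu$'' of Corollary~\ref{C:Hom}.  Your claimed deduction -- that from $\nu\dom\mu$ and $\mu\ndom\la^\reg$ one can conclude $\nu\neq\la^\reg$ -- is a non sequitur: if $\nu=\la^\reg$ then $\nu\dom\mu$ says $\la^\reg\dom\mu$, which is perfectly compatible with $\mu\ndom\la^\reg$ (it just says $\mu\neq\la^\reg$).  In fact your own parenthetical remark smuggles in the hypothesis $\la^\reg\ndom\mu$ that you had just said you lack.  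The correct thing to note is that what \cite[Theorem~4.1.1]{L2} actually supplies is a $\mu$ obtained by moving a node of $\la$ (or of $\la'$) into a longer ladder, and \cite[Lemma~2.1]{F1} then gives $\la^\reg\ndom\mu$ -- exactly as the paper records for the analogous step in Proposition~\ref{P:MHRed}.  That is the condition Corollary~\ref{C:Hom} wants, and with it the proof is the direct two-line citation you intended; no alternative reading of Corollary~\ref{C:Hom} is needed or available.
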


Now we give the second result we require on homomorphisms.  This also defines a certain family of pairs of partitions where the corresponding homomorphism space is non-zero; however, the partitions in question are rather less natural than in the Carter--Payne case, and the result below was proved solely for the purposes of the present paper.

\begin{defn}
Say that a partition $\la$ is \emph{MH-reducible} if there exists $x\gs0$ such that $(x+1,\la_{x+1}+1)$ is an addable node of $\la$, and the partition $\nu=(\la_{x+1},\la_{x+2},\dots)$ has the form
\[\left((g+f+s')^s,g+f+s'-1,g+f+s'-2,\ldots,g+s',g,g-1,\ldots,2\right)\]
where $s,s',f,g$ are integers such that $f\gs0$, $g\gs2$, $s'\gs s\gs2$ and either 
\begin{itemize}
\item $s$ and $s'$ are odd; or
\item $s=2$, $s'$ is even and $f=0$.  
\end{itemize}
\end{defn}

\begin{propn}\label{P:MH}
Suppose $\la$ is a partition of $n$ which is MH-reducible, and let $x$ be as in the definition of MH-reducible.  Define a partition $\mu$ by
\[\mu_i = \begin{cases}
\la_i+1 & (i=x+1,x+2) \\
\la_i-2&(i=\ell(\la))\\
\la_i & (\text{otherwise}). 
\end{cases}\]
Then $\hom_{\h_n}(S^\mu,S^\la) \neq0$.  
\end{propn}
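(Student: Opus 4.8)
The plan is to construct the required homomorphism explicitly using the combinatorial machinery for homomorphisms between Specht modules that will be developed in Section~\ref{S:Hom}. Recall that such homomorphisms are described by ``semistandard homomorphisms'': a homomorphism $S^\mu \to S^\la$ is built from a $\bbz$-linear combination of column-standard $\la$-tableaux of type $\mu$, subject to certain relations (the Garnir relations, suitably $q$-deformed, together with the requirement that the image of the Specht generator be well-defined). Since $\mu$ differs from $\la$ only in rows $x+1$, $x+2$ and the last row $\ell(\la)$ — two boxes having been moved up from the bottom row into rows $x+1$ and $x+2$ — the combinatorics localises to the partition $\nu = (\la_{x+1}, \la_{x+2}, \dots)$ and the corresponding ``sub-partition'' of $\mu$; the rows above $x+1$ contribute only a trivial part to the tableaux and can be ignored. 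So the first step is to reduce to proving the statement for the staircase-type partition $\nu$ and its modification $\bar\mu$, where $\bar\mu$ is obtained from $\nu$ by adding a box to each of its first two rows and removing two boxes from its last row.

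The second step is to write down the candidate homomorphism. For a partition of the staircase shape appearing in the definition of MH-reducible, the natural thing is to take a sum over tableaux obtained by sliding entries along ladders or along rows/columns in all ``admissible'' ways, with signs (and possibly $q$-powers, though with $q = -1$ these become signs) determined by the number of transpositions used. Concretely I would expect the homomorphism to be a signed sum of the column-standard $\nu$-tableaux of type $\bar\mu$ in which the two ``extra'' entries placed in rows $1$ and $2$ by $\bar\mu$ are realised by choosing, in each column of the staircase, which of two adjacent rows receives the larger entry — the staircase shape being exactly what makes the number of such choices manageable. The two cases in the definition ($s,s'$ both odd, versus $s=2$, $s'$ even, $f=0$) correspond to the two parity regimes in which the relevant signed sum does not collapse to zero; the conditions $g \gs 2$ and $s' \gs s \gs 2$ ensure there is enough room in the shape for the construction to make sense.

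The third and main step — and the principal obstacle — is verifying that this signed sum actually defines an $\h_n$-homomorphism, i.e.\ that it is annihilated by the ideal of relations (equivalently, that applying each Garnir relation to the candidate element yields zero). This is where the parity hypotheses and the precise staircase shape are used in an essential way: the Garnir relations along the long diagonal of the staircase produce, at $q = -1$, cancellations that rely on the row lengths decreasing by exactly $1$, and the two allowed parity cases are precisely those for which the boundary contributions (at the top two rows, of length involving $s$ and $s'$, and at the short rows near the bottom, of lengths $g, g-1, \dots, 2$) vanish. I anticipate that the bulk of the proof in Section~\ref{S:Hom} is a careful induction on the size of the staircase — peeling off either the top rows or the bottom rows — reducing each Garnir relation check to a smaller instance of the same configuration, with the base cases handled by direct computation. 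Throughout one must keep track of the fact that $q = -1$, so that $q$-binomial coefficients $\gauss{m}{r}$ specialise to ordinary binomial coefficients modulo $2$ (Lucas' theorem), which is ultimately what forces the odd/even dichotomy in the hypotheses. Once the element is shown to be a homomorphism, it is manifestly nonzero because its leading term (the column-standard tableau of type $\bar\mu$ which is ``closest to standard'') appears with coefficient $\pm 1$, so $\hom_{\h_n}(S^\mu, S^\la) \neq 0$ as required.
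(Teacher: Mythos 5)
Your high-level plan is correct and matches the paper's: reduce by row removal to the staircase partition (Proposition~\ref{P:RowRem} reduces Proposition~\ref{P:MH} to Proposition~\ref{mainhom}), then exhibit an explicit linear combination of tableau homomorphisms and verify that its image lies in $S^\la$. But beyond that outline, the proposal does not constitute a proof, and several of its concrete claims diverge from what actually happens. First, the technical framework you describe --- column-standard $\la$-tableaux of type $\mu$ modulo Garnir relations --- is not the one the paper uses; the paper works with semistandard $\mu$-tableaux of type $\la$, the maps $\Theta_T$, and the Kernel Intersection Theorem ($S^\la=\bigcap\ker\psi_{d,t}$), together with the straightening rules of Proposition~\ref{Lemma7}. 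These are dual viewpoints, but you would need to set up the Garnir machinery for this purpose, and you haven't. Second, your candidate element is only gestured at (``a signed sum $\dots$ choosing, in each column, which of two adjacent rows receives the larger entry''); the actual element is a double sum $\sum_{(i,j)\in\mathcal I}\sum_{T\in\cala}m_{i,j}\sgn(T)\Theta(i,j,T)$ with a carefully chosen index set $\mathcal I$, a constrained tableau family $\cala$, and nontrivial coefficients $m_{i,j}$ (which are $\pm1$ except for $m_{2,3}=\tfrac12(s-1)$), and it is not clear your description produces this or any specific element. Third, you anticipate an induction on the size of the staircase; the paper instead verifies $\psi_{d,t}\circ\Theta=0$ directly, case-by-case in $d$ (rows $1$ to $s$, $s$ to $s+f$, $s+f$ to $s+f+g-1$), which is most of the work and is not inductive. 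Finally, the remark that at $q=-1$ the $q$-binomials ``specialise to ordinary binomial coefficients modulo $2$ (Lucas)'' is not correct here: the field has characteristic $0$, and $\gauss{m}{j}_{q=-1}$ is an ordinary integer (vanishing exactly when $m$ is even and $j$ is odd), not a residue mod $2$. The parity hypotheses on $s,s'$ enter because these integers and the signs $({-}1)^{\mu_d}$ depend on the row lengths, not because of any mod-$2$ reduction.

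In short, the proposal correctly identifies \emph{what} must be done (row removal, explicit element, verification) but supplies neither the element nor the verification, and where it speculates about the mechanism it does not match the paper. You would need to actually construct the tableaux $U(i,j,T)$ with their coefficients and then carry out the $\psi_{d,t}$ computations of Propositions~\ref{topbit}--\ref{verybot2precise} (or an equivalent Garnir-relation check) before this could be called a proof.
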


The partitions appearing in Proposition \ref{P:MH} may be visualised using the following diagram (in which we take $s=s'=5$, $g=4$, $f=2$).  The dotted nodes at the bottom of the diagram are present in $\la$, while those at the top right are present in $\mu$.

\newlength\inm
\setlength\inm{0.06cm}
{\small\[
\begin{tikzpicture}[scale=0.5]
\foreach \x in {0,1,2,3,4,5,6,7,8,9,10,11}
\draw(\x,5)--(\x,10.5);
\foreach \y in {5,6,7,8,9}
\draw(0,\y)--(11,\y);
\draw(0,10)--(12.5,10);
\draw(12,10)--(12,10.5);
\foreach \x in {0,1,2,3}
\draw(\x,1)--(\x,5);
\draw(0,1)--(3,1);
\draw(0,2)--(4,2);
\draw(0,3)--(9,3);
\draw(0,4)--(10,4);
\draw(4,2)--(4,5);
\foreach \x in {5,6,7,8,9}
\draw(\x,3)--(\x,5);
\draw(10,4)--(10,5);
\draw[densely dotted](0,1)--(0,0)--(2,0)--(2,1);
\draw[densely dotted](1,0)--(1,1);
\draw[densely dotted](11,8)--(12,8)--(12,10)--(11,10);
\draw[densely dotted](11,9)--(12,9);
\draw[<->](-0.7,0cm+\inm)--(-0.7,3cm-\inm);
\draw[<->](-0.7,3cm+\inm)--(-0.7,5cm-\inm);
\draw[<->](-0.7,5cm+\inm)--(-0.7,10cm-\inm);
\draw[<->](0cm+\inm,-0.7)--(4cm-\inm,-0.7);
\draw[<->](4cm+\inm,-0.7)--(9cm-\inm,-0.7);
\draw[<->](9cm+\inm,-0.7)--(11cm-\inm,-0.7);
\draw(-0.7,1.5)node[fill=white,inner sep=1pt]{$\gmo$};
\draw(-0.7,4)node[fill=white,inner sep=1pt]{$f$};
\draw(-0.7,7.5)node[fill=white,inner sep=1pt]{$s$};
\draw(2,-0.7)node[fill=white,inner sep=1pt]{$g$};
\draw(6.5,-0.7)node[fill=white,inner sep=1pt]{$s'$};
\draw(10,-0.7)node[fill=white,inner sep=1pt]{$f$};
\end{tikzpicture}
\]}

The proof of Proposition~\ref{P:MH} is somewhat lengthy, and we postpone it to Section~\ref{S:Hom}, where we introduce all the necessary background concerning homomorphisms.

\begin{propn} \label{P:MHRed}
Suppose that $\la$ is MH-reducible.  Then $S^\la$ is reducible.  
\end{propn}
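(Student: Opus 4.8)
The plan is to combine Proposition~\ref{P:MH} with Corollary~\ref{C:Hom}, so the only thing that really needs checking is the dominance condition $\la^\reg\ndom\mu$, where $\mu$ is the partition defined in the statement of Proposition~\ref{P:MH}. Indeed, suppose $\la$ is MH-reducible, with $x$, $s$, $s'$, $f$, $g$ and $\nu=(\la_{x+1},\la_{x+2},\dots)$ as in the definition, and let $\mu$ be as in Proposition~\ref{P:MH}. Proposition~\ref{P:MH} gives $\hom_{\h_n}(S^\mu,S^\la)\neq0$, so once we know $\la^\reg\ndom\mu$, Corollary~\ref{C:Hom} immediately yields that $S^\la$ is reducible.

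So the heart of the proof is to show $\la^\reg\ndom\mu$. First I would observe that $\mu\doms\la$: $\mu$ is obtained from $\la$ by moving two nodes \emph{up} (from row $\ell(\la)$ into rows $x+1$ and $x+2$), so the partial sums $\sum_{i=1}^l\mu_i$ are all $\gs\sum_{i=1}^l\la_i$, with strict inequality for some $l$. Since $\la^\reg$ is the partition obtained by pushing the nodes of $\la$ as high as possible in their ladders, and $\la^\reg\dom\la$ always, I cannot conclude $\la^\reg\ndom\mu$ from $\mu\doms\la$ alone --- instead I need to exhibit an explicit index $l$ where $\sum_{i=1}^l\mu_i>\sum_{i=1}^l\la^\reg_i$. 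The natural candidate is $l=x+2$ (or some small value near there), using the structure of $\nu$. The point is that regularisation moves nodes of $\la$ into higher ladders but cannot move the node we care about any further left than the diagonal constraint allows, whereas the operation producing $\mu$ packs two extra nodes into rows $x+1$, $x+2$ directly. One computes $\sum_{i=1}^{x+2}\mu_i$ explicitly from the shape of $\nu$ (the top of $\nu$ contributes $s$ copies of $g+f+s'$ followed by $g+f+s'-1$, and $\mu$ adds $1$ to each of the first two of these), and compares with an upper bound for $\sum_{i=1}^{x+2}\la^\reg_i$ coming from the fact that the first $x+2$ rows of $\la^\reg$ can contain at most the nodes of $\la$ lying in the first $x+2$ ladders\footnote{more precisely, one bounds $\sum_{i\ls l}\la^\reg_i$ by the number of nodes of $\la$ in ladders $\call_1,\dots,\call_{l}$ together with a counting argument on how ladders distribute over rows}. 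The arithmetic is routine but needs the hypotheses $g\gs2$, $s'\gs s\gs2$ to make the inequality strict; the two cases ($s,s'$ odd, versus $s=2$, $s'$ even, $f=0$) should be handled uniformly or with a minor case split at the very end.

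The main obstacle I anticipate is getting the regularisation bound sharp enough: regularisation is a global operation, and a crude bound on $\sum_{i=1}^l\la^\reg_i$ may not beat $\sum_{i=1}^l\mu_i$. The resolution is to pick the comparison index $l$ cleverly --- likely $l=x+2$, exploiting that $\mu$ has a ``heavy'' top in rows $x+1, x+2$ (value $g+f+s'+1$ in each of rows $x+1$, $x+2$ when $s\gs2$, since the first $s$ rows of $\nu$ all equal $g+f+s'$), while in $\la^\reg$ the corresponding nodes have been spread upward into distinct ladders and rows above, so they cannot all accumulate in the first $x+2$ rows. One should also keep track of where the addable node $(x+1,\la_{x+1}+1)$ sits: the hypothesis that this is addable means $\la_x\gs\la_{x+1}+1$, so rows $1,\dots,x$ of $\la$ (equivalently of $\mu$, since $\mu$ agrees with $\la$ there) contribute at least $x(g+f+s'+1)$ beyond what rows $x+1,\dots$ contribute, and this extra room is exactly what forces the strict inequality. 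Once the index and the counting are pinned down, the computation should close, and the proposition follows.

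\begin{proof}
Let $\la$ be MH-reducible, and let $x$, $s$, $s'$, $f$, $g$ and $\nu$ be as in the definition of MH-reducible. Define $\mu$ as in Proposition~\ref{P:MH}; then $\hom_{\h_n}(S^\mu,S^\la)\neq0$ by that proposition. By Corollary~\ref{C:Hom}, it suffices to show that $\la^\reg\ndom\mu$. Since $\mu$ is obtained from $\la$ by moving two nodes upwards (from row $\ell(\la)$ to rows $x+1$ and $x+2$), we have $\mu\doms\la$; we must show that $\mu$ is not dominated by $\la^\reg$ either. For this it is enough to find an index $l$ with $\sum_{i=1}^l\mu_i>\sum_{i=1}^l\la^\reg_i$, and one takes $l=x+2$: using the explicit shape of $\nu$ (and the hypotheses $g\gs2$, $s'\gs s\gs2$, together with the fact that $(x+1,\la_{x+1}+1)$ is addable, so rows $1,\dots,x$ of $\la$ strictly exceed row $x+1$), one computes $\sum_{i=1}^{x+2}\mu_i$ directly and bounds $\sum_{i=1}^{x+2}\la^\reg_i$ by the number of nodes of $\la$ lying in the first $x+2$ ladders, distributed over rows; comparing the two gives the strict inequality, in both cases of the definition. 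Hence $\la^\reg\ndom\mu$, and $S^\la$ is reducible.
\end{proof}
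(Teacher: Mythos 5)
Your high-level strategy matches the paper's exactly: apply Proposition~\ref{P:MH} to get $\hom_{\h_n}(S^\mu,S^\la)\neq0$, and Corollary~\ref{C:Hom} to conclude reducibility, so everything hinges on $\la^\reg\ndom\mu$. You correctly identify that this is the crux, and you correctly observe that $\mu\doms\la$ by itself is useless. But the way you try to establish $\la^\reg\ndom\mu$ has a genuine gap.

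The paper's argument at this point is short and structural: since $s'\gs s$, the two nodes moved from row $\ell(\la)$ to rows $x+1,x+2$ land in strictly longer ladders than the ones they came from, and then \cite[Lemma~2.1]{F1} says precisely that a partition obtained by moving nodes to longer ladders is never dominated by $\la^\reg$. You never articulate this ``nodes move to longer ladders'' fact, and you do not have the lemma to fall back on; instead you propose an explicit partial-sum comparison at $l=x+2$. That specific choice fails: take $s=s'=3$, $f=0$, $g=2$, $x=0$, so $\la=(5,5,5,2)$ and $\mu=(6,6,5)$, with $\la^\reg=(7,5,3,2)$. Then $\sum_{i\ls 2}\mu_i=12=\sum_{i\ls 2}\la^\reg_i$, so $l=x+2=2$ gives no strict inequality; one needs $l=3$ here. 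So ``one takes $l=x+2$'' is simply wrong, and there is no uniform formula for the witnessing index that your argument supplies.

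A second problem is the bound you propose for $\sum_{i\ls l}\la^\reg_i$. You suggest bounding it by ``the number of nodes of $\la$ lying in the first $l$ ladders.'' For a $2$-regular partition $\sigma$, row $1$ already reaches into ladder $\call_{\sigma_1}$, so $\sum_{i\ls l}\sigma_i$ is generally far larger than $\sum_{k\ls l}L_k(\sigma)$; in the example above with $l=2$ the ladder-count bound would give $3$, whereas $\la^\reg_1+\la^\reg_2=12$. Your footnote gestures at ``a counting argument on how ladders distribute over rows'' to repair this, but no such argument is given, and producing one is essentially equivalent to reproving \cite[Lemma~2.1]{F1}. As written, the proof cannot be closed: you need either to invoke that lemma (which is the route the paper takes) or to give the full ladder-versus-row counting argument rather than a sketch with an incorrect choice of index.
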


\begin{proof}
Since $\la$ is MH-reducible, we may define the partition $\mu$ as in Proposition~\ref{P:MH} so that $\hom_{\h_n}(S^\mu,S^\la) \neq0$.  Furthermore, the condition $s'\gs s$ guarantees that $\mu$ is obtained from $\la$ by moving two nodes to longer ladders, so by \cite[Lemma~2.1]{F1}, $\la^\reg \ndom \mu$ and hence $S^\la$ is reducible by Corollary~\ref{C:Hom}.   
\end{proof}

\subsubs{Fock space techniques}

\begin{defn}
Say that a partition $\la$ with $\ell(\la)=l$ is \emph{LLT-reducible} if $\la$ is $2$-singular, has no broken ladders and satisfies:
\begin{itemize}
\item $\la_1 \gs l+1$;
\item $\la_l \gs 2$;
\item there exists $1 \ls x<l$ with $\la_x-\la_{x+1}>1$.
\end{itemize} 
\end{defn}

\begin{propn} \label{P:LLTRed}
Suppose $\la$ is LLT-reducible.  Then $S^\la$ is reducible.  
\end{propn}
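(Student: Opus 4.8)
The plan is to argue via Fock-space combinatorics. Since $\cha(\bbf)=0$ and $q=\mo$ is a primitive second root of unity, the Lascoux--Leclerc--Thibon conjecture, as established by Ariki, is available: in the level-one Fock space for $U_v(\hsl_2)$, with standard basis $\{s_\nu\}$ indexed by all partitions and canonical basis $\{G(\mu)\}$ indexed by $2$-regular partitions, writing $G(\mu)=\sum_\nu d_{\nu\mu}(v)\,s_\nu$ one has $d_{\nu\mu}(v)\in\zo[v]$, with $d_{\nu\mu}(v)=0$ unless $\mu\dom\nu$, and $[S^\nu:D^\mu]=d_{\nu\mu}(1)$. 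By Lemma~\ref{L:Reg} the module $S^\la$ already has $D^{\la^\reg}$ as a composition factor, so it suffices to produce a $2$-regular partition $\mu\ne\la^\reg$ with $d_{\la\mu}(v)\ne0$; and since the coefficients of $d_{\la\mu}(v)$ are non-negative, it is enough to exhibit a single monomial that occurs in $d_{\la\mu}(v)$, so there is no risk of a cancellation when $v=1$.

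First I would unpack the shape imposed by the hypotheses. As $\la$ is $2$-singular but has no broken ladder, all its repeated parts lie strictly above the row $x$ at which $\la_x-\la_{x+1}>1$, and hence rows $x,x+1,\dots,\ell(\la)$ are strictly decreasing; combined with $\la_{\ell(\la)}\gs2$ and $\la_1\gs\ell(\la)+1$, this says that the Young diagram of $\la$ properly contains the staircase with $\ell(\la)$ rows and protrudes beyond it in both the first row and the first column. Using this picture I would define $\mu$ by moving one or two removable nodes of $\la$ -- selected with the help of the distinguished step at row $x$ -- to the top positions of suitable strictly longer ladders; the inequalities $\la_{\ell(\la)}\gs2$ and $\la_1\gs\ell(\la)+1$ are exactly what guarantee that such vacant top positions exist and that the resulting $\mu$ is $2$-regular, satisfies $\mu\dom\la$, and is different from $\la^\reg$ (the latter being obtained from $\la$ by pushing nodes up \emph{within} their own ladders).

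It then remains to prove $d_{\la\mu}(v)\ne0$, for which I would combine reduction and direct computation. On the one hand, the $v$-analogues of the James--Mathas row- and column-removal theorems let one strip common rows and columns from the pair $(\la,\mu)$ without altering $d_{\la\mu}(v)$, and these operations preserve $2$-singularity, the absence of broken ladders and the distinguished step; so the problem reduces to the same question for a smaller partition in a restricted family. On the other hand, for that base family one computes the relevant canonical basis element directly from the LLT algorithm, writing $G(\mu)$ as $f_r^{(m)}G(\nu)$ minus lower-order correction terms, where $\nu$ is obtained by deleting the longest ladder of $\mu$, and following the monomial of $d_{\la\mu}(v)$ that arises from the leading term of $f_r^{(m)}s_\nu$. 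The non-negativity of the coefficients guarantees that this monomial survives the subtraction of the correction terms.

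The hard part is exactly this last step. The LLT algorithm subtracts, at each stage, correction terms indexed by $2$-regular partitions strictly dominating the current label, and one has to verify that after all of these are removed the coefficient of $s_\la$ in $G(\mu)$ is still nonzero. Non-negativity reduces this to locating one monomial that is not cancelled, but doing so requires a careful analysis of the ladder data of $\la$ near the step at row $x$ -- in particular of exactly how many nodes of $\la$ sit in each of the two or three ladders flanking that row -- and it is this bookkeeping, rather than any conceptual difficulty, that occupies the bulk of the proof.
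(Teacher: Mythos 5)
There is a genuine gap, and in fact the key part of your argument is left undone. You correctly set up the Fock-space framework (Ariki's theorem, non-negativity of $d_{\nu\mu}(v)$, and the sufficiency of finding a $2$-regular $\mu\neq\la^\reg$ with $d_{\la\mu}(v)\neq 0$), and your analysis of the shape imposed by LLT-reducibility is fine. But you never actually construct $\mu$, and you explicitly defer the verification that $d_{\la\mu}(v)\neq 0$ to ``bookkeeping'' that you do not carry out. Producing such a $\mu$ with $d_{\la\mu}(v)\neq 0$ is the whole content of the proposition, so the plan as written restates the problem rather than solving it. There is also a logical slip in the last paragraph: the non-negativity of the coefficients of the \emph{final} polynomial $d_{\la\mu}(v)$ does not, by itself, guarantee that a specific monomial seen at an intermediate stage of the LLT algorithm survives the subtraction of the correction terms. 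To make that argument work you would need to single out a $v$-degree that the corrections cannot touch (e.g.\ an extremal degree), and that is precisely the delicate combinatorics that is missing.

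The paper's proof takes a related but cleaner route. Rather than hunting for a second composition factor $D^\mu$, it uses the $N$-statistic of \cite[Lemmas 5.4--5.5]{FL} (restated here as Lemma~\ref{L:LLT}): if $S^\la$ is irreducible then, for every partition $\mu$ that is alternating in $\la$, the quantity $N(\la,\mu)$ takes the same value. Thus it suffices to exhibit two partitions $\mu,\tilde\mu$ alternating in $\la$ with $N(\la,\mu)\neq N(\la,\tilde\mu)$. The proof does exactly this, splitting into the cases $\la_1+1\geq\la_l+l$ and $\la_1+1<\la_l+l$ and writing down explicit $\mu$, $\tilde\mu$; the comparison of $N$-values reduces to counting entries equal to a particular value along the last column of the tableau $T(\la,\mu)$, which is routine. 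This sidesteps any direct computation of canonical-basis coefficients or of the LLT correction terms, and it is the mechanism you would need to replace (or re-derive) to make your approach complete.
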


Before proving Proposition~\ref{P:LLTRed} we give some background.  
In~\cite{FL}, the authors show how Ariki's Theorem~\cite{A} may be used to prove that certain Specht modules are reducible.  We summarise the relevant results here.  For details, and to put these results into context, we refer the reader to~\cite[Section~5]{FL}. 

Suppose that $\la$ is a partition. If $\mu$ is a partition such that $\mu \subseteq \la$ and $\mu_i-\mu_{i+1}$ is odd for $1 \ls i <\ell(\la)$, we will say that $\mu$ is \emph{alternating in $\la$}.  In this case, we define a sequence of partitions $\mu=\mu^0,\mu^1,\mu^2,\ldots$ by setting $\mu^{j+1}$ to be the partition obtained from $\mu^j$ by adding all addable nodes that are contained in $\la$.  Now define a $\la$-tableau $T=T(\la,\mu)$ as follows.  Begin by filling in each node of $\mu$ with a 0, then, for $j \gs 1$ fill in each node of $\mu^j \setminus \mu^{j-1}$ with $j$.  (Readers unfamiliar with tableaux should consult Section \ref{SS:ConHom} below.)

Now for each node $(r,c) \in \la$, let $j=T_{r,c}$ and define
\[
N(r,c) = \left|\rset{m<r}{T_{m,\la_m}<j,\ T_{m,\la_m} \not\equiv j \ppmod 2}\right| - \left|\rset{m<r}{T_{m,\la_m}<j,\ T_{m,\la_m} \equiv j \ppmod 2}\right|.
\]
Let $N=N(\la,\mu) = \sum_{(r,c) \in \la} N(r,c)$.  

\begin{eg}
Let $\la=(13,12^5,7,4,3)$ and $\mu=(13,12,11,10,9,8,7,2,1)$.  The tableaux $T,N$ are shown in the two diagrams below, and we see that $N(\la,\mu)=10$.
\[
T=%\young(0000000000000,000000000000,000000000001,000000000012,000000000123,000000001234,0000000,0012,012)
\yo{\bxl0\bxl0\bxl0\bxl0\bxl0\bxl0\bxl0\bxl0\bxl0\bxl0\bxl0\bxl0\bxd0
\bxr0\bxr0\bxr0\bxr0\bxr0\bxr0\bxr0\bxr0\bxr0\bxr0\bxr0\bxd0
\bxl1\bxl0\bxl0\bxl0\bxl0\bxl0\bxl0\bxl0\bxl0\bxl0\bxl0\bxd0
\bxr0\bxr0\bxr0\bxr0\bxr0\bxr0\bxr0\bxr0\bxr0\bxr0\bxr1\bxd2
\bxl3\bxl2\bxl1\bxl0\bxl0\bxl0\bxl0\bxl0\bxl0\bxl0\bxl0\bxd0
\bxr0\bxr0\bxr0\bxr0\bxr0\bxr0\bxr0\bxr0\bxr1\bxr2\bxr3\bxd4
\addtolength\xpos{-5cm}\bxl0\bxl0\bxl0\bxl0\bxl0\bxl0\bxd0
\bxr0\bxr0\bxr1\bxd2\addtolength\xpos{-1cm}\bxl2\bxl1\bxl0
}
\qquad\qquad 
N=%\young(0000000000000,000000000000,000000000002,00000000002\mone,0000000002\mone2,000000002\mone2\mone,0000000,003\mtwo,03\mtwo)
\yo{\bxl0\bxl0\bxl0\bxl0\bxl0\bxl0\bxl0\bxl0\bxl0\bxl0\bxl0\bxl0\bxd0
\bxr0\bxr0\bxr0\bxr0\bxr0\bxr0\bxr0\bxr0\bxr0\bxr0\bxr0\bxd0
\bxl2\bxl0\bxl0\bxl0\bxl0\bxl0\bxl0\bxl0\bxl0\bxl0\bxl0\bxd0
\bxr0\bxr0\bxr0\bxr0\bxr0\bxr0\bxr0\bxr0\bxr0\bxr0\bxr2\bxd\mone
\bxl2\bxl\mone\bxl2\bxl0\bxl0\bxl0\bxl0\bxl0\bxl0\bxl0\bxl0\bxd0
\bxr0\bxr0\bxr0\bxr0\bxr0\bxr0\bxr0\bxr0\bxr2\bxr\mone\bxr2\bxd\mone
\addtolength\xpos{-5cm}\bxl0\bxl0\bxl0\bxl0\bxl0\bxl0\bxd0
\bxr0\bxr0\bxr3\bxd\mtwo
\addtolength\xpos{-1cm}\bxl\mtwo\bxl3\bxl0
}
\]
\end{eg}

The following lemma follows from \cite[Lemma 5.4 \& Lemma 5.5]{FL}.

\begin{lemma} \label{L:LLT}
Let $\la$ be a partition of $n$.  Suppose that $\mu$ and $\tilde\mu$ are alternating in $\la$.  If $N(\la,\mu) \neq N(\la,\tilde\mu)$ then $S^\la$ is reducible.
\end{lemma}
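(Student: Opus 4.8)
The plan is to argue the contrapositive: I assume that $S^\la$ is irreducible and deduce that $N(\la,\mu)=N(\la,\tilde\mu)$ for any two partitions $\mu,\tilde\mu$ that are alternating in $\la$. The starting observation is Lemma~\ref{L:Reg}: if $S^\la$ is irreducible then $S^\la\cong D^{\la^\reg}$, so $D^{\la^\reg}$ is the only composition factor of $S^\la$; in the graded (Fock space) refinement this says that the decomposition polynomial $d_{\la\nu}(v)$ vanishes for every $2$-regular $\nu\neq\la^\reg$, while $d_{\la,\la^\reg}(v)=1$.

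Next I record what the cascade means representation-theoretically. Since $\mu$ is alternating in $\la$, its nonzero parts strictly decrease with every gap odd, so $\mu$ is an alternating partition and $S^\mu\cong D^\mu$ is irreducible by Proposition~\ref{P:Carter}. A short parity check shows that, at each stage of the cascade, the addable nodes of $\mu^{j-1}$ that lie inside $\la$ all carry one common residue $i_j$, and that $\mu^j$ is the \emph{only} partition obtainable from $\mu^{j-1}$ by adding the prescribed number of addable $i_j$-nodes while remaining inside $\la$. Thus the cascade is a sequence of single-residue inductions whose composite $F$ (a product of divided powers of the $f_{i_j}$ acting on the Fock space, in the language of \cite[Section~5]{FL}) carries $S^\mu=D^\mu$ to a module with a Specht filtration in which $S^\la$ occurs exactly once, at the dominance-maximal target, with grading shift $v^{N(\la,\mu)}$; the combinatorial formula for $N(r,c)$ is the box-by-box accumulation of this shift. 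This is the content of \cite[Lemma~5.4]{FL}.

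The second ingredient is Ariki's theorem, packaged as \cite[Lemma~5.5]{FL}: it computes the graded composition multiplicity of $D^{\la^\reg}$ in the module $F(D^\mu)$ directly as a canonical-basis coefficient in the Fock space. Reading the same multiplicity off the Specht filtration, the distinguished copy of $S^\la$ contributes $v^{N(\la,\mu)}\,d_{\la,\la^\reg}(v)=v^{N(\la,\mu)}$, while the other Specht subquotients, all strictly below $\la$ in the dominance order, contribute $D^{\la^\reg}$ only in strictly higher degree once $S^\la$ is simple; hence the lowest-degree term of the Fock-space coefficient is $v^{N(\la,\mu)}$. But that coefficient is determined inside the Fock space by $\la$ and the residue sequence (which is itself forced by $\la$), not by the particular starting partition, so its bottom degree is unchanged if we run the cascade from $\tilde\mu$ instead. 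Therefore $N(\la,\mu)=N(\la,\tilde\mu)$, which is exactly the contrapositive of the lemma.

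Essentially all the work is in \cite[Lemmas~5.4 and~5.5]{FL}: that the cascade genuinely is a sequence of single-residue inductions with a unique dominance-maximal target $\la$, that the resulting leading term sits in degree exactly $N(\la,\mu)$, and that Ariki's theorem may be used to evaluate the canonical-basis coefficient. In the present paper I would simply quote those two results and assemble them as above. The one delicate point in the assembly — and what I would expect to be the main obstacle — is the interface: checking that, once $S^\la$ is simple, the non-leading Specht subquotients of $F(D^\mu)$ contribute nothing to the $D^{\la^\reg}$-multiplicity in degree $\leq N(\la,\mu)$, so that its bottom degree is pinned to $N(\la,\mu)$ and is thereby forced to be independent of~$\mu$.
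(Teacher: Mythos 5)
The paper itself offers no argument beyond the citation — it asserts that the lemma ``follows from [Lemma 5.4 \& Lemma 5.5]{FL}'' and leaves the assembly to the reader. You correctly identify the two ingredients, but the assembly you sketch does not actually close, and the missing mechanism is precisely the one you flag at the end as ``the one delicate point.'' Two specific problems. First, you set $d_{\la,\la^\reg}(v)=1$; this is false for $2$-singular $\la$: multiplicity one together with positivity only forces $d_{\la,\la^\reg}(v)$ to be a single monomial $v^{k_\la}$ with (in the standard normalisation) $k_\la\geq1$. Already for $\la=(1,1)$ one has $G_{(2)}=s_{(2)}+v\,s_{(1,1)}$, so $d_{(1,1),(2)}(v)=v\neq1$. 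Second, and more seriously, the step that is supposed to force $N(\la,\mu)=N(\la,\tilde\mu)$ rests on the assertion that the Fock-space coefficient ``is determined inside the Fock space by $\la$ and the residue sequence (which is itself forced by $\la$), not by the particular starting partition.'' That is not true: the cascade, its residue sequence, the number of nodes added at each stage, and hence the operator $F$ and the module $F(D^\mu)$ all depend on $\mu$. Your final claim that the bottom degree ``is thereby forced to be independent of $\mu$'' is exactly the conclusion, asserted rather than proved.

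The mechanism you are missing is bar-invariance. Since $\mu$ is alternating, $s_\mu=G_\mu$ is a canonical basis vector; applying the (bar-invariant) divided-power cascade $F$ to it gives $F(s_\mu)=\sum_\sigma b_\sigma(v)G_\sigma$ with each $b_\sigma$ a \emph{bar-invariant} Laurent polynomial with non-negative coefficients. Reading off the $s_\la$-coefficient and using that this coefficient equals $v^{N(\la,\mu)}$ together with the vanishing $d_{\la\sigma}(v)=0$ for $\sigma\neq\la^\reg$ (this is where irreducibility of $S^\la$ enters), one gets the identity $v^{N(\la,\mu)}=b_{\la^\reg}(v)\cdot d_{\la,\la^\reg}(v)$. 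A product of two monomials with non-negative coefficients equals a monomial, so $b_{\la^\reg}(v)$ is a single monomial; bar-invariance then forces $b_{\la^\reg}(v)=1$, giving $N(\la,\mu)=k_\la$ where $d_{\la,\la^\reg}(v)=v^{k_\la}$. The right-hand side depends only on $\la$, which is what yields $N(\la,\mu)=N(\la,\tilde\mu)$. Without invoking bar-invariance (or some equivalent rigidity) your sketch does not rule out $b_{\la^\reg}(v)$ being a $\mu$-dependent monomial, so the $\mu$-independence of $N(\la,\mu)$ remains unproved.
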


We can now prove Proposition~\ref{P:LLTRed}.

\begin{proof}[Proof of Proposition~\ref{P:LLTRed}]
Suppose that $\la$ is LLT-reducible, let $l=\ell(\la)$ and let $x<l$ be maximal such that $\la_{x} -\la_{x+1}>1$; since $\la$ has no broken ladders, we have $\la_i-\la_{i+1}=1$ for $x+1 \ls i <l$.  Now we consider two cases.
\begin{itemize}
\item
Suppose first that $\la_1+1 \gs \la_l+l$.  Define $\sigma$ as follows.  Set $\sigma_1$ to be maximal such that $\sigma_1 \ls \la_1$ and $\sigma_1+1 \equiv \la_l+l \ppmod 2$.  For $2 \ls i \ls x$, define $\sigma_i$ to be maximal such that $\sigma_i \ls \min\{\la_i, \sigma_{i-1}-1\}$ and $\sigma_i+i \equiv \la_l+l \ppmod 2$.  Since $\la$ has no broken ladders and $\la_1+1\gs\la_l+l$, we have $\la_i+i \gs \la_l+l$ for all $1 \ls i \ls x$, and using this it is easy to show by induction that $\sigma_i\gs \la_l+l-i$ for all $1\ls i\ls x$.  In particular, $\sigma_x \gs \la_l+l-x >\la_l+l-x-1=\la_{x+1}$.  So we can define two partitions $\mu$ and $\tilde\mu$ by setting
\begin{align*}
\mu&=(\sigma_1,\sigma_2,\ldots,\sigma_x,\la_{x+1},\la_{x+2},\ldots,\la_{l}), \\
\tilde\mu&=(\sigma_1,\sigma_2,\ldots,\sigma_x,\la_{x+1}-2,\la_{x+2}-2,\ldots,\la_{l}-2).
\end{align*} 
%\Yboxdim{11pt}
By construction, $\mu$ and $\tilde\mu$ are alternating in $\la$, and we claim that $N(\la,\mu)\neq N(\la,\tilde\mu)$.  The entries in $T(\la,\mu)$ and $T(\la,\tilde\mu)$ agree except in the last two entries in rows $x+1,\dots,l$, which are $\yo{\bxr0\bxy0}$ in $T(\la,\mu)$, and $\yo{\bxr1\bxy2}$ in $T(\la,\tilde\mu)$.  So the definition of $N(\la,\mu)$ gives
\[N(\la,\tilde\mu) - N(\la,\mu) = (l-x) \left|\rset{1 \ls m \ls x}{T_{m,\la_m} =1}\right|.\]
Choose $1 \ls g \ls x$ minimal such that $\sigma_g \neq \la_g$.  Then by construction $\la_g - \sigma_g=1$ and $T_{g,\la_g}=1$.  Hence $N(\la,\tilde\mu) - N(\la,\mu)>0$ and $S^\la$ is reducible by Lemma \ref{L:LLT}.
\item
Now suppose that $\la_1+1 < \la_l+l$.  Define $\mu$ and $\tilde\mu$ by
\begin{align*}
\mu&=(\la_1,\la_1-1,\ldots,\la_1-x+1,\la_{1}-x,\ldots,\la_{1}-l+1), \\
\tilde\mu&= (\la_1,\la_1-1,\ldots,\la_1-x+1,\la_{1}-x-2,\ldots,\la_{1}-l-1).
\end{align*} 
Again we claim that $\mu$ and $\tilde\mu$ satisfy the conditions of Lemma~\ref{L:LLT}.  Since $\la_1+1<\la_l+l$, the nodes $(1,\la_1)$, $(l,\la_1-l+1)$ and $(l,\la_1-l)$ all lie in $\la$, so since $\la$ has no broken ladders, $\mu$ and $\tilde\mu$ are both alternating in $\la$.  Again, $T(\la,\mu)$ and $T(\la,\tilde\mu)$ agree except in rows $x+1,\dots,l$.  If we let $k=\la_l-\la_1+l-1$, then rows $x+1,\dots,l$ of $T(\la,\mu)$ have the form
%\raisebox{1pt}{\small$\gyoung(;0_2\cdots;0;1;2_2\cdots;k)$},
$\yo{\bxr0\hdbxyr{1.5}\bxr0\bxr1\bxr2\hdbxyr{1.5}\bxy k}$,
while in $T(\la,\tilde\mu)$ these rows have the form
%\raisebox{1pt}{\small$\gyoung(;0_2\cdots;0;1;2_2\cdots_2\kpt)$}.
$\yo{\bxr0\hdbxyr{1.5}\bxr0\bxr1\bxr2\hdbxyr{1.5}\draw[fill=white](\xpos,\ypos)--++(0,1)--++(1.5,0)--++(0,-1)--cycle;\draw(\xpos+0.75cm,\ypos+0.5cm)node[fill=white,inner sep=0.5pt]{$\vphantom1\smash{\kpt}$};}$.
Hence
\[N(\la,\tilde\mu) - N(\la,\mu) = (l-x) \left|\rset{1 \ls m \ls x}{T_{m,\la_m}=k+1}\right|.\]
It remains to show that $T_{m,\la_m}=k+1$ for some $1 \ls m \ls x$, which is equivalent to saying that the ladder $\mathcal{L}=\mathcal{L}_{l+\la_l}$  intersects non-trivially with the set of nodes $\{(m,\la_m) \mid 1 \ls m \ls x\}$.      Certainly $\mathcal{L}$ intersects with $\{(x,c) \mid 1 \ls c \ls \la_x\}$ since $\la_x-\la_{x+1} >1$, so choose $r \gs 1$ minimal such that $\mathcal{L}$ intersects with $\{(r,c) \mid 1 \ls c \ls \la_r\}$.  Then $r>1$ since $\la_1+1 < \la_l+l$, so the fact that $r$ is minimal means that $(r,\la_r)$ lies on $\mathcal{L}$ as required.\qedhere
\end{itemize}
\end{proof}

\subsubs{Induction and restriction}

\begin{defn}
For $i \in \{0,1\}$ let $\la^{(i)}$ be the partition obtained by removing all removable nodes of residue $i$ from $\la$.  
\end{defn}

The proof of the following proposition comes from (the $q$-analogue of)~\cite[Lemma~2.13]{BK}. 

\begin{propn}\thmcite{FL}{Lemma~3.13}\label{P:IndRed}
Suppose $i\in\{0,1\}$. If $S^{\la^{(i)}}$ is reducible then so is $S^\la$.
\end{propn}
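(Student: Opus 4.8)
The plan is to prove the contrapositive: if $S^\la$ is irreducible then so is $S^{\la^{(i)}}$. The machinery is the $i$-restriction functor $e_i\colon\h_n\text{-mod}\to\h_{n-1}\text{-mod}$ (the direct summand of ordinary restriction cutting out the generalised eigenspace for the eigenvalue $q^i$ attached to residue $i$) together with its divided powers $e_i^{(k)}$, which are exact functors satisfying $e_i^k\cong(e_i^{(k)})^{\oplus k!}$. I would use two standard branching inputs, in their $q$-analogue form for $\h_n$: \textbf{(a)} $e_iS^\mu$ has a filtration by Specht modules $S^{\mu\setminus\mathfrak u}$, one for each removable node $\mathfrak u$ of $\mu$ of residue $i$; and \textbf{(b)} the modular branching rule, that if $D^\nu$ is irreducible and $\varepsilon=\max\{m:e_i^mD^\nu\neq0\}$ then $e_i^{(\varepsilon)}D^\nu$ is again irreducible.

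Let $k$ be the number of removable nodes of $\la$ of residue $i$; if $k=0$ then $\la^{(i)}=\la$ and there is nothing to prove, so assume $k\gs1$. A short combinatorial check shows that removing a removable $i$-node from a partition neither destroys another removable $i$-node nor creates a new one — any cell that can newly become removable after such a move has residue $i\pm1$. Hence the filtration in \textbf{(a)} iterates cleanly: $e_i^kS^\la$ has a filtration whose $k!$ subquotients are each isomorphic to $S^{\la^{(i)}}$, while $e_i^{k+1}S^\la$ has a filtration all of whose subquotients vanish (since $\la^{(i)}$ has no removable $i$-node) and so is zero. Consequently $\varepsilon_i(S^\la):=\max\{m:e_i^mS^\la\neq0\}=k$, and, passing to the Grothendieck group, $[e_i^{(k)}S^\la]=[S^{\la^{(i)}}]$.

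Now suppose $S^\la$ is irreducible. By Lemma~\ref{L:Reg}, $S^\la\cong D^{\la^\reg}$, so $\varepsilon_i(D^{\la^\reg})=\varepsilon_i(S^\la)=k$, and \textbf{(b)} applies with $\nu=\la^\reg$ and $\varepsilon=k$ to show that $e_i^{(k)}S^\la=e_i^{(k)}D^{\la^\reg}$ is irreducible. Since $[S^{\la^{(i)}}]=[e_i^{(k)}S^\la]$ is therefore the class of a single irreducible module, $S^{\la^{(i)}}$ itself is irreducible (one may also cross-check this against Lemma~\ref{L:Reg}, which forces $D^{(\la^{(i)})^\reg}$ to occur in $S^{\la^{(i)}}$ with multiplicity exactly $1$). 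This is the required contrapositive.

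The routine ingredients are the combinatorial observation about removable $i$-nodes and the divided-power bookkeeping. The step where one genuinely relies on deep input — and the one I expect to be the obstacle to making self-contained — is \textbf{(b)}: one needs the full $q$-analogue of Kleshchev's modular branching theorem (equivalently, the $\mathfrak{sl}_2$-categorification structure on $\bigoplus_n\h_n\text{-mod}$), since it is precisely this that upgrades the soft Grothendieck-group identity $[e_i^{(k)}S^\la]=[S^{\la^{(i)}}]$ into the statement that $e_i^{(k)}D^{\la^\reg}$, taken at the maximal power $k=\varepsilon_i(D^{\la^\reg})$, stays irreducible rather than merely having controlled composition length.
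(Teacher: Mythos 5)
Your proposal is correct and is essentially the argument the paper is alluding to: the paper cites \cite[Lemma~3.13]{FL}, whose proof is noted to be the $q$-analogue of \cite[Lemma~2.13]{BK}, and that argument runs exactly as you describe — the Specht filtration of $e_iS^\mu$, the combinatorial fact that removing an $i$-node can only create removable nodes of residue $i\pm1$, the resulting identity $[e_i^{(k)}S^\la]=[S^{\la^{(i)}}]$ with $\varepsilon_i(S^\la)=k$, and then the modular branching theorem to conclude that $e_i^{(k)}D^{\la^\reg}$ is irreducible. You are also right to flag the $q$-analogue of Kleshchev's branching rule as the genuinely deep input; everything else is bookkeeping.
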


Obviously this result will enable us to prove Theorem \ref{T:Main} by induction.  In order to do this, we make the following definition.

\begin{defn}
Say that $\la$ is \emph{inductively reducible} if for some $i\in \{0,1\}$ we have $\la^{(i)} \neq \la$ and one of the following holds.
\begin{itemize}
\item $\la^{(i)}$ is $2$-regular or $2$-restricted and $S^{\la^{(i)}}$ is reducible.
\item $\la^{(i)}$ is doubly-singular and neither $\la^{(i)}$ nor $\la^{(i)'}$ is an FM-partition.  
\end{itemize}     
\end{defn}

\subs{Analysis of partitions}\label{S:Classify}

The aim of this section is to complete the proof of Theorem \ref{T:Main}, modulo the proof of Proposition \ref{P:MH}.  The strategy is simple: we show that a Specht module which is not shown to be reducible by any of the techniques in \S\ref{S:Techniques} is labelled by an FM-partition or the conjugate of one.  That is, we prove the following result.

\begin{propn} \label{P:Classify} Suppose $\la$ is a partition of $n$ which satisfies the following conditions:
\begin{itemize}
\item $\la$ is doubly-singular;
\item $\la$ does not have a broken ladder;
\item neither $\la$ nor $\la'$ is CP-reducible;
\item $\la$ is not MH-reducible;
\item neither $\la$ nor $\la'$ is LLT-reducible;
\item $\la$ is not inductively reducible.
\end{itemize}
Then either $\la$ or $\la'$ is an FM-partition.  
\end{propn}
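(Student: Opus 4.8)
\emph{Strategy.} The plan is to prove the contrapositive, in the equivalent form: every partition $\la$ that is doubly-singular, has no broken ladder, with neither $\la$ nor $\la'$ CP-reducible or LLT-reducible and $\la$ not MH-reducible, is an FM-partition, a conjugate of one, or inductively reducible. This is a purely combinatorial assertion about the shape of $\la$, which I would establish by analysing that shape in stages, from coarse structure down to the fine residue data; at each stage the surviving configurations are pruned either because they fall under one of the reducibility hypotheses or because they force $\la^{(0)}$ or $\la^{(1)}$ to be a ``bad'' partition, so that $\la$ is inductively reducible.

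\emph{The coarse shape.} Being doubly-singular, $\la$ has at least one plateau (an index $i$ with $\la_i=\la_{i+1}\gs 1$) and at least one jump (an index $i$ with $\la_i-\la_{i+1}\gs 2$, where $\la_{\ell(\la)+1}=0$); in the notation of the definition of an FM-partition, the last jump is at row $a$, the last plateau at row $b$, and $\ell(\la)=a+c$. The reformulation of broken ladders given after Proposition~\ref{P:BrokeRed} says precisely that $\la$ has a broken ladder exactly when some jump lies strictly above some plateau, so the ``no broken ladder'' hypothesis forces every jump to lie strictly below every plateau. Hence $b<a$, rows $1,\dots,b$ contain no jumps (the differences $\la_i-\la_{i+1}$ there are $0$ or $1$), and rows $b+1,\dots,a+c$ strictly decrease. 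Thus $\la$ consists of a weakly decreasing ``top block'' (rows $\ls b$, possibly with repeats) followed by a strictly decreasing ``bottom block'' (rows $b+1,\dots,a+c$), and the definition of an FM-partition amounts to saying that the bottom block contains only the jump at row $a$, that $\la_b\gs a-1\gs b$, that the first $c$ rows strictly decrease, and that the residues of the addable nodes are suitably constrained.

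\emph{Extracting the refinements.} The key observation for the residue conditions is that for a node $(r,c)$ the parity of $r+c$ determines both the parity of the index of the ladder $\call_{r+c-1}$ containing it and its residue $(c-r)\ppmod 2$; hence two nodes lie in ladders of the same parity if and only if they have the same residue, so ``$\la$ not CP-reducible'' says exactly that whenever an addable node and a removable node of $\la$ have the same residue, the addable one lies in a ladder no longer than the removable one. Now the removable nodes of the bottom block occur one per row and have weakly decreasing ladder indices, while the addable nodes sit at the tops of jumps, at the foot of the diagram, and at $(1,\la_1+1)$; comparing these, and bringing in the not-LLT-reducible hypothesis for $\la$ and $\la'$ exactly when the conditions $\la_1\gs\ell(\la)+1$ and $\la_{\ell(\la)}\gs 2$ become relevant, one shows that the bottom block contains no jump besides the one at row $a$, that $\la_b\gs a-1\gs b$, that the first $c$ rows strictly decrease, and that all addable nodes have a common residue with, when $c=0$, the two exceptions in the first row and first column --- which is precisely the pair of residue bullets in the definition of an FM-partition. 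The LLT hypotheses leave open only a narrow range of shapes, namely (after conjugating $\la$ if need be) those in which, below possibly some further rows, a plateau of length $s\gs 2$ is followed by a staircase, a single jump, and a staircase terminating at $2$ rather than at $1$; these are exactly the partitions listed in the definition of MH-reducible, so ``$\la$ not MH-reducible'' eliminates them. In each of the few configurations that still survive, removing all removable nodes of an appropriate residue $i$ yields either a $2$-regular or $2$-restricted partition whose Specht module is reducible or a doubly-singular partition neither of whose conjugates is an FM-partition, and in either case $\la$ is inductively reducible.

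\emph{The main obstacle.} The difficulty lies not in any single idea but in the case analysis: the shape of $\la$ around the junction rows $b,\dots,a$ breaks into many sub-cases, the two residues have to be tracked through all of them, and --- most delicately --- one must verify that the family of MH-reducible partitions is \emph{exactly} the gap left between the CP-reducible and LLT-reducible families, since this is what makes the chosen battery of reducibility tests sufficient to force the FM shape. A secondary complication is that the MH-reducibility hypothesis, unlike the others in the list, is imposed on $\la$ alone and not on $\la'$, so one must take care over which of $\la$ and $\la'$ to work with in each branch.
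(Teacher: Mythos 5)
Your overall strategy — use the no-broken-ladder hypothesis to split $\la$ into a weakly decreasing block above a strictly decreasing block, then use CP/LLT/MH/induction to force the FM shape — is broadly the same architecture as the paper's argument, and your ``coarse shape'' paragraph is correct. However, there is a concrete flaw in the middle of the proposal that would derail it if you tried to carry it out. You assert that, after conjugating if necessary, the shapes surviving the no-broken-ladder, not-CP-reducible and not-LLT-reducible hypotheses ``are exactly the partitions listed in the definition of MH-reducible.'' That is false. MH-reducibility carries parity constraints ($s$ and $s'$ both odd, or else $s=2$, $s'$ even and $f=0$) that are not implied by the plateau--staircase--jump--staircase shape alone. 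In the paper (Lemma~\ref{l2}) one does indeed arrive at a partition of that combinatorial form, but the parities are then extracted by two further arguments: if all removable nodes share a residue, a ladder count shows $s,s'$ are both odd; if not, one deduces $f=0$ and $s,s'$ even, and then applies the \emph{inductive reducibility} hypothesis to $\sigma$ (the partition obtained by removing only the point) to get $\min(s,s')=2$, whence $s=2$ since the shape already gives $s'\gs s$. So ``not MH-reducible'' cannot do the work you want it to until these parities have been pinned down by the residue analysis and a second invocation of inductive reducibility; your claimed equivalence of families skips exactly this.

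This also reveals a structural mismatch between your proposal and the paper's proof. You present inductive reducibility as a mop-up applied to ``the few configurations that still survive'' after the shape analysis. In the paper, inductive reducibility is the engine of the whole argument: one defines $\mu$ to be $\la$ with a residue class of removable nodes removed, notes that the hypothesis forces $\mu$ (or $\mu'$) to be alternating or an FM-partition, and then bootstraps the FM property of $\mu$ up to $\la$ through a chain of lemmas (\ref{RestIneq}, \ref{UpFrommu}, \ref{LastPartLarge}, \ref{InEq}, \ref{l1}, \ref{l2}) that compare the FM parameters $a,b,c$ of $\la$ against those of $\mu$. In particular the inequality $\la_b\gs a-1\gs b$ and the condition $\la_1>\cdots>\la_c$ are read off from $\mu$ being an FM-partition, not deduced directly from CP- and LLT-non-reducibility as your sketch suggests. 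Without this detour through $\mu$ (and, in the one delicate case, through $\sigma$), the case analysis you envisage does not close up.
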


Throughout this section we fix a partition $\la$ with $\ell(\la)=l$ satisfying the hypotheses of Proposition \ref{P:Classify}.  We begin by introducing some additional notation.

\begin{defn} Suppose $\nu$ is a partition.
If $\nu$ is not $2$-restricted, we define:
\begin{itemize}
\item $a^{\ast}(\nu)$ to be minimal such that $\nu_{a^\ast(\nu)}-\nu_{a^\ast(\nu)+1} \gs 2$;
\item $a_{\ast}(\nu)$ to be maximal such that $\nu_{a_\ast(\nu)}-\nu_{a_\ast(\nu)+1} \gs 2$;
\item $c(\nu)$ to be maximal such that $\nu_{a_\ast(\nu)+c(\nu)}>0$.
\end{itemize}
If $a^\ast(\nu)=a_\ast(\nu)$ we will write $a(\nu)=a^\ast(\nu)=a_\ast(\nu)$.

If $\nu$ is not $2$-regular, we define $b(\nu)$ to be maximal such that $\nu_{b(\nu)}=\nu_{b(\nu)+1}>0$.
\end{defn}

Now consider our chosen partition $\la$. Since $\la$ has no broken ladders, we have $\ad \gs \au > \bd$ and $\auc> \bdc$.

\begin{defn}
Say that $\la$ is \emph{pointed} if $b(\la)+1 = a^{\ast}(\la)$.  Note that $\la$ is pointed if and only if $\la'$ is pointed.  If $\la$ is pointed, we call the removable node in row $a^{\ast}(\la)$ the \emph{point}.  
\end{defn}

\begin{lemma} \label{PossibleResidues}
If $\la$ is not pointed then all removable nodes of $\la$ have the same residue.  If $\la$ is pointed then all removable nodes of $\la$ except possibly the point have the same residue.  
\end{lemma}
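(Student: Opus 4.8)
The plan is to work entirely with ladders. Recall that the node $(r,c)$ lies in $\call_{r+c-1}$, that all nodes of a given ladder have a common residue, and that $\call_j$ and $\call_k$ carry the same residue exactly when $j\equiv k\ppmod2$. Put $f(r)=\la_r+r$, so that the removable node in row $r$ (when there is one) lies in $\call_{f(r)-1}$ and the addable node in row $r$ (when there is one) lies in $\call_{f(r)}$. The one structural fact I need is that $f$ is unimodal with its maximum attained at $\au$ (which is itself a removable row): $f$ is weakly increasing on rows $1,\dots,\au$ because $\au$ is the \emph{first} row in which $\la$ drops by at least $2$, and it is weakly decreasing on rows $\au,\dots,l$ because $\bd<\au$ (from the no-broken-ladder hypothesis) forces $\la$ to strictly decrease from row $\au$ onwards. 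Everything else flows from the hypothesis that $\la$ is not CP-reducible, which says precisely that no addable node of $\la$ lies in a strictly longer ladder than a removable node of $\la$ of the same residue; equivalently, once an addable node is exhibited lying in a strictly longer ladder than some removable node, those two nodes must have opposite residues. So for each removable node in question I will produce an addable node lying in a strictly longer ladder and then read off its residue.

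Suppose first that $\la$ is not pointed, so $\bd<\au-1$ and hence $\la_{\au-1}>\la_{\au}$, making $(\au,\la_{\au}+1)$ an addable node. It lies in $\call_{f(\au)}$; since every removable node lies in some $\call_{f(r)-1}$ with $f(r)-1<f(\au)$, this single addable node lies in a strictly longer ladder than every removable node. As $\la$ is not CP-reducible, every removable node must then carry the residue opposite to that of $(\au,\la_{\au}+1)$, so in particular they all carry the same residue.

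Now suppose $\la$ is pointed, so $(\au,\la_{\au})$ is the point and the removable nodes to be treated lie either above or below row $\au$. If $\au$ is not the first removable row, let $s$ be the largest removable row below $\au$; then $\la_s-\la_{s+1}=1$, so $(s+1,\la_{s+1}+1)$ is an addable node lying in $\call_{f(s+1)}=\call_{f(s)}$, and by unimodality of $f$ this is a strictly longer ladder than that of any removable node in a row $\ls s$, i.e.\ of any removable node above the point. Likewise $(\au+1,\la_{\au+1}+1)$ is an addable node --- whether or not $\au=l$ --- lying in $\call_{f(\au+1)}$, which by unimodality is a strictly longer ladder than that of any removable node below the point. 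Since $f(s)$ and $f(\au+1)$ are comparable, one of these two addable nodes lies in a ladder at least as long as the other, and hence in a strictly longer ladder than \emph{every} removable node other than the point; as $\la$ is not CP-reducible, all those removable nodes then carry the residue opposite to it. (If $\au$ is the first removable row, or if $\au=l$, only one of the two addable nodes is needed; if both hold there is nothing to prove.) The pointed case is where care is needed: one has to check that the two candidate addable nodes really do between them strictly dominate every removable node except the point, which is just the monotonicity of $f$ on either side of row $\au$; and one should notice that here $f(s)<f(\au)$ and $f(\au+1)<f(\au)$, so that neither candidate lies in a strictly longer ladder than the point itself --- which is exactly why the point may genuinely fail to share the common residue, and hence why the ``except possibly the point'' clause is present.
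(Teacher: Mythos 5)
Your proof is correct. Let me just verify the key steps: the unimodality of $f(r)=\la_r+r$ with maximum at $\au$ is exactly the no-broken-ladder hypothesis in disguise ($\la_r-\la_{r+1}\ls1$ for $r<\au$, $\la_r>\la_{r+1}$ for $\au\ls r\ls l-1$); the addable nodes you exhibit do exist (in the non-pointed case $\bd<\au-1$ forces $\la_{\au-1}>\la_{\au}$, and $\au\gs2$ always since $\au>\bd\gs1$); and the ``strictly longer ladder'' assertion reduces to $f(\text{addable})\gs f(\text{removable row})$, which unimodality delivers. The edge cases you flag are the right ones and are handled.

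Your route differs from the paper's in an interesting way. The paper observes that every removable node $(r,c)$ other than the point has an addable node \emph{adjacent} to it (at $(r+1,c)$ if $r<\au$, at $(r,c+1)$ if $r>\bd+1$), hence in a ladder exactly one longer. Given two non-point removable nodes of different residues, say in ladders $k<k'$, the addable node adjacent to the second lies in $\call_{k'+1}$, which is longer than $\call_k$ and of the same parity --- CP-reducibility at once, no case analysis needed. Your proof instead hunts for a single addable node that strictly dominates \emph{every} non-point removable node in the ladder order, which requires the unimodality lemma and the split into above-$\au$ and below-$\au$ cases, plus the observation that one of the two candidate addable nodes must dominate. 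Both arguments are sound and both rest squarely on CP-reducibility; the paper's local/pairwise argument is shorter and avoids the auxiliary structural fact about $f$, while yours has the (minor) merit of explicitly pinpointing a single addable node whose residue forces everything, and of making visible why the point is exceptional (neither candidate addable ladder reaches $f(\au)$).
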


\begin{proof}
First note that any removable node $(r,c)$ of $\la$ which is not the point has an addable node adjacent to it: if $r<a^\ast(\la)$, then the node $(r+1,c)$ must be addable, while if $r>b(\la)+1$ then the node $(r,c+1)$ is addable.

Now suppose there are two removable nodes $(r,c)$ and $(r',c')$ of different residues, neither of which is the point, lying in ladders $k,k'$ say.  Since the residues of the nodes are not the same we have $k\neq k'$ and we suppose without loss of generality that $k<k'$.  There is an addable node adjacent to $(r',c')$, and this must lie in ladder $k'+1$.  Since $k'+1>k$ and $k'+1\equiv k\ppmod2$, $\la$ is CP-reducible; contradiction.
\end{proof}

\begin{cory}\label{C:Alladd}
All addable nodes of $\la$ except possibly those in the first row and first column have the same residue.  
\end{cory}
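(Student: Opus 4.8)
The plan is to deduce Corollary~\ref{C:Alladd} from Lemma~\ref{PossibleResidues} by transferring the statement about removable nodes to one about addable nodes via conjugation, and then dealing with the few nodes that behave differently under this transfer. The key observation is that there is a natural bijection between the addable nodes of $\la$ and the removable nodes of $\la'$: a node $(r,c)$ is addable for $\la$ if and only if $(c,r)$ is removable for $\la'$, and conjugation sends a node of residue $i$ to a node of residue $-i\equiv i\ppmod 2$ in the $q={-}1$ setting, so residues are preserved under this bijection.

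First I would apply Lemma~\ref{PossibleResidues} to $\la'$ (which is also doubly-singular, has no broken ladder, and is not CP-reducible, by the hypotheses of Proposition~\ref{P:Classify} together with Lemma~\ref{L:Conj} and the symmetry of the CP-reducible condition under conjugation noted in the text). If $\la'$ is not pointed, then all removable nodes of $\la'$ have the same residue, hence all addable nodes of $\la$ have the same residue, which is stronger than what is claimed. If $\la'$ is pointed, then $\la$ is pointed too (as noted in the definition), and all removable nodes of $\la'$ except the point have the same residue; translating back, all addable nodes of $\la$ except the one corresponding to the point of $\la'$ have the same residue. It then remains to check that the addable node of $\la$ corresponding to the point of $\la'$ lies in the first row or the first column of $\la$.

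The point of $\la'$ is the removable node in row $a^\ast(\la')=b(\la')+1$ of $\la'$. I would unwind what this node is: since $\la$ has no broken ladder, $\au>\bd$, and conjugating, $b(\la')=\au-1$ and $a^\ast(\la')=\au$; more directly, the relevant addable node of $\la$ is the one in column $1$, namely $(\ell(\la)+1,1)$ — this is precisely the node that conjugation carries to the removable node of $\la'$ in its last column and first "ragged" row, i.e.\ the point. So the exceptional addable node is automatically in the first column, and the claim follows. (One should double-check the boundary case where $\la'$ being pointed forces us to also worry about whether the first-row addable node of $\la$ could be a second exception; but the first-row addable node of $\la$ corresponds to the removable node of $\la'$ in its first row, and whether or not that node is the point of $\la'$, it is allowed to be exceptional in the conclusion we are proving.)

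The main obstacle I anticipate is purely bookkeeping: correctly identifying which addable node of $\la$ corresponds to the point of $\la'$ and verifying it lies in the first column (or first row), being careful about the edge cases $c(\la)=0$ versus $c(\la)>0$ and about whether $\la'$ is pointed at all. There is no serious mathematical content beyond Lemma~\ref{PossibleResidues}; the work is in the translation dictionary between $\la$ and $\la'$ and in confirming that the "possibly first row and first column" slack in the statement exactly absorbs the single exceptional node coming from the point.
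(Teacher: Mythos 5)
Your proof has a fundamental error at its foundation. You claim that ``a node $(r,c)$ is addable for $\la$ if and only if $(c,r)$ is removable for $\la'$.'' This is false: conjugation carries addable nodes to addable nodes and removable nodes to removable nodes. Indeed, $(r,c)\notin\la$ and $\la\cup\{(r,c)\}$ a partition is equivalent, by taking conjugates throughout, to $(c,r)\notin\la'$ and $\la'\cup\{(c,r)\}$ a partition. There is no bijection of the kind you want in general — a partition always has exactly one more addable node than removable node, so no bijection exists, via conjugation or otherwise. Because of this, applying Lemma~\ref{PossibleResidues} to $\la'$ and translating back through (the correct) conjugation only recovers the lemma for $\la$ itself: the removable nodes of $\la'$ correspond to the removable nodes of $\la$, and you learn nothing new about addable nodes. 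The entire argument collapses and cannot be repaired along the lines you describe.

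The paper's actual proof is much more direct and does not pass through conjugation at all: it observes that every addable node $(r,c)$ of $\la$ with $r>1$ and $c>1$ has an \emph{orthogonally adjacent} removable node which is moreover not the point. (If $\la_{r-1}=c$, the removable node is $(r-1,c)$; if $\la_{r-1}>c$, the no-broken-ladder hypothesis forces $\la_{r+1}<c-1$, so $(r,c-1)$ is removable; in either case a short check shows the adjacent removable node is not the point.) Since all such removable nodes share a residue by Lemma~\ref{PossibleResidues}, and an addable node adjacent to a node of residue $i$ has residue $1-i$, all these addable nodes share a residue. You should study that one-line adjacency argument — it is the clean way to move from removable-node information to addable-node information here.
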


\begin{proof}
Every addable node except possibly those in the first row or column has a removable node (which is not the point) adjacent to it.
\end{proof}

\begin{defn}
Define $\mu$ to be the partition obtained from $\la$ by removing all removable nodes if all the removable nodes have the same residue, and all removable nodes except the point otherwise. 
\end{defn}

Note that $\mu\neq\la$, so since $\la$ is not inductively reducible, either $\mu$ or $\mu'$ must be either alternating or an FM-partition.

The following properties of $\mu$ follow easily from the definitions.

\begin{lemma}  \label{RestIneq} \quad\vspace{-\topsep}
\begin{itemize}
\item Suppose that $\mu$ is not $2$-restricted.  Then $a^\ast(\la)=a^{\ast}(\mu)$.  
\item Suppose that $\mu$ is not $2$-regular.  Then $\la_{b(\la)}=\mu_{b(\mu)}$.  
\item Suppose that $\la_l \neq 2$ or that $\mu_l=\la_l$.  Then $\mu$ is not $2$-restricted and $a_\ast(\la)=a_\ast(\mu)$.  
\item Suppose that $\la_1>\la_2$ or that $\la_2=\la_3$ or that $\mu_2=\la_2$.  Then $\mu$ is not $2$-regular.
\end{itemize}
\end{lemma}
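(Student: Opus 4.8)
The plan is to set $d_r=\la_r-\la_{r+1}$ and write $\mu_r=\la_r-\epsilon_r$, where $\epsilon_r\in\{0,1\}$ records whether a node is deleted from row $r$ in passing from $\la$ to $\mu$. By the definition of $\mu$ together with Lemma~\ref{PossibleResidues} (which makes the point well-defined whenever it is needed), $\epsilon_r=1$ exactly when row $r$ is removable, with one possible exception: when $\mu$ is formed by deleting every removable node except the point, $\la$ is pointed, so $a^\ast(\la)=b(\la)+1$, and $\epsilon_{a^\ast(\la)}=0$ while $\epsilon_r$ is $1$ iff $\la_r>\la_{r+1}$ for every $r\ne a^\ast(\la)$. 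In particular $\mu_l=\la_l$ iff $\epsilon_l=0$, which forces $l=a^\ast(\la)$. Throughout I would use the shape forced on $\la$ by the absence of broken ladders: $b(\la)<a^\ast(\la)\ls a_\ast(\la)\ls l$, with $d_r=0$ only if $r\ls b(\la)$ (so $d_r=1$ for $b(\la)<r<a^\ast(\la)$ and for $a_\ast(\la)<r<l$), $d_r\ls1$ for $r<a^\ast(\la)$ and for $r>a_\ast(\la)$, and the point, when it exists, in row $a^\ast(\la)$.

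Everything rests on the identity $\mu_r-\mu_{r+1}=d_r-\epsilon_r+\epsilon_{r+1}$. Two consequences: for $r<a^\ast(\la)$ we have $\epsilon_r=d_r$ (row $r$ is removable iff $d_r=1$, and it is not the point), so $\mu_r-\mu_{r+1}=\epsilon_{r+1}\in\{0,1\}$; likewise $\epsilon_r=d_r\ls1$ for $a_\ast(\la)<r\ls l$, so $\mu_r-\mu_{r+1}\ls1$ there too. Hence $a^\ast(\mu)\gs a^\ast(\la)$ and $a_\ast(\mu)\ls a_\ast(\la)$ whenever these are defined, and a repeat $\mu_r=\mu_{r+1}$ can occur only at a row with $d_r\ls1$, where it happens exactly when no node is deleted from row $r+1$; the largest such $r$ is $b(\la)-1$ when $\mu$ is formed by deleting every removable node, and $b(\la)$ when the point is kept (then row $b(\la)+1=a^\ast(\la)$ loses no node and $d_{b(\la)}=0$). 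In the first case $\mu_{b(\mu)}=\mu_{b(\la)-1}=\la_{b(\la)-1}-d_{b(\la)-1}=\la_{b(\la)}$; in the second $\mu_{b(\mu)}=\mu_{b(\la)}=\la_{b(\la)}$, since row $b(\la)$ is not removable. This is the second bullet. For the fourth bullet, $\mu$ is $2$-singular as soon as such an $r\gs1$ exists, which is automatic when the point is kept and otherwise holds iff $b(\la)\gs2$; and each of the three hypotheses forces $b(\la)\gs2$ in that remaining case: $\la_1>\la_2$ excludes $b(\la)=1$, while each of $\la_2=\la_3$ and $\mu_2=\la_2$ forces $\la_2=\la_3$, hence $\la_3>0$ (as $\la$ is doubly-singular) and so $b(\la)\gs2$.

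For the first and third bullets I would examine the differences at rows $a^\ast(\la)$ and $a_\ast(\la)$. We have $\mu_{a^\ast(\la)}-\mu_{a^\ast(\la)+1}=d_{a^\ast(\la)}-\epsilon_{a^\ast(\la)}+\epsilon_{a^\ast(\la)+1}\gs2$ unless $d_{a^\ast(\la)}=2$, $\epsilon_{a^\ast(\la)}=1$ and row $a^\ast(\la)+1$ loses no node; but $a^\ast(\la)+1>b(\la)$ makes that row removable if it exists, so this exceptional case forces $a^\ast(\la)=l$, hence $\la_l=d_l=2$, $\mu_l=1$, and $\mu$ is then $2$-restricted. Thus if $\mu$ is not $2$-restricted then $\mu_{a^\ast(\la)}-\mu_{a^\ast(\la)+1}\gs2$, which with $a^\ast(\mu)\gs a^\ast(\la)$ gives $a^\ast(\mu)=a^\ast(\la)$. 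The same argument at $a_\ast(\la)$: if $a_\ast(\la)<l$ then row $a_\ast(\la)+1$ has $d=1$, loses a node, and contributes $+1$, so $\mu_{a_\ast(\la)}-\mu_{a_\ast(\la)+1}\gs2$; if $a_\ast(\la)=l$ the difference equals $\la_l-\epsilon_l$, which is $\gs2$ exactly under the hypothesis that $\la_l\ne2$ or $\mu_l=\la_l$. In either case, together with $\mu_r-\mu_{r+1}\ls1$ for all $r>a_\ast(\la)$, this shows $\mu$ is not $2$-restricted and $a_\ast(\mu)=a_\ast(\la)$.

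There is no deep step here --- these assertions really do follow from the definitions --- so I expect the only genuine work to be in the boundary configurations: when $a^\ast(\la)$ or $a_\ast(\la)$ equals $l$, when $\mu_l$ collapses to $0$, or when $b(\la)=1$. These are exactly the cases that the extra hypotheses in the third and fourth bullets are there to exclude, and keeping all the bookkeeping in terms of the integers $d_r-\epsilon_r+\epsilon_{r+1}$, with signs controlled by the no-broken-ladder shape of $\la$, is the cleanest way to handle them.
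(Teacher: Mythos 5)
The paper gives no proof of this lemma --- it is introduced with the remark that the properties ``follow easily from the definitions'' --- so there is no argument of the authors' to compare against. Your verification is correct and is exactly the kind of elementary bookkeeping the authors had in mind. The key move, writing $\mu_r=\la_r-\epsilon_r$ with $\epsilon_r\in\{0,1\}$ and then working with $\mu_r-\mu_{r+1}=d_r-\epsilon_r+\epsilon_{r+1}$, immediately yields $\epsilon_r=d_r$ for $r<a^\ast(\la)$ and for $a_\ast(\la)<r\ls l$ (because there $d_r\ls1$, the row is removable exactly when $d_r=1$, and it cannot be the point), which pins down all the differences $\mu_r-\mu_{r+1}$ outside the range $[a^\ast(\la),a_\ast(\la)]$ and hence $a^\ast(\mu)\gs a^\ast(\la)$, $a_\ast(\mu)\ls a_\ast(\la)$. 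The four bullets then reduce, as you observe, to boundary checks at $r=a^\ast(\la)$, $r=a_\ast(\la)$, $r=b(\la)$ and $r=l$, and in each case the extra hypothesis in the lemma is precisely what rules out the degenerate configuration ($\la_l=2$ with $\epsilon_l=1$ giving $\mu_l=1$; $b(\la)=1$ in the all-removable case). The only place where one could be terser than fully rigorous is the use of double-singularity to conclude $\la_3>0$ from $\la_2=\la_3$: the point is that $\la_2=\la_3=0$ would make $\la$ a one-row partition, which is $2$-regular and so not doubly-singular. That is implicit in your parenthesis and the argument is sound.
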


\begin{comment}
\begin{cory} \label{FirstLastLarge}
Suppose that $\la_l \gs 3$ and that $\la_1=\la_3$.  Then $\la$ or $\la'$ is an FM-partition.
\end{cory}

\begin{proof}
$\mu$ must be doubly-singular, so either $\mu$ or $\mu'$ is an FM-partition.  By Lemma~\ref{LastPartLarge}, if $\mu$ is an FM-partition then $\la$ is an FM-partition, while if $\mu'$ is an FM-partition then $\la'$ is an FM-partition.   
\end{proof}
\end{comment}

\begin{lemma} \label{UpFrommu}
Suppose that all addable nodes of $\la$ have the same residue and that $\la_l \neq 2$ or $\la_l=\mu_l$.  If $\mu$ is an FM-partition then $\la$ is an FM-partition.
\end{lemma}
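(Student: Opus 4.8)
The plan is to check, one at a time, the five defining conditions of an FM-partition for $\la$; throughout write $l=\ell(\la)$. The fourth and fifth conditions are immediate, since the hypothesis that all addable nodes of $\la$ have the same residue is stronger than either of them. For the first condition, recall that an FM-partition is not $2$-restricted and has $a^{\ast}(\mu)=a_{\ast}(\mu)$; and the hypothesis ``$\la_l\ne2$ or $\la_l=\mu_l$'' is exactly the hypothesis of the third bullet of Lemma~\ref{RestIneq}, so that bullet gives that $\mu$ is not $2$-restricted and $a_{\ast}(\la)=a_{\ast}(\mu)$, and then the first bullet of Lemma~\ref{RestIneq} gives $a^{\ast}(\la)=a^{\ast}(\mu)$. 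Hence $a^{\ast}(\la)=a^{\ast}(\mu)=a_{\ast}(\mu)=a_{\ast}(\la)$; writing $a$ for this common value (so $a=a(\mu)$), the definitions of $a^{\ast}$ and $a_{\ast}$ say precisely that $\la_i-\la_{i+1}\ls1$ for all $i\ne a$, which is the first condition for $\la$. For the second condition, $\mu$ is in particular $2$-singular, so the second bullet of Lemma~\ref{RestIneq} gives $\la_{b(\la)}=\mu_{b(\mu)}$, and the second condition for $\mu$ gives $\mu_{b(\mu)}\gs a-1$; thus $\la_{b(\la)}\gs a-1$. Since $\la$ has no broken ladder we also have $b(\la)<a^{\ast}(\la)=a$, so $b(\la)\ls a-1$, and altogether $\la_{b(\la)}\gs a-1\gs b(\la)$, which is the second condition for $\la$.

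It remains to prove the third condition $\la_1>\dots>\la_{c(\la)}$, which is the substantial part. Since $a^{\ast}(\la)=a_{\ast}(\la)=a$, the partition $\la$ has a single jump, at row $a$, and since $\la$ has no broken ladder the rows $a+1,\dots,l$ form a staircase decreasing by exactly $1$ at each step; consequently $c(\la)=l-a$ if $a<l$, and $c(\la)=0$ (so the condition is vacuous) if $a=l$. Assume $a<l$. Then the removable node in row $l$ of $\la$ is not the point (which lies in row $a^{\ast}(\la)=a$), so it is removed in forming $\mu$ and $\mu_l=\la_l-1$; in particular the hypothesis forces $\la_l\ne2$. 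If $\la_l\gs3$ then $\ell(\mu)=l$ and $c(\mu)=c(\la)$, and non-LLT-reducibility of $\la$ (valid since $\la_l\gs2$ and the jump at row $a<l$ supplies a step $>1$) forces $\la_1\ls l$; if $\la_l=1$ then $\ell(\mu)=l-1$ and $c(\mu)=c(\la)-1$.

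In either case the third condition for $\mu$ says that the first $c(\mu)$ rows of $\mu$ are strictly decreasing, and it suffices to rule out an equality $\la_i=\la_{i+1}$ with $i\ls c(\la)-1$. Given such an equality, row $i$ of $\la$ has no removable node, so $\mu_i=\la_i$; strictness of $\mu$ in rows $i$ and $i+1$ then forces $\mu_{i+1}=\la_{i+1}-1$, so that row $i+1$ of $\la$ carries a removable node which is removed in forming $\mu$, and $\la_{i+1}>\la_{i+2}$. The core of the proof is to show that this configuration cannot occur: using the single-jump shape of $\la$, the bound $\la_1\ls l$ in the case $\la_l\gs3$, and, crucially, the hypothesis that all addable nodes of $\la$ have a common residue, one obtains a contradiction with the standing hypotheses on $\la$ --- typically by exhibiting a broken ladder, or a removable node and a longer addable node of the same parity (a Carter--Payne configuration), in $\la$ or $\la'$ --- via a residue-and-ladder computation; the sub-case $\la_l=1$ requires a separate but parallel argument, with extra care near the last rows (and near the edge $i=c(\mu)$). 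Eliminating this ``fake drop'' in $\mu$, which would conceal an equality of $\la$ among its first $c(\la)$ rows, is the main obstacle of the proof, and is essentially the only point where the residue hypothesis and the various non-reducibility assumptions carried by $\la$ in Proposition~\ref{P:Classify} are genuinely needed.
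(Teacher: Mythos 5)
Your verification of the first, second, fourth and fifth FM-conditions is correct and matches the paper's (via Lemma~\ref{RestIneq} and the hypothesis on addable nodes). But for the third condition there is a genuine gap: you reduce to ruling out an equality $\la_i=\la_{i+1}$ with $i\ls c(\la)-1$, derive that such an equality would give $\mu_i=\la_i$ and $\mu_{i+1}=\la_{i+1}-1$, and then stop, asserting that \emph{``one obtains a contradiction \dots via a residue-and-ladder computation''} without producing it. That computation is the whole content of the third condition. The route you gesture at --- manufacturing a broken ladder or a Carter--Payne configuration in $\la$ or $\la'$ --- is not what the paper does and is unlikely to succeed directly: since every non-point removable node of $\la$ is adjacent to an addable node, all non-point removable nodes have residue $1-\rho$ where $\rho$ is the common residue of the addable nodes, so a single equality $\la_i=\la_{i+1}$ does not by itself yield an addable/removable pair of the same residue.

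The paper's argument is shorter, avoids your case split on $\la_l$ and the bound $\la_1\ls l$, and uses the FM-conditions for $\mu$ rather than the reducibility hypotheses on $\la$. If $c(\la)\ls1$ there is nothing to prove; otherwise $c(\mu)\gs c(\la)-1\gs1$, so the fifth FM-condition for $\mu$ says that \emph{all} addable nodes of $\mu$, including the one in the first row, share a single residue. Every node of $\la$ removed in forming $\mu$ has residue $1-\rho$ and becomes an addable node of $\mu$ of residue $1-\rho$; since $\mu\ne\la$ there is at least one such node. If $\la_1=\la_2$, then $\mu_1=\la_1$, so $(1,\la_1+1)$ is addable in $\mu$ of residue $\rho$, giving $\mu$ addable nodes of both residues --- contradiction. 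Hence $\la_1>\la_2$ and $\mu_1=\la_1-1$. One now runs an easy induction: for $2\ls i\ls c(\la)-1$, assuming $\mu_{i-1}=\la_{i-1}-1$, the strict inequality $\mu_{i-1}>\mu_i$ from the third FM-condition for $\mu$ together with $\mu_i\gs\la_i-1$ gives $\la_{i-1}-\la_i\gs1$; if moreover $\mu_i=\la_i$ this would force $\la_{i-1}-\la_i\gs2$, hence $i-1\gs a^\ast(\la)$, but then (by the no-broken-ladder condition and $i<l$) row $i$ carries a removable non-point node, which is removed, giving $\mu_i=\la_i-1$ --- contradiction. So $\mu_i=\la_i-1$ and $\la_i>\la_{i+1}$, completing the induction. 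No auxiliary reducibility configuration is needed. (As a small aside, the paper's line ``$c(\mu)=c(\la)-1$'' holds only when $\la_l=1$; when $\la_l\gs3$ one has $c(\mu)=c(\la)$, but all that is used is $c(\mu)\gs1$.)
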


\begin{proof}
By Lemma~\ref{RestIneq} we have
\[\au =a^\ast(\mu) =a_\ast(\mu)=\ad,\]
\[\la_{\bd} = \la_{b(\mu)} \gs a(\mu)-1 = a(\la)-1.\]
It remains only to show that $\la_1 >\dots >\la_{c(\la)}$.  If $c(\la) \ls 1$ there is nothing to check, so assume that $c(\la)\gs 2$.  Then $c(\mu)=c(\la)-1 \gs 1$, so all the addable nodes of $\mu$ have the same residue.  This means that the node $(1,\la_1+1)$ cannot be an addable node of $\mu$, so $\la_1>\la_2$.  Now since we have $\mu_1>\dots>\mu_{c(\la)-1}$ and $\la$ does not have a broken ladder, we must have $\la_1>\dots>\la_{c(\la)}$.
\end{proof}

\begin{lemma} \label{LastPartLarge}
Suppose that $\la_l \gs 3$ and $\mu$ is an FM-partition.  Then $\la$ is an FM-partition.
\end{lemma}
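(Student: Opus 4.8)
The strategy is to exploit the fact that the hypothesis $\la_l\gs3$ forces $c(\la)=0$, after which the defining conditions of an FM-partition for $\la$ follow quickly from those for $\mu$ together with Lemma~\ref{RestIneq}; crucially, the condition on the residues of addable nodes then becomes the weak one, which is automatic for $\la$.

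First I would record the consequences of $\la_l\gs3$. Since $\la_l-\la_{l+1}=\la_l\gs3$, the partition $\la$ is not $2$-restricted, and $i=l$ is the only index $i\gs l$ with $\la_i-\la_{i+1}\gs2$; hence $a_\ast(\la)=l$ and therefore $c(\la)=0$. Also $\mu_l\gs\la_l-1\gs2$, so $\ell(\mu)=l$. Now apply Lemma~\ref{RestIneq}: since $\la_l\neq2$, its third item shows that $\mu$ is not $2$-restricted and that $a_\ast(\mu)=a_\ast(\la)=l$; since $\mu$ is an FM-partition we have $a^\ast(\mu)=a_\ast(\mu)=l$; and since $\mu$ is not $2$-restricted, the first item gives $a^\ast(\la)=a^\ast(\mu)=l$. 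Thus $a^\ast(\la)=a_\ast(\la)=l$, so $a(\la)$ is defined and equal to $l=a(\mu)$.

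It then remains to verify the four conditions in the definition of an FM-partition for $\la$, with parameters $a=a(\la)=l$ and $c=c(\la)=0$. The inequality $\la_i-\la_{i+1}\ls1$ for all $i\neq l$ is immediate from $a^\ast(\la)=a_\ast(\la)=l$. For $\la_{b(\la)}\gs a-1\gs b(\la)$: the bound $b(\la)\ls l-1$ holds because $\la$ has no broken ladders, whence $b(\la)<a^\ast(\la)=l$ (as recorded just before the definition of a pointed partition); and since $\mu$, being an FM-partition, is $2$-singular, the second item of Lemma~\ref{RestIneq} gives $\la_{b(\la)}=\mu_{b(\mu)}\gs a(\mu)-1=l-1$. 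The condition $\la_1>\dots>\la_c$ is vacuous since $c=0$. Finally, with $c=0$ the required residue condition is exactly that all addable nodes of $\la$ except possibly those in the first row and first column have the same residue, which is the content of Corollary~\ref{C:Alladd}; hence $\la$ is an FM-partition. The only non-routine point here is the opening observation that $\la_l\gs3$ forces $c(\la)=0$: this is what lets us dispense with the hypothesis on addable-node residues needed in Lemma~\ref{UpFrommu}, and one should take care that the ``$c=0$'' form of the residue condition in the definition of an FM-partition is precisely what Corollary~\ref{C:Alladd} supplies; everything else is bookkeeping with Lemma~\ref{RestIneq}.
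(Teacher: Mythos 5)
Your argument is correct and follows essentially the same route as the paper's own proof: both use Lemma~\ref{RestIneq} to transfer $a^\ast$, $a_\ast$ and $\la_{b(\la)}=\mu_{b(\mu)}$ from $\mu$ to $\la$, note that $\la_l\gs3$ forces $a_\ast(\la)=\ell(\la)$ and hence $c(\la)=0$, and then invoke Corollary~\ref{C:Alladd} for the residue condition in the $c=0$ case. You have merely spelled out more explicitly the intermediate steps that the paper compresses into one display.
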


\begin{proof}
Suppose that $\mu$ is an FM-partition.  Then using Lemma~\ref{RestIneq} 
\begin{align*}
\au&=a^\ast(\mu)=a_{\ast}(\mu)=\ad,\\
\la_{\bd}&= \mu_{b(\mu)} \gs a(\mu)-1 = a(\la)-1,\\
c(\la)&=c(\mu) =0. 
\end{align*}    
Since $a(\la)-1 \gs \bd$ and all addable nodes of $\la$, except for possibly those in the first row and the first column, have the same residue, $\la$ is also an FM-partition.
\end{proof}

\begin{lemma} \label{InEq}
Suppose $\la_1> l$ and $\la_l \gs 2$.  Then $\la$ is an FM-partition.
\end{lemma}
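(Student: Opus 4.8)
The plan is to verify, for $\la$ itself, the five conditions in the definition of FM-partition; most of them come essentially for free, and the argument reduces to a single inequality which I would settle by a four-way case split. First I would fix the shape of $\la$: since $\la$ is doubly-singular, has no broken ladder, and $\la_1\gs l+1$, $\la_l\gs2$, the only way $\la$ can fail to be LLT-reducible is that no $x<l$ has $\la_x-\la_{x+1}>1$; hence $\la_i-\la_{i+1}\ls1$ for all $i<l$, so the only row where $\la$ drops by $\gs2$ is row $l$, giving $\au=\ad=l$ and $c(\la)=0$. In the notation of the FM-partition definition this says $a=l$, $b=\bd$, $c=0$, so the first, third and fifth conditions are immediate or vacuous, the fourth is exactly Corollary~\ref{C:Alladd}, and the inequality $a-1\gs b$ holds because the absence of a broken ladder forces $\bd<\au=l$. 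Thus only $\la_{\bd}\gs l-1$ remains. Writing $b=\bd$, the maximality of $b$ makes rows $b+1,\dots,l$ a unit-step staircase, so $\la_b=\la_{b+1}=\la_l+(l-b-1)$ and the goal becomes $\la_l\gs b$; since moreover the step of $\la$ at row $b$ is $0$, we get $\la_l\gs\la_1-(l-2)>2$, i.e.\ $\la_l\gs3$.

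Now I would use that $\la$ is not inductively reducible. Let $\mu$ be the partition defined just before Lemma~\ref{RestIneq}; by Lemma~\ref{PossibleResidues} (recall $a^\ast(\la)=l$, so the point, if $\la$ is pointed, lies in row $l$) we have $\mu=\la^{(i)}$, where $i$ is the residue shared by all removable nodes of $\la$ in rows $\neq l$, and $\mu\neq\la$; hence one of $\mu,\mu'$ is alternating or an FM-partition. Three of the four possibilities I would dispose of at once:
\begin{itemize}
\item if $\mu$ is an FM-partition, then $\la$ is one by Lemma~\ref{LastPartLarge} (applicable since $\la_l\gs3$);
\item if $\mu'$ is alternating, then $\mu$ is $2$-restricted, contradicting the third bullet of Lemma~\ref{RestIneq} (applicable since $\la_l\neq2$), so this case cannot arise;
\item if $\mu$ is alternating, hence $2$-regular, the contrapositive of the fourth bullet of Lemma~\ref{RestIneq} gives $\la_1=\la_2>\la_3$, so rows $1,2$ form the unique block of maximal rows of $\la$. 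If $\la$ had another block of $\gs2$ equal rows, with top row $c$ (necessarily $3\ls c\ls l-1$), then $(c-1,\la_{c-1})$ would be a removable node of $\la$ in a row $\neq l$, hence of residue $i$; removing it in $\mu$ gives $\mu_{c-1}=\la_{c-1}-1=\la_c=\mu_c$, contradicting $2$-regularity. So $\bd=1$ and $\la_{\bd}=\la_1>l-1$.
\end{itemize}

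The remaining possibility, that $\mu'$ is an FM-partition, is where the real work lies. There $\mu'$ is doubly-singular, so $\mu$ is $2$-singular; the first and third bullets of Lemma~\ref{RestIneq} give $a^\ast(\mu)=a_\ast(\mu)=l$, so $\mu$ too is a unit-step staircase with $\ell(\mu)=l$, $\mu_l\gs2$, $a(\mu)=l$, while the second bullet gives $\la_{\bd}=\mu_{b(\mu)}$. Since $\mu_l\gs2$ we have $\mu'_1=\mu'_2=l$, so the third FM-condition for $\mu'$ forces $c(\mu')\ls1$; hence $a(\mu')\in\{\ell(\mu'),\ell(\mu')-1\}=\{\mu_1,\mu_1-1\}$, and since at most one node is removed from the first row of $\la$ we get $\mu_1\gs\la_1-1\gs l$, so $a(\mu')\gs l-1$. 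On the other hand, the first FM-condition for $\mu'$ says that for every value $v\neq a(\mu')$ at most one row of $\mu$ has length $v$; since $\mu$ is $2$-singular, exactly one value is the length of more than one row of $\mu$, namely $a(\mu')$, so the unique repeat of $\mu$ lies among its rows of length $a(\mu')$ and $\mu_{b(\mu)}=a(\mu')\gs l-1$. Therefore $\la_{\bd}=\mu_{b(\mu)}\gs l-1$, which finishes the second FM-condition and the proof. I expect this last case to be the main obstacle: the first three are each either a direct application of Lemma~\ref{LastPartLarge} or an immediate contradiction, while ``$\mu'$ is an FM-partition'' says nothing about $\la$ directly, and the key point is that the first FM-condition for $\mu'$ collapses the row-structure of $\mu$ to a single repeated length, which together with $\la_{\bd}=\mu_{b(\mu)}$ and $\mu_1\gs l$ yields the inequality.
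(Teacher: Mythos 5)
Your proof is correct and follows essentially the same route as the paper's: derive $a^\ast(\la)=a_\ast(\la)=l$ and $\la_l\gs3$, reduce to the single inequality $\la_{b(\la)}\gs l-1$, and settle the decisive $\mu'$-FM case by observing that $\mu_{b(\mu)}=a(\mu')=\mu_1-c(\mu')\gs l-1$. The only difference is organizational: the paper disposes of $b(\la)=1$ at the outset (so that $\mu$ is automatically doubly-singular, giving just two subcases and routing the $\mu'$-FM subcase back through Lemma~\ref{LastPartLarge} after showing $\mu$ itself is FM), whereas you split four ways on whether $\mu$ or $\mu'$ is alternating or FM and verify the target inequality directly in each.
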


\begin{proof}
Since $\la$ is not LLT-reducible we have $a^\ast(\la)=a_\ast(\la)=l$.  Since $\la_1>l$ and $\la$ is not $2$-regular this implies that $\la_l \gs 3$ and therefore $a^\ast(\mu)=a_{\ast}(\mu)=l$ and $c(\mu)=0$.  If $\bd=1$ then $\la=((l+1)^2,l,l-1,\dots,3)$ is an FM-partition, so assume $\bd>1$.  Then $\mu$ is doubly-singular, so either $\mu$ or $\mu'$ is an FM-partition.  In fact, we claim that $\mu$ must be an FM-partition.  If $\mu'$ is an FM-partition then we have $c(\mu')\ls1$ (because $\mu'_1=\mu'_2=l$) and $a^\ast(\mu') = a_\ast(\mu')$, so that
\[\mu_{b(\mu)}=\mu_1-c(\mu')\gs\la_1-2\gs l-1 =a(\mu)-1.\] 
Hence $\mu_{b(\mu)} \gs a(\mu)-1$ and $\mu$ is also an FM-partition.

Now Lemma~\ref{LastPartLarge} implies that $\la$ is also an FM-partition.
\end{proof}

\begin{lemma} \label{l1}
Suppose that $\la_1 \gs l$ and $\la_l=1$.  Then $\la$ or $\la'$ is an FM-partition.
\end{lemma}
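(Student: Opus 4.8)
I would prove Lemma~\ref{l1} in the spirit of Lemmas~\ref{InEq}--\ref{LastPartLarge}, using the reduction partition $\mu$ and the structural lemmas, with non-CP-reducibility used to pin down residues. The key preliminary observation is that non-CP-reducibility forces $\la_1\equiv l\ppmod2$: the node $(l,1)$ is a removable node of $\la$ lying in ladder $\call_l$, and $(1,\la_1+1)$ is an addable node lying in ladder $\call_{\la_1+1}$, and since $\la_1\gs l$ we have $\la_1+1>l$; so if $\la_1\not\equiv l\ppmod2$ then $\la_1+1\equiv l\ppmod2$ and $\la$ is CP-reducible. Combined with Corollary~\ref{C:Alladd} and the fact that the addable node $(l+1,1)$ has residue $l\ppmod2$, this shows that \emph{every} addable node of $\la$ has the same residue; since conjugation gives a residue-preserving bijection between addable nodes, the same is true of $\la'$.

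Next, record the shape forced by the absence of broken ladders: every row $r$ with $\la_r-\la_{r+1}\gs2$ lies below every row with $\la_r=\la_{r+1}>0$, so $\bd<\au\ls\ad\ls l-1$ (the last inequality because $\la_l=1$), whence $c(\la)=l-\ad\gs1$ and the rows of $\la$ below row $\ad$ form the staircase $\la_{\ad+1}=c(\la),\la_{\ad+2}=c(\la)-1,\dots,\la_l=1$. I claim $\mu$ is doubly-singular. It is not $2$-restricted, because the big step in row $\ad\ls l-1$ survives in $\mu$ (the corner in row $\ad+1$ is deleted, while the corner in row $\ad$ is either deleted or kept as the point, the latter only enlarging the step). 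It is not $2$-regular: if $\bd\gs2$ then $\la_{\bd-1}\in\{\la_{\bd},\la_{\bd}+1\}$ forces $\mu_{\bd-1}=\mu_{\bd}$, while if $\bd=1$ then the corners of $\la$ are precisely those in rows $2,\dots,l$, so unless $\la$ is pointed they all have residue $1-l\ppmod2$, contradicting $\la_1=\la_2\equiv l\ppmod2$; hence $\la$ is pointed with $\au=2$, the point in row $2$ is kept in $\mu$, and $\mu_1=\mu_2$. As $\mu$ and $\mu'$ are $2$-singular (hence not alternating) and $\la$ is not inductively reducible, the remark following the definition of $\mu$ shows that $\mu$ or $\mu'$ is an FM-partition.

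It remains to lift. Since conjugation preserves residues and sends the point of $\la$ to the point of $\la'$, the partition $\mu'$ is obtained from $\la'$ by exactly the recipe defining $\mu$, so Lemmas~\ref{UpFrommu} and~\ref{LastPartLarge} may be applied with $(\la,\mu)$ replaced by $(\la',\mu')$. If $\mu$ is an FM-partition, then since $\la_l=1\neq2$ Lemma~\ref{UpFrommu} shows that $\la$ is an FM-partition. If instead $\mu'$ is an FM-partition, then, the addable nodes of $\la'$ all sharing a residue, I would apply Lemma~\ref{UpFrommu} or Lemma~\ref{LastPartLarge} to $\la'$ according to the value of $(\la')_{\ell(\la')}=|\{i:\la_i=\la_1\}|$: if $\la_1>\la_2$ then $(\la')_{\ell(\la')}=1\neq2$ and Lemma~\ref{UpFrommu} applies to $\la'$; if $\la_1=\la_2=\la_3$ then $(\la')_{\ell(\la')}\gs3$ and Lemma~\ref{LastPartLarge} applies to $\la'$; and if $\la_1=\la_2>\la_3$ then the corners in rows $2$ and $l$ of $\la$ have opposite residues (using $\la_1\equiv l\ppmod2$), so by Lemma~\ref{PossibleResidues} one of them is the point, and as the point lies in row $\au\ls l-1$ it must be the corner in row $2$, forcing $\au=2$ and $\bd=1$; the point in row $2$ is then kept in $\mu$, giving $(\mu')_{\ell(\la')}=2=(\la')_{\ell(\la')}$, so that Lemma~\ref{UpFrommu} again applies to $\la'$. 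In every case $\la$ or $\la'$ is an FM-partition.

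I expect this last step to be the main obstacle: verifying, in each of the three sub-cases, that the relevant lift lemma genuinely applies to $\la'$—in particular that its auxiliary hypothesis on $(\la')_{\ell(\la')}$ holds and that the inequality $\la'_{b(\la')}\gs a(\la')-1$ can be read off from the shape of $\la'$—so that FM-ness really does transfer from $\mu'$ to $\la'$; the interplay between the point of $\la$ and the passage to $\mu$ in the case $\la_1=\la_2>\la_3$ also needs to be handled with care.
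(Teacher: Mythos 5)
Your proof is correct and follows essentially the same route as the paper's: establish that all addable nodes of $\la$ (and hence of $\la'$) share a residue via CP-reducibility and Corollary~\ref{C:Alladd}, show $\mu$ is doubly-singular (with the key observation that in the case $\la_1=\la_2>\la_3$ the corner $(2,\la_1)$ must be the point and is therefore kept), and then lift FM-ness from $\mu$ or $\mu'$ using Lemma~\ref{UpFrommu}. Your presentation is a little more laborious than the paper's—the paper invokes Lemma~\ref{RestIneq} directly to dispose of the not-$2$-restricted and most of the not-$2$-regular cases, and collapses your three-way lift into the single disjunction ``$\la'_{\ell(\la')}\neq2$ or $\mu'_{\ell(\la')}=\la'_{\ell(\la')}$''—but the mathematical content matches.
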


\begin{proof}
Since $\la_1\gs l$, the addable node $(1,\la_1+1)$ lies in a longer ladder than the removable node $(l,1)$.  Since $\la$ is not CP-reducible, these nodes must have different residues, so the addable node $(1,\la_1+1)$ has the same residue as the addable nodes $(l,2)$ and $(l+1,1)$.  So by Corollary \ref{C:Alladd} all the addable nodes of $\la$ have the same residue.

Now we claim that $\mu$ is doubly-singular.  By Lemma \ref{RestIneq} $\mu$ is not $2$-restricted, and the only way $\mu$ could be $2$-regular is if $\la_1=\la_2>\la_3$.  But if this is the case then the removable nodes $(2,\la_1)$ and $(l,1)$ of $\la$ have different residues, so $(2,\la_1)$ must be the point; and this means that $\mu_1=\mu_2$, so $\mu$ is not $2$-regular.

So either $\mu$ or $\mu'$ is an FM-partition.  If $\mu$ is an FM-partition, then by Lemma \ref{UpFrommu} $\la$ is an FM-partition.  If $\mu'$ is an FM-partition, then (from the argument in the last paragraph) either $\la'_{\ell(\la')}\neq2$ or $\mu'_{\ell(\la')}=\la'_{\ell(\la')}$; so by Lemma \ref{UpFrommu} $\la'$ is an FM-partition.
\end{proof}

\begin{lemma} \label{l2}
Suppose that $\la_1=\la_2=l$ and $\la_l=2$.  If $\mu$ is an FM-partition then $\la$ or $\la'$ is an FM-partition.  
\end{lemma}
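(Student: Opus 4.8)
The plan is to split on the number of jumps of $\la$. Since $\la_l=2>0=\la_{l+1}$ we have $a_\ast(\la)=l$, and hence $c(\la)=0$ because $\ell(\la)=l$; also $\la$ having no broken ladder gives $l=a_\ast(\la)\gs a^\ast(\la)>b(\la)\gs1$, the last inequality because $\la_1=\la_2$.

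Suppose first that $a^\ast(\la)=l$, so $\la$ has a single jump. Then $\la_i-\la_{i+1}\ls1$ for $i<l$, which together with $\la_1=\la_2=l$ and $\la_l=2$ forces every gap between rows $2,\dots,l$ to equal $1$; thus $\la=(l,l,l-1,\dots,3,2)$. But then each of the removable nodes of $\la$ — one in each of rows $2,\dots,l$ — has residue $l\bmod 2$, so $\mu$ is obtained by deleting all of them and equals the staircase $(l,l-1,\dots,2,1)$; this is $2$-regular, hence not doubly-singular and in particular not an FM-partition, contradicting the hypothesis. So this case does not arise, and we may assume $a^\ast(\la)<l$. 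In particular $\la$ has at least two jumps and so is not itself an FM-partition, so the task is to prove $\la'$ is.

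Write $a=a^\ast(\la)$. Here $(l,2)$ is a removable node of $\la$ which is not the point (the point lying in row $a<l$), so it is deleted; thus $\mu_l=1$, $\ell(\mu)=l$, and a short argument using $\la_1=\la_2$ and $\la_l=2$ rules out $a\ls2$, so $a\gs3$. As $\mu$ is FM it is doubly-singular, so Lemma~\ref{RestIneq} gives $a^\ast(\mu)=a$ and $\la_{b(\la)}=\mu_{b(\mu)}$, and since $\mu$'s unique jump sits at $a_\ast(\mu)=a^\ast(\mu)=a<l=\ell(\mu)$ we have $c(\mu)=l-a\gs1$. The FM-conditions for $\mu$ thus read: $\mu$ has a single jump, at row $a$; $\la_{b(\la)}=\mu_{b(\mu)}\gs a-1\gs b(\mu)$; $\mu_1>\dots>\mu_{c(\mu)}$; and all addable nodes of $\mu$ have the same residue. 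Feeding these into the non-reducibility hypotheses, I would show $\la$ must have the shape $(l^d,l-1,l-2,\dots,d+1,2)$ for a suitable $d\gs3$: one checks that in fact $c(\mu)=1$, so $a=l-1$ and $\la$'s first jump is at row $l-1$; and a plateau or repeated part of $\la$ away from the top block of $l$'s would either create a broken ladder, or force a CP-reducible pair (via the residue analysis of Lemma~\ref{PossibleResidues} and Corollary~\ref{C:Alladd}, which also pins down the parity of $d$), or produce exactly the tail $\nu$ forbidden by MH-reducibility. For $\la$ of this shape $l$ is its only repeated part-value, so $\la'$ has a single jump, at row $l$; one then checks $c(\la')=0$ (since $\la'_{\ell(\la')}=d\gs2$), the inequality $\la'_{b(\la')}\gs a(\la')-1\gs b(\la')$ by a short count, and the residue condition for $\la'$ using the conjugation-invariance of residues modulo $2$ together with the analysis of Lemma~\ref{PossibleResidues} and Corollary~\ref{C:Alladd}, treating the addable nodes of $\la'$ in its first row and first column separately.

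The main obstacle is that penultimate step: extracting the precise ``top block of $l$'s, then strict decrease to $2$'' shape of $\la$ from the FM-ness of $\mu$ together with the non-reducibility hypotheses — the hypothesis that $\la$ is not MH-reducible being the essential new ingredient, since it is exactly what excludes the competing shapes — and then verifying by hand that $\la'$ satisfies both numerical FM-inequalities and the residue condition. The residue bookkeeping is the fiddliest part, because the addable nodes of $\la'$ in its first row and first column must be treated apart from the rest, just as in the proofs of Lemma~\ref{PossibleResidues}, Corollary~\ref{C:Alladd} and Lemma~\ref{UpFrommu}.
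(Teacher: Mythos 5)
Your Case~1 (where $a^\ast(\la)=l$, forcing $\la=(l,l,l-1,\dots,3,2)$ and hence $\mu$ the $2$-regular staircase) is exactly the paper's opening observation, phrased as ``$a^\ast(\la)=a_\ast(\la)$ is impossible.'' Your target shape $\la=(l^d,l-1,\dots,d+1,2)$ in Case~2 is also correct, and it does match what the paper ends up with: in the paper's $\la=\nu$ case with $g=2$, the parameters satisfy $s=s'+1$, $d=s$, $\la_{l-1}=d+1$.

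But there is a genuine gap, and it sits precisely where you flag ``one checks that in fact $c(\mu)=1$.'' That step is not a local computation: $c(\mu)=1$ is equivalent to $\mu_1=\mu_2$, which fails exactly when $(2,l)$ is a removable node that is removed in forming $\mu$, i.e.\ when $\la_3<l$. This happens whenever $\la$ has a second plateau strictly below the top block of $l$'s, and ruling that out is the real content of the lemma. The paper does not try to deduce $c(\mu)=1$ in general; instead it sets $x$ minimal with $\la_{x+1}=\la_{b(\la)}$, writes the tail $\nu=(\la_{x+1},\dots,\la_l)$ in the parametric form $\left((g+f+s')^s,g+f+s'-1,\dots,g+s',g,\dots,2\right)$ (using $a^\ast(\mu)=a_\ast(\mu)$), and splits on $\la=\nu$ versus $\la\neq\nu$. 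Only in the $\la=\nu$ branch is $\mu_1=\mu_2$ automatic (there $s=s'+1\gs 3$ forces $\la_3=l$), whence $c(\mu)=1$, i.e.\ $g=2$, and $\la'$ is read off directly as an FM-partition via $\la'_{b(\la')}=l-1=a^\ast(\la')-1$. In the $\la\neq\nu$ branch the paper derives $s'\gs s$ and then gets a contradiction — but this uses \emph{two} hypotheses, not just the absence of MH-reducibility. When all removable nodes share a residue one gets $s,s'$ odd and MH-reducibility at once; but when the point has the odd residue out one first needs $f=0$ with $s,s'$ even, and then the paper applies the \emph{inductive} hypothesis (``$\la$ not inductively reducible'') to $\sigma=\la\setminus\{\text{point}\}$ to conclude $\sigma$ or $\sigma'$ is FM, which forces $s$ or $s'$ to equal $2$, hence $s=2$, and only then does MH-reducibility finish. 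Your sketch credits broken ladders, CP-reducibility, residue analysis and MH-reducibility, but omits this $\sigma$-argument entirely; the small self-conjugate examples such as $(5,5,4,4,2)$ and $(6,6,5,5,4,2)$ show that the exclusion of the ``second plateau'' shapes really does need the full arsenal, depending on the example sometimes MH and sometimes CP and sometimes the inductive step. In short: the endpoint is right, but the route from ``$a^\ast(\la)<l$'' to ``$\la=(l^d,l-1,\dots,d+1,2)$'' is the whole lemma and is not supplied.
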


\begin{proof}
First note that since $\la_1=\la_2$ and $\la_l=2$, we cannot have $\au=\ad$, because this would give $\la=(l^2,l-1,l-2,\dots,2)$ so that $\mu$ is $2$-regular.  So $\au<\ad$; since $\mu$ is an FM-partition, we have $a^\ast(\mu)=a_\ast(\mu)=a^\ast(\la)$.

Let $x \gs 0$ be minimal such that $\la_{x+1}=\la_{\bd}$ and let $\nu=(\la_{x+1},\ldots,\la_l)$.  Since $a^\ast(\mu)=a_\ast(\mu)$, $\nu$ has the form 
\[\nu=\left((g+f+s')^s,g+f+s'-1,\ldots,g+s',g,\ldots,2\right)\]
where $g \gs 2$, $f \gs 0$ and $s,s' \gs 2$.  If $\la = \nu$ then $\la_1=\la_2$ so, since $\mu$ is an FM-partition, $c(\mu)=1$; that is, $g=2$.  Then $\la'_{\bdc}= l-1 = \auc -1$ and $\la'$ is an FM-partition.  Assume then that $\la \neq \nu$.  Then we have
\[
x+s+f+g-1=l,\qquad g+s'+f=\la_{x+1}\gs \la_1-x+1=l-x+1,
\]
which gives $s' \gs s$.

Suppose all removable nodes of $\la$ have the same residue.  Then $s$ and $s'$ are both odd, so $\la$ is MH-reducible, a contradiction.

Next suppose that not all removable nodes of $\la$ have the same residue.  Then $f=0$ and $s,s'$ are even.  Let $\sigma$ be the partition obtained by removing the point of $\la$; then $\sigma$ is doubly-singular, so either $\sigma$ or $\sigma'$ is an FM-partition.  In particular, either $a^\ast(\sigma)=a_\ast(\sigma)$ or $a^\ast(\sigma')=a_\ast(\sigma')$, which means that either $s$ or $s'$ equals $2$.  Since $s'\gs s$, we get $s=2$, so again $\la$ is MH-reducible; contradiction.  
\end{proof}

By combining the results in this section, we can prove Proposition \ref{P:Classify}.

\begin{proof}[Proof of Proposition \ref{P:Classify}]
Suppose $\la$ is a partition with the given properties.  Then $\la'$ has the same properties, and we may replace $\la$ with $\la'$ if necessary.

If $\la_1>l$, then by Lemma \ref{InEq} or Lemma \ref{l1}, either $\la$ or $\la'$ is an FM-partition.  So we may assume $\la_1\ls l$.  Applying the same argument to $\la'$, we may assume that $\la_1=l$.

Now by Lemma \ref{l1} applied to both $\la$ and $\la'$, we can assume that $\la_l\gs2$ and $\la_1=\la_2$.  Now the only way $\mu$ could be $2$-regular or $2$-restricted is if $\la=(l^2,l-1,l-2,\dots,2)$, which is an FM-partition.  So we can assume that $\mu$ is doubly-singular.  Hence either $\mu$ or $\mu'$ is an FM-partition.  Replacing $\la$ with $\la'$ if necessary, we can assume $\mu$ is an FM-partition.  And now we are done using Lemma \ref{LastPartLarge} or Lemma \ref{l2}.
\end{proof}

We are now ready to prove Theorem~\ref{T:Main}.  

\begin{proof}[Proof of Theorem \ref{T:Main}]
The proof is by induction on $|\la|$.  If $\la=\emptyset$ the theorem is trivially true.  Suppose that $\la$ is a doubly-singular partition of $n\gs 1$ such that neither $\la$ nor $\la'$ is an FM-partition, and suppose that Theorem~\ref{T:Main} holds for all partitions of $m<n$.  By Proposition~\ref{P:Classify} at least one of the following statements holds for $\la$.
\begin{itemize}
\item $\la$ has a broken ladder.
\item $\la$ or $\la'$ is CP-reducible.
\item $\la$ is MH-reducible.  
\item $\la$ or $\la'$ is LLT reducible.
\item $\la$ is inductively reducible.
\end{itemize}
If any of the first four statements hold then $S^\la$ is reducible by Lemma~\ref{L:Conj},  Proposition~\ref{P:BrokeRed}, Proposition~\ref{P:CPRed}, Proposition~\ref{P:MHRed} or Proposition~\ref{P:LLTRed}.  So suppose $\la$ is inductively reducible.  Then there exists $i\in \{0,1\}$ such that $\la^{(i)} \neq \la$ and $\la^{(i)}$ satisfies one of the following conditions.
\begin{itemize}
\item $\la^{(i)}$ is $2$-regular and is not alternating.
\item $\la^{(i)'}$ is $2$-regular and is not alternating.
\item $\la^{(i)}$ is doubly-singular and neither $\la^{(i)}$ nor $\la^{(i)'}$ is an FM-partition.  
\end{itemize}     
By Proposition~\ref{P:Carter} or the inductive hypothesis, $S^{\la^{(i)}}$ is reducible.  Then $S^\la$ is reducible by Proposition~\ref{P:IndRed}.   
\end{proof}

To complete the proof of Theorem~\ref{T:Main} it remains only to give the deferred proof of Proposition~\ref{P:MH}.  

\section{Homomorphisms between Specht modules} \label{S:Hom}

\subs{Constructing homomorphisms}\label{SS:ConHom}

\subsubs{Tableaux}

If $\mu$ is a composition of $n$, a \emph{$\mu$-tableau} is defined to be a filling of the nodes of $\mu$ with positive integers; if $T$ is a tableau, we write $T_{r,c}$ for the $(r,c)$-entry.  The \emph{type} of a tableau is the composition $\la$, where $\la_i$ is the number of nodes filled with the integer $i$, for each $i$.  A tableau is \emph{row-standard} if the entries are weakly increasing along the rows. We write $\calt(\mu,\la)$ for the set of row-standard $\mu$-tableaux of type $\la$.  If $\mu$ is a partition, we say that a $\mu$-tableau is \emph{semistandard} if the entries are weakly increasing along the rows and strictly increasing down the columns; we write $\calt_0(\mu,\la)$ for the set of semistandard $\mu$-tableaux of type $\la$.  We remark that $\calt_0(\mu,\la)$ is empty unless $\mu\dom{\arrowover\la}$, where $\arrowover\la$ is the partition obtained by arranging the parts of $\la$ in decreasing order.

\subsubs{Permutation modules and Specht modules}

Now take $\bbf$ to be an arbitrary field with $q \in \bbf^\times$.  For each composition $\la$ of $n$, we let $M^\la$ denote the `permutation module' defined by Dipper and James; if $\la$ is a partition, then the Specht module $S^\la$ is a submodule of $M^\la$.  If $\mu,\la$ are compositions of $n$ and $T$ is a row-standard $\mu$-tableau of type $\la$, then there is an $\h_n$-homomorphism $\check\Theta_T:M^\mu\to M^\la$. The set $\lset{\check\Theta_T}{T\in\calt(\mu,\la)}$ is a basis for $\hom_{\h_n}(M^\mu,M^\la)$ \cite[Theorem 3.4]{DJ}.

These homomorphisms may be used to define the Specht module.  Suppose $\la$ and $\mu$ are partitions of $n$, and $1 \ls d < \ell(\la)$ and $1 \ls t \ls \la_{d+1}$. Define the partition $\la(d,t)$ by
\[\la(d,t)_i = \begin{cases} \la_i+t & (i=d) \\
\la_i-t & (i=d+1) \\
\la_i & (\text{otherwise}).
\end{cases} \]
Then there is a unique row-standard $\la$-tableau of type $\la(d,t)$ with the property that for every $i\neq d+1$ all the entries in row $i$ are equal to $i$.  The corresponding homomorphism from $M^\la$ to $M^{\la(d,t)}$ is denoted $\psi_{d,t}$.

The Kernel Intersection Theorem~\cite[Theorem~7.5]{DJ} says that
\[
S^\la = \bigcap_{d=1}^{\ell(\la)-1} \bigcap_{t=1}^{\la_{d+1}} \ker(\psi_{d,t}).
\]

\begin{rmk}
Our notation is not universally used: the partition $\la(d,t)$ is referred to elsewhere in the literature as $\nu(d,t)$; we use the notation $\la(d,t)$ in order to emphasise the dependence on $\la$.  In addition, the homomorphism $\psi_{d,t}$ is sometimes denoted $\psi_d^t$ or $\psi_{d,\la_{d+1}-t}$.
\end{rmk}

If $\mu$ is a partition and $\la$ a composition of $n$ and $T\in\calt(\mu,\la)$, we shall often consider the restriction of $\check\Theta_T$ to $S^\mu$, which we denote $\Theta_T$.  We write $\ehom_{\h_n}(S^\mu,M^\la)$ for the subspace of $\hom_{\h_n}(S^\mu,M^\la)$ spanned by all the $\Theta_T$; by \cite[Corollary 8.7]{dj2}, $\lset{\Theta_T}{T\in\calt_0(\mu,\la)}$ is a basis for $\ehom_{\h_n}(S^\mu,M^\la)$; in particular, $\ehom_{\h_n}(S^\mu,M^\la)=0$ unless $\mu\dom{\arrowover\la}$.

\begin{rmk}
In fact, $\ehom_{\h_n}(S^\mu,M^\la)$ is almost always equal to $\hom_{\h_n}(S^\mu,M^\la)$; the exception is the case of most interest in this paper, when $q={-}1$ and $\mu$ is $2$-singular.

We also remark that homomorphisms denoted $\check\Theta_T,\Theta_T$ are denoted $\Theta_T,\hat\Theta_T$ elsewhere in the literature.  Since we shall almost exclusively be considering the restricted homomorphism, we use the less cluttered notation for this.
\end{rmk}

\subsubs{Constructing homomorphisms between Specht modules}\label{homspecht}

Suppose now that $\la,\mu$ are partitions of $n$, and $\Theta\in\hom_{\h_n}(S^\mu,M^\la)$.  By the Kernel Intersection Theorem, we have $\im(\Theta) \subseteq S^\la$ if and only if $\psi_{d,t}\circ\Theta =0$ for all $d,t$.  We shall only be considering the cases where $\Theta\in\ehom_{\h_n}(S^\mu,M^\la)$; we write $\ehom_{\h_n}(S^\mu,S^\la)$ for the set of $\Theta\in\ehom_{\h_n}(S^\mu,M^\la)$ for which $\im(\Theta)\subseteq S^\la$.

It turns out that it is possible to give an expression for $\psi_{d,t}\circ\Theta_T$, which shows in particular that $\psi_{d,t}\circ\Theta_T\in\ehom_{\h_n}(S^\mu,M^{\la(d,t)})$.  One consequence of this which will save a lot of effort later is that we automatically have $\psi_{d,t}\circ\Theta_T=0$ unless $\mu\dom{\arrowover{\la(d,t)}}$.

In order to give our expression for $\psi_{d,t}\circ\Theta_T$, we need to recall quantum integers and quantum binomial coefficients. For $m \gs 0$ define
\[[m] = 1+q+\dots + q^{m-1},\]
and $[m]!=\prod_{i=1}^m[i]$.  If $q$ is an indeterminate, then for integers $m,j$, set
\[
\gauss{m}{j} = 
\begin{cases}
\mfrac{[m]!}{[j]![m-j]!} & (m\gs j\gs0)\\
0&(\text{otherwise}).
\end{cases}
\]
Then $\gauss{m}{k}$ is a polynomial in $q$; so we can extend the definition of $\gauss mk$ to the case where $q$ is algebraic by defining it to be the specialisation of this polynomial.

For a tableau $T$, let $T^i_j$ denote the number of entries equal to $i$ in row $j$ of $T$.  Let $T^{>i}_j = \sum_{k>i}T^k_j$, and we define terms such as $T^{<i}_j$ similarly.  Now we can describe the composition $\psi_{d,t}\circ\Theta_T$.

\begin{propn}\thmcite{L2}{Proposition~2.14}\label{Lemma5}  
Suppose that $T$ is a row-standard $\mu$-tableau of type $\la$.   
Choose $d$ with $1 \ls d <\ell(\la)$ and $t$ with $1 \ls t \ls \la_{d+1}$.  Let $\mathcal{S}$ be the set of row-standard tableaux of type $\la(d,t)$ obtained by replacing $t$ of the entries in $T$ which are equal to $d+1$ with $d$.
Then 
\[\psi_{d,t}\circ\Theta_T = \sum_{S \in \mathcal{S}} \left(  \prod_{j= 1}^{\ell(\mu)} q^{T^d_{>j}(S^d_j - T^d_j)} \gauss{S^d_j}{T^d_j}\right) \Theta_S.\] 
\end{propn}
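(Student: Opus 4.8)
The plan is to establish the identity at the level of permutation modules, with $\Theta_T$ replaced throughout by the unrestricted homomorphism $\check\Theta_T\colon M^\mu\to M^{\la}$, and then recover the stated form by restricting to $S^\mu\subseteq M^\mu$ (using that $\Theta_S$ is by definition the restriction of $\check\Theta_S$). So the target is the identity of maps $M^\mu\to M^{\la(d,t)}$
\[
\psi_{d,t}\circ\check\Theta_T=\sum_{S\in\cals}\Bigl(\prod_{j=1}^{\ell(\mu)}q^{T^d_{>j}(S^d_j-T^d_j)}\gauss{S^d_j}{T^d_j}\Bigr)\check\Theta_S .
\]

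\textbf{Reduction to a cyclic generator.} Both $\check\Theta_T$ and $\psi_{d,t}$ (the latter being $\check\Theta_R$ for the evident row-standard $\la$-tableau $R$ of type $\la(d,t)$) are $\h_n$-module maps, so the composite lies in $\hom_{\h_n}(M^\mu,M^{\la(d,t)})$, which by \cite[Theorem~3.4]{DJ} has basis $\{\check\Theta_V\mid V\in\calt(\mu,\la(d,t))\}$. Write $\psi_{d,t}\circ\check\Theta_T=\sum_V a_V\check\Theta_V$. Since $M^\mu$ is cyclic on a generator $m_\mu$, a homomorphism out of $M^\mu$ is determined by the image of $m_\mu$; equivalently the elements $\check\Theta_V(m_\mu)$ of $M^{\la(d,t)}$ are linearly independent, and being unitriangular with respect to dominance in the standard tabloid basis, their coefficients can be read off one at a time. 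So it suffices to compute $\psi_{d,t}(m_T)$ in the tabloid basis of $M^{\la(d,t)}$, where $m_T:=\check\Theta_T(m_\mu)$.

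\textbf{The computation.} Expand $m_T$ in the tabloid basis of $M^\la$ according to the Dipper--James definition of $\check\Theta_T$: it is a sum of the standard basis elements indexed by the $\la$-tabloids compatible with $T$, each weighted by a power of $q$ coming from straightening. Now apply $\psi_{d,t}$ termwise. On a single standard basis element, $\psi_{d,t}$ acts by selecting, in all possible ways, $t$ of the entries currently in row $d+1$ and reassigning them to row $d$, each reassignment contributing a monomial in $q$ that records the inversions produced when the raised entries are straightened past the remaining entries; this is the $q$-deformation of the classical tabloid rule for $\psi_{d,t}$, and is the only point at which the quadratic relation $(T_i-q)(T_i+1)=0$ is invoked. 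Finally, regroup the resulting $\la(d,t)$-tabloids by the row-standard $\la(d,t)$-tableau they represent. A nonzero total forces that tableau to be obtained from $T$ by changing exactly $t$ entries equal to $d+1$ into $d$, i.e.\ to lie in $\cals$; and for $S\in\cals$, in each row $j$ the $S^d_j$ entries equal to $d$ split into $T^d_j$ old entries and $S^d_j-T^d_j$ freshly raised ones, so summing $q^{(\text{inversions})}$ over the interleavings of new among old inside the tabloid yields $\gauss{S^d_j}{T^d_j}$ (a standard $q$-binomial identity), while the factor $q^{T^d_{>j}(S^d_j-T^d_j)}$ accounts for the inversions between the raised entries of row $j$ and the entries equal to $d$ lying in rows below $j$. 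Taking the product over $j$ gives the coefficient claimed.

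\textbf{Conclusion and the main obstacle.} Both sides of the displayed identity are $\h_n$-maps out of the cyclic module $M^\mu$ agreeing on $m_\mu$, hence are equal; restricting to $S^\mu$ gives the statement of the proposition. The real work is entirely in the middle step: one must genuinely push $\psi_{d,t}(m_T)$ into the standard basis of $M^{\la(d,t)}$ by repeated use of the Hecke relations and then identify the accumulated $q$-polynomials as the stated products of Gaussian binomials. For $q=1$ (the group algebra $\bbf\sym_n$) this collapses to a routine count of subsets; the substance of the proposition---and of \cite[Proposition~2.14]{L2}---is the faithful $q$-analogue of that count. An induction on $t$ based on the relation between $\psi_{d,t}$ and $\psi_{d,1}$ is also conceivable, but in the case $q={-}1$ of interest here the relevant quantum factorials can vanish, so the direct computation is preferable; alternatively one may route the same calculation through the $q$-Schur algebra, where the Gaussian binomials appear as the structure constants governing the merging of two rows.
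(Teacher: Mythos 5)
This proposition is not proved in the paper: it is quoted verbatim from \cite[Proposition~2.14]{L2} via the \texttt{\textbackslash thmcite} macro, so there is no ``paper's own proof'' to compare against. Evaluated on its own terms, your sketch identifies the right strategy --- prove the identity at the level of the permutation modules, i.e.\ for $\check\Theta_T$, and then restrict to $S^\mu$; this does match the route taken in \cite{L2}. The reduction to the cyclic generator and the appeal to \cite[Theorem 3.4]{DJ} are also correct.

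The gap is exactly where you say ``the real work is.'' You declare, rather than derive, both halves of the claimed coefficient. First, you assert that the coefficient of $\check\Theta_V$ vanishes unless $V\in\cals$, i.e.\ unless $V$ is obtained from $T$ by changing $d{+}1$'s to $d$'s cell-by-cell inside the $\mu$-diagram; this is not automatic from shape/type considerations and needs to be shown by tracking which tabloids survive the straightening. Second, you attribute the factor $\gauss{S^d_j}{T^d_j}$ to interleaving within row $j$ and the factor $q^{T^d_{>j}(S^d_j - T^d_j)}$ to cross-row inversions, but to verify that the exponent really involves $T^d_{>j}$ (not $S^d_{>j}$, and not a symmetrised combination) one must actually carry out the book-keeping with a fixed set of distinguished double coset representatives and the quadratic relation. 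These are not cosmetic: the precise exponents are what drive all the cancellation arguments in Section~4 of the present paper (for instance, the repeated identities $[m]-[m-2]=0$ and $({-}1)^x + [x] - [x-3]=0$ at $q=-1$ only appear if the exponents come out exactly as stated). One auxiliary point: your claim that the $\check\Theta_V(m_\mu)$ are ``unitriangular'' in the tabloid basis and that coefficients can therefore be read off ``one at a time'' needs a supporting citation or argument; while true, it is a structural fact about the Dipper--James basis, not something that follows from genericity. In summary your proposal is a correct outline that agrees in spirit with \cite{L2}, but it is a sketch rather than a proof, and the missing calculation is precisely the content of the cited result.
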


A difficulty with Proposition \ref{Lemma5} is that it expresses $\psi_{d,t}\circ\Theta_T$ in terms of homomorphisms labelled by tableaux which are not necessarily semistandard.  In order to be able to use this result to show that a composition $\psi_{d,t}\circ\Theta$ is zero, we need the following result, which allows a homomorphism $\Theta_T$ to be written in terms of other tableaux. In this proposition, we write $\bbz_+$ for the set of non-negative integers; given $g\in\bbz_+^l$, we write $\bar g_{d-1}$ for the partial sum $\sum_{i=1}^{d-1}g_i$.

\begin{propn}\thmcite{L3}{Theorem~4.2}\label{Lemma7}
Suppose $\mu$ is a partition and $\nu$ a composition of $n$, and $S\in\calt(\mu,\nu)$.
\begin{enumerate}
\item Suppose $1 \ls r\ls \ell(\mu)-1$ and that $1 \ls d \ls \ell(\nu)$.  Let
\[\mathcal{G} =\rset{g\in\bbz_+^{\ell(\nu)}}{g_d=0, \, \textstyle\sum_{i=1}^{\ell(\nu)} g_i =S^d_{r+1} \text{ and } g_i \ls S^{i}_{r} \text{ for } 1 \ls i \ls \ell(\nu)}.\]
For $g \in \mathcal{G}$, let
$U_g$ be the row-standard tableau formed from $S$ by moving all entries equal to $d$ from row $r+1$ to row $r$ and for $i \neq d$ moving $g_i$ entries equal to $i$ from row $r$ to row $r+1$. Then
\[\Theta_S = ({-}1)^{S^d_{r+1}} q^{-\binom{S^d_{r+1}+1}{2}} q^{-S^d_{r+1}S^{<d}_{r+1}} \sum_{g \in \mathcal{G}} q^{\bar{g}_{d-1}} \prod_{i=1}^{\ell(\nu)} q^{g_i S^{<i}_{r+1}} \gauss{S^i_{r+1}+g_i}{g_i}\Theta_{U_g}.\]
\item Suppose $1 \ls r\ls \ell(\mu)-1$ and $\mu_r=\mu_{r+1}$ and that $1 \ls d \ls \ell(\nu)$.  Let
\[\mathcal{G} =\rset{g\in\bbz_+^{\ell(\nu)}}{g_d=0, \, \textstyle\sum_{i=1}^{\ell(\nu)} g_i = S^d_r \text{ and } g_i \ls S^{i}_{r+1} \text{ for } 1 \ls i \ls \ell(\nu)}.\]
For $g \in \mathcal{G}$, let 
$U_g$ be the row-standard tableau formed form $S$ by moving all entries equal to $d$ from row $r$ to row $r+1$ and for $i \neq d$ moving $g_i$ entries equal to $i$ from row $r+1$ to row $r$. Then
\[\Theta_S =  ({-}1)^{S^d_{r}} q^{-\binom{S^d_{r}}{2}} q^{-S^d_r S^{>d}_r} \sum_{g \in \mathcal{G}} q^{-\bar{g}_{d-1}}  \prod_{i=1}^{\ell(\nu)} q^{g_i S^{>i}_{r}} \gauss{S^i_{r}+g_i}{g_i} \Theta_{U_g}.\]
\end{enumerate}
\end{propn}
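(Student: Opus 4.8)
The plan is to exhibit an explicit nonzero element of $\ehom_{\h_n}(S^\mu,S^\la)$; since $\ehom_{\h_n}(S^\mu,S^\la)\subseteq\hom_{\h_n}(S^\mu,S^\la)$ this gives $\hom_{\h_n}(S^\mu,S^\la)\neq0$. By the basis $\{\Theta_T\mid T\in\calt_0(\mu,\la)\}$ of $\ehom_{\h_n}(S^\mu,M^\la)$, any such element has the form $\Theta=\sum_{T\in\calt_0(\mu,\la)}c_T\Theta_T$, and by the Kernel Intersection Theorem $\im(\Theta)\subseteq S^\la$ if and only if $\psi_{d,t}\circ\Theta=0$ for all $1\ls d<\ell(\la)$ and $1\ls t\ls\la_{d+1}$. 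Since $\psi_{d,t}\circ\Theta_T$ lies in $\ehom_{\h_n}(S^\mu,M^{\la(d,t)})$, it vanishes automatically unless $\mu\dom{\arrowover{\la(d,t)}}$; a short dominance computation, using that $\mu$ is obtained from $\la$ by moving one node from row $\ell(\la)$ to row $x+1$ and one to row $x+2$ (note $\la_{\ell(\la)}=2$), shows this forces $d\gs x+1$ and reduces the problem to the checks $(d,t)=(x+1,1)$ together with $(d,t)\in\{x+2,\dots,\ell(\la)-1\}\times\{1,2\}$. Moreover, since $(x+1,\la_{x+1}+1)$ is addable we have $\la_x>\la_{x+1}$, which forces every $T\in\calt_0(\mu,\la)$ to have rows $1,\dots,x$ filled with $1^{\mu_1},\dots,x^{\mu_x}$; so each relevant tableau is determined by its restriction to rows $x+1,\dots,\ell(\la)$, which has shape $\mu^\circ=\bigl((g+f+s'+1)^2,(g+f+s')^{s-2},g+f+s'-1,\dots,g+s',g,\dots,3\bigr)$ and type $\nu$, and each $\psi_{d,t}$ with $d\gs x+1$ involves only those rows. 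It therefore suffices to construct the homomorphism in the core case $\la=\nu$, $\mu=\mu^\circ$.

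With $\la=\nu$ fixed I would write down the candidate $\Theta$ explicitly as a linear combination $\sum_T c_T\Theta_T$ over a family of semistandard $\mu$-tableaux $T$ of type $\nu$ obtained from a fixed reference tableau by a controlled redistribution of the entries in the first two rows and the last few rows of $\la$, with coefficients $c_T$ given by products of signs and quantum binomial coefficients $\gauss{m}{j}$ specialised at $q=-1$; the case division ``$s,s'$ odd'' versus ``$s=2$, $s'$ even, $f=0$'' in the definition of MH-reducible should correspond to two slightly different such families. Nonvanishing of $\Theta$ is the easy part: one checks that a distinguished tableau $T^\circ$ occurs in the sum with coefficient a nonzero scalar (this is where $\cha(\bbf)=0$ matters), and since $\{\Theta_T\mid T\in\calt_0(\mu,\nu)\}$ is linearly independent it follows that $\Theta\neq0$. (Separately, the inequality $s'\gs s$ is what guarantees that $\mu$ is obtained from $\la$ by moving nodes to strictly longer ladders, as used in the deduction of Proposition~\ref{P:MHRed} from Proposition~\ref{P:MH}.)

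The substance of the proof is the verification that $\psi_{d,t}\circ\Theta=0$ for each of the checks isolated above. For a fixed $(d,t)$ I would apply Proposition~\ref{Lemma5} to expand each $\psi_{d,t}\circ\Theta_T$ as a sum of $\Theta_S$ over row-standard (generally non-semistandard) tableaux $S$ of type $\la(d,t)$, then repeatedly apply both parts of Proposition~\ref{Lemma7} — part~(2), which requires $\mu_r=\mu_{r+1}$, being available precisely because $\mu$ (and $\la$) have two equal rows at the head, which is one place the hypothesis $s\gs2$ enters — to rewrite every $\Theta_S$ in terms of semistandard $\Theta_U$, and finally check that the total coefficient of each surviving $\Theta_U$ is zero. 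This is where the quantum-integer arithmetic and the parity hypotheses are used: at $q=-1$ the integers $[m]$ equal $0$ or $\pm1$, the coefficients $\gauss{m}{j}$ collapse by Lucas' theorem to binomial coefficients modulo $2$, and the assumptions that $s$ and $s'$ are odd (respectively that $s=2$, $s'$ is even and $f=0$) are exactly what forces the surviving contributions to pair off and cancel. I expect this last step — organising the tableaux $S$ and $U$, applying the two parts of Proposition~\ref{Lemma7} in a consistent order, tracking the many $q=-1$ evaluations, and keeping the bookkeeping uniform over the whole family of checks — to be the long and technically demanding core of the argument; everything preceding it is essentially formal.
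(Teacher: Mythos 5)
You have sketched a proof of the wrong statement. The statement at issue is Proposition~\ref{Lemma7}, the semistandardisation lemma: it expresses a tableau homomorphism $\Theta_S$ as an explicit $q$-linear combination of homomorphisms $\Theta_{U_g}$ indexed by redistributions $g$ of entries between two adjacent rows $r,r+1$ of $S$, with coefficients involving quantum binomials $\gauss{S^i_{r+1}+g_i}{g_i}$. This result is not proved in the present paper at all; it is cited from \cite[Theorem~4.2]{L3}, and a proof would have to be carried out at the level of the permutation modules $M^\nu$ and $M^{\mu}$ themselves, starting from the explicit Murphy-type description of the maps $\check\Theta_S$ and $\psi_{d,t}$ on standard basis elements and performing a straightening computation to relate $\Theta_S$ to the $\Theta_{U_g}$. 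Nothing in your proposal bears on any of that.

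What you have instead written is a high-level plan for Proposition~\ref{P:MH} (equivalently, after row removal via Proposition~\ref{P:RowRem}, Proposition~\ref{mainhom}): exhibit $\Theta=\sum_T c_T\Theta_T\in\ehom_{\h_n}(S^\mu,M^\la)$, verify $\im(\Theta)\subseteq S^\la$ via the Kernel Intersection Theorem, prune the checks $(d,t)$ by dominance (the remark in \S\ref{homspecht}), and then cancel contributions using Propositions~\ref{Lemma5} and~\ref{Lemma7} with $q=-1$ arithmetic. That outline is indeed broadly the shape of the paper's proof in Section~\ref{S:Hom}, but it answers a different question, and in particular it \emph{uses} Proposition~\ref{Lemma7} as a black box rather than proving it. As a response to the statement actually posed, there is no overlap: you have neither identified what needs to be shown (an identity among tableau homomorphisms into a permutation module, valid over any $\bbf$ and any $q$, not just at $q=-1$) nor engaged with the module-theoretic computations from which such an identity is extracted.
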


We remark that since the first draft of this paper was written, the first author has proved a more general result giving linear relations between tableau homomorphisms \cite{f4}, which yields an explicit fast algorithm for `semistandardising' a homomorphism.  However, the result above will be sufficient in this paper.

The following result~\cite[Theorem 3.1]{LM} or~\cite[Prop.~10.4]{D} often allows us to simplify our calculations.  

\begin{propn}\label{P:RowRem}
Suppose that $\la$ and $\mu$ are partitions of $n$ and that for some $x \gs 0$ we have $\la_i=\mu_i$ for $1 \ls i \ls x$.  Let $\bar\la=(\la_{x+1},\la_{x+2},\ldots)$ and $\bar\mu=(\mu_{x+1},\mu_{x+2},\ldots)$, and let $m=|\bar\la|=|\bar\mu|$.  Then
\[\dim_\bbf\ehom_{\h_n}(S^\mu,S^\la) = \dim_\bbf\ehom_{\h_m}(S^{\bar\mu},S^{\bar\la}).\] 
\end{propn}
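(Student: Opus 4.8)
If $x=0$ there is nothing to prove, so assume $x\gs1$. The plan is to build, out of the natural ``row-removal'' operation on semistandard tableaux, a linear isomorphism $\Phi$ from $\ehom_{\h_n}(S^\mu,M^\la)$ onto $\ehom_{\h_m}(S^{\bar\mu},M^{\bar\la})$, and then to prove that $\Phi$ restricts to an isomorphism from $\ehom_{\h_n}(S^\mu,S^\la)$ onto $\ehom_{\h_m}(S^{\bar\mu},S^{\bar\la})$.

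The combinatorial input is this: if $\la_i=\mu_i$ for $1\ls i\ls x$, then in \emph{any} semistandard $\mu$-tableau $T$ of type $\la$ --- indeed of any composition whose first $x$ parts agree with those of $\mu$ --- row $i$ is filled entirely with the entry $i$, for each $i\ls x$; this is an easy induction on $i$ using column-strictness together with $\mu_i=\la_i$. Consequently, deleting rows $1,\dots,x$ of $T$ and subtracting $x$ from every remaining entry defines a bijection $T\mapsto\bar T$ from $\calt_0(\mu,\la)$ onto $\calt_0(\bar\mu,\bar\la)$. This already yields $\dim_\bbf\ehom_{\h_n}(S^\mu,M^\la)=\dim_\bbf\ehom_{\h_m}(S^{\bar\mu},M^{\bar\la})$, and I take $\Phi$ to be the resulting isomorphism $\Theta_T\mapsto\Theta_{\bar T}$. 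The same recipe gives, for each $d>x$ and each $1\ls t\ls\la_{d+1}$, a bijection $\calt_0(\mu,\la(d,t))\to\calt_0(\bar\mu,\overline{\la(d,t)})$, where one checks directly that $\overline{\la(d,t)}=\bar\la(d-x,t)$.

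Next I would use the Kernel Intersection Theorem: $\Theta\in\ehom_{\h_n}(S^\mu,M^\la)$ lies in $\ehom_{\h_n}(S^\mu,S^\la)$ precisely when $\psi_{d,t}\circ\Theta=0$ for all $1\ls d<\ell(\la)$ and $1\ls t\ls\la_{d+1}$. The equations with $d\ls x$ are automatic: $\psi_{d,t}\circ\Theta_T$ lies in $\ehom_{\h_n}(S^\mu,M^{\la(d,t)})$, which is zero unless $\mu\dom{\arrowover{\la(d,t)}}$, while
\[
\textstyle\sum_{i=1}^d\bigl({\arrowover{\la(d,t)}}\bigr)_i \gs \sum_{i=1}^d\la(d,t)_i = t+\sum_{i=1}^d\la_i = t+\sum_{i=1}^d\mu_i > \sum_{i=1}^d\mu_i
\]
(sorting a finite sequence into weakly decreasing order can only increase its partial sums), so $\mu\ndom{\arrowover{\la(d,t)}}$ and $\psi_{d,t}\circ\Theta_T=0$ for every semistandard $T$. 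For $d>x$ I would instead show that $\Phi$ intertwines $\psi_{d,t}$ with $\psi_{d-x,t}$: in the formula of Proposition~\ref{Lemma5}, the set $\mathcal S$ of row-standard tableaux obtained by changing $t$ of the entries $d{+}1$ in $T$ into $d$ is in bijection with the analogous set for $\bar T$ --- all the affected entries lie in rows $>x$ --- and each accompanying power of $q$ and each Gaussian binomial $\gauss{S^d_j}{T^d_j}$ depends only on the numbers $T^k_j$ and $S^k_j$ with $k,j>x$, which are exactly the corresponding data for $\bar T$ and $\bar S$; moreover straightening each resulting $\Theta_S$ into the semistandard basis (by iterating Proposition~\ref{Lemma7}, or via the algorithm of \cite{f4}) only ever moves entries $>x$ among rows indexed $>x$, and so mirrors the corresponding straightening of $\Theta_{\bar S}$ with identical coefficients. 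Granting all this, for $d>x$ we get $\psi_{d,t}\circ\Theta=0$ if and only if $\psi_{d-x,t}\circ\Phi(\Theta)=0$; combined with the vacuity of the equations for $d\ls x$, this gives $\Theta\in\ehom_{\h_n}(S^\mu,S^\la)$ if and only if $\Phi(\Theta)\in\ehom_{\h_m}(S^{\bar\mu},S^{\bar\la})$, so $\Phi$ restricts to the desired isomorphism and the two dimensions coincide.

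The main obstacle is the intertwining claim for $d>x$. Proposition~\ref{Lemma5} outputs homomorphisms labelled by \emph{non}-semistandard tableaux, so before coefficients can be matched up one must straighten via Proposition~\ref{Lemma7}, and the delicate point is to check that throughout this process the top $x$ rows retain their diagonal form --- in particular, that no column-failure ever appears at the junction of rows $x$ and $x+1$, so that only rows indexed $>x$ are ever touched --- and hence that the linear relations among the $\Theta_S$ obtained upstairs are exactly the images under $S\mapsto\bar S$ of the ones obtained downstairs. Given how explicit and local the coefficients in Propositions~\ref{Lemma5} and~\ref{Lemma7} are, this is a bookkeeping exercise rather than a conceptual difficulty, but it is where the real work lies.
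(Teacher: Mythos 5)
The paper does not actually prove this proposition: it is quoted from Lyle--Mathas \cite[Theorem~3.1]{LM} (or, equivalently, Donkin \cite[Prop.~10.4]{D}), so there is no ``paper's own proof'' to compare against. I'll therefore assess your proposal on its own merits.

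Your overall framework is sound and is close in spirit to the Lyle--Mathas approach. Three of your ingredients are correct and cleanly argued: (i) in any semistandard $\mu$-tableau whose type $\nu$ satisfies $\nu_i=\mu_i$ for $i\ls x$, the top $x$ rows are forced to be diagonal, so deleting them and subtracting $x$ gives a bijection $\calt_0(\mu,\nu)\to\calt_0(\bar\mu,\bar\nu)$; (ii) the dominance argument for $d\ls x$ --- the partial sum of the sorted composition at position $d$ strictly exceeds $\sum_{i\ls d}\mu_i$, so $\ehom_{\h_n}(S^\mu,M^{\la(d,t)})=0$ --- is correct; and (iii) the Kernel Intersection Theorem reduces membership in $\ehom(S^\mu,S^\la)$ to vanishing of the compositions $\psi_{d,t}\circ\Theta$, so the remaining task is indeed just the intertwining claim for $d>x$.

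The gap is exactly where you place it, but I do not think it is fair to dismiss it as ``bookkeeping rather than a conceptual difficulty.'' What you need is the statement that, for a row-standard $S$ of type $\la(d,t)$ with diagonal top $x$ rows, the unique expansion $\Theta_S=\sum_Uc_{S,U}\Theta_U$ over semistandard $U$ has $c_{S,U}=c_{\bar S,\bar U}$. Your argument is that any straightening via Proposition~\ref{Lemma7} never involves a row $\ls x$, because the top rows are diagonal and all entries below are $>x$; this is \emph{plausible}, but you have not actually shown that some specific straightening procedure terminates, resolves every column failure by touching only rows $>x$, and produces, at every intermediate step, exactly the same relation (with the same quantum integers and signs) as the corresponding step applied to $\bar S$. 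Proposition~\ref{Lemma7} is a supply of relations, not an algorithm, so you need to either specify and analyse an iteration scheme, or invoke the algorithm of \cite{f4} and verify locality for it. Until that is done, the crucial equality $\rho\circ(\psi_{d,t}\circ-)=(\psi_{d-x,t}\circ-)\circ\Phi$ is unproven, and the proposal is an outline rather than a proof. (Your observation that all semistandard tableaux of type $\la(d,t)$ with $d>x$ are automatically diagonal on top is genuinely useful --- it at least guarantees the straightened answer lives in the expected subspace --- but it does not by itself show that the coefficients agree with those downstairs.)
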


We will also make use of the next result.  

\begin{lemma}\label{toomany}
Suppose $V$ is a $\la$-tableau such that for some $k$ there are $m$ entries equal to $k$ which all lie in rows of length strictly less than $m$.  Then $\Theta_V=0$.
\end{lemma}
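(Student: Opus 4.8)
The plan is to apply the first part of Proposition~\ref{Lemma7} repeatedly, using it to ``push'' all the entries equal to $k$ upward one row at a time, until they are forced to sit in a single row that is too short to hold them; at that point the index set $\mathcal{G}$ occurring in Proposition~\ref{Lemma7} is empty, and the corresponding sum of tableau homomorphisms is the empty sum, so $\Theta_V=0$.

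In detail: write $\nu$ for the type of $V$, so $\Theta_V\colon S^\la\to M^\nu$, and let $W^k_r$ denote the number of entries equal to $k$ in row $r$ of a row-standard tableau $W$. We may assume $m\gs1$, as otherwise there is nothing to prove. Since $\la$ is a partition, the rows of length strictly less than $m$ are precisely rows $R,R+1,\dots,\ell(\la)$, where $R$ is minimal with $\la_R<m$; because the $m$ prescribed entries equal to $k$ all lie in such rows we have $\sum_{j\gs R}V^k_j\gs m$, and because a single row of length less than $m$ cannot contain $m$ entries we have $R<\ell(\la)$. Now apply Proposition~\ref{Lemma7}(1), always with $d=k$, successively to the pairs of rows $(r,r+1)$ for $r=\ell(\la)-1,\ell(\la)-2,\dots,R$. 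Each such application moves \emph{all} entries equal to $k$ from row $r+1$ into row $r$, replacing them by the same number of entries of other values moved down from row $r$; hence it keeps the shape equal to $\la$, does not alter the number of entries equal to $k$ in any row other than $r$ and $r+1$, and removes no entry equal to $k$ from the union of rows $R,\dots,\ell(\la)$. Consequently, by the time the pair $(r,r+1)$ is reached, every tableau then occurring has $V^k_r$ entries equal to $k$ in row $r$ and $\sum_{j\gs r+1}V^k_j$ entries equal to $k$ in row $r+1$, while row $r$ contains $\la_r-V^k_r$ entries not equal to $k$. The set $\mathcal{G}$ of Proposition~\ref{Lemma7}(1) is therefore empty unless $\sum_{j\gs r+1}V^k_j\ls\la_r-V^k_r$, that is, unless $\sum_{j\gs r}V^k_j\ls\la_r$. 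For $r=R$ this fails, since $\sum_{j\gs R}V^k_j\gs m>\la_R$. So at the step $r=R$ (if not earlier) every tableau homomorphism then under consideration is expressed as a sum over the empty set, and unwinding the chain of identities back up gives $\Theta_V=0$.

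The proof is essentially bookkeeping, and the only point that needs care is that the numbers $V^k_r$ which decide whether $\mathcal{G}$ is empty are the same for every one of the (possibly many) tableaux produced during the sweep; this holds because an application of Proposition~\ref{Lemma7}(1) with $d=k$ redistributes the entries equal to $k$ in a way that depends only on how many of them lie in the two rows concerned, not on which other entries are moved. Thus the single inequality $\sum_{j\gs R}V^k_j>\la_R$ forces $\mathcal{G}=\varnothing$ simultaneously for all of them at the critical step, which is what makes the argument go through.
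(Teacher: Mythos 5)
Your proof is correct and follows essentially the same route as the paper's: repeated application of Proposition~\ref{Lemma7}(1) with $d=k$ to push the entries equal to $k$ upward one row at a time until the target row is too short to hold them, at which point the index set $\mathcal{G}$ is empty. You simply make explicit the bookkeeping the paper leaves implicit, namely that every tableau produced at a given stage of the sweep has the same distribution of $k$-entries among its rows, so that $\mathcal{G}=\varnothing$ occurs simultaneously for all of them at the critical step $r=R$.
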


\begin{proof}
Choose $y$ minimal such that $V^k_y \neq 0$.  
We may apply Proposition~\ref{Lemma7} repeatedly to write $\Theta_V$ as a linear combination of homomorphisms indexed by tableaux obtained by moving all entries equal to $k$ in $V$ upwards until they are all contained in row $y$.  But by assumption there are no such tableaux.   
\end{proof}

We may now use Proposition~\ref{Lemma5} and Proposition~\ref{Lemma7} to give a proof of Proposition~\ref{P:MH}.  We now return to the assumption that $\bbf$ has characteristic 0 and that $q={-}1$.  
By Proposition~\ref{P:RowRem} the proof of Proposition~\ref{P:MH} follows from the following proposition.

\begin{propn} \label{mainhom}
Fix integers $s,s',f,g$ with $f\gs0$, $g\gs2$, $s'\gs s\gs2$ and 
\begin{itemize}
\item $s$ and $s'$ are odd; or
\item $s=2$, $s'$ is even and $f=0$.  
\end{itemize}
Define
\begin{align*}
\mu &= \left((g+f+s'+1)^2,(g+f+s')^{s-2},g+f+s'-1,g+f+s'-2,\dots,g+s',g,g-1,\dots,3\right),\\
\la &= \left((g+f+s')^{s},g+f+s'-1,g+f+s'-2,\dots,g+s',g,g-1,\dots,2\right),
\end{align*}
and let $n=|\la|=|\mu|$. Then 
\[\ehom_{\h_n}(S^\mu,S^\la)\neq0.\]
\end{propn}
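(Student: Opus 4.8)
\textbf{Proof plan for Proposition~\ref{mainhom}.}

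The plan is to exhibit an explicit element of $\ehom_{\h_n}(S^\mu,S^\la)$ as a linear combination of the tableau homomorphisms $\Theta_T$ with $T\in\calt_0(\mu,\la)$, and verify it lies in $\ehom_{\h_n}(S^\mu,S^\la)$ by checking, via Proposition~\ref{Lemma5}, that its composition with every $\psi_{d,t}$ vanishes. First I would set up coordinates: $\la$ has a large rectangular block of $s$ rows of length $g+f+s'$, a ``staircase'' of rows descending from $g+f+s'-1$ down to $g+s'$, and a lower staircase descending from $g$ down to $2$; $\mu$ is obtained by adding a node to each of rows $1$ and $2$ and removing two nodes from the last row. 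Since $\mu\dom\arrowover\la$, the space $\ehom_{\h_n}(S^\mu,M^\la)$ has a basis indexed by $\calt_0(\mu,\la)$; I would identify the (small) set of semistandard $\mu$-tableaux of type $\la$ that can plausibly contribute --- the parity hypotheses on $s,s'$ (and $f=0$ in the even case) should force the entries in rows $1,2$ and the last row to deviate from the ``diagonal'' filling only in a controlled way, so that the relevant tableaux differ from the obvious one only by moving a single entry (in the ``one-node'' spirit of Carter--Payne), or by a short chain of such moves. The conjectural homomorphism is then a sum over these tableaux with coefficients that are monomials (up to sign) in $q={-}1$, i.e.\ signs.

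The verification splits according to $d$. For most pairs $(d,t)$ the composite $\psi_{d,t}\circ\Theta$ is zero for cheap reasons: either $\mu\ndom\arrowover{\la(d,t)}$ (so $\ehom_{\h_n}(S^\mu,M^{\la(d,t)})=0$ and Proposition~\ref{Lemma5} gives $0$ termwise after semistandardising), or the resulting tableaux have too many entries of some value $k$ crammed into short rows and vanish by Lemma~\ref{toomany}. The genuine work is concentrated at the one or two values of $d$ that straddle the junctions between the rectangular block, the upper staircase, and the lower staircase --- these are the $d$ for which $\la(d,t)$ is still dominated by $\mu$ and for which the tableaux produced by Proposition~\ref{Lemma5} are genuinely competing. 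There I would apply Proposition~\ref{Lemma7} to rewrite each non-semistandard $\Theta_S$ appearing on the right-hand side of Proposition~\ref{Lemma5} as an $\bbf$-combination of $\Theta_{U_g}$ with $U_g$ semistandard (or zero), collect coefficients of each semistandard tableau, and check the total coefficient is $0$ in characteristic $0$ with $q={-}1$. Because $q={-}1$, the quantum integers $[m]$ are $0$ for even $m$ and $\pm1$ for odd $m$, and the Gaussian binomials $\gauss{m}{j}$ collapse to ordinary binomials ``mod $2$'' (a Lucas-type evaluation); the parity hypotheses on $s,s'$ are exactly what is needed to make the surviving coefficients cancel in pairs. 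I would organise this as a couple of lemmas: one computing $\psi_{d,t}\circ\Theta_T$ on the generating tableau(x), and one doing the semistandardisation bookkeeping at the critical $d$.

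The main obstacle I expect is precisely this last bookkeeping step: semistandardising via Proposition~\ref{Lemma7} produces sums over the index sets $\mathcal{G}$ of sequences $g$, and at $q={-}1$ one must track which terms survive and pair them up so that the coefficients of each semistandard tableau in $\psi_{d,t}\circ\Theta$ sum to zero. The even case ($s=2$, $s'$ even, $f=0$) and the odd case ($s,s'$ both odd) will likely need to be handled separately, since the mechanism of cancellation --- and which entry ``wants'' to move --- differs. A secondary subtlety is confirming that the proposed combination $\Theta$ is genuinely nonzero, i.e.\ that at least one $\Theta_T$ with a nonzero coefficient survives as a basis element of $\ehom_{\h_n}(S^\mu,M^\la)$; this is immediate once the coefficient of the ``most dominant'' semistandard tableau is pinned down to be $\pm1$. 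I would also keep Proposition~\ref{P:RowRem} in reserve to strip off the top rows that are never touched, reducing the tableau calculations to a bounded-size problem depending only on $s,s',f,g$ near the bottom of the diagram, which is what makes the explicit computation feasible.
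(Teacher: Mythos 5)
Your strategic framework matches the paper's: build $\Theta$ as an $\bbf$-linear combination of $\Theta_T$ with $T\in\calt_0(\mu,\la)$, push it through Proposition~\ref{Lemma5} for each $(d,t)$, discard most terms via dominance or Lemma~\ref{toomany}, semistandardise the survivors with Proposition~\ref{Lemma7}, and check the coefficients of each semistandard tableau cancel at $q={-}1$. You also correctly note that nonvanishing of $\Theta$ is automatic once at least one contributing tableau is semistandard with nonzero coefficient. So far so good.

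However, the proposal leaves out the genuinely hard part of the proof and, where it guesses at the answer, guesses wrong. You predict that the tableaux in the combination ``differ from the obvious one only by moving a single entry (in the `one-node' spirit of Carter--Payne), or by a short chain of such moves,'' and that the coefficients are ``monomials in $q={-}1$, i.e.\ signs.'' Neither is right. The paper's $\Theta$ is indexed by a \emph{product} of two index sets: a set $\mathcal{I}$ of $O(s^2)$ pairs $(i,j)$ controlling the behaviour inside the rectangular block of $s$ rows, and a set $\cala$ of $(2^{g-1})$-tableaux of type $(2^{g-1})$ (subject to a nontrivial ``splitting'' condition) controlling the lower staircase. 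Both factors grow with the parameters; there is no short chain. The coefficients are $m_{i,j}\sgn(T)$, where $\sgn(T)$ is a sign read off the splitting row of $T$ but $m_{2,3}=\tfrac12(s-1)$, which is not a sign and in particular requires characteristic $0$ in a way a ``sign'' combination would not. You also predict that the nontrivial cancellation is confined to ``one or two values of $d$'' at the junctions; in fact the paper needs a detailed thirteen-case analysis (Proposition~\ref{topbit}) just for $d=1,\dots,s-1$, a second long analysis (Propositions~\ref{bot1ab}--\ref{verybot2precise}) for $d$ in the lower staircase, and only the middle block $s\ls d\ls s+f-1$ is easy. Finally, Proposition~\ref{P:RowRem} has already been spent to reduce Proposition~\ref{P:MH} to this statement, so it is not available to further bound the problem; the computations here still scale with $s,s',f,g$.

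In short: the verification recipe is correct and identical to the paper's, but the proposal treats the explicit construction of the combination as a minor matter when it is the central creative content, and the shape you propose for it (a handful of near-diagonal tableaux with sign coefficients) is not the shape that works. Without the right index sets $\mathcal{I}$ and $\cala$, the sign function on $\cala$, and the coefficient $m_{i,j}$ (including the non-sign $m_{2,3}$), the cancellation at the staircase junctions will not close, and the plan as written cannot be carried out.
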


The remainder of this paper is devoted to proving Proposition~\ref{mainhom}.  

\subs{Proof of Proposition~\ref{mainhom} when $s$ and $s'$ are both odd} \label{S:MHRedOdd}

Fix integers $s,s',f,g,\la,\mu$ as in the statement of Proposition~\ref{mainhom} and assume $s$ and $s'$ are odd.  Let $l=\ell(\mu)=s+f+g-2$.  We will say a $\mu$-tableau (of arbitrary type) is \emph{usable} if for every row $i$, all except possibly the last two entries are equal to $i$.  All the tableaux we consider will be usable.  Given a usable tableau of shape $\mu$, we will often encode it simply by giving a tableau of shape $(2^l)$, recording the last two entries in each row.  Conversely, given a tableau of shape $(2^l)$, we will talk about the corresponding usable $\mu$-tableau.

Now we need some more definitions.  Suppose $1\ls i<j\ls s$.  Then there is a unique $(2^{s-1})$-tableau $S(i,j)$ of type $(1^2,2^{s-2})$ such that
\begin{align*}
S(i,j)_{1,2}& =i, \quad S(i,j)_{2,2}=j,\\ 
S(i,j)_{1,1}&\ls S(i,j)_{2,1}\ls S(i,j)_{3,1}\ls S(i,j)_{3,2}\ls S(i,j)_{4,1}\ls\cdots\ls S(i,j)_{s-1,2}.\tag*{($\ast$)}
\end{align*}
Define
\[
m_{ij}=\begin{cases}
\frac12(s-1)&(i=2,\ j=3)\\
({-}1)^{j+1}&(\text{otherwise}).
\end{cases}
\]
Later we shall also need a slight variant of the above definition.  Suppose $1\ls d\ls s-1$ and let
\[
\nu^d=\begin{cases}
(2,0,2^{s-2})&(d=1)\\
(1,2,1,2^{s-3})&(d=2)\\
(1^2,2^{d-3},3,1,2^{s-d-1})&(d\gs3);
\end{cases}
\]
that is, $\nu^d$ is the composition obtained from $(1^2,2^{s-2})$ by increasing the $d$th part by $1$ and decreasing the $(d+1)$th part by $1$.  Given $1 \ls i < j \ls s$ as above, but excluding the cases where $d=1$ and $i$ or $j$ is equal to 2, there is a unique $(2^{s-1})$-tableau $S^d(i,j)$ of type $\nu^d$ satisfying ($\ast$).

Next, we need to consider tableaux shape $(2^{g-1})$ and type $(2^{g-1})$.  Given such a tableau $T$ and given $1\ls i \ls g-1$, we will say that $T$ is \emph{split} at row $i$ if all the entries in rows $1,\dots,i$ are less than all the entries in rows $i+1,\dots,g-1$.  Let $\cala$ denote the set of $(2^{g-1})$tableaux $T$ of type $(2^{g-1})$ for which:
\begin{itemize}
\item
the entries in each row are weakly increasing;
\item
for each $k$, the entries in row $k$ are at least $k-1$;
\item
for all $2\ls k\ls g-2$, the first entry in row $k$ is strictly less than the second entry in row $k+1$;
\item
if $T$ is split at row $k$, then it is split at all rows $k+1,k+2,\dots,g-1$.
\end{itemize}
If $T \in \cala$, we define $\sgn(T)$ to be $({-}1)^a$, where $a$ is the first row at which $T$ is split.  

Now we can construct the semistandard $\mu$-tableaux which we will combine to give our homomorphism.  Set 
\[\mathcal{I}=\rset{(i,j)}{1 \leq i<j\leq s \text{ and $j$ is odd or } i \geq 3}.\]
Given $(i,j) \in \mathcal{I}$ and $T \in \cala$, construct a tableau of shape $(2^{s+f+g-2})$ as follows:
\begin{itemize}
\item
the first $s-1$ rows are just the rows of $S(i,j)$;
\item
for $s\ls k\ls s+f-1$, the entries in row $k$ are both equal to $k+1$;
\item
rows $s+f,\dots,s+f+g-2$ are the rows of $T$, with each entry increased by $s+f$.
\end{itemize}
Let $U(i,j,T)$ be the corresponding usable $\mu$-tableau, and let $\Theta(i,j,T)$ denote the corresponding homomorphism from $S^\mu$ to $M^\la$.

\begin{eg}
%\Yboxdim{14pt}
Suppose $(s,s',f,g)=(5,5,2,5)$.  Then $(2,5)\in\mathcal{I}$, and the tableau
\[
T=%\young(23,11,23,44)
\yo{\bxr2\bxd3\bxl1\bxd1\bxr2\bxd3\bxl4\bxl4}
\]
lies in $\cala$.  We have $m_{2,5}=1$ and $\sgn(T)={-}1$, and
\[
S(2,5)=%\young(12,35,34,45)
\yo{\bxr1\bxd2\bxl5\bxd3\bxr3\bxd4\bxl5\bxl4},\qquad
U(2,5,T)=%\young(1111111111112,2222222222235,333333333334,444444444445,555555555566,66666666677,777777779\ten,88888,999\ten,\ten\eleven\eleven)
\yo{
\bxl2\bxl1\bxl1\bxl1\bxl1\bxl1\bxl1\bxl1\bxl1\bxl1\bxl1\bxl1\bxd1
\bxr2\bxr2\bxr2\bxr2\bxr2\bxr2\bxr2\bxr2\bxr2\bxr2\bxr2\bxr3\bxd5\addtolength\xpos{-1cm}
\bxl4\bxl3\bxl3\bxl3\bxl3\bxl3\bxl3\bxl3\bxl3\bxl3\bxl3\bxd3
\bxr4\bxr4\bxr4\bxr4\bxr4\bxr4\bxr4\bxr4\bxr4\bxr4\bxr4\bxd5
\bxl6\bxl6\bxl5\bxl5\bxl5\bxl5\bxl5\bxl5\bxl5\bxl5\bxl5\bxd5
\bxr6\bxr6\bxr6\bxr6\bxr6\bxr6\bxr6\bxr6\bxr6\bxr7\bxd7\addtolength\xpos{-1cm}
\bxl{10}\bxl9\bxl7\bxl7\bxl7\bxl7\bxl7\bxl7\bxl7\bxd7
\bxr8\bxr8\bxr8\bxr8\bxd8\addtolength\xpos{-1cm}
\bxl{10}\bxl9\bxl9\bxd9
\bxr{10}\bxr{11}\bxr{11}
}.
\]

\end{eg}

Then we claim that
\[
\Theta=\sum_{(i,j) \in \mathcal{I}}\sum_{T \in \cala} m_{i,j}\sgn(T)\Theta(i,j,T)
\]
gives a homomorphism in $\ehom_{\h_n}(S^\mu,S^\la)$.  One can check that all the $U(i,j,T)$ are semistandard, so $\Theta$ is certainly non-zero. All we then need to do is check that $\psi_{d,t}\circ\Theta =0$ for all $d,t$.  By dominance considerations (see the remarks in the second paragraph of \S\ref{homspecht}), the only pairs $d,t$ that we need to consider are $(d,1)$ for $1\ls d\ls s+f+g-2$, and $(d,2)$ for $s+1\ls d\ls s+f+g-2$. 

For later use, we extend the notation above: given $1\ls i<j\ls s$ and given $1\ls d\ls s-1$, we define $\Theta^d(i,j,T)$ in the same way, but using the tableau $S^d(i,j)$ instead of $S(i,j)$; as above, we exclude the cases where $d=1$ and $i$ or $j$ is equal to 2.  

\subsubs{Notation for tableaux}

We list here a few items of notation that we shall use below.
\begin{itemize}
\item
If $V,W$ are row-standard tableaux, we shall use the notation $V\sra dr W$ to mean that $W$ is obtained from $V$ by replacing a \pp{} in row $r$ with a \dd, and we write $V\sra d{r,s}W$ to mean that $W$ is obtained by replacing two \pp s with \dd s, in rows $r$ and $s$ (where $r$ may equal $s$).
\item
If $T$ is a tableau and $1\ls i\ls j$, we write $T\tub ij$ for the tableau consisting of rows $i,\dots,j$ of $T$.
\item
If $T,U$ are tableaux of the same shape and $1\ls i\ls j$, we write $T\agree ijU$ to mean that the entries of $T$ and $U$ are the same except in rows $i,\dots,j$.
\end{itemize}

\subsubs{Rows $1$ to $s$}

Throughout this section, we let $m=g+f+s'$.

\begin{propn}\label{topbit}
Suppose $(i,j) \in \mathcal{I}$, $T \in \cala$ and $1\ls d\ls s-1$.  Then $\psi_{d,1}\circ\Theta(i,j,T)$ is given by the following.
\renewcommand\theenumi{\alph{enumi}}
\renewcommand\labelenumi{(\theenumi)}
\begin{enumerate}
\item%D,O
$\psi_{d,1}\circ\Theta(i,d,T)=({-}1)^{m}\Theta^d(i,d,T)$\hfill if $d\gs3$ and $i<d$.
\item%E
$\psi_{d,1}\circ\Theta(i,d+1,T)=({-}1)^{m}\Theta^d(i,d,T)$\hfill if $d\gs 4$ and $i<d$.
\item%F
$\psi_{d,1}\circ\Theta(i,j,T)=0$\hfill if $d\gs 4$, $i<d$ and $j\neq d,d+1$.
\item%G,Q
$\psi_{d,1}\circ\Theta(d,d+1,T)=({-}1)^{m+d+1}\Theta^d(2,d,T)$\hfill if $d\gs3$.
\item%H,R
$\psi_{d,1}\circ\Theta(d,j,T)=({-}1)^{m+1}\Theta^d(d,j,T)$\hfill if $d\gs2$ and $j\gs d+2$.
\item%I,S,M
$\psi_{d,1}\circ\Theta(d+1,j,T)=({-}1)^{m}\Theta^d(d,j,T)$\hfill if $d\gs2$ and $j\gs d+2$.
\item%J,T
$\psi_{d,1}\circ\Theta(i,j,T)=0$\hfill if $d\gs3$ and $i\gs d+2$.
\item%P
$\psi_{3,1}\circ\Theta(2,j,T)=({-}1)^{m+1}\Theta^3(2,3,T)$\hfill if $j\gs5$.
\item%K
$\psi_{2,1}\circ\Theta(2,3,T)=0$.
\item%N
$\psi_{2,1}\circ\Theta(i,j,T)=({-}1)^{m+i}\Theta^2(2,i,T)$\hfill if $i\gs4$.
\item%A
$\psi_{1,1}\circ\Theta(2,j,T)=0$.
\item%B
$\psi_{1,1}\circ\Theta(3,j,T)=({-}1)^{m}\Theta^1(1,j,T)$.
\item%C
$\psi_{1,1}\circ\Theta(i,j,T)=({-}1)^{m+i}\Theta^1(1,i,T)+({-}1)^{m+i+1}\Theta^1(1,j,T)$ \hfill if $i\gs4$.
\end{enumerate}
\renewcommand\theenumi{arabic{enumi}}
\renewcommand\labelenumi{\theenumi.}
\end{propn}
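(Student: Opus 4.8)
The proof of Proposition~\ref{topbit} is a direct, if lengthy, computation: for each $(i,j)\in\cali$, $T\in\cala$ and $1\ls d\ls s-1$ we expand $\psi_{d,1}\circ\Theta(i,j,T)$ using Proposition~\ref{Lemma5} with $t=1$, and then rewrite the resulting tableau homomorphisms in terms of the semistandard homomorphisms $\Theta^d(i',j',T)$ by means of Proposition~\ref{Lemma7}. I describe the four stages of this.

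\textbf{Setting up the expansion.} Since $d+1\ls s$, in the usable tableau $U(i,j,T)$ the entries equal to $d+1$ occur only in the bulk of row $d+1$ and among the last two entries of rows $1,\dots,s-1$, the latter positions being read off from $S(i,j)$. Proposition~\ref{Lemma5} with $t=1$ therefore writes $\psi_{d,1}\circ\Theta(i,j,T)$ as a sum, over the rows $r$ in which a single entry $d+1$ may be demoted to $d$, of $q^{T^d_{>r}}\,[T^d_r+1]\,\Theta_{S_r}$, where $S_r$ is the tableau obtained by performing that demotion. Indeed, only the $j=r$ factor of the product in Proposition~\ref{Lemma5} is non-trivial, and since $S^d_r=T^d_r+1$ it equals the quantum integer $\gauss{T^d_r+1}{T^d_r}=[T^d_r+1]$.

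\textbf{Killing the easy terms.} Two mechanisms eliminate most terms. First, at $q={-}1$ the quantum integer $[T^d_r+1]$ vanishes whenever $T^d_r$ is odd, so only those demotions survive for which row $r$ already contained an even number of entries equal to $d$. Second, demoting a $d+1$ in the bulk of row $d+1$, or in a row where it sits below too many entries equal to $d$, produces a tableau in which some value occurs more times than the length of any row available to it, so that $\Theta_{S_r}=0$ by Lemma~\ref{toomany}. For the configurations in which every surviving $S_r$ is killed in this way we obtain $\psi_{d,1}\circ\Theta(i,j,T)=0$; this gives parts (c), (g), (i) and (k).

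\textbf{Semistandardising the survivors, and reading off the sign.} For each remaining $S_r$ the only failure of semistandardness is a block of entries equal to $d$ in row $d+1$ lying below the block of $d$'s in row $d$, so we apply Proposition~\ref{Lemma7}(1) with value $d$ and row index $d$ to push those entries up. Here one splits according to whether $\mu_d=\mu_{d+1}$ (which holds for $d=1$ and for $3\ls d\ls s-1$) or $\mu_d>\mu_{d+1}$ (which holds only for $d=2$), since the ragged right-hand edge of $\mu$ in the top rows changes which tableaux appear in the index set of Proposition~\ref{Lemma7}(1); for $d=1,2,3$ one must also work with the irregular compositions $\nu^d$ and the tableaux $S^d(i,j)$ from the definitions, which is exactly why these values are singled out in the statement. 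After this rewriting --- again using Lemma~\ref{toomany} and the vanishing of the quantum integers $[n]$ and Gaussian binomials $\gauss{n}{k}$ at $q={-}1$ for appropriate $n,k$ --- every term collapses: in all cases but one the unique survivor is $\Theta^d(i',j',T)$ for the pair $(i',j')$ named in the statement, while in part (m) exactly two terms survive. Finally one collects the powers of $q$ contributed by Propositions~\ref{Lemma5} and~\ref{Lemma7} and specialises $q={-}1$; since the first row of $\la$ has length $m=g+f+s'$, these demotion and promotion steps move entries along rows of length of order $m$, and the accumulated exponent reduces modulo~$2$ to the stated value ($m$, $m+1$, $m+i$, $m+d+1$, as the case may be). The main obstacle is this last stage: the repeated application of Proposition~\ref{Lemma7} generates many intermediate non-semistandard tableaux, and one must verify carefully that all but one of them (or, in part (m), all but two) cancel or vanish, and that the survivor is precisely the tableau attached to $S^d(i',j')$. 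The thirteen cases (a)--(m) are exactly the possible positions of $d$ relative to $i$ and $j$, together with the small-value exceptions $d\ls3$, and each is an essentially routine --- if delicate --- instance of this calculation.
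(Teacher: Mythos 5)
Your high-level strategy — expand via Proposition~\ref{Lemma5}, then semistandardise via Proposition~\ref{Lemma7}, using Lemma~\ref{toomany} and the vanishing of even quantum integers at $q=-1$ — does match the overall shape of the paper's argument, and your observation that the ragged right edge of $\mu$ (rows $1,2$ longer than rows $3,\dots,s$) forces a case split at $d=1,2$ versus $d\gs3$ is correct and relevant. However, the proposal has two genuine gaps.

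First, your ``killing the easy terms'' paragraph is wrong about which parts are disposed of by Lemma~\ref{toomany} alone. You list (c), (g), (i) and (k) as coming from pure vanishing. Parts (c) and (g) do indeed follow from Proposition~\ref{Lemma5} together with Lemma~\ref{toomany}, but parts (i) and (k) do not: for $\psi_{2,1}\circ\Theta(2,3,T)$, for example, Proposition~\ref{Lemma5} produces two surviving terms with coefficients $[m]$ and $1$, and Proposition~\ref{Lemma7} must then be used to show the second term is $-[m-2]$ times the first, so the two cancel since $[m]=[m-2]$ at $q={-}1$. Neither tableau has ``too many entries,'' so Lemma~\ref{toomany} does not apply. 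Similarly your claim that demoting a $d+1$ in the bulk of row $d+1$ always gives a tableau killed by Lemma~\ref{toomany} is not right: in several cases (a), (b), (d), (e) this term is non-zero and must instead be re-expressed as a multiple of another survivor by Proposition~\ref{Lemma7}.

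Second, and more seriously, the ``semistandardising the survivors'' step is where the real work lies, and your proposal does not actually carry it out. For the non-trivial parts — (d), the $d=2$ sub-case of (e), (h), (j) and especially (m) — the surviving non-semistandard tableaux cannot be resolved by a single application of Proposition~\ref{Lemma7}. One needs iterated manipulations, with careful tracking of signs and identification of which intermediate terms vanish, and these rely on structural facts about chains of rows of nearly equal length. The paper isolates precisely the re-writing identities required in a set of preliminary lemmas (on pairs and blocks of rows), and the case analysis in Proposition~\ref{topbit} invokes them repeatedly; for instance part (m) requires first splitting $\Theta_V$ into three terms, showing one vanishes after a long chain of manipulations, and identifying the other two as $\pm\Theta^1(1,i,T)$ and $\pm\Theta^1(1,j,T)$. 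Your assertion that ``every term collapses'' with ``the unique survivor'' being the stated one is precisely the content that needs proof, and the sign bookkeeping (``the accumulated exponent reduces modulo~$2$ to the stated value'') cannot be read off without doing these calculations. As written, the proposal is a plausible plan but not a proof; the thirteen cases require explicit auxiliary lemmas to make the semistandardisation tractable.
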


Given this, it is straightforward to check the following corollary.

\begin{cory}\label{topcory}
Suppose $T\in \cala$.  Then for $1\ls d\ls s-1$, we have
\[
\psi_{d,1}\circ\left(\sum_{(i,j) \in \mathcal{I}}m_{i,j}\Theta(i,j,T)\right)=0.
\]
Hence $\psi_{d,1}\circ\Theta =0$ for $1\ls d\ls s-1$.
\end{cory}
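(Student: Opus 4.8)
The plan is to read the corollary off from Proposition~\ref{topbit} by a direct coefficient-chasing argument. Fix $T\in\cala$ and $d$ with $1\ls d\ls s-1$. By Proposition~\ref{topbit}, for each pair $(i,j)\in\mathcal{I}$ the homomorphism $\psi_{d,1}\circ\Theta(i,j,T)$ is a linear combination (of at most two terms) of homomorphisms of the shape $\Theta^d(i',j',T)$; hence so is $\psi_{d,1}\circ\bigl(\sum_{(i,j)\in\mathcal{I}}m_{i,j}\Theta(i,j,T)\bigr)$. Grouping the resulting terms according to the label $(i',j')$ of the target $\Theta^d(i',j',T)$, it suffices to show that within each group the scalar coefficients sum to zero; this requires no linear independence among the $\Theta^d(i',j',T)$.

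In the generic range $d\gs4$ only parts (a)--(g) of Proposition~\ref{topbit} are relevant, and between them they cover every pair in $\mathcal{I}$: those with $i<d$ via (a),(b),(c); those with $i=d$ via (d),(e); those with $i=d+1$ via (f); and those with $i\gs d+2$ via (g). The only non-zero targets that occur are $\Theta^d(i,d,T)$ with $i<d$ and $\Theta^d(d,j,T)$ with $j\gs d+2$. For a target $\Theta^d(i,d,T)$ with $i\gs3$, both $(i,d)$ and $(i,d+1)$ lie in $\mathcal{I}$ and $m_{i,d}=({-}1)^{d+1}=-m_{i,d+1}$, so the contributions from (a) and (b) cancel. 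For the target $\Theta^d(2,d,T)$, exactly one of $(2,d),(2,d+1)$ lies in $\mathcal{I}$ according to the parity of $d$, and its contribution (through (a) or (b)) is cancelled against the contribution from (d), using $m_{d,d+1}=({-}1)^{d}$. For a target $\Theta^d(d,j,T)$ one has $m_{d,j}=m_{d+1,j}=({-}1)^{j+1}$ while the relevant signs are $({-}1)^{m+1}$ in (e) and $({-}1)^{m}$ in (f), so these contributions cancel. Parts (c) and (g) contribute nothing.

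The cases $d=1,2,3$ proceed in the same way, now invoking parts (h)--(m) together with the relevant parts of (a)--(g), and these are the cases in which the non-obvious value $m_{2,3}=\tfrac12(s-1)$ is forced. For instance, when $d=3$ the coefficient of $\Theta^3(2,3,T)$ collects $m_{2,3}({-}1)^{m}=\tfrac12(s-1)({-}1)^{m}$ from (a) applied to $(2,3)$, the term $m_{3,4}({-}1)^{m+4}=-({-}1)^{m}$ from (d) applied to $(3,4)$, and a term $-({-}1)^{m}$ from (h) for each of the $\tfrac12(s-3)$ odd pairs $(2,j)$ with $5\ls j\ls s$; these sum to $({-}1)^{m}\bigl(\tfrac{s-1}{2}-1-\tfrac{s-3}{2}\bigr)=0$. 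The same mechanism disposes of every remaining target for $d=1,2,3$. The only real work is this finite, purely mechanical check for the small values of $d$; it becomes routine once one uses the telescoping identity $({-}1)^{j+1}+({-}1)^{j+2}=0$ together with the parity restrictions built into the definition of $\mathcal{I}$. I expect this bookkeeping to be the only obstacle.

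The concluding sentence of the corollary then follows at once: since $\Theta=\sum_{(i,j)\in\mathcal{I}}\sum_{T\in\cala}m_{i,j}\sgn(T)\Theta(i,j,T)$, linearity of $\psi_{d,1}$ and the identity just established for each $T\in\cala$ give
\[
\psi_{d,1}\circ\Theta=\sum_{T\in\cala}\sgn(T)\,\psi_{d,1}\circ\Bigl(\sum_{(i,j)\in\mathcal{I}}m_{i,j}\Theta(i,j,T)\Bigr)=0
\]
for all $1\ls d\ls s-1$.
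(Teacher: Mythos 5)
Your proof is correct and takes the same approach the paper has in mind: the paper gives no argument at all, merely declaring the corollary ``straightforward to check'' from Proposition~\ref{topbit}, and you supply exactly that routine coefficient-chasing. Your framing --- group contributions by the label of the output $\Theta^d(i',j',T)$ and show the scalar coefficients cancel within each group, so that no linear independence among those homomorphisms is needed --- is the right one, your verification for $d\gs4$ and the $d=3$ sample are accurate, and the remaining small-$d$ targets do telescope in the same way (using the parity built into $\mathcal{I}$ and the fact that $s$ is odd).
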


In order to prove Proposition \ref{topbit}, we need a few preliminary results concerning tableau homomorphisms.

%\Yboxdim{14pt}

\begin{lemma}\label{aaccbbb}
Suppose $\mu$ is a partition and $i\gs1$ is such that $\mu_{i+1}=\mu_i-1$.  Suppose $V,W,X$ are $\mu$-tableaux such that $V\agree i{i+1}W\agree i{i+1}X$ and
%\[V\tub i{i+1}=\text{\small$\gyoung(;a_2\cdots;a;c;c,;b_3\cdots;b)$},\qquad W\tub i{i+1}=\text{\small$\gyoung(;a_2\cdots;a;b;c,;b_2\cdots;b;c)$},\qquad X\tub i{i+1}=\text{\small$\gyoung(;a_3\cdots;a;b;b,;b_2\cdots;b;c;c)$},\]
\[
V\tub i{i+1}=\yo{\setxy\bxr a\hdbxyr{2}\bxr a\bxr c\bxd c\setlength\xpos{0cm}\bxr b\hdbxyr{3}\bxy b},\qquad W\tub i{i+1}=\yo{\setxy\bxr a\hdbxyr{2}\bxr a\bxr b\bxd c\setlength\xpos{0cm}\bxr b\hdbxyr{2}\bxr b\bxy c},\qquad
X\tub i{i+1}=\yo{\setxy\bxr a\hdbxyr{2.5}\bxr a\bxr b\bxd b\setlength\xpos{0cm}\bxr b\hdbxyr{1.5}\bxr b\bxr c\bxy c},
\]
where $a<b<c$.  Then $\Theta_V=-\Theta_W-\Theta_X$.
\end{lemma}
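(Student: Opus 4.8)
The plan is to apply Proposition~\ref{Lemma7}(1) to the three tableaux $V$, $W$, $X$ at the pair of rows $(i,i+1)$ (so $r=i$, noting $\mu_{i+1}=\mu_i-1$ makes this the applicable case), and to compare the resulting expansions. In each case one chooses $d=c$ (or whichever entry it is convenient to move up); since the entries of $V$, $W$, $X$ agree outside rows $i,i+1$, and each consists of a block of $a$'s, then $b$'s, with one or two copies of $b$ or $c$ in the distinguished positions, the index set $\mathcal{G}$ in Proposition~\ref{Lemma7}(1) will be small — essentially parametrised by how many $a$'s and $b$'s get pushed down from row $i$ to row $i+1$ when the $c$'s are pulled up. The key point is that $V$, $W$, $X$ are precisely the three tableaux one can build in rows $i,i+1$ with the given multiset of entries (namely $\mu_i$ copies of something $\ls a$... more precisely: the $a$-block, plus the symbols $\{b,b,\dots,b,c,c\}$ arranged with $c$'s occupying $0$, $1$, or $2$ of the two special slots), so their $\Theta_{U_g}$-expansions all run over the \emph{same} family of target tableaux.

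First I would fix notation for the common target tableaux: let $Y_k$ ($k$ ranging over the relevant values) denote the row-standard $\mu$-tableau agreeing with $V,W,X$ outside rows $i,i+1$ and having, in rows $i,i+1$, the entries $a,\dots,a$ in row $i$ with $k$ of the $b$'s demoted, etc. Then I would write out $\Theta_V$, $\Theta_W$, $\Theta_X$ each as an explicit linear combination $\sum_k \alpha_k \Theta_{Y_k}$, $\sum_k \beta_k \Theta_{Y_k}$, $\sum_k \gamma_k \Theta_{Y_k}$, reading the coefficients off Proposition~\ref{Lemma7}(1): each is a product of a sign $(-1)^{S^d_{r+1}}$, a power of $q$ coming from the binomial-exponent term $q^{-\binom{S^d_{r+1}+1}{2}}$, the cross term $q^{-S^d_{r+1}S^{<d}_{r+1}}$, and the factors $q^{\bar g_{d-1}}\prod q^{g_iS^{<i}_{r+1}}\gauss{S^i_{r+1}+g_i}{g_i}$. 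Since $q=-1$ and $\bbf$ has characteristic $0$, the quantum binomials collapse to ordinary binomials evaluated at $-1$, which are $0$ unless the "lower" argument is even, so most terms vanish and the surviving coefficients are just signs. The claim $\Theta_V = -\Theta_W - \Theta_X$ then reduces to the numerical identities $\alpha_k + \beta_k + \gamma_k = 0$ for every $k$, which I expect to verify by a short case check on the parities of the relevant entry-counts (using $a<b<c$ to control which of $b$, $c$ lands where).

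An alternative, and perhaps cleaner, route is to use the Kernel Intersection Theorem directly: it suffices to show that $\Theta_V+\Theta_W+\Theta_X$, as an element of $\ehom_{\h_n}(S^\mu,M^\la)$, expands with zero coefficient on every semistandard tableau, i.e.\ that $\Theta_V+\Theta_W+\Theta_X=0$ in $\ehom$. For this one semistandardises all three homomorphisms — each of $V,W,X$ is already row-standard and differs from a semistandard tableau only through the column-strictness failure in rows $i,i+1$ — and observes that the non-semistandard corrections produced by Proposition~\ref{Lemma7} are identical (up to the expected signs) for all three. Either way, the essential mechanism is the same: all three tableaux sit in a single "straightening triangle'', and the three sets of straightening coefficients sum to zero.

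\textbf{Main obstacle.} The genuine work is bookkeeping the $q$-powers: the exponents $\bar g_{d-1}$, $g_iS^{<i}_{r+1}$, $S^d_{r+1}S^{<d}_{r+1}$ and $\binom{S^d_{r+1}+1}{2}$ all depend on exactly how many $a$'s precede the moved entries and on the multiplicities in the (possibly long) $a$-block, so one must track these carefully even though, after specialising $q=-1$, only their parities matter. The subtle case is when two copies of $c$ (in $V$) versus one (in $W$) versus none (in $X$) are being pulled up, since the number of $c$'s moved changes the leading sign $(-1)^{S^d_{r+1}}$ and the binomial exponent simultaneously; reconciling these so that the three contributions to each $\Theta_{Y_k}$ cancel is where the $a<b<c$ hypothesis and the precise shapes of $V\tub i{i+1}$, $W\tub i{i+1}$, $X\tub i{i+1}$ get used. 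I do not anticipate any conceptual difficulty beyond this parity accounting.
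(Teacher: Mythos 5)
Your central idea — expand $\Theta_V,\Theta_W,\Theta_X$ via Proposition~\ref{Lemma7}(1) at rows $(i,i+1)$ into a common family of target tableaux and check the coefficients sum to zero — is sound and amounts to the paper's proof read in the opposite direction. The paper expands $\Theta_V$ (with $d=b$) in terms of two auxiliary tableaux $Y,Z$ in which all the $b$'s sit in row $i$, then expands $\Theta_Y,\Theta_Z$ one further step (now with $d=a$) to reach $\Theta_W,\Theta_X$; you would expand all three of $\Theta_V,\Theta_W,\Theta_X$ directly into $\{\Theta_Y,\Theta_Z\}$. Since the one-step straightenings $Y\leftrightarrow W$, $Z\leftrightarrow X$ are invertible with $\pm1$ coefficients (the second potential term in each is killed by a $[2]=0$ factor), the two routes are equivalent, and the parity check does come out: with $m=\mu_{i+1}$ the coefficients on $\Theta_Y$ are $({-}1)^{\binom m2+1},({-}1)^{\binom m2},0$ and on $\Theta_Z$ are $({-}1)^{\binom{m+1}2},0,({-}1)^{\binom{m-1}2}$, and each column sums to zero because $\binom{m+1}2-\binom{m-1}2=2m-1$ is odd. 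One genuine slip: $d=c$ is the wrong entry to move. Row $i+1$ of $V$ contains no $c$'s, so Proposition~\ref{Lemma7}(1) with $d=c$ is vacuous for $V$, and for $W$ and $X$ it produces different, non-overlapping target sets rather than a common basis. The forced choice is $d=b$: rows $i,i+1$ of each of $V,W,X$ carry the same multiset ($\mu_i-2$ copies of $a$, $\mu_i-1$ of $b$, two of $c$), and there are exactly two row-standard tableaux with no $b$ in row $i+1$, namely $Y$ and $Z$. Your parenthetical hedge saves you, but this is the one substantive choice in the argument and should be made explicitly. A smaller point: your ``alternative route'' does not in fact use the Kernel Intersection Theorem; the relevant input is that $\{\Theta_T : T\in\calt_0(\mu,\nu)\}$ is a basis of $\ehom_{\h_n}(S^\mu,M^\nu)$, which is a separate result of Dipper and James.
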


\begin{proof}
Applying Proposition~\ref{Lemma7}, we get
\[
\Theta_V=({-}1)^{\binom m2+1}\Theta_Y+({-}1)^{\binom{m+1}2}\Theta_Z,
\]
where $m=\mu_{i+1}$, $Y\agree i{i+1}Z\agree i{i+1}V$ and
%\[Y\tub i{i+1}=\text{\small$\gyoung(;b_3\cdots;b;c,;a_2\cdots;a;c)$},\qquad Z\tub i{i+1}=\text{\small$\gyoung(;a;b_4\cdots;b,;a_2\cdots;a;c;c)$}\]
\[
Y\tub i{i+1}=\yo{\setxy\bxr b\hdbxyr3\bxr b\bxd c\setlength\xpos{0cm}\bxr a\hdbxyr2\bxr a\bxy c},\qquad
Z\tub i{i+1}=\yo{\setxy\bxr a\bxr b\hdbxyr{3.5}\bxd b\setlength\xpos{0cm}\bxr a\hdbxyr{1.5}\bxr a\bxr c\bxy c}.
\]
Proposition~\ref{Lemma7} again gives
%\[\Theta_Y=({-}1)^{\binom{m}2}\Theta_W,\qquad \Theta_Z=({-}1)^{\binom{m-1}2}\Theta_X.\tag*{\qedhere}\]
\[
\Theta_Y=({-}1)^{\binom{m}2}\Theta_W,\qquad \Theta_Z=({-}1)^{\binom{m-1}2}\Theta_X.\tag*{\qedhere}
\]
\end{proof}

\begin{lemma}\label{aacdbbb}
Suppose $\mu$ is a partition and $i\gs1$ is such that $\mu_{i+1}=\mu_i-1$.  Suppose $V,W,X,Y$ are $\mu$-tableaux such that $V\agree i{i+1}W\agree i{i+1}X\agree i{i+1}Y$ and
%\begin{alignat*}2V\tub i{i+1}&=\text{\small$\gyoung(;a_2\cdots;a;c;d,;b_3\cdots;b)$},&\qquad W\tub i{i+1}&=\text{\small$\gyoung(;a_2\cdots;a;b;d,;b_2\cdots;b;c)$},\\X\tub i{i+1}&=\text{\small$\gyoung(;a_2\cdots;a;b;c,;b_2\cdots;b;d)$},&\qquad Y\tub i{i+1}&=\text{\small$\gyoung(;a_3\cdots;a;b;b,;b_2\cdots;b;c;d)$},\end{alignat*}
\begin{alignat*}2
V\tub i{i+1}&=\yo{\setxy\bxr a\hdbxyr2\bxr a\bxr c\bxd d\setlength\xpos{0cm}\bxr b\hdbxyr3\bxy b},&\qquad
W\tub i{i+1}&=\yo{\setxy\bxr a\hdbxyr2\bxr a\bxr b\bxd d\setlength\xpos{0cm}\bxr b\hdbxyr2\bxr b\bxy c},\\
X\tub i{i+1}&=\yo{\setxy\bxr a\hdbxyr2\bxr a\bxr b\bxd c\setlength\xpos{0cm}\bxr b\hdbxyr2\bxr b\bxy d},&\qquad
Y\tub i{i+1}&=\yo{\setxy\bxr a\hdbxyr{2.5}\bxr a\bxr b\bxd b\setlength\xpos{0cm}\bxr b\hdbxyr{1.5}\bxr b\bxr c\bxy d},
\end{alignat*}
where $a<b<c<d$. Then $\Theta_V=-\Theta_W-\Theta_X-\Theta_Y$.
\end{lemma}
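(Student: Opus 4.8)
The plan is to mirror the proof of Lemma~\ref{aaccbbb}: I would apply Proposition~\ref{Lemma7} to $\Theta_V$ twice, first to move every $b$ up out of row $i+1$ and then to move every $a$ back up into row $i$, and then collect terms, specialising $q={-}1$ throughout. Write $m=\mu_{i+1}$, so that $\mu_i=m+1$, row $i$ of $V$ consists of $m-1$ copies of $a$ followed by one $c$ and one $d$, and row $i+1$ of $V$ consists of $m$ copies of $b$. Only rows $i$ and $i+1$ will ever change, so every tableau produced automatically agrees with $V$ in all other rows, as required.

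\emph{First step.} Apply Proposition~\ref{Lemma7}(1) with $r=i$ and distinguished value $b$. The conditions on the index vector $g$, namely $g_b=0$, $\sum_k g_k=m$, $g_a\ls m-1$ and $g_c,g_d\ls1$, have exactly the three solutions $(g_a,g_c,g_d)\in\{(m-2,1,1),(m-1,1,0),(m-1,0,1)\}$, yielding three intermediate tableaux $P,Q,R$: in $P$, row $i$ is ($a$, then $m$ copies of $b$) and row $i+1$ is ($m-2$ copies of $a$, then $c$, then $d$); in $Q$, row $i$ is ($m$ copies of $b$, then $d$) and row $i+1$ is ($m-1$ copies of $a$, then $c$); in $R$, row $i$ is ($m$ copies of $b$, then $c$) and row $i+1$ is ($m-1$ copies of $a$, then $d$). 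Since row $i+1$ of $V$ contains no $a$, $c$ or $d$, every quantum binomial appearing here is $\gauss{n}{n}=1$; hence $\Theta_V=\epsilon_P\Theta_P+\epsilon_Q\Theta_Q+\epsilon_R\Theta_R$, the signs coming from the prefactor $({-}1)^m q^{-\binom{m+1}{2}}$ and the per-term powers of $q$ in Proposition~\ref{Lemma7}.

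\emph{Second step.} Now apply Proposition~\ref{Lemma7}(1) again to each of $\Theta_P,\Theta_Q,\Theta_R$, with $r=i$ and distinguished value $a$. For $P$, row $i$ has no $c$ and no $d$, so $g$ is forced and $\Theta_P=\pm\Theta_Y$. For $Q$ there are two admissible $g$: one produces the tableau $W$, the other produces $Y$; and this is the one place where $c\neq d$ matters, since row $i+1$ of $Q$ contains a $c$ but no $d$, so the binomial attached to the $Y$-term is $\gauss{0+1}{1}=1$ rather than the $\gauss{1+1}{1}=[2]$ that vanishes at $q={-}1$ and killed the analogous term in Lemma~\ref{aaccbbb}; hence $\Theta_Q=\pm\Theta_W\pm\Theta_Y$. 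Symmetrically $\Theta_R=\pm\Theta_X\pm\Theta_Y$, both terms surviving because row $i+1$ of $R$ contains a $d$ but no $c$. All the intermediate tableaux are manifestly row-standard since $a<b<c<d$.

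\emph{Collecting.} Substituting, $\Theta_V$ becomes a signed combination of $\Theta_W$, $\Theta_X$ and $\Theta_Y$, in which $\Theta_W$ and $\Theta_X$ each occur once (from $Q$ and from $R$) while $\Theta_Y$ occurs three times (from $P$, $Q$ and $R$). It then remains to pin down the signs: each of the two applications of Proposition~\ref{Lemma7} contributes a prefactor $({-}1)^{S^d_{r+1}}q^{-\binom{S^d_{r+1}+1}{2}}q^{-S^d_{r+1}S^{<d}_{r+1}}$ together with, for each $g$, a power $q^{\bar g_{d-1}}\prod_k q^{g_k S^{<k}_{r+1}}$, which one reduces modulo $2$ at $q={-}1$. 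The expected outcome is that the coefficients of $\Theta_W$ and $\Theta_X$ are each $-1$ and that the three contributions to the coefficient of $\Theta_Y$ cancel down to $-1$, giving $\Theta_V=-\Theta_W-\Theta_X-\Theta_Y$. The hard part will be exactly this sign bookkeeping through two nested applications of Proposition~\ref{Lemma7} — in particular getting the triple contribution to $\Theta_Y$ right — whereas the enumeration of the $g$'s, the row-standardness of the intermediate tableaux, and the non-vanishing of all the relevant binomials at $q={-}1$ are routine. An alternative would be to present this lemma as the ``$c\neq d$'' companion of Lemma~\ref{aaccbbb} and carry over its sign computation with the obvious modifications.
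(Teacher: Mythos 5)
Your proposal is correct and is essentially the paper's own argument: the authors' proof of this lemma is a one-line reference saying ``as in the proof of Lemma~\ref{aaccbbb}, apply Proposition~\ref{Lemma7} to move the \bb s up, then again to move the \ba s up,'' which is exactly the two-step procedure you carry out. Your identification of the three $g$-vectors in the first step, the observation that all binomials are trivially $1$ there, and especially the key point that in the second step the term mapping $Q$ (resp.\ $R$) to $Y$ survives with binomial $\gauss{0+1}{1}=1$ rather than being killed by a $[2]$ factor as in Lemma~\ref{aaccbbb}, are all right; and the sign computation you defer does indeed close up to give $-1,-1$ for $\Theta_W,\Theta_X$ and contributions $-1,+1,-1$ to $\Theta_Y$ summing to $-1$.
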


\begin{proof}
As in the proof of Lemma \ref{aaccbbb}, we apply Proposition~\ref{Lemma7} to $\Theta_V$ to move the \bb s up to row $1$, and then again to move the \ba s up to row $1$.
\end{proof}

\begin{lemma}\label{aabdbbbc}
Suppose $\mu$ is a partition with $\mu_i=\mu_{i+1}$ for some $i$, and $V,W$ are $\mu$-tableaux such that $V\agree i{i+1}W$ and
\[
V\tub i{i+1}=\yo{\setxy\bxr a\hdbxyr{1.5}\bxr a\bxr b\bxd d\setlength\xpos{0cm}\bxr b\hdbxyr{2.5}\bxr b\bxr c},\qquad
W\tub i{i+1}=\yo{\setxy\bxr a\hdbxyr{1.5}\bxr a\bxr b\bxd b\setlength\xpos{0cm}\bxr b\hdbxyr{1.5}\bxr b\bxr c\bxy d},
\]
%\[V\tub i{i+1}=\text{\small$\gyoung(;a_2\cdots;a;b;d,;b_3\cdots;b;c)$},\qquad W\tub i{i+1}=\text{\small$\gyoung(;a_2\cdots;a;b;b,;b_2\cdots;b;c;d)$},\]
where $a<b<c<d$. Then $\Theta_V=\Theta_W$.
\end{lemma}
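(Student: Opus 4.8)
The plan is to apply Proposition~\ref{Lemma7}(2) to each of $\Theta_V$ and $\Theta_W$ separately, in both cases with $r=i$ (legitimate since $\mu_i=\mu_{i+1}$) and with the distinguished value of that proposition — there called ``$d$'' — taken to be $b$. In each case this moves the $b$'s of row $i$ down into row $i+1$ and pulls some other entries back up, and I expect the index set $\mathcal G$ of the proposition to be a singleton, so that $\Theta_V$ and $\Theta_W$ are each rewritten as a scalar times $\Theta_U$ for one and the same $\mu$-tableau $U$.

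First I would record the shapes: row $i$ of $V$ is $(a,\dots,a,b,d)$ and row $i+1$ is $(b,\dots,b,c)$, while row $i$ of $W$ is $(a,\dots,a,b,b)$ and row $i+1$ is $(b,\dots,b,c,d)$; moreover $V$ and $W$ agree outside rows $i,i+1$, and these two rows carry the same multiset of entries in $V$ as in $W$. Applying Proposition~\ref{Lemma7}(2) to $\Theta_V$ with distinguished value $b$: the single $b$ in row $i$ moves down, and since $b$-entries are not allowed to come back up, the only entry that can be pulled up from row $i+1$ is its $c$, so $\mathcal G=\{g\}$ with $g_c=1$. The resulting tableau $U$ has row $i$ equal to $(a,\dots,a,c,d)$, row $i+1$ a string of $b$'s of length $\mu_{i+1}$, and agrees with $V$ (hence with $W$) everywhere else. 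Doing the same to $\Theta_W$: the two $b$'s of row $i$ move down, and the only way to bring two entries up from row $i+1$ is to take its $c$ and its $d$, so again $\mathcal G$ is a singleton, and the resulting tableau has exactly the same rows $i$ and $i+1$ and agrees with $W$ elsewhere — so it is the same $U$. Thus $\Theta_V=c_V\Theta_U$ and $\Theta_W=c_W\Theta_U$.

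It then remains to compare the two scalars read off from Proposition~\ref{Lemma7}(2) and use $q=-1$. For $V$ the value $b$ occurs once in row $i$, with exactly one larger entry (the $d$) in that row, the exponent $\bar g_{b-1}$ is $0$, and the one surviving factor of the inner sum is $q^{V^{>c}_i}\gauss{1}{1}=q$; this gives $c_V=(-1)^{1}\,q^{-\binom{1}{2}}\,q^{-1}\cdot q=-1$. For $W$ the value $b$ occurs twice in row $i$ with no larger entry in that row, $\bar g_{b-1}=0$ again, and the inner sum contributes $\gauss{1}{1}\gauss{1}{1}=1$; this gives $c_W=(-1)^{2}\,q^{-\binom{2}{2}}=q^{-1}$. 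Since $q=-1$ we have $c_V=c_W=-1$, so $\Theta_V=-\Theta_U=\Theta_W$, as required.

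The computation is short, and I do not expect a genuine obstacle: the one thing to be careful about is the bookkeeping in Proposition~\ref{Lemma7}(2) — the powers of $q$, the Gaussian binomials $\gauss{S^i_r+g_i}{g_i}$, and the exponent $\bar g_{d-1}$ — together with the (clear) observation that since $V$ and $W$ differ only in rows $i,i+1$ and the proposition modifies only those rows, the two singleton reductions land on literally the same row-standard $\mu$-tableau. It is worth noting that the identity fails for generic $q$ — one finds $\Theta_V=-\Theta_U$ but $\Theta_W=q^{-1}\Theta_U$ — so the hypothesis $q=-1$ is essential here.
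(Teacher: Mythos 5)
Your proposal is correct and follows exactly the route the paper takes: apply Proposition~\ref{Lemma7}(2) at row $i$ with distinguished value $b$ to each of $\Theta_V$ and $\Theta_W$, observe that in both cases $\mathcal G$ is a singleton producing the same tableau $X$ (with $c,d$ at the end of row $i$ and a full row of $b$'s in row $i+1$), and read off the scalar as $-1$ for $V$ and $q^{-1}$ for $W$, which coincide precisely because $q=-1$. Your explicit tracking of where the $q=-1$ hypothesis enters is a useful addition to what the paper leaves implicit.
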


\begin{proof}
By Proposition~\ref{Lemma7}(2), both homomorphisms equal $-\Theta_X$, where
%\[X\tub i{i+1}=\text{\small$\gyoung(;a_2\cdots;a;c;d,;b_4\cdots;b)$}.\tag*{\qedhere}\]
\[
X\tub i{i+1}=\yo{\setxy\bxr a\hdbxyr{1.5}\bxr a\bxr c\bxd d\setlength\xpos{0cm}\bxr b\hdbxyr{3.5}\bxr b}.
\tag*{\qedhere}
\]
\end{proof}

%\Yboxdim{17pt}

\begin{lemma}\label{block}
Suppose $\mu$ is a partition and $1\ls a<b-1$ such that $\mu_a=\mu_{b-1}$, and that $X,Y$ are $\mu$-tableaux with $X\agree a{b-1}Y$ and
\[
X\tub a{b-1}=\yw{\setxy\bxr a\hdbxyr2\bxr a\bxd b
\setlength\xpos{0cm}\bxr\apo\hdbxyr2\bxr\apo\bxd\apt
\setlength\xpos{0cm}\bxr\apt\hdbxyr2\bxr\apt\bxd\aph
\setlength\xpos{0cm}\addtolength\ypos{-.5cm}\vdbxy{1.5}\addtolength\xpos{3cm}\vdbxy{1.5}\addtolength\xpos{1cm}\vdbxy{1.5}
\setlength\xpos{0cm}\addtolength\ypos{-1cm}\bxr\bmt\hdbxyr2\bxr\bmt\bxd\bmo\setlength\xpos{0cm}\bxr\bmo\hdbxyr2\bxr\bmo\bxr b},\qquad
Y\tub a{b-1}=\yw{\setxy\bxr a\hdbxyr{2.5}\bxr a\bxd \apo
\setlength\xpos{0cm}\bxr\apo\hdbxyr{1.5}\bxr\apo\bxr\apt\bxd\apt
\setlength\xpos{0cm}\bxr\apt\hdbxyr{1.5}\bxr\apt\bxr\aph\bxd\aph
\setlength\xpos{0cm}\addtolength\ypos{-.5cm}\vdbxy{1.5}\addtolength\xpos{2.5cm}\vdbxy{1.5}\addtolength\xpos{1cm}\vdbxy{1.5}\addtolength\xpos{1cm}\vdbxy{1.5}
\setlength\xpos{0cm}\addtolength\ypos{-1cm}\bxr\bmt\hdbxyr{1.5}\bxr\bmt\bxr\bmo\bxd\bmo
\setlength\xpos{0cm}\bxr\bmo\hdbxyr{1.5}\bxr\bmo\bxr b\bxr b}.
\]
%\[X\tub a{b-1}=\text{\footnotesize$\gyoung(;a_2\cdots;a;b,;\apo_2\cdots;\apo;\apt,;\apt_2\cdots;\apt;\aph,|2\vdots::|2\vdots|2\vdots,:,;\bmt_2\cdots;\bmt;\bmo,;\bmo_2\cdots;\bmo ;b)$},\qquad Y\tub a{b-1}=\text{\footnotesize$\gyoung(;a_3\cdots;a;\apo,;\apo_2\cdots;\apo;\apt;\apt,;\apt_2\cdots;\apt;\aph;\aph,|2\vdots::|2\vdots|2\vdots|2\vdots,:,;\bmt_2\cdots;\bmt;\bmo;\bmo,;\bmo_2\cdots;\bmo;b;b)$}.\]
Then $\Theta_X=-\Theta_Y$.
\end{lemma}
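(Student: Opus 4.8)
The plan is to prove this by induction on $b-a\ge 2$, the inductive step reducing the block $[a,b-1]$ to the shorter block $[a+1,b-1]$.

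\emph{Base case} $b-a=2$. The block is just the two rows $a$ and $a+1$, which have the same length since $\mu_a=\mu_{a+1}$; here row $a$ of $X$ is $a\cdots a\,(a+2)$ and row $a+1$ of $X$ is $(a+1)\cdots(a+1)\,(a+2)$, while row $a$ of $Y$ is $a\cdots a\,(a+1)$ and row $a+1$ of $Y$ is $(a+1)\cdots(a+1)\,(a+2)\,(a+2)$. I would apply Proposition~\ref{Lemma7}(2) to rows $a$ and $a+1$ of $X$ with $d$ equal to the value $a+2$: moving the unique entry $a+2$ of row $a$ down into row $a+1$ forces one $a+1$ up, there is exactly one resulting tableau (which is precisely $Y$), every quantum binomial that appears is $\gauss{1}{1}=1$, and collecting the $q$-powers at $q={-}1$ gives $\Theta_X=-\Theta_Y$.

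\emph{Inductive step} $b-a\ge 3$. Apply Proposition~\ref{Lemma7}(2) to rows $a$ and $a+1$ of $X$, now with $d$ equal to the value $b$: moving the unique entry $b$ of row $a$ down into row $a+1$ forces one entry up from row $a+1$, which is either an $a+1$ or the $a+2$. Writing $U^{(1)}$ and $U^{(2)}$ for the two resulting tableaux and tracking the coefficients at $q={-}1$, one gets $\Theta_X=-\Theta_{U^{(1)}}-\Theta_{U^{(2)}}$. Now rows $a+1,\dots,b-1$ of $U^{(2)}$ are exactly the configuration ``$X$'' of this lemma for the block $[a+1,b-1]$ (with $a,b$ replaced by $a+1,b$), with all rows above frozen; so by the inductive hypothesis $\Theta_{U^{(2)}}=-\Theta_V$, where $V$ agrees with $U^{(2)}$ outside rows $a+1,\dots,b-1$ and restricts there to the configuration ``$Y$''. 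The tableaux $V$ and $Y$ then differ only in rows $a$ and $a+1$, where they sit in exactly the relation treated by the base case, so $\Theta_V=-\Theta_Y$, whence $\Theta_{U^{(2)}}=\Theta_Y$. It therefore remains to prove that $\Theta_{U^{(1)}}=0$.

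The vanishing of $\Theta_{U^{(1)}}$ is the main obstacle. In $U^{(1)}$ the entry $b$ has been displaced into row $a+1$, where it sits above the ``staircase'' occupying rows $a+2,\dots,b-1$, and I would attack this by pushing it down the staircase one row at a time, again via Proposition~\ref{Lemma7}(2) (whose coefficients all reduce to signs, or vanish, at $q={-}1$). Each such step produces either a tableau that reduces through the inductive hypothesis and the base case, or a tableau with too many copies of some value crowded into short rows, killed by Lemma~\ref{toomany}; the claim is that all of these contributions cancel. This is precisely the bookkeeping that the preparatory Lemmas~\ref{aaccbbb},~\ref{aacdbbb} and~\ref{aabdbbbc} are designed to streamline, so in practice I expect the inductive step to be assembled from repeated applications of those three lemmas rather than from bare manipulations with Proposition~\ref{Lemma7}.
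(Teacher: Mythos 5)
Your base case and the skeleton of the inductive step are correct: one checks that Proposition~\ref{Lemma7}(2) applied to rows $a,a+1$ of $X$ with $d=a+2$ really does give $\Theta_X=-\Theta_Y$ when $b=a+2$, and that with $d=b$ in the general case it gives $\Theta_X=-\Theta_{U^{(1)}}-\Theta_{U^{(2)}}$, with $\Theta_{U^{(2)}}=-\Theta_V=\Theta_Y$ via the inductive hypothesis followed by the base case. But the whole argument now rests on the claim $\Theta_{U^{(1)}}=0$, and you don't prove it -- you only say you \emph{expect} it. Worse, the mechanism you gesture at (contributions cancelling, or being killed by Lemma~\ref{toomany}) is not the one that actually makes it work. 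What does work: apply Lemma~\ref{aabdbbbc} to rows $k,k+1$ for $k=a+1,\dots,b-3$ in turn (with the roles of its $a,b,c,d$ played by $k,k+1,k+2,b$), giving a chain $\Theta_{U^{(1)}}=\Theta_{W_1}=\cdots=\Theta_{W_{b-a-3}}$ in which the stray $b$ slides down one row at a time until it sits at the end of row $b-2$. At that point rows $b-2,b-1$ read $(b-2,\dots,b-2,b-1,b)$ and $(b-1,\dots,b-1,b)$, and Proposition~\ref{Lemma7}(2) with $r=b-2$, $d=b$ gives a single term whose coefficient carries a factor $\gauss{2}{1}=[2]=0$. (When $b-a=3$ one skips the chain and applies this last step directly.) So the vanishing is due to a factor of $[2]$, not to Lemma~\ref{toomany} or to a cancellation between competing terms; this needs to be argued for the proof to be complete.

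For comparison, the paper proves the lemma by a different route that sidesteps the need for any vanishing argument. It introduces the auxiliary tableau $Z$, which agrees with $X$ and $Y$ outside rows $a,\dots,b-1$ and whose block has row $a$ equal to $(a,\dots,a,a+1)$, row $a+1$ equal to $(a+1,\dots,a+1,b,b)$ and rows $a+2,\dots,b-1$ constant. It then produces chains from $X$ to $Z$ and from $Y$ to $Z$ by repeatedly applying Proposition~\ref{Lemma7}(2), moving the $b$ at the end of row $b-1$ up one row at a time (resp.\ moving pairs of equal entries down). Because the $b$ being moved is the unique non-constant entry in its row, each application produces exactly one tableau, with coefficient $-1$; counting the lengths of the two chains gives $\Theta_X=(-1)^{a+b+1}\Theta_Z$ and $\Theta_Y=(-1)^{a+b}\Theta_Z$. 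Working from the bottom of the block avoids the two-term branching that created your $U^{(1)}$, which is what makes the paper's argument shorter. Your top-down induction is a legitimate alternative, but it is incomplete as written without the $[2]$-factor argument above.
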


\begin{proof}
Define the tableau $Z$ by $Z\agree a{b-1}X$ and
%\[Z\tub a{b-1}=\text{\footnotesize$\gyoung(;a_3\cdots;a;\apo,;\apo_2\cdots;\apo;b;b,;\apt_3\cdots;\apt;\apt,|2\vdots:::|2\vdots|2\vdots,:,;\bmt_3\cdots;\bmt;\bmt,;\bmo_3\cdots;\bmo;\bmo)$}.\]
\[Z\tub a{b-1}=\yw{
\setxy\bxr a\hdbxyr{2.5}\bxr a\bxd \apo
\setlength\xpos{0cm}\bxr\apo\hdbxyr{1.5}\bxr\apo\bxr b\bxd b
\setlength\xpos{0cm}\bxr\apt\hdbxyr{2.5}\bxr\apt\bxd\apt
\setlength\xpos{0cm}\addtolength\ypos{-.5cm}\vdbxy{1.5}\addtolength\xpos{3.5cm}\vdbxy{1.5}\addtolength\xpos{1cm}\vdbxy{1.5}
\setlength\xpos{0cm}\addtolength\ypos{-1cm}\bxr\bmt\hdbxyr{2.5}\bxr\bmt\bxd\bmt
\setlength\xpos{0cm}\bxr\bmo\hdbxyr{2.5}\bxr\bmo\bxr\bmo}.
\]
We define a sequence of tableaux $X=X_{b},X_{b-1},\ldots,X_{a+2}$, where for $k=b-1,\dots,a+2$, $X_{k}$ is formed from $X_{k+1}$ by swapping the $k$ in row $k-1$ and the $b$ in row $k$.  Applying Proposition~\ref{Lemma7}(2), we find that $\Theta_{X_k}=-\Theta_{X_{k-1}}$.  We then apply Proposition~\ref{Lemma7}(2) to $X_{a+2}$ to move the \bb{} from row $1$ to row $2$, so that $\Theta_{X_{a+2}}=-\Theta_Z$.  Hence $\Theta_X=({-}1)^{a+b+1}\Theta_Z$.

We do a similar thing for $Y$: for $k=b-1,\cdots,a+2$ we move the two $k$s from row $k-1$ to row $k$.  We get $\Theta_Y=({-}1)^{a+b}\Theta_Z$, which gives the result.
\end{proof}

Now we are ready to prove Proposition \ref{topbit}.

\needspace{0pt}
\begin{proof}[Proof of Proposition \ref{topbit}]
\renewcommand\theenumi{\alph{enumi}}
\renewcommand\labelenumi{(\theenumi)}
\indent
\begin{enumerate}
\vspace{-\topsep}
\item%D,O
This is a simple application of Proposition~\ref{Lemma5} and Proposition~\ref{Lemma7}, using the fact that $[m-1]-[m-2]=({-}1)^m$.
\item%E
$U(i,d+1,T)$ has \pp s in rows $2,d,d+1$, and Proposition~\ref{Lemma5} gives
\[
\psi_{d,1}\circ\Theta(i,d+1,T)=({-}1)^m\Theta^d(i,d,T)+[m]\Theta_V+\Theta_W,
\]
where $U(i,d+1,T)\sra dd V$ and $U(i,d+1,T)\sra d{d+1}W$.  Proposition~\ref{Lemma7} gives $\Theta_W=-[m-2]\Theta_V$, and the fact that $[m]=[m-2]$ gives the result.
\item%F
This is a simple application of Proposition~\ref{Lemma5} and Lemma \ref{toomany}.
\item%G,Q
The \pp s in $U(d,d+1,T)$ lie in rows $2$, $d$ and $d+1$.  Proposition~\ref{Lemma5} gives
\[
\psi_{d,1}\circ\Theta(d,d+1,T)=({-}1)^{m+1}\Theta_V+[m]\Theta_W+\Theta_X,
\]
where
\[
U(d,d+1,T)\sra d2V,\qquad U(d,d+1,T)\sra ddW,\qquad U(d,d+1,T)\sra d{d+1}X.
\]
Proposition~\ref{Lemma7} gives $\Theta_X=-[m-2]\Theta_W$, and so we just need to show that $\Theta_V=({-}1)^d\Theta^d(2,d,T)$.  Applying Proposition~\ref{Lemma7}(2) rows $1$ and $2$ and then Lemma \ref{aaccbbb}, we find that $\Theta_V=\Theta_Y+\Theta_Z$, where
%\Yboxdim{14pt}
%\[Y\tub13=\text{\small$\gyoung(;1_3\cdots;1;2,;2_2\cdots;2;3;d,;3_2\cdots;3;d)$},\qquad Z\tub13=\text{\small$\gyoung(;1_4\cdots;1;2,;2_3\cdots;2;3;3,;3_2\cdots;3;d;d)$}\]
\[
Y\tub13=\yo{\setxy\bxr1\hdbxyr{3}\bxr1\bxd2\setlength\xpos{0cm}\bxr2\hdbxyr{2}\bxr2\bxr3\bxd d\setlength\xpos{0cm}\bxr3\hdbxyr{2}\bxr3\bxr d},\qquad
Z\tub13=\yo{\setxy\bxr1\hdbxyr{3.5}\bxr1\bxd2\setlength\xpos{0cm}\bxr2\hdbxyr{2.5}\bxr2\bxr3\bxd3\setlength\xpos{0cm}\bxr3\hdbxyr{1.5}\bxr3\bxr d\bxr d}
\]
and $Y\agree13Z\agree13U(d,d+1,T)$.  By Lemma \ref{toomany} we have $\Theta_Z=0$, so we concentrate on $\Theta_Y$.  For $k=4,\cdots,d-1$ the $k$th row of $Y$ consists entirely of $k$s.  So we can repeatedly apply Proposition~\ref{Lemma7}(2) to move the \dd{} in row $3$ down to row $d-1$, and we get $\Theta_Y=({-}1)^d\Theta^d(2,d,T)$, as required.
\item%H,R
If $d\gs3$, then this is a simple application of Proposition~\ref{Lemma5} and Proposition~\ref{Lemma7}: all the \pp s in $U(i,j,T)$ lie in rows $d$ and $d+1$, and Proposition~\ref{Lemma5} gives
\[
\psi_{d,1}\circ\Theta(d,j,T)=[m]\Theta^d(d,j,T)+\Theta_W,
\]
where $U(d,j,T)\sra d{d+1}W$.  Proposition~\ref{Lemma7} gives $\Theta_W=-[m-1]\Theta^d(d,j,T)$, and the fact that $[m]-[m-1]=({-}1)^{m+1}$ gives the result.

Now suppose $d=2$.  Then
%\[U(2,j,T)\tub13=\text{\small$\gyoung(;1_3\cdots;1;2,;2_2\cdots;2;3;j,;3_2\cdots;3;4)$}\]
\[
U(2,j,T)\tub13=\yo{\setxy\bxr1\hdbxyr{2.5}\bxr1\bxd2\setlength\xpos{0cm}\bxr2\hdbxyr{1.5}\bxr2\bxr3\bxd j
\setlength\xpos{0cm}\bxr3\hdbxyr{1.5}\bxr3\bxr4}
\]
\normalsize and Proposition~\ref{Lemma5} gives
\[
\psi_{2,1}\circ\Theta(2,j,T)=[m]\Theta^2(2,j,T)+\Theta_V,
\]
where $U(2,j,T)\sra 23V$.  Proposition~\ref{Lemma7} gives $\Theta_V=-[m-1]\Theta^2(2,j,T)$ plus a scalar multiple of $\Theta_W$, where
%\[W\tub23=\text{\small$\gyoung(;2_4\cdots;2;3,;3_2\cdots;3;4;j)$}\]
\[
W\tub23=\yo{\setxy\bxr2\hdbxyr{3.5}\bxr2\bxd3\setlength\xpos{0cm}\bxr3\hdbxyr{1.5}\bxr3\bxr4\bxr j}
\]
and $W\agree23U(2,j,T)$.  Since $[m]-[m-1]=({-}1)^{m+1}$, we just need to show that $\Theta_W=0$.
For $4\ls k\ls j-1$ we have
%\Yboxdim{17pt}
%\[W\tub kk=\text{\footnotesize$\gyoung(;k_2\cdots;k;\kpo)$}.\]
\[
W\tub kk=\yw{\setxy\bxr k\hdbxyr{1.5}\bxr k\bxr\kpo}.
\]
We apply Lemma \ref{aabdbbbc} in rows $k,k+1$, for $k=3,\dots,j-3$ in turn, and we find that $\Theta_W=\Theta_X$, where
%\[X\tub{j-2}{j-1}=\text{\footnotesize$\gyoung(;\jmt_2\cdots;\jmt;\jmo;j,;\jmo_3\cdots;\jmo;j)$}.\]
\[
X\tub{j-2}{j-1}=\yw{\setxy\bxr\jmt\hdbxyr{1.5}\bxr\jmt\bxr\jmo\bxd j\setlength\xpos{0cm}\bxr\jmo\hdbxyr{2.5}\bxr\jmo\bxr j}.
\]
Now Proposition~\ref{Lemma7} gives $\Theta_X=0$, since we get a factor of $[2]=0$.

\item%I,S,M
The tableau $U(d+1,j,T)$ contains a \pp{} in row $1$, with the remaining \pp s in row $d+1$.  Proposition~\ref{Lemma5} yields
\[
\psi_{d,1}\circ\Theta(d+1,j,T)=({-}1)^m\Theta^d(d,j,T)+\Theta_W,
\]
where $U(d+1,j,T)\sra d{d+1}W$.  But $\Theta_W=0$ by Lemma \ref{toomany}, and we are done.
\item%J,T
In this case all the \dd s and \pp s in $U(i,j,T)$ lie in rows of length at most $m$; so by Proposition~\ref{Lemma5} and Lemma \ref{toomany} we have $\psi_{d,1}\circ\Theta(i,j,T)=0$.
\item%P
In this case
%\Yboxdim{14pt}
\[
U(2,j,T)\tub24=\yo{\setxy\bxr2\hdbxyr{1.5}\bxr2\bxr3\bxd j\setlength\xpos{0cm}\bxr3\hdbxyr{1.5}\bxr3\bxd4\setlength\xpos{0cm}\bxr4\hdbxyr{1.5}\bxr4\bxr5},
\]
%\[U(2,j,T)\tub24=\text{\small$\gyoung(;2_2\cdots;2;3;j,;3_2\cdots;3;4,;4_2\cdots;4;5)$},\]
and Proposition~\ref{Lemma5} gives
\[
\psi_{3,1}\circ\Theta(2,j,T)=[m]\Theta_V+\Theta_W,
\]
where $U(2,j,T)\sra 33V$ and $U(2,j,T)\sra34W$.  Proposition~\ref{Lemma7} gives $\Theta_W=-[m-1]\Theta_V$, so we just need to show that $\Theta_V=\Theta^3(2,3,T)$.

Applying Proposition~\ref{Lemma7} twice, we find that $\Theta_V=-\Theta_X$, where $X$ is obtained from $V$ by interchanging the $j$ in row $2$ with a $3$ in row $3$.  We can apply Lemma \ref{block} to $X$ (with $a=3$, $b=j$) and we obtain $\Theta_X=-\Theta^3(2,3,T)$.
\item%K
This is a simple application of Proposition~\ref{Lemma5} and Proposition~\ref{Lemma7}.
\item%N
The $3$s in $U(i,j,T)$ all appear in row $3$, so Proposition~\ref{Lemma5} gives $\psi_{2,1}\circ\Theta(i,j,T)=\Theta_V$, where $U(i,j,T)\sra23V$.  Applying Proposition~\ref{Lemma7}, this equates to $({-}1)^m\Theta_W$, where
%\[W\tub13=\text{\small$\gyoung(;1_2\cdots;1;1;i,;2_2\cdots;2;2;2,;3_2\cdots;3;j)$}\]
\[
W\tub13=\yo{\setxy\bxr1\hdbxyr{1.5}\bxr1\bxr1\bxd i
\setlength\xpos{0cm}\bxr2\hdbxyr{1.5}\bxr2\bxr2\bxd2
\setlength\xpos{0cm}\bxr3\hdbxyr{1.5}\bxr3\bxr j}
\]
(and $W\agree13U(i,j,T)$).  For $k=4,\dots,i-1$ row $k$ of $W$ consists entirely of $k$s, so we can apply Proposition~\ref{Lemma7}(2) repeatedly to move the $j$ from row $3$ down to row $i-1$.  We also apply Proposition~\ref{Lemma7}(2) in rows $1$ and $2$, and we find that $\Theta_W=({-}1)^{i+1}\Theta_X$, where
%\Yboxdim{17pt}
%\[X\tub{i-1}{j-1}=\text{\footnotesize$\gyoung(;\imo_2\cdots;\imo;j,;i_2\cdots;i;\ipo,;\ipo_2\cdots;\ipo;\ipt,|2\vdots::|2\vdots|2\vdots,:,;\jmo_2\cdots;\jmo;j)$}.\]
\[
X\tub{i-1}{j-1}=\yw{
\setxy\bxr\imo\hdbxyr{1.5}\bxr\imo1\bxd j
\setlength\xpos{0cm}\bxr i\hdbxyr{1.5}\bxr i\bxd\ipo
\setlength\xpos{0cm}\bxr\ipo\hdbxyr{1.5}\bxr\ipo\bxd\ipt
\setlength\xpos{0cm}\addtolength\ypos{-.5cm}\vdbxy{1.5}\addtolength\xpos{2.5cm}\vdbxy{1.5}\addtolength\xpos{1cm}\vdbxy{1.5}
\addtolength\ypos{-1cm}\setlength\xpos{0cm}\bxr\jmo\hdbxyr{1.5}\bxr\jmo\bxr j}.
\]
\normalsize By Lemma \ref{block}, we have $\Theta_X=-\Theta^2(2,i,T)$, and we are done.

\item%A
This is a simple application of Proposition~\ref{Lemma5} and Proposition~\ref{Lemma7}.
\item%B
This is a simple application of Proposition~\ref{Lemma5} and Proposition~\ref{Lemma7}.
\item%C
Applying Proposition~\ref{Lemma5} and Proposition~\ref{Lemma7} gives $\psi_{1,1}\circ\Theta(i,j,T)=({-}1)^m\Theta_V$, where
%\Yboxdim{14pt}
%\[V\tub12=\text{\small$\gyoung(;1_4\cdots;1,;2_2\cdots;2;i;j)$}\]
\[
V\tub12=\yo{\setxy\bxr1\hdbxyr{3.5}\bxd1
\setlength\xpos{0cm}\bxr2\hdbxyr{1.5}\bxr2\bxr i\bxr j}
\]
and $V\agree12U(i,j,T)$.  Applying Lemma \ref{aacdbbb} gives
\[
\Theta_V=-\Theta_W-\Theta_X-\Theta_Y,
\]
where
%\[W\tub23=\text{\small$\gyoung(;2_2\cdots;2;3;j,;3_2\cdots;3;i)$},\qquad X\tub23=\text{\small$\gyoung(;2_2\cdots;2;3;i,;3_2\cdots;3;j)$},\]\[Y\tub23=\text{\small$\gyoung(;2_3\cdots;2;3;3,;3_2\cdots;3;i;j)$}\]
\[
W\tub23=\yo{\setxy\bxr2\hdbxyr{2}\bxr2\bxr3\bxd j
\setlength\xpos{0cm}\bxr3\hdbxyr{2}\bxr3\bxd i},\qquad
X\tub23=\yo{\setxy\bxr2\hdbxyr{2}\bxr2\bxr3\bxd i
\setlength\xpos{0cm}\bxr3\hdbxyr{2}\bxr3\bxr j},\qquad
Y\tub23=\yo{\setxy\bxr2\hdbxyr{2.5}\bxr2\bxr3\bxd3
\setlength\xpos{0cm}\bxr3\hdbxyr{1.5}\bxr3\bxr i\bxr j}
\]
and $V\agree23W\agree23X\agree23Y$.  In particular, for $k=4,\dots,i-1$ the $k$th row of any of these tableaux consists entirely of $k$s.

For $W$, we can repeatedly apply Proposition~\ref{Lemma7}(2) to move the $i$ from row $3$ down to row $i-1$.  We get $\Theta_W=({-}1)^i\Theta^1(1,j,T)$.

We do the same for $X$ to reach a tableau in which the row $i-1$ has the form
%\Yboxdim{17pt}
%\[\text{\footnotesize$\gyoung(;\imo_2\cdots;\imo;j)$}.\]
\[
\yw{\setxy\bxr\imo\hdbxyr{1.5}\bxr\imo\bxd j}.
\]
We can apply Lemma \ref{block} to this tableau (with $a=i-1$, $b=j$) to obtain $\Theta_X=-({-}1)^i\Theta^1(1,i,T)$.

It remains to show that $\Theta_Y=0$.  Examining the tableau $Y\tub3{i-1}$, we find that there is a unique semistandard tableau with the same shape and content, so $\Theta_Y$ must equal a scalar multiple of $\Theta_Z$, where $Z\tub3{i-1}$ is this semistandard tableau and $Z\agree3{i-1}Y$.  Then
%\[Z\tub{i-1}{j-1}=\text{\footnotesize$\gyoung(;\imo_2\cdots;\imo;i;j,;i_2\cdots;i;i;\ipo,;\ipo_2\cdots;\ipo;\ipo;\ipt,|2\vdots::|2\vdots|2\vdots|2\vdots,:,;\jmo_2\cdots;\jmo;\jmo;j)$}.\]
\[
Z\tub{i-1}{j-1}=\yw{
\setxy\bxr\imo\hdbxyr{1.5}\bxr\imo\bxr i\bxd j
\setlength\xpos{0cm}\bxr i\hdbxyr{1.5}\bxr i\bxr i\bxd\ipo
\setlength\xpos{0cm}\bxr\ipo\hdbxyr{1.5}\bxr\ipo\bxr\ipo\bxd\ipt
\setlength\xpos{0cm}\addtolength\ypos{-.5cm}
\vdbxy{1.5}\addtolength\xpos{2.5cm}
\vdbxy{1.5}\addtolength\xpos{1cm}\vdbxy{1.5}\addtolength\xpos{1cm}\vdbxy{1.5}
\addtolength\ypos{-1cm}\setlength\xpos{0cm}\bxr\jmo\hdbxyr{1.5}\bxr\jmo\bxr\jmo\bxr j
}.
\]
Applying Lemma \ref{aabdbbbc} repeatedly, we can move the $j$ from row $i-1$ down to row $j-2$; we obtain a tableau in which rows $j-2,j-1$ have the form
%\[\text{\footnotesize$\gyoung(;\jmt_2\cdots;\jmt;\jmo;j,;\jmo_3\cdots;\jmo;j)$}.\]
\[
\yw{
\setxy\bxr\jmt\hdbxyr{1.5}\bxr\jmt\bxr\jmo\bxd j
\setlength\xpos{0cm}\bxr\jmo\hdbxyr{2.5}\bxr\jmo\bxr j
}.
\]
\normalsize Now Proposition~\ref{Lemma7} tells us that the corresponding homomorphism is zero.\qedhere
\end{enumerate}

\renewcommand\theenumi{arabic{enumi}}
\renewcommand\labelenumi{\theenumi.}
\end{proof}

\subsubs{Rows $s$ to $s+f$}

\begin{propn}\label{middle}
Suppose $s\ls d\ls s+f-1$, and $i,j,T$ are as above.  Then $\psi_{d,1}\circ\Theta(i,j,T)=0$.
\end{propn}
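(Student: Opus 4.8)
The plan is to compute $\psi_{d,1}\circ\Theta(i,j,T)$ directly from Proposition~\ref{Lemma5}, obtaining an explicit combination of at most two tableau homomorphisms, and then to apply Proposition~\ref{Lemma7}(1) to show these cancel. (If $f=0$ the range is empty and there is nothing to prove, so assume $f\gs1$.) First I would locate the entries equal to $d+1$ in the usable $\mu$-tableau $U(i,j,T)$. Since $s\ls d\ls s+f-1$, row $d$ of $U(i,j,T)$ has every entry equal to $d$ except its last two, which equal $d+1$; and row $d+1$ has every entry equal to $d+1$ except its last two. This holds uniformly, including the endpoint $d=s+f-1$, where row $d+1=s+f$ is the first row of the $T$-block: the repeated (``bulk'') entry of every usable row is its own row index, so the bulk of row $d+1$ is $d+1$ in all cases. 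Every other row of $U(i,j,T)$ has all its entries either $\ls d$ or $\gs d+2$. Thus, in the notation of Proposition~\ref{Lemma5}, the set $\mathcal{S}$ has exactly two elements: the tableau $S_1$ obtained by changing the second-from-last entry of row $d$ from $d+1$ to $d$, and the tableau $S_2$ obtained by changing the first entry of row $d+1$ from $d+1$ to $d$.

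Next I would evaluate the coefficients in Proposition~\ref{Lemma5}. No row of $U(i,j,T)$ below row $d$ contains an entry equal to $d$, so all the powers of $q$ are trivial and the quantum binomial factors collapse, leaving the coefficient $\gauss{\mu_d-1}{\mu_d-2}=[\mu_d-1]$ on $\Theta_{S_1}$ and $1$ on $\Theta_{S_2}$:
\[
\psi_{d,1}\circ\Theta(i,j,T)=[\mu_d-1]\,\Theta_{S_1}+\Theta_{S_2}.
\]
Then I would apply Proposition~\ref{Lemma7}(1) to $\Theta_{S_2}$ with $r=d$, pushing the single entry equal to $d$ from row $d+1$ up into row $d$. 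The index set $\mathcal{G}$ there reduces to the single vector $e_{d+1}$, since the only entries of row $d$ available to move down are the two remaining entries equal to $d+1$; performing the exchange (one $d$ up, one $d+1$ down) rebuilds exactly the tableau $S_1$. Tracking the resulting scalar -- the sign $({-}1)^{1}$, the factor $q^{-\binom{2}{2}}=q^{-1}$, the factor $q$ produced by the unique entry $\ls d$ in row $d+1$, and the binomial $\gauss{\mu_{d+1}-2}{1}=[\mu_{d+1}-2]$ -- gives $\Theta_{S_2}=-[\mu_{d+1}-2]\,\Theta_{S_1}$, so that
\[
\psi_{d,1}\circ\Theta(i,j,T)=\bigl([\mu_d-1]-[\mu_{d+1}-2]\bigr)\Theta_{S_1}.
\]

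To finish, on the range $s\ls d\ls s+f-1$ the definition of $\mu$ gives $\mu_{s+k}=g+f+s'-k$ for $0\ls k\ls f$, so $\mu_{d+1}=\mu_d-1$ and therefore
\[
[\mu_d-1]-[\mu_{d+1}-2]=[\mu_d-1]-[\mu_d-3]=q^{\mu_d-3}(1+q)=q^{\mu_d-3}[2],
\]
which is zero because $q={-}1$. Hence $\psi_{d,1}\circ\Theta(i,j,T)=0$, as required.

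The work here is bookkeeping rather than idea: the points needing care are identifying exactly which entries of $U(i,j,T)$ equal $d+1$ (this is what creates the $S_2$ term and makes the argument uniform across the block, in particular at $d=s+f-1$) and then extracting the precise coefficients $[\mu_d-1]$ and $-[\mu_{d+1}-2]$ from Propositions~\ref{Lemma5} and~\ref{Lemma7}; once those are correct, the vanishing is forced by $[2]=0$. It is worth observing that $\Theta_{S_1}$ is usually nonzero, so this is a genuine cancellation, made possible precisely because consecutive parts of $\mu$ on rows $s$ through $s+f$ differ by exactly $1$.
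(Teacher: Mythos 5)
Your proof is correct and follows essentially the same route as the paper: apply Proposition~\ref{Lemma5} to express $\psi_{d,1}\circ\Theta(i,j,T)$ as $[\mu_d-1]\Theta_V+\Theta_W$ (with $V,W$ the tableaux obtained by demoting a $d+1$ in row $d$ or row $d+1$ respectively), then use Proposition~\ref{Lemma7}(1) to get $\Theta_W=-[\mu_{d+1}-2]\Theta_V=-[\mu_d-3]\Theta_V$, and conclude from $[\mu_d-1]=[\mu_d-3]$ at $q=-1$. You are a bit more explicit than the paper in tracking the powers of $q$ and in checking that $\mu_{d+1}=\mu_d-1$ on this range, but the decomposition, the two lemmas invoked, and the final cancellation are identical.
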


\begin{proof}
We apply Proposition~\ref{Lemma5}.  Note that the \pp s in $U(i,j,T)$ occur in rows $d,d+1$.  Row $d$ contains $\mu_d-2$ \dd s and two \pp s, and all the remaining \dd s are in higher rows, so we get
\[
\psi_{d,1}\circ\Theta(i,j,T)=[\mu_{d}-1]\Theta_V+\Theta_W,
\]
where $U(i,j,T)\sra dd V$ and $U(i,j,T)\sra d{d+1}W$.  But Proposition~\ref{Lemma7} immediately gives $\Theta_W=-[\mu_d-3]\Theta_V$, are we are done.
\end{proof}

\begin{propn}\label{middle2}
Suppose $s+1\ls d\ls s+f$, and $i,j,T$ are as above.  Then $\psi_{d,2}\circ\Theta(i,j,T)=0$.
\end{propn}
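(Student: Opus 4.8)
The plan is to expand $\psi_{d,2}\circ\Theta(i,j,T)$ using Proposition~\ref{Lemma5} and then to show that \emph{every} tableau homomorphism occurring in the expansion vanishes by Lemma~\ref{toomany}; in contrast with the $\psi_{d,1}$ computation of Proposition~\ref{middle}, this needs neither Proposition~\ref{Lemma7} nor any quantum-binomial identity. The first step is to locate, in $U(i,j,T)$, the entries equal to $d$ and to $d+1$. Since $d\gs s+1$, all entries in the rows above row $d-1$ are at most $d-1$; the last two entries of row $d-1$ are both equal to $d$ (this uses only the rule that the trailing pair of encoding row $k$ is $(k+1,k+1)$, since $s\ls d-1\ls s+f-1$); and all entries in the rows below row $d$ are at least $d+1$. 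Hence the entries of $U(i,j,T)$ equal to $d$ are exactly the last two entries of row $d-1$ together with the first $\mu_d-2$ entries of row $d$, while every entry equal to $d+1$ occurs in row $d$ or row $d+1$.

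Feeding this into Proposition~\ref{Lemma5} with $t=2$: each tableau $S$ occurring in the expansion of $\psi_{d,2}\circ\Theta(i,j,T)$ is obtained from $U(i,j,T)$ by replacing two entries equal to $d+1$ --- each lying in row $d$ or in row $d+1$ --- with $d$. Thus the $\la_d+2$ entries of $S$ equal to $d$ all lie in rows $d-1$, $d$ and $d+1$, whose lengths are $\mu_{d-1}=\la_d+1$, $\mu_d=\la_d$ and $\mu_{d+1}=\la_{d+1}<\la_d$; every one of these is strictly less than $\la_d+2$. By Lemma~\ref{toomany}, $\Theta_S=0$ for every such $S$, and therefore $\psi_{d,2}\circ\Theta(i,j,T)=0$.

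I expect two points to need care. The conceptual one is exactly the length comparison at the end: passing from $t=1$ to $t=2$ raises the number of entries equal to $d$ from $\la_d+1$ --- which merely equals the length $\mu_{d-1}$ of the longest relevant row, so Lemma~\ref{toomany} just fails to apply, forcing the detour through Proposition~\ref{Lemma7} in Proposition~\ref{middle} --- to $\la_d+2$, which strictly exceeds it. The technical one is the boundary value $d=s+f$: there the trailing pairs of entries in rows $d$ and $d+1$ of $\mu$ come from the tableau $T$ rather than from the constant rule, so I would check separately that these entries are all at least $d+1$, using that an entry in the $k$th row of a member of $\cala$ is at least $k-1$ (hence at least $1$ in rows $1$ and $2$); granted this, the two structural facts above still hold and the argument closes in the same way. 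This edge case is where I anticipate essentially all of the bookkeeping effort lying.
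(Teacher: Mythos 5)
Your proof is correct and takes exactly the same route as the paper, whose proof of this proposition is the one-line remark that it follows from Proposition~\ref{Lemma5} and Lemma~\ref{toomany}; you have simply supplied the row-by-row bookkeeping (locating the entries equal to $d$ and $d+1$, counting $\la_d+2$ copies of $d$ in the resulting tableau, and comparing with the row lengths $\mu_{d-1}=\la_d+1$, $\mu_d=\la_d$, $\mu_{d+1}<\la_d$) that the paper leaves implicit, including the correct treatment of the boundary case $d=s+f$.
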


\begin{proof}
This follows from Proposition~\ref{Lemma5} and Lemma \ref{toomany}.
\end{proof}

\subsubs{Rows $s+f$ to $s+f+g-1$}\label{firstofsix}

In the next few sections we prove the following, which will complete the proof of Proposition~\ref{mainhom} when $s,s'$ are odd.

\begin{propn}\label{bot1}
Suppose that $(i,j) \in \mathcal{I}$, that $s+f\ls d\ls s+f+g-2$ and that $t=1$ or $2$.  Then
\[
\psi_{d,t}\circ\left(\sum_{T\in\cala}\sgn(T)\Theta(i,j,T)\right)=0.
\]
\end{propn}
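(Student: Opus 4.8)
The plan is to follow the pattern used for the top rows (Proposition~\ref{topbit} and Corollary~\ref{topcory}). The key observation is that for $d$ in the range $s+f\ls d\ls s+f+g-2$ the homomorphism $\psi_{d,t}$ alters only rows $d$ and $d+1$ of a $\mu$-tableau of type $\la$, and these rows lie inside the block of rows of $U(i,j,T)$ that is built from $T$. For a fixed pair $(i,j)$ the rows $1,\dots,s+f-1$ of $U(i,j,T)$ are the same for every $T$, and --- after the collapse of the quantum binomials at $q={-}1$, with terms straying into the higher rows being killed by a factor $[2]=0$ or by Lemma~\ref{toomany} --- the application of Proposition~\ref{Lemma5} followed by the semistandardising relations of Proposition~\ref{Lemma7} only involves the rows built from $T$. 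Consequently each composite $\psi_{d,t}\circ\Theta(i,j,T)$ can be written as $\sum_{T'}c_{T,T',d,t}\,\Theta(i,j,T')$, where $T'$ runs over usable $\mu$-tableaux of the appropriate type whose first $s+f-1$ rows agree with those of $U(i,j,T)$, and the scalars $c_{T,T',d,t}$ do not depend on $(i,j)$. It therefore suffices to establish, for each $T'$ and each admissible pair $(d,t)$, the identity
\[\sum_{T\in\cala}\sgn(T)\,c_{T,T',d,t}=0.\]

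The first part of the work is the explicit evaluation of $\psi_{d,t}\circ\Theta(i,j,T)$ for an individual $T\in\cala$, the analogue of Proposition~\ref{topbit} --- and, like it, a long case analysis, now carrying the extra parameter $t\in\{1,2\}$. Proposition~\ref{Lemma5} writes the composite as a sum over the tableaux obtained by turning $t$ of the entries equal to $d+1$ into $d$; since $q={-}1$, most of the quantum binomials occurring are $0$ or $\pm1$ and a factor $[2]=0$ repeatedly annihilates a term, so only a handful survive, and each survivor is rewritten using Proposition~\ref{Lemma7} together with the auxiliary Lemmas~\ref{aaccbbb}, \ref{aacdbbb}, \ref{aabdbbbc}, \ref{block} and \ref{toomany}, exactly as in the proof of Proposition~\ref{topbit}. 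This produces an explicit list of the tableaux $T'$ that occur --- they are small perturbations of $T$ inside its bottom block, obtained by moving an entry between two adjacent rows or by changing where $T$ is split --- together with their coefficients. The cases are organised according to the position of $d$ relative to the rows at which $T$ is split, and the boundary rows $d=s+f$ and $d=s+f+g-2$ need separate treatment, since there one of the two rows involved has an exceptional length (governed by $g+s'$, respectively by the shortest row of $\la$).

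The second part is the cancellation encoded in the displayed identity. I would prove it by matching the tableaux $T\in\cala$ with $c_{T,T',d,t}\neq0$, for each fixed target $T'$, into a sign-reversing pairing forced by the defining conditions of $\cala$: the contributing tableaux come in pairs differing only in whether they are split at the row immediately adjacent to where $\psi_{d,t}$ has acted (or, near such a row, in which of two candidate entries occupies the upper of the two rows), and the requirement that a tableau once split remains split below keeps such a move inside $\cala$; the rule $\sgn(T)=({-}1)^a$ then makes the two paired tableaux carry opposite signs, while Proposition~\ref{Lemma7} forces their coefficients $c_{T,T',d,t}$ to have equal absolute value. The step I expect to be the main obstacle is verifying that this pairing is fixed-point-free and exhausts every contributing tableau --- in particular at the two boundary rows, and in reconciling the $t=1$ with the $t=2$ computation; this is the same kind of finite but delicate bookkeeping already carried out for the top rows, and I anticipate it taking up most of the remaining sections. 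Summing the resulting identities over $(i,j)$ with the weights $m_{i,j}$ then yields $\psi_{d,t}\circ\Theta=0$ for all these $(d,t)$, which with Corollary~\ref{topcory}, Proposition~\ref{middle} and Proposition~\ref{middle2} completes the odd case of Proposition~\ref{mainhom}.
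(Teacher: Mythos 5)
Your overall architecture coincides with the paper's: compute $\psi_{d,t}\circ\Theta(i,j,T)$ for each individual $T$ using Propositions~\ref{Lemma5} and~\ref{Lemma7} together with the auxiliary lemmas (with $[2]=0$ and Lemma~\ref{toomany} killing stray terms), observe that the resulting coefficients do not depend on $(i,j)$, and then exhibit a combinatorial matching of $\cala$ under which the contributions cancel. One organising device the paper uses that you should add is a prior partition of $\cala$ according to the multiset $\{a,b\}$ of large entries occurring among $a_k,b_k$ for $s+f\ls k\ls d+1$; Proposition~\ref{bot1} then reduces to the corresponding statement for each $\cala^{a,b}$, which makes the case analysis finite.

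The genuine gap is in the cancellation mechanism you describe. You attribute the sign change to the rule $\sgn(T)=({-}1)^a$, predicting that the matching consists of $\sgn$-reversing pairs with coefficients of equal absolute value. In fact, in the bulk of the cases the paper's bijections \emph{preserve} $\sgn$, and they match tableaux in groups of two, three, or four (plus some singletons whose images are already zero); see Propositions~\ref{mid1abprecise}(4)(5) and \ref{mid1aaprecise}(3)(5). For these groups the vanishing comes not from $\sgn$ but from identities among quantum integers and binomials at $q=-1$, such as $({-}1)^x+[x]-[x-3]=0$ and $\gauss{x}{2}-[x-1][x-3]-\gauss{x-3}{2}=1$, applied after Propositions~\ref{Lemma5} and \ref{Lemma7}. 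Only a handful of cases --- notably Propositions~\ref{mid2aaprecise}(3) and \ref{verybot2precise}(3), for $t=2$ --- actually use a $\sgn$-reversing bijection together with the equality of the images of $\psi_{d,2}$. So searching exclusively for $\sgn$-flipping fixed-point-free pairings will not terminate: you will encounter tableaux with no plausible partner (because the cancellation involves three or four of them at once), and you will be unable to locate the required sign flip for the $\sgn$-preserving bijections. The fix is to match sets of tableaux (indexed by the configuration of $(a_d,b_d,a_{d+1},b_{d+1})$ and the row where the extra \dd{} or \pp{} sits) rather than individual tableaux, and to let the quantum arithmetic at $q=-1$ supply the cancellation where $\sgn$ does not.
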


Note that the case $ t=2$, $d=s+f$ has already been covered in Proposition \ref{middle2}.

If $T \in \cala$, then the \pp s in $U(i,j,T)$ lie within rows $s+f,\dots,d+1$.  Given $s+f\ls k\ls d+1$, we write $a^T_k,b^T_k$ for the entries at the end of row $k$ of $T$, with $a_k^T\ls b_k^T$.  (We may just write $a_k,b_k$ if $T$ is understood.)

Now assume $d\ls s+f+g-3$ (the easier case $d=s+f+g-2$ is addressed below).   
The multiset $\lset{a_k,b_k}{s+f \ls k\ls d+1}$ contains two each of the integers $s+f+1,\dots,d+1$, together with two larger integers $a,b$. We partition $\cala$ according to these integers $a,b$: given $d+1<a\ls b$, we define $\cala^{a,b}$ to be the set of $T\in\cala$ such that $\lset{a_k,b_k}{s+f \ls k\ls d+1}$ includes the integers $a$ and $b$.  We prove the following refinement of Proposition \ref{bot1}.

\begin{propn}\label{bot1ab}
Suppose $(i,j) \in \mathcal{I}$ and that $s+f\ls d\ls s+f+g-3$, $ t=1$ or $2$ and $d+1<a\ls b$.  Then
\[
\psi_{d,t}\circ\left(\sum_{T\in\cala^{a,b}}\sgn(T)\Theta(i,j,T)\right)=0.
\]
\end{propn}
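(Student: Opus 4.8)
The plan is to compute $\psi_{d,t}\circ\Theta(i,j,T)$ separately for each $T\in\cala^{a,b}$, using Propositions~\ref{Lemma5} and~\ref{Lemma7}, and then to show that these contributions cancel in the signed sum over $\cala^{a,b}$, the cancellation coming from a sign-reversing involution built out of the ``split'' structure of the tableaux in $\cala$. Set $e=d-s-f$, so that $0\ls e\ls g-3$; under the shift by $s+f$, rows $s+f,\dots,d+1$ of $U(i,j,T)$ are precisely rows $1,\dots,e+2$ of $T$. Every entry of $U(i,j,T)$ outside rows $s+f,\dots,d+1$ is either strictly less than $d$ or is a diagonal entry of one of the rows $d+2,\dots,\ell(\mu)$ and so strictly greater than $d+1$; hence the entries of $U(i,j,T)$ equal to $d+1$ lie entirely in rows $s+f,\dots,d+1$, namely the $\mu_{d+1}-2$ diagonal entries of row $d+1$ together with the tail entries of rows $1,\dots,e+2$ of $T$. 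Proposition~\ref{Lemma5} then writes $\psi_{d,t}\circ\Theta(i,j,T)$ as a linear combination of homomorphisms $\Theta_S$, where $S$ agrees with $U(i,j,T)$ outside rows $s+f,\dots,d+1$ and is obtained from it by changing $t$ of these $(d+1)$-entries to $d$.

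Next I would simplify each $\Theta_S$. Making $t$ entries equal to $d$ where they were $d+1$, together with the $\mu_d-2$ diagonal $d$'s in row $d$ (and, when $e=0$, the two $d$'s ending row $s+f-1$), over-fills the short bottom rows of $\mu$ for all but a handful of choices, so that $\Theta_S=0$ by Lemma~\ref{toomany}; this already kills $\psi_{d,t}\circ\Theta(i,j,T)$ unless the tail of $T$ in rows $1,\dots,e+2$ has a special shape. For the surviving $S$ I would put $\Theta_S$ into the semistandard basis using Proposition~\ref{Lemma7}, together with Lemmas~\ref{aaccbbb}, \ref{aacdbbb}, \ref{aabdbbbc} and~\ref{block} as appropriate to rearrange the affected entries; since rows $s+f+1,\dots,d+1$ of $\mu$ decrease in length by one, I expect the difference-one relation of Lemma~\ref{aaccbbb} to govern the case $a=b$ (when the two large tail values coincide) and Lemma~\ref{aacdbbb} the case $a<b$. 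The expected outcome is that $\psi_{d,t}\circ\Theta(i,j,T)$ is either zero, or a signed multiple of a single semistandard homomorphism $\Theta_{S_0}$ whose rows below $d+1$ are those of $U(i,j,T)$ and whose sign records the splitting behaviour of $T$.

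Finally I would sum over $T\in\cala^{a,b}$ with weights $\sgn(T)$. Because the rows of $S_0$ below $d+1$ coincide with those of $U(i,j,T)$, once a semistandard target is fixed the only freedom left in $T$ is in rows $1,\dots,e+2$, and the sign attached to the corresponding $\Theta_{S_0}$ should equal $\sgn(T)$ up to a factor independent of $T$. The signed sum then vanishes by a sign-reversing involution on this piece of $\cala^{a,b}$: the natural candidate moves the first split row of $T$ by one and locally rearranges the pair of rows straddling the new split. The requirement in the definition of $\cala$ that $T$ be split at every row beyond its first split row is exactly what makes this involution well-defined, and it reverses $\sgn$ by construction; the tableaux it fails to pair -- those whose first split row cannot be decreased, or for which the move would disturb rows below $e+2$ -- should be precisely those already annihilated by Lemma~\ref{toomany} or producing a factor $[2]=0$ under Proposition~\ref{Lemma7}, so they contribute nothing.

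The step I expect to be the main obstacle is the sign bookkeeping inside the semistandardisation: one must track the sign contributed by each swap in Proposition~\ref{Lemma7}(2), each application of Proposition~\ref{Lemma7}(1), and each of the three- and four-term relations of Lemmas~\ref{aaccbbb} and~\ref{aacdbbb}, and verify that a tableau and its partner under the split-row involution always land on the same $\Theta_{S_0}$ with opposite overall sign. The cases $t=1$ and $t=2$ should run in parallel, noting that for $t=2$ one has $d\gs s+f+1$ so that the staircase rows $s+f+1,\dots,d+1$ are non-degenerate; and the extreme subcases $a=d+2$ and $a=b$ will need to be checked directly, since there one of the rows occurring in Lemmas~\ref{aaccbbb}--\ref{aacdbbb} degenerates and one must instead invoke Lemma~\ref{aabdbbbc} or the $[2]=0$ vanishing built into Proposition~\ref{Lemma7}.
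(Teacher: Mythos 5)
Your overall scaffolding matches the paper: apply Proposition~\ref{Lemma5} to expand $\psi_{d,t}\circ\Theta(i,j,T)$, observe that the relevant $(d+1)$-entries lie only in rows $s+f,\dots,d+1$, kill most terms with Lemma~\ref{toomany}, semistandardise the survivors via Proposition~\ref{Lemma7} and the auxiliary lemmas, and then cancel in the signed sum over $\cala^{a,b}$. That much is right. But the proposed cancellation mechanism --- a single sign-reversing involution that decrements the first split row --- does not actually exist, and this is a real gap rather than a bookkeeping issue.

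Two of your intermediate claims fail. First, $\psi_{d,t}\circ\Theta(i,j,T)$ is \emph{not} always zero or a single signed semistandard term: in the cases handled by Propositions~\ref{mid1abprecise}(\ref{mid1abfour}) and~\ref{mid1aaprecise}(\ref{mid1aathree}) one gets a genuine two-term combination (for instance $({-}1)^{\mu_d}(\Theta_V+\Theta_W)$), and there is no further collapse to a single $\Theta_{S_0}$. Second, the cancellation is not governed by $\sgn$ reversing under a pairing. The paper partitions $\cala^{a,b}$ into some twelve or thirteen subsets determined by the four tail entries $(a_d,b_d,a_{d+1},b_{d+1})$ and, where necessary, by the relative order of the entries $d,d+1$ in higher rows; the cancellations that result are a mixture of two-, three- and four-term identities. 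In several of the two-term cancellations (e.g.\ Proposition~\ref{mid1abprecise}(\ref{mid1abdddpa})) the bijection $T\mapsto T'$ \emph{preserves} $\sgn$, and the vanishing comes instead from the opposite signs $({-}1)^{\mu_d}$ and $({-}1)^{\mu_d-1}$ that Proposition~\ref{Lemma5} attaches to the two sides. Only in the genuinely different case of Proposition~\ref{mid2aaprecise}(\ref{mid2aaddaa}) does one get $\sgn(T')=-\sgn(T)$ with equal coefficients. A pure sign-reversing involution cannot reproduce the three- and four-term patterns, nor can it account for bijections whose cancellation comes from the $\psi$-coefficients rather than from $\sgn$. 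To close the gap you would need to replace the involution by the explicit partition of $\cala^{a,b}$ and the case-by-case bijections and coefficient computations, as the paper does separately for $a<b$ versus $a=b$ and for $d>s+f$ versus $d=s+f$.
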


\newcounter{ct}
\setcounter{ct}0
\newcounter{ddpp}
\newcounter{dppa}
\newcounter{dppb}
\newcounter{ddpa}
\newcounter{dapp}
\newcounter{ddpb}
\newcounter{dbpp}
\newcounter{dapbe}
\newcounter{dbpae}
\newcounter{dapbu}
\newcounter{dbpau}
\newcounter{dapbd}
\newcounter{dbpad}
\newcounter{ddaae}
\newcounter{dpaae}
\newcounter{dapau}
\newcounter{dapad}
\newcounter{ddaau}
\newcounter{dpaau}
\newcounter{dpaad}
\newcounter{dapae}
\newcounter{ppaax}
\newcounter{ppaay}
\newcounter{papb}
\newcounter{pbpa}
\newcounter{abpp}
\newcounter{ppaa}
\newcounter{papa}
\newcounter{aapp}
\newcounter{ppx}
\newcounter{ppy}
\newcounter{dpe}
\newcounter{dde}
\newcounter{ddd}
\newcounter{dpu}
\newcounter{dpd}

We consider four different cases, according to whether $d>s+f$ and whether $a<b$.

\subsubs{The case $s+f+1\ls d\ls s+f+g-3,\ a<b$}

We start by defining some subsets of $\cala^{a,b}$:
{\allowdisplaybreaks\begin{align*}
\addtocounter{ct}1\setcounter{ddpp}{\value{ct}}\cala^{a,b}_{\arabic{ct}}&=\lset{T\in\cala^{a,b}}{(a_d,b_d,a_{d+1},b_{d+1})=(d,d,d+1,d+1)};\\
\addtocounter{ct}1\setcounter{dppa}{\value{ct}}\cala^{a,b}_{\arabic{ct}}&=\lset{T\in\cala^{a,b}}{(a_d,b_d,a_{d+1},b_{d+1})=(d,d+1,d+1,a)};\\
\addtocounter{ct}1\setcounter{dppb}{\value{ct}}\cala^{a,b}_{\arabic{ct}}&=\lset{T\in\cala^{a,b}}{(a_d,b_d,a_{d+1},b_{d+1})=(d,d+1,d+1,b)};\\
\addtocounter{ct}1\setcounter{ddpa}{\value{ct}}\cala^{a,b}_{\arabic{ct}}&=\lset{T\in\cala^{a,b}}{(a_d,b_d,a_{d+1},b_{d+1})=(d,d,d+1,a)};\\
\addtocounter{ct}1\setcounter{dapp}{\value{ct}}\cala^{a,b}_{\arabic{ct}}&=\lset{T\in\cala^{a,b}}{(a_d,b_d,a_{d+1},b_{d+1})=(d,a,d+1,d+1)};\\
\addtocounter{ct}1\setcounter{ddpb}{\value{ct}}\cala^{a,b}_{\arabic{ct}}&=\lset{T\in\cala^{a,b}}{(a_d,b_d,a_{d+1},b_{d+1})=(d,d,d+1,b)};\\
\addtocounter{ct}1\setcounter{dbpp}{\value{ct}}\cala^{a,b}_{\arabic{ct}}&=\lset{T\in\cala^{a,b}}{(a_d,b_d,a_{d+1},b_{d+1})=(d,b,d+1,d+1)};\\
\addtocounter{ct}1\setcounter{dapbe}{\value{ct}}\cala^{a,b}_{\arabic{ct}}&=\lset{T\in\cala^{a,b}}{(a_d,b_d,a_{d+1},b_{d+1})=(d,a,d+1,b),\ a_k=d\text{ for some }k<d};\\
\addtocounter{ct}1\setcounter{dbpae}{\value{ct}}\cala^{a,b}_{\arabic{ct}}&=\lset{T\in\cala^{a,b}}{(a_d,b_d,a_{d+1},b_{d+1})=(d,b,d+1,a),\ a_k=d\text{ for some }k<d};\\
\addtocounter{ct}1\setcounter{dapbu}{\value{ct}}\cala^{a,b}_{\arabic{ct}}&=\lset{T\in\cala^{a,b}}{(a_d,b_d,a_{d+1},b_{d+1})=(d,a,d+1,b),\ b_k=d,\ b_l=d+1\text{ for some }k<l<d};\\
\addtocounter{ct}1\setcounter{dbpau}{\value{ct}}\cala^{a,b}_{\arabic{ct}}&=\lset{T\in\cala^{a,b}}{(a_d,b_d,a_{d+1},b_{d+1})=(d,b,d+1,a),\ b_k=d,\ b_l=d+1\text{ for some }k<l<d};\\
\addtocounter{ct}1\setcounter{dapbd}{\value{ct}}\cala^{a,b}_{\arabic{ct}}&=\lset{T\in\cala^{a,b}}{(a_d,b_d,a_{d+1},b_{d+1})=(d,a,d+1,b),\ b_k=d+1,\ b_l=d\text{ for some }k<l<d};\\
\addtocounter{ct}1\setcounter{dbpad}{\value{ct}}\cala^{a,b}_{\arabic{ct}}&=\lset{T\in\cala^{a,b}}{(a_d,b_d,a_{d+1},b_{d+1})=(d,b,d+1,a),\ b_k=d+1,\ b_l=d\text{ for some }k<l<d}.
\end{align*}}

Using the definition of $\cala$, it is easy to check that these sets partition $\cala^{a,b}$.  (Note that because $a<b$, a tableau $T\in\cala^{a,b}$ cannot have $(a_{d+1},b_{d+1})=(a,b)$, because this would violate the splitting condition; similarly, we cannot have $\{a_d,b_d,a_{d+1},b_{d+1}\}=\{d+1,d+1,a,b\}$ as multisets.)  Now Proposition \ref{bot1ab} in this case will follow from the following two results.

\begin{propn}\label{mid1abprecise}
Suppose $i,j,d,a,b$ are as above with $a<b$.
\begin{enumerate}
\item\label{mid1ab0}
If $T\in\cala^{a,b}_{\arabic{ddpp}}\cup\cala^{a,b}_{\arabic{dppa}}\cup\cala^{a,b}_{\arabic{dppb}}$, then $\psi_{d,1}\circ\Theta(i,j,T)=0$.
\item\label{mid1abdddpa}
There is a bijection $T\mapsto T'$ from $\cala^{a,b}_{\arabic{ddpa}}$ to $\cala^{a,b}_{\arabic{dapp}}$ such that
\[
\psi_{d,1}\circ\left(\sgn(T)\Theta(i,j,T)+\sgn(T')\Theta(i,j,T')\right)=0\tag*{\textit{for each $T\in\cala^{a,b}_{\arabic{ddpa}}$.}}
\]
\item\label{mid1abdddpb}
There is a bijection $T\mapsto T'$ from $\cala^{a,b}_{\arabic{ddpb}}$ to $\cala^{a,b}_{\arabic{dbpp}}$ such that
\[
\psi_{d,1}\circ\left(\sgn(T)\Theta(i,j,T)+\sgn(T')\Theta(i,j,T')\right)=0\tag*{\textit{for each $T\in\cala^{a,b}_{\arabic{ddpb}}$.}}
\]
\item\label{mid1abdadpb}
There is a bijection $T\mapsto T'$ from $\cala^{a,b}_{\arabic{dapbe}}$ to $\cala^{a,b}_{\arabic{dbpae}}$ such that
\[
\psi_{d,1}\circ\left(\sgn(T)\Theta(i,j,T)+\sgn(T')\Theta(i,j,T')\right)=0\tag*{\textit{for each $T\in\cala^{a,b}_{\arabic{dapbe}}$.}}
\]
\item\label{mid1abfour}
There are bijections
\begin{alignat*}3
\cala^{a,b}_{\arabic{dapbu}}&\longrightarrow\cala^{a,b}_{\arabic{dbpau}}&\qquad\cala^{a,b}_{\arabic{dapbu}}&\longrightarrow\cala^{a,b}_{\arabic{dapbd}}&\qquad\cala^{a,b}_{\arabic{dapbu}}&\longrightarrow\cala^{a,b}_{\arabic{dbpad}}\\
T&\longmapsto T'&T&\longmapsto T''&T&\longmapsto T'''
\end{alignat*}
such that
\[
\psi_{d,1}\circ\left(\sgn(T)\Theta(i,j,T)+\sgn(T')\Theta(i,j,T')+\sgn(T)\Theta(i,j,T'')+\sgn(T')\Theta(i,j,T''')\right)=0\tag*{\textit{for each $T\in\cala^{a,b}_{\arabic{dapbu}}$.}}
\]
\end{enumerate}
\end{propn}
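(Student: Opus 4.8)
The proof will be a direct but lengthy computation, carried out uniformly over the thirteen subsets. Throughout, fix $(i,j)\in\cali$, $s+f+1\ls d\ls s+f+g-3$ and $d+1<a\ls b$, and recall that for $T$ in any of these subsets the \pp s in $U(i,j,T)$ all lie in rows $s+f,\dots,d+1$: the entries at the ends of rows $d$ and $d+1$ are exactly the ones listed in the subset's definition, while the remaining end-entries of rows $s+f,\dots,d-1$ form a rearrangement of two copies each of $s+f+1,\dots,d-1$ together with $a$, $b$ and, in most subsets, one or two further copies of $d$ or $d+1$. The plan is: \emph{(i)} apply Proposition~\ref{Lemma5} with $t=1$ to write $\psi_{d,1}\circ\Theta(i,j,T)$ as a sum $\sum_V c_V\Theta_V$ over the row-standard tableaux $V$ obtained from $U(i,j,T)$ by changing a single entry $d+1$ to $d$, where each $c_V$ is the product of powers of $q$ and quantum binomial coefficients supplied by that proposition, evaluated at $q={-}1$; \emph{(ii)} discard the vanishing terms; \emph{(iii)} straighten the survivors into a normal form; and \emph{(iv)} match the resulting normal forms against one another across the stated bijections.

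For step \emph{(ii)} the relevant facts are that at $q={-}1$ we have $[2]=0$ and many quantum binomials vanish, and that a tableau $V$ carrying more than $\mu_d$ entries equal to $d$, all of them in rows of length at most $\mu_d$, has $\Theta_V=0$ by Lemma~\ref{toomany}; in each subset these observations leave only a handful of surviving terms. For step \emph{(iii)} we rewrite each surviving $\Theta_V$ using Proposition~\ref{Lemma7} together with Lemmas~\ref{aaccbbb}, \ref{aacdbbb} and \ref{aabdbbbc}, pushing the stray copies of $d$ and $d+1$ and the large entries $a,b$ up into their row-standard positions; the outcome is an explicit $\pm1$-combination of homomorphisms $\Theta_W$, where $W$ is determined by which two rows among $s+f,\dots,d+1$ carry $a$ and $b$. (Because rows $s+f+1,\dots,s+f+g-2$ of $\mu$ have strictly decreasing lengths, the block move of Lemma~\ref{block} is not needed here: every move is a single-row application of Proposition~\ref{Lemma7} or of one of Lemmas~\ref{aaccbbb}--\ref{aabdbbbc}.) For part~(\ref{mid1ab0}) the composite is then zero --- in the first subset the unique surviving term is killed by Lemma~\ref{toomany}, and in the other two the survivors cancel after straightening. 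For parts~(\ref{mid1abdddpa}), (\ref{mid1abdddpb}) and (\ref{mid1abdadpb}) the bijection $T\mapsto T'$ is the obvious one: interchanging the prescribed positions of $d$ and $a$ (respectively of $d$ and $b$) between rows $d$ and $d+1$, or interchanging $a$ and $b$ in the two rows that carry them. One checks that straightening $\Theta(i,j,T)$ and $\Theta(i,j,T')$ produces the same normal-form homomorphisms with opposite overall sign; since $\sgn(T)$ depends only on the parity of the first row at which $T$ is split, and the bijection alters $T$ only within two adjacent rows --- so this first split row is unchanged, or moves by one, in which case the straightening contributes a compensating sign via Proposition~\ref{Lemma7}(2) --- the terms $\sgn(T)\Theta(i,j,T)$ and $\sgn(T')\Theta(i,j,T')$ cancel.

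Part~(\ref{mid1abfour}) is treated by the same method, but here straightening $\Theta(i,j,T)$ for $T\in\cala^{a,b}_{\arabic{dapbu}}$ yields four normal-form homomorphisms rather than two; the three bijections $T\mapsto T',T'',T'''$ are the three ways of redistributing the pair $\{a,b\}$ --- together with the accompanying stray copies of $d$ and $d+1$ sitting in the rows indexed by the $k<l<d$ of the definition --- among the available rows, chosen so that the four four-term expansions share all of their normal-form homomorphisms, which then cancel in pairs once the signs are taken into account.

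The main obstacle will be the bookkeeping in part~(\ref{mid1abfour}): one must check that straightening genuinely produces exactly four surviving $\Theta_W$, with no extra term slipping past the vanishing criteria of step \emph{(ii)}, and that the three bijections are mutually consistent, so that every $\Theta_W$ that occurs is cancelled exactly once and with the correct sign. The delicate thread running through all of this is the behaviour of the first split row --- hence of $\sgn$ --- under the bijections and the straightening moves; keeping these signs aligned is exactly what the partition of $\cala^{a,b}$ into these thirteen pieces is designed to achieve.
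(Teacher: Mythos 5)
Your high-level strategy --- expand $\psi_{d,1}\circ\Theta(i,j,T)$ via Proposition~\ref{Lemma5}, kill terms using $[2]=0$ and Lemma~\ref{toomany}, straighten the survivors with Proposition~\ref{Lemma7}, and match across the stated bijections --- is exactly the route the paper takes, and your identification of the thirteen-part case split as the organizing device for the signs is right. But there is a concrete defect in your description of the bijections for parts~(\ref{mid1abdddpa}) and~(\ref{mid1abdddpb}) which would make your sign argument unsound if carried out as written.

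For $T\in\cala^{a,b}_{\arabic{ddpa}}$ we have $(a_d,b_d,a_{d+1},b_{d+1})=(d,d,d+1,a)$, and since the multiset of end-entries across rows $s+f,\dots,d+1$ must consist of two copies each of $s+f+1,\dots,d+1$ together with $a,b$, there is necessarily a stray $d+1$ sitting in some row $k<d$. The target set $\cala^{a,b}_{\arabic{dapp}}$ has $(d,a,d+1,d+1)$ in rows $d,d+1$, and its rows $k<d$ carry a stray $d$, not $d+1$. So the bijection is not ``interchanging $d$ and $a$ between rows $d$ and $d+1$'': one must also replace the \pp{} in row $k$ by a \dd . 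Without that change the map does not even preserve the type $(2^{g-1})$. This matters because your sign argument rests on the claim that ``the bijection alters $T$ only within two adjacent rows''; for parts~(\ref{mid1abdddpa}) and~(\ref{mid1abdddpb}) it alters three rows including the nonadjacent row $k$, so the argument that the first split row moves by at most one --- and hence that $\sgn(T)=\sgn(T')$ --- does not follow from what you wrote. (It \emph{is} true that $\sgn(T')=\sgn(T)$ here, but for a different reason: the change in row $k$ replaces a $d+1$ by a $d$, both of which lie on the ``small'' side of any admissible split, and the entries in rows $d,d+1$ are permuted among positions that straddle no split.) Two smaller points: the invocation of Proposition~\ref{Lemma7}(2) as a source of compensating signs is misplaced, since that clause requires $\mu_r=\mu_{r+1}$, whereas the rows $s+f+1,\dots,s+f+g-2$ of $\mu$ have strictly decreasing lengths; and Lemmas~\ref{aaccbbb}--\ref{aabdbbbc} are not needed anywhere in this proposition --- the straightening here uses only single-row applications of Proposition~\ref{Lemma7}, so listing them as tools overcomplicates the plan rather than breaking it.
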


\begin{pfenum}
\item
If $T\in\cala^{a,b}_{\arabic{ddpp}}$, then we get $\psi_{d,1}\circ\Theta(i,j,T)=0$ by Proposition~\ref{Lemma5} and Lemma \ref{toomany}.

If $T\in\cala^{a,b}_{\arabic{dppa}}$ or $\cala^{a,b}_{\arabic{dppb}}$, then Proposition~\ref{Lemma5} gives
\[
\psi_{d,1}\circ\Theta(i,j,T)=[\mu_d]\Theta_V+\Theta_W,
\]
where $V\ars ddU(i,j,T)\sra d{d+1}W$.  But Proposition~\ref{Lemma7} gives $\Theta_W=-[\mu_d-2]\Theta_V$, so $\psi_{d,1}\circ\Theta(i,j,T)=0$.
\item
Given $T\in\cala^{a,b}_{\arabic{ddpa}}$, there is one \pp{} in a row $k<d$ of $U(i,j,T)$.  We define $T'$ by replacing this entry with \dd , and replacing $(a_d,b_d,a_{d+1},b_{d+1})$ with $(d,a,d+1,d+1)$.  It is easy to check that this really does define a bijection from $\cala^{a,b}_{\arabic{ddpa}}$ to $\cala^{a,b}_{\arabic{dapp}}$.  Moreover, it is clear that $\sgn(T')=\sgn(T)$.  Now consider $\psi_{d,1}\circ\Theta(i,j,T)$.  Applying Proposition~\ref{Lemma5}, we replace a \pp{} with a \dd{} in row $k$ or row $d+1$.  But in the latter case the resulting homomorphism is zero, by Lemma \ref{toomany}.  So we have $\psi_{d,1}\circ\Theta(i,j,T)=({-}1)^{\mu_d}\Theta_V$, where $U(i,j,T)\sra dkV$.  On the other hand, we have $\psi_{d,1}\circ\Theta(i,j,T')=\Theta_W$, where $U(i,j,T')\sra d{d+1}W$.  Applying Proposition~\ref{Lemma7} to $W$, we get $\Theta_W=({-}1)^{\mu_d-1}\Theta_V$; so $\psi_{d,1}\circ\left(\Theta(i,j,T)+\Theta(i,j,T')\right)=0$, as required.
\item
This is identical to (\ref{mid1abdddpa}), with the r\^oles of $a$ and $b$ interchanged.
\item
Given $T$, we define $T'$ simply by interchanging the \ba{} and \bb{} in rows $d$ and $d+1$ of $U(i,j,T)$.  It is very easy to see that this is a bijection, and that $\sgn(T)=\sgn(T')$.  Now we apply Proposition~\ref{Lemma5} to compute $\psi_{d,1}\circ\Theta(i,j,T)$; note that since we have $a_k=d$, we must have $b_k=d+1$.  Hence when we replace a \pp{} with a \dd{} in row $k$, we get a factor of $[2]=0$.  So we just have $\psi_{d,1}\circ\Theta(i,j,T)=\Theta_V$, where $U(i,j,T)\sra d{d+1}V$.  Similarly $\psi_{d,1}\circ\Theta(i,j,T')=\Theta_W$ where $U(i,j,T')\sra d{d+1}W$; applying Proposition~\ref{Lemma7} to $V$, we get $\Theta_V=({-}1)^{\mu_d-1}\Theta_X$, where $X$ is obtained by interchanging the \dd{} in row $d+1$ and the \ba{} in row $d$.  Similarly $\Theta_W=({-}1)^{\mu_d}\Theta_X$ (the difference in signs arising because $a<b$), and so $\psi_{d,1}\circ\left(\Theta(i,j,T)+\Theta(i,j,T')\right)=0$, as required.
\item
Given $T$, we obtain $T'$ by interchanging the \ba{} and \bb{} in rows $d,d+1$.  We obtain $T''$ by interchanging the $d,d+1$ in rows $k$ and $l$, and we obtain $T'''$ by doing both of these changes.  It is easy to see that these maps are bijections, and that $\sgn(T)=\sgn(T')=\sgn(T'')=\sgn(T''')$.  Proposition~\ref{Lemma5} gives
\begin{alignat*}2
\psi_{d,1}\circ\Theta(i,j,T)&=&({-}1)^{\mu_d-1}\Theta_V&+\Theta_{X_1}\\
\psi_{d,1}\circ\Theta(i,j,T')&=\ &({-}1)^{\mu_d-1}\Theta_W&+\Theta_{X_2}\\
\psi_{d,1}\circ\Theta(i,j,T'')&=&({-}1)^{\mu_d}\Theta_V&+\Theta_{Y_1}\\
\psi_{d,1}\circ\Theta(i,j,T''')&=&({-}1)^{\mu_d}\Theta_W&+\Theta_{Y_2},
\end{alignat*}
where
\[
U(i,j,T)\sra dlV\ars dkU(i,j,T''),\qquad U(i,j,T')\sra dlW\ars dkU(i,j,T''')
\]
and
\[
U(i,j,T)\sra d{d+1}X_1,\qquad U(i,j,T')\sra d{d+1}X_2,\qquad U(i,j,T'')\sra d{d+1}Y_1,\qquad U(i,j,T''')\sra d{d+1}Y_2.
\]
Using Proposition~\ref{Lemma7} as in (\ref{mid1abdadpb}) above, we get $\Theta_{X_1}=-\Theta_{X_2}$ and $\Theta_{Y_1}=-\Theta_{Y_2}$, so that
\[
\psi_{d,1}\circ\left(\Theta(i,j,T)+\Theta(i,j,T')+\Theta(i,j,T'')+\Theta(i,j,T''')\right)=0.\tag*{\qedhere}
\]
\end{pfenum}

\begin{propn}\label{mid2abprecise}
Suppose $i,j,d,a,b$ are as above with $a<b$.
\begin{enumerate}
\item\label{mid2ab0}
If $T\in\bigcup_{i=1}^9\cala^{a,b}_i$, then $\psi_{d,2}\circ\Theta(i,j,T)=0$.
\item\label{mid2abdadpb1}
There is a bijection $T\mapsto T'$ from $\cala^{a,b}_{\arabic{dapbu}}$ to $\cala^{a,b}_{\arabic{dbpau}}$ such that
\[
\psi_{d,2}\circ\left(\sgn(T)\Theta(i,j,T)+\sgn(T')\Theta(i,j,T')\right)=0\tag*{\textit{for each $T\in\cala^{a,b}_{\arabic{dapbu}}$.}}
\]
\item\label{mid2abdadpb2}
There is a bijection $T\mapsto T'$ from $\cala^{a,b}_{\arabic{dapbd}}$ to $\cala^{a,b}_{\arabic{dbpad}}$ such that
\[
\psi_{d,2}\circ\left(\sgn(T)\Theta(i,j,T)+\sgn(T')\Theta(i,j,T')\right)=0\tag*{\textit{for each $T\in\cala^{a,b}_{\arabic{dapbd}}$.}}
\]
\end{enumerate}
\end{propn}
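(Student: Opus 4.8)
The plan is to run the same argument as in the proof of Proposition~\ref{mid1abprecise}, but with $\psi_{d,2}$ in place of $\psi_{d,1}$. So in every case the starting point is Proposition~\ref{Lemma5} with $t=2$, which writes $\psi_{d,2}\circ\Theta(i,j,T)$ as a linear combination of homomorphisms $\Theta_S$, where $S$ runs over the row-standard tableaux obtained from $U(i,j,T)$ by converting two entries equal to $d+1$ into $d$, each term weighted by a product of quantum binomial coefficients $\gauss{S^d_j}{T^d_j}$. All the vanishing will come from Lemma~\ref{toomany}, from the straightening relations of Proposition~\ref{Lemma7}, and from the identities $\gauss{2}{1}=[2]=0$ and $[m]=[m-2]$ valid at $q={-}1$ in characteristic $0$.

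For part~(\ref{mid2ab0}) I would show that for $T$ in any of the nine classes $\cala^{a,b}_1,\dots,\cala^{a,b}_9$ every term of this expansion is zero. The key point is that in these classes the entries of $U(i,j,T)$ equal to $d+1$ all lie in row $d+1$, apart from at most one, which lies in a row $k<d$; moreover that row $k$ also contains an entry equal to $d$ precisely when $T\in\cala^{a,b}_{\arabic{dapbe}}\cup\cala^{a,b}_{\arabic{dbpae}}$. A term then dies in one of two ways. If one conversion is made inside a row $k<d$ that already contains an entry equal to $d$, the term picks up a factor $\gauss{2}{1}=[2]=0$. Otherwise the resulting tableau $S$ has at least $\mu_d+1$ entries equal to $d$ forced into rows $d$ and $d+1$, whose lengths are $\mu_d$ and $\mu_d-1$; then either Lemma~\ref{toomany} already gives $\Theta_S=0$, or --- when there is a further entry equal to $d$ in a longer row $k<d$ --- the entry in the last column of row $d$ (which is then the only non-$d$ entry there, or an $a$, according to the class) blocks the straightening, so that the index set $\mathcal{G}$ in Proposition~\ref{Lemma7}(1) applied at row $d$ is empty and $\Theta_S=0$. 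Collecting these, $\psi_{d,2}\circ\Theta(i,j,T)=0$.

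For parts~(\ref{mid2abdadpb1}) and (\ref{mid2abdadpb2}) I would take the map $T\mapsto T'$ that interchanges the two entries $a$ and $b$ in the last columns of rows $d$ and $d+1$ of $U(i,j,T)$ and leaves every other row --- in particular the rows $k<l<d$ witnessing membership --- unchanged. This is visibly a bijection $\cala^{a,b}_{\arabic{dapbu}}\to\cala^{a,b}_{\arabic{dbpau}}$ (respectively $\cala^{a,b}_{\arabic{dapbd}}\to\cala^{a,b}_{\arabic{dbpad}}$), it preserves all the defining conditions of $\cala$, and it leaves the first splitting row unchanged, so $\sgn(T')=\sgn(T)$. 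Computing $\psi_{d,2}\circ\Theta(i,j,T)$ and $\psi_{d,2}\circ\Theta(i,j,T')$ by Proposition~\ref{Lemma5}, every term in which both conversions are confined to rows $d$ and $d+1$ vanishes by the argument of the previous paragraph (the entry in the last column of row $d$ blocks the straightening), so the only surviving terms come from converting the single $d+1$ carried by row $l$ together with one $d+1$ from row $d+1$. Straightening these by Proposition~\ref{Lemma7} --- the process being identical for $T$ and $T'$, which differ only by the swap $a\leftrightarrow b$ --- both $\psi_{d,2}\circ\Theta(i,j,T)$ and $\psi_{d,2}\circ\Theta(i,j,T')$ reduce to $\pm\Theta_X$ for one and the same tableau $X$, with opposite overall signs, the discrepancy arising from the strict inequality $a<b$ exactly as in part~(\ref{mid1abfour}) of Proposition~\ref{mid1abprecise}. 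Hence $\psi_{d,2}\circ\bigl(\sgn(T)\Theta(i,j,T)+\sgn(T')\Theta(i,j,T')\bigr)=0$.

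The conceptual content is slight: everything is contained in the root-of-unity identities $\gauss21=[2]=0$ and $[m]=[m-2]$, together with Lemma~\ref{toomany} and the empty-index-set version of Proposition~\ref{Lemma7}(1). I expect the main obstacle to be the bookkeeping in part~(\ref{mid2ab0}): one must run through all nine classes, enumerate for each the ways Proposition~\ref{Lemma5} can distribute the two conversions among the rows, and check that one of the two mechanisms always applies. The delicate point is to determine correctly, class by class, which rows $k<d$ may carry an entry equal to $d+1$ and whether such a row also carries an entry equal to $d$, since this is what decides which mechanism is needed.
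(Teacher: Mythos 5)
Your proposal is correct and follows essentially the same route as the paper: expand via Proposition~\ref{Lemma5} with $t=2$, kill terms by Lemma~\ref{toomany} or by a forced factor $[2]=0$, and for parts~(\ref{mid2abdadpb1}) and (\ref{mid2abdadpb2}) use the $a\leftrightarrow b$ swap in rows $d,d+1$ together with a sign discrepancy coming from Proposition~\ref{Lemma7} because $a<b$. One small imprecision in your write-up of part~(\ref{mid2ab0}): for $T\in\cala^{a,b}_2\cup\cala^{a,b}_3$ the extra entry equal to $d+1$ lies in row $d$ (at $b_d$), not in a row $k<d$ as you claim, but this does not affect the argument since in every case the two conversions still force at least $\mu_d+1$ entries equal to $d$ into rows $d$ and $d+1$, and the index-set argument you invoke handles both situations.
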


\begin{pfenum}
\item\label{mid2abprecise3:abc:2aalast}
If $T\in\cala^{a,b}_i$ for $1\ls i\ls 7$, then the result follows by Proposition~\ref{Lemma5} and Lemma \ref{toomany}.  So suppose $T\in\cala^{a,b}_{\arabic{dapbe}}$ or $\cala^{a,b}_{\arabic{dbpae}}$, and consider applying Proposition~\ref{Lemma5}.  The \pp s in $U(i,j,T)$ lie in rows $k$ and $d+1$.  If we replace \pp{} with \dd{} in row $k$, then we get a factor of $[2]=0$; on the other hand, if we replace two \pp s with \dd s in row $d+1$, then the resulting homomorphism is zero by Lemma \ref{toomany}.
\item
The bijection $T\mapsto T'$ is the same as in Proposition \ref{mid1abprecise}(\ref{mid1abfour}).  Now consider applying Proposition~\ref{Lemma5}.  The \pp s in $U(i,j,T)$ all lie in row $d+1$ except for one, which lies in row $l$.  If we replace two of the \pp s in row $d+1$ with \dd s, then by Lemma \ref{toomany} the corresponding homomorphism is zero, so Proposition~\ref{Lemma5} gives
\[
\psi_{d,2}\circ\Theta(i,j,T)=({-}1)^{\mu_d-1}\Theta_X,
\]
where $U(i,j,T)\sra d{l,d+1}X$.  Proposition~\ref{Lemma5} similarly gives
\[
\psi_{d,2}\circ\Theta(i,j,T')=({-}1)^{\mu_d-1}\Theta_{X'}
\]
where $U(i,j,T')\sra d{l,d+1}X'$.  Now we apply Proposition~\ref{Lemma7} to $X$ and $X'$, in both cases moving the \dd{} from row $d+1$ to row $d$.  We find that $\Theta_X=-\Theta_{X'}$, and hence $\psi_{d,2}\circ\left(\Theta(i,j,T)+\Theta(i,j,T')\right)=0$, as required.
\item
This case is identical to the previous case, except that $({-}1)^{\mu_d-1}$ should be replaced with $({-}1)^{\mu_d}$.
\end{pfenum}

\subsubs{The case $s+f+1\ls d\ls s+f+g-3,\ a=b$}

Now we consider the case $a=b$.  The method is the same as in the last section, but we need to define some different subsets of $\cala^{a,a}$:
\setcounter{ct}0
{\allowdisplaybreaks
\begin{align*}
\addtocounter{ct}1\setcounter{ddpp}{\value{ct}}\cala^{a,a}_{\arabic{ddpp}}&=\lset{\vbox to 10pt{}T\in\cala^{a,a}}{(a_d,b_d,a_{d+1},b_{d+1})=(d,d,d+1,d+1)};\\
\addtocounter{ct}1\setcounter{dppa}{\value{ct}}\cala^{a,a}_{\arabic{dppa}}&=\lset{\vbox to 10pt{}T\in\cala^{a,a}}{(a_d,b_d,a_{d+1},b_{d+1})=(d,d+1,d+1,a)};\\
\addtocounter{ct}1\setcounter{ddpa}{\value{ct}}\cala^{a,a}_{\arabic{ddpa}}&=\lset{\vbox to 10pt{}T\in\cala^{a,a}}{(a_d,b_d,a_{d+1},b_{d+1})=(d,d,d+1,a)};\\
\addtocounter{ct}1\setcounter{dapp}{\value{ct}}\cala^{a,a}_{\arabic{dapp}}&=\lset{\vbox to 10pt{}T\in\cala^{a,a}}{(a_d,b_d,a_{d+1},b_{d+1})=(d,a,d+1,d+1)};\\
\addtocounter{ct}1\setcounter{ddaae}{\value{ct}}\cala^{a,a}_{\arabic{ddaae}}&=\lset{\vbox to 10pt{}T\in\cala^{a,a}}{(a_d,b_d,a_{d+1},b_{d+1})=(d,d,a,a),\ a_k=d+1\text{ for some }k<d};\\
\addtocounter{ct}1\setcounter{dpaae}{\value{ct}}\cala^{a,a}_{\arabic{dpaae}}&=\lset{\vbox to 10pt{}T\in\cala^{a,a}}{(a_d,b_d,a_{d+1},b_{d+1})=(d,d+1,a,a),\ a_k=d\text{ for some }k<d};\\
\addtocounter{ct}1\setcounter{dapau}{\value{ct}}\cala^{a,a}_{\arabic{dapau}}&=\lset{\vbox to 10pt{}T\in\cala^{a,a}}{(a_d,b_d,a_{d+1},b_{d+1})=(d,a,d+1,a),\ b_k=d,\ b_l=d+1\text{ for some }k<l<d};\\
\addtocounter{ct}1\setcounter{dapad}{\value{ct}}\cala^{a,a}_{\arabic{dapad}}&=\lset{\vbox to 10pt{}T\in\cala^{a,a}}{(a_d,b_d,a_{d+1},b_{d+1})=(d,a,d+1,a),\ b_k=d+1,\ b_l=d\text{ for some }k<l<d};\\
\addtocounter{ct}1\setcounter{ddaau}{\value{ct}}\cala^{a,a}_{\arabic{ddaau}}&=\lset{\vbox to 10pt{}T\in\cala^{a,a}}{(a_d,b_d,a_{d+1},b_{d+1})=(d,d,a,a),\ b_k=d+1,\ b_l=d+1\text{ for some }k<l<d};\\
\addtocounter{ct}1\setcounter{dpaau}{\value{ct}}\cala^{a,a}_{\arabic{dpaau}}&=\lset{\vbox to 10pt{}T\in\cala^{a,a}}{(a_d,b_d,a_{d+1},b_{d+1})=(d,d+1,a,a),\ b_k=d,\ b_l=d+1\text{ for some }k<l<d};\\
\addtocounter{ct}1\setcounter{dpaad}{\value{ct}}\cala^{a,a}_{\arabic{dpaad}}&=\lset{\vbox to 10pt{}T\in\cala^{a,a}}{(a_d,b_d,a_{d+1},b_{d+1})=(d,d+1,a,a),\ b_k=d+1,\ b_l=d\text{ for some }k<l<d};\\
\addtocounter{ct}1\setcounter{dapae}{\value{ct}}\cala^{a,a}_{\arabic{dapae}}&=\lset{\vbox to 10pt{}T\in\cala^{a,a}}{(a_d,b_d,a_{d+1},b_{d+1})=(d,a,d+1,a),\ a_k=d\text{ for some }k<d};\\
\addtocounter{ct}1\setcounter{ppaax}{\value{ct}}\cala^{a,a}_{\arabic{ppaax}}&=\lset{\vbox to 10pt{}T\in\cala^{a,a}}{(a_d,b_d,a_{d+1},b_{d+1})=(d+1,d+1,a,a)\text{ and either $a_{d-1}<d$ or $d=s+f+1$}};\\
\addtocounter{ct}1\setcounter{ppaay}{\value{ct}}\cala^{a,a}_{\arabic{ppaay}}&=\lset{\vbox to 10pt{}T\in\cala^{a,a}}{(a_{d-1},a_{d-1},a_d,b_d,a_{d+1},b_{d+1})=(d,d,d+1,d+1,a,a)\text{ and $d>s+f+1$}}.
\end{align*}}

Now Proposition \ref{bot1ab} in this case follows from the next two results.

\begin{propn}\label{mid1aaprecise}
Suppose $i,j,d,a$ are as above.
\begin{enumerate}
\item\label{mid1aa0}
If $T\in\cala^{a,a}_{\arabic{ddpp}}\cup\cala^{a,a}_{\arabic{dppa}}\cup\cala^{a,a}_{\arabic{dapae}}\cup\cala^{a,a}_{\arabic{ppaax}}\cup\cala^{a,a}_{\arabic{ppaay}}$, then $\psi_{d,1}\circ\Theta(i,j,T)=0$.
\item\label{mid1aadddpa}
There is a bijection $T\mapsto T'$ from $\cala^{a,a}_{\arabic{ddpa}}$ to $\cala^{a,a}_{\arabic{dapp}}$ such that
\[
\psi_{d,1}\circ\left(\sgn(T)\Theta(i,j,T)+\sgn(T')\Theta(i,j,T')\right)=0\tag*{\textit{for each $T\in\cala^{a,a}_{\arabic{ddpa}}$.}}
\]
\item\label{mid1aaddaa}
There is a bijection $T\mapsto T'$ from $\cala^{a,a}_{\arabic{ddaae}}$ to $\cala^{a,a}_{\arabic{dpaae}}$ such that
\[
\psi_{d,1}\circ\left(\sgn(T)\Theta(i,j,T)+\sgn(T')\Theta(i,j,T')\right)=0\tag*{\textit{for each $T\in\cala^{a,a}_{\arabic{ddaae}}$.}}
\]
\item\label{mid1aadadpa}
There is a bijection $T\mapsto T'$ from $\cala^{a,a}_{\arabic{dapau}}$ to $\cala^{a,a}_{\arabic{dapad}}$ such that
\[
\psi_{d,1}\circ\left(\sgn(T)\Theta(i,j,T)+\sgn(T')\Theta(i,j,T')\right)=0\tag*{\textit{for each $T\in\cala^{a,a}_{\arabic{dapau}}$.}}
\]
\item\label{mid1aathree}
There are bijections
\begin{alignat*}2
\cala^{a,a}_{\arabic{ddaau}}&\longrightarrow\cala^{a,a}_{\arabic{dpaau}}&\qquad\cala^{a,a}_{\arabic{ddaau}}&\longrightarrow\cala^{a,a}_{\arabic{dpaad}}\\
T&\longmapsto T'&T&\longmapsto T''
\end{alignat*}
such that
\[
\psi_{d,1}\circ\left(\sgn(T)\Theta(i,j,T)+\sgn(T')\Theta(i,j,T')+\sgn(T)\Theta(i,j,T'')\right)=0\tag*{\textit{for each $T\in\cala^{a,a}_{\arabic{ddaau}}$.}}
\]
\end{enumerate}
\end{propn}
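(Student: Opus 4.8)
The plan is to adapt, case by case, the proof of Proposition~\ref{mid1abprecise} to the situation $a=b$. Throughout I would use Proposition~\ref{Lemma5} to expand $\psi_{d,1}\circ\Theta(i,j,T)$ as a linear combination of tableau homomorphisms, Lemma~\ref{toomany} to discard terms in which a block of equal entries collects in rows that are too short, and Proposition~\ref{Lemma7} to bring the surviving terms to a common form; I would also use repeatedly that, over a field of characteristic $0$ with $q={-}1$, one has $[m]=[m-2]$ for all $m$ while $[2]=0$. The partition of $\cala^{a,a}$ into fourteen subsets is arranged so that for the subsets named in part~(\ref{mid1aa0}) the composition $\psi_{d,1}\circ\Theta(i,j,T)$ collapses to zero on its own, while the remaining subsets fall into matched families whose contributions cancel.

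For part~(\ref{mid1aa0}): $\cala^{a,a}_{\arabic{ddpp}}$ is handled exactly as in the $a<b$ case, the single term from Proposition~\ref{Lemma5} being killed by Lemma~\ref{toomany}; $\cala^{a,a}_{\arabic{dppa}}$ likewise, Proposition~\ref{Lemma5} producing $[\mu_d]\Theta_V+\Theta_W$ and Proposition~\ref{Lemma7} giving $\Theta_W=-[\mu_d-2]\Theta_V$. For $\cala^{a,a}_{\arabic{dapae}}$ --- which is the $a=b$ specialisation of the family treated in Proposition~\ref{mid1abprecise}(\ref{mid1abdadpb}) --- and the two families $\cala^{a,a}_{\arabic{ppaax}}$, $\cala^{a,a}_{\arabic{ppaay}}$, which occur only because $a=b$, I would apply Proposition~\ref{Lemma5} and then repeatedly apply Proposition~\ref{Lemma7} to push the entry $d$ created by $\psi_{d,1}$ upwards, using the constraints built into the definition of $\cala$ (in particular the conditions relating the entries of consecutive rows, and for the last two families the hypothesis imposed on row $d-1$) to force the resulting homomorphism to vanish --- either because a block of equal entries ends up in rows that are too short, or because a factor $[2]=0$ appears.

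For parts~(\ref{mid1aadddpa})--(\ref{mid1aathree}) I would write down the bijections by exact analogy with the $a<b$ case: part~(\ref{mid1aadddpa}) turns the lone \pp{} in a row $k<d$ into a \dd{} and changes $(a_d,b_d,a_{d+1},b_{d+1})$ from $(d,d,d+1,a)$ to $(d,a,d+1,d+1)$; part~(\ref{mid1aaddaa}) interchanges the $d+1$ at $a_k$ (some $k<d$) with the $d$ at $b_d$; part~(\ref{mid1aadadpa}) interchanges the $d$ at $b_k$ with the $d+1$ at $b_l$ for the two rows $k<l<d$ in question; and part~(\ref{mid1aathree}) uses the two maps obtained, as in Proposition~\ref{mid1abprecise}(\ref{mid1abfour}), by interchanging the $d$ at $b_d$ with one of the two $d+1$'s at $b_k$, $b_l$. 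In each case one checks that the map is a bijection onto the stated subset (the points to verify being weak increase within rows, the conditions linking consecutive rows, and the splitting condition, which is less restrictive here because $a=b$), that it preserves $\sgn$, and then --- using Proposition~\ref{Lemma5} and Proposition~\ref{Lemma7} as in the $a<b$ proof --- that the images of the homomorphisms under $\psi_{d,1}$ cancel, with the same signs.

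The main obstacle will be part~(\ref{mid1aa0}) for the families $\cala^{a,a}_{\arabic{ppaax}}$ and $\cala^{a,a}_{\arabic{ppaay}}$: here one must track carefully which moves of Proposition~\ref{Lemma7} remain available as the new entry $d$ is pushed past rows $d$ and $d-1$, and verify that the two hypotheses imposed on row $d-1$ are precisely what is needed to prevent the calculation from terminating at a non-zero semistandard homomorphism. The rest --- checking that the fourteen subsets genuinely partition $\cala^{a,a}$, and that the various bijections land in the asserted subsets --- is routine but must be carried out with some care because of the splitting condition.
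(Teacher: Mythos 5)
Your plan follows the paper's proof closely: the same partition of $\cala^{a,a}$ into fourteen subsets, the same bijections for parts (2)--(5), and the same basic tools (Proposition~\ref{Lemma5}, Lemma~\ref{toomany}, Proposition~\ref{Lemma7}, and the specialisation identities at $q={-}1$).

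Two places where your anticipated difficulties are not quite aligned with the actual computation, though neither is a fatal gap. First, in parts (3) and (5) the cancellation is not of the simple two-term $({-}1)^m+({-}1)^{m+1}=0$ form seen in the $a<b$ case: after semistandardising, the contribution from $T$ is a single $({-}1)^{\mu_d}\Theta_V$, while that from $T'$ is $[\mu_d]\Theta_V+\Theta_W$ with $\Theta_W=-[\mu_d-3]\Theta_V$, so the identity you actually need is $({-}1)^x+[x]-[x-3]=0$ (which follows from the facts you list via $[x]-[x-1]=({-}1)^{x+1}$ and $[x-1]=[x-3]$, but it is a genuinely three-term cancellation). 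Second, the obstacle you forecast for $\cala^{a,a}_{13}$ and $\cala^{a,a}_{14}$ does not arise, and the hypotheses imposed on row $d-1$ play no role in the $\psi_{d,1}$ computation: in both cases Proposition~\ref{Lemma5} gives $[\mu_d-1]\Theta_V+\Theta_W$ with $U(i,j,T)\sra ddV$ and $U(i,j,T)\sra d{d+1}W$, then $\Theta_W=-[\mu_d-3]\Theta_V$ from Proposition~\ref{Lemma7}, and $[\mu_d-1]=[\mu_d-3]$. The row $d-1$ conditions are there so that the fourteen sets really do partition $\cala^{a,a}$ and so that the $\psi_{d,2}$ argument in Proposition~\ref{mid2aaprecise} can treat $\cala^{a,a}_{13}$ and $\cala^{a,a}_{14}$ differently; they are not needed for $\psi_{d,1}$.
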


\begin{pfenum}
\item
The cases where $T\in\cala^{a,a}_{\arabic{ddpp}}\cup\cala^{a,a}_{\arabic{dppa}}$ are dealt with as in Proposition \ref{mid1abprecise}(\ref{mid1ab0}).  In the case where $T\in\cala^{a,a}_{\arabic{dapae}}$, consider applying Proposition~\ref{Lemma5}.  If we replace a \pp{} with a \dd{} in row $k$, then we get a factor of $[2]$.  On the other hand, if we replace a \pp{} with a \dd{} in row $d+1$ and then apply Proposition~\ref{Lemma7} to move this \dd{} up to row $d$, we again get a factor of $[2]$.

In the case where $T\in\cala^{a,a}_{\arabic{ppaax}}$ or $\cala^{a,a}_{\arabic{ppaay}}$, we get
\[
\psi_{d,1}\circ\Theta(i,j,T)=[\mu_d-1]\Theta_V+\Theta_W
\]
with $V\ars ddU(i,j,T)\sra d{d+1}W$.  But Proposition~\ref{Lemma7} gives $\Theta_W=-[\mu_d-3]\Theta_W$, so that $\psi_{d,1}\circ\Theta(i,j,T)=0$.
\item
This is identical to cases (\ref{mid1abdddpa},\ref{mid1abdddpb}) in Proposition \ref{mid1abprecise}.
\item
Given $T$, we define $T'$ by replacing a \pp{} with a \dd{} in row $k$ of $U(i,j,T)$, and replacing a \dd{} with a \pp{} in row $d$.  Again, it easy to check that this gives a bijection from $\cala^{a,a}_{\arabic{ddaae}}$ to $\cala^{a,a}_{\arabic{dpaae}}$, and that $\sgn(T)=\sgn(T')$.

Consider applying Proposition~\ref{Lemma5} to compute $\psi_{d,1}\circ\Theta(i,j,T)$.  There are two \pp s in row $k$ of $U(i,j,T)$, and the remaining \pp s lie in row $d+1$.  If we replace a \pp{} with a \dd{} in row $d+1$, then by Lemma \ref{toomany} the resulting homomorphism is zero.  So we get $\psi_{d,1}\circ\Theta(i,j,T)=({-}1)^{\mu_d}\Theta_V$, where $U(i,j,T)\sra dkV$.

For $T'$, we get
\[
\psi_{d,1}\circ\Theta(i,j,T')=[\mu_d]\Theta_V+\Theta_W,
\]
where $V\ars ddU(i,j,T')\sra d{d+1}W$.  Applying Proposition~\ref{Lemma7}, we get $\Theta_W=-[\mu_d-3]\Theta_V$, and since $({-}1)^x+[x]-[x-3]=0$ for any $x$, we have $\psi_{d,1}\circ\left(\Theta(i,j,T)+\Theta(i,j,T')\right)=0$, as required.
\item
Given $T$, we define $T'$ by interchanging the \dd{} in row $k$ of $U(i,j,T)$ and the \pp{} in row $l$.

Applying Proposition~\ref{Lemma5}, we have
\begin{alignat*}2
\psi_{d,1}\circ\Theta(i,j,T)& =\ &({-}1)^{\mu_d-1}&\Theta_V+\Theta_W\\
\psi_{d,1}\circ\Theta(i,j,T')&=&({-}1)^{\mu_d}&\Theta_V+\Theta_X
\end{alignat*}
where
\[
U(i,j,T)\sra dlV\ars dkU(i,j,T')\qquad U(i,j,T)\sra d{d+1}W,\qquad U(i,j,T')\sra d{d+1}X.
\]
Applying Proposition~\ref{Lemma7}, we get $\Theta_W=\Theta_X=0$ (Proposition~\ref{Lemma7} gives a factor of $[2]$ in both cases), and so we have $\psi_{d,1}\circ\left(\Theta(i,j,T)+\Theta(i,j,T')\right)=0$.
\item
Given $T$, define $T'$ by replacing the last \dd{} in row $d$ of $U(i,j,T)$ with a \pp , and the \pp{} in row $k$ with a \dd .  Define $T''$ by replacing the last \dd{} in row $d$ of $U(i,j,T)$ with a \pp , and the \pp{} in row $l$ with a \dd .  Again, it is easy to see that we have bijections, and that $\sgn(T)=\sgn(T')=\sgn(T'')$.

Consider applying Proposition~\ref{Lemma5} to compute $\psi_{d,1}\circ\Theta(i,j,T)$.  If we replace a \pp{} with a \dd{} in row $d+1$, then by Lemma \ref{toomany} the resulting homomorphism is zero; so
\[
\psi_{d,1}\circ\Theta(i,j,T)=({-}1)^{\mu_d}(\Theta_V+\Theta_W),
\]
where $V\ars dkU(i,j,T)\sra dlW$.

For $T',T''$ we have
\begin{alignat*}4
\psi_{d,1}\circ\Theta(i,j,T')& =\ &({-}1)^{\mu_d-1}&\Theta_X&\ +\ [\mu_d]&\Theta_V&\ +\ &\Theta_Y\\
\psi_{d,1}\circ\Theta(i,j,T'')&=&({-}1)^{\mu_d}&\Theta_X&\ +\ [\mu_d]&\Theta_W&\ +\ &\Theta_Z,
\end{alignat*}
where
\begin{alignat*}3
U(i,j,T')&\sra dlX,&\qquad U(i,j,T')&\sra ddV,&\qquad U(i,j,T')&\sra d{d+1}Y,\\
U(i,j,T'')&\sra dkX,& U(i,j,T'')&\sra ddW,& U(i,j,T'')&\sra d{d+1}Z.
\end{alignat*}
Proposition~\ref{Lemma7} gives
\[
\Theta_Y=-[\mu_d-3]\Theta_V,\qquad \Theta_Z=-[\mu_d-3]\Theta_W,
\]
so that
\[
\psi_{d,1}\circ\left(\Theta(i,j,T)+\Theta(i,j,T')+\Theta(i,j,T'')\right)=0.\tag*{\qedhere}
\]
\end{pfenum}

\begin{propn}\label{mid2aaprecise}
Suppose $i,j,d,a$ are as above.
\begin{enumerate}
\item\label{mid2aa0}
If $T\in\cala^{a,a}_{\arabic{ddpp}}\cup\cala^{a,a}_{\arabic{dppa}}\cup\cala^{a,a}_{\arabic{ddpa}}\cup\cala^{a,a}_{\arabic{dapp}}\cup\cala^{a,a}_{\arabic{dpaae}}\cup\cala^{a,a}_{\arabic{dapau}}\cup\cala^{a,a}_{\arabic{dapad}}\cup\cala^{a,a}_{\arabic{dapae}}\cup\cala^{a,a}_{\arabic{ppaay}}$, then $\psi_{d,2}\circ\Theta(i,j,T)=0$.
\item\label{mid2aaddpaa}
There is a bijection $T\mapsto T'$ from $\cala^{a,a}_{\arabic{dpaau}}$ to $\cala^{a,a}_{\arabic{dpaad}}$ such that
\[
\psi_{d,2}\circ\left(\sgn(T)\Theta(i,j,T)+\sgn(T')\Theta(i,j,T')\right)=0\tag*{\textit{for each $T\in\cala^{a,a}_{\arabic{dpaau}}$.}}
\]
\item\label{mid2aaddaa}
There is a bijection $T\mapsto T'$ from $\cala^{a,a}_{\arabic{ddaae}}\cup\cala^{a,a}_{\arabic{ddaau}}$ to $\cala^{a,a}_{\arabic{ppaax}}$ such that
\[
\psi_{d,2}\circ\left(\sgn(T)\Theta(i,j,T)+\sgn(T')\Theta(i,j,T')\right)=0\tag*{\textit{for each $T\in\cala^{a,a}_{\arabic{ddaae}}\cup\cala^{a,a}_{\arabic{ddaau}}$.}}
\]
\end{enumerate}
\end{propn}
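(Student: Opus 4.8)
The plan is to prove all three parts by the local mechanism used throughout \S\ref{firstofsix}: apply Proposition~\ref{Lemma5} to expand $\psi_{d,2}\circ\Theta(i,j,T)$ as an explicit $\bbf$-linear combination of tableau homomorphisms $\Theta_U$ (the tableaux $U$ being obtained from $U(i,j,T)$ by turning two entries equal to $d+1$ into $d$), and then simplify using Proposition~\ref{Lemma7} and Lemma~\ref{toomany}. In $U(i,j,T)$ the entries equal to $d+1$ consist of the $\mu_{d+1}-2$ bulk entries of row $d+1$ together with at most a couple of entries at the ends of rows $k\ls d$, so in each case only a handful of choices need to be examined.

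For part~(1), in every listed subset each admissible choice of two $d+1$-entries yields a homomorphism that vanishes. Converting two bulk entries of row $d+1$ produces a tableau with too many entries equal to $d$ in rows of length $<\mu_d$, so $\Theta_U=0$ by Lemma~\ref{toomany}; or, after moving a $\dd$ up one row by Proposition~\ref{Lemma7}, a factor $[2]=0$ appears. If instead one of the converted entries lies at the end of a short row $k$, a factor $[2]=0$ appears directly since that row already contains an entry equal to $d$. This is the bookkeeping of Proposition~\ref{mid2abprecise}(\ref{mid2ab0}) and its proof, applied to the longer list of subsets that occurs when $a=b$. Part~(2) is handled exactly as Proposition~\ref{mid2abprecise}(\ref{mid2abdadpb1},\ref{mid2abdadpb2}): the bijection $T\mapsto T'$ interchanges the $d$ and the $d+1$ at the ends of rows $k$ and $l$, so that $\sgn(T)=\sgn(T')$; applying Proposition~\ref{Lemma5} only the term converting the $d+1$ in row $l$ (respectively $k$) together with one bulk entry of row $d+1$ survives, and pushing the resulting $\dd$ from row $d+1$ up to row $d$ via Proposition~\ref{Lemma7} shows $\psi_{d,2}\circ\Theta(i,j,T)=-\psi_{d,2}\circ\Theta(i,j,T')$.

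The substance of the proposition is part~(3), and this is where the main obstacle lies. Here a tableau whose row $d$ ends in $(d,d)$ must be cancelled against one whose row $d$ ends in $(d+1,d+1)$. Given $T\in\cala^{a,a}_{\arabic{ddaae}}\cup\cala^{a,a}_{\arabic{ddaau}}$ one constructs $T'\in\cala^{a,a}_{\arabic{ppaax}}$ by moving the $d+1$-entries currently at the ends of rows $k$ (and $l$) of $U(i,j,T)$ down to the end of row $d$, replacing them there by two $d$s; the auxiliary clause ``$a_{d-1}<d$ or $d=s+f+1$'' in the definition of $\cala^{a,a}_{\arabic{ppaax}}$ is exactly what is needed for this to be a well-defined bijection onto the stated target, and it also ensures that the terms of $\psi_{d,2}\circ\Theta(i,j,T')$ coming from the bulk of row $d+1$ all vanish by Lemma~\ref{toomany}. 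Computing both $\psi_{d,2}\circ\Theta(i,j,T)$ and $\psi_{d,2}\circ\Theta(i,j,T')$ by Proposition~\ref{Lemma5}, for $T'$ the surviving term converts the two $d+1$s at the end of row $d$ and gives $\Theta_V$ for the tableau with row $d$ all $d$s, while for $T$ the same $\Theta_V$ emerges only after pushing $\dd$s upward with Proposition~\ref{Lemma7}, and the accompanying scalars combine to zero by an elementary identity in $\bbf$ of the same type as the $({-}1)^x+[x]-[x-3]=0$ used in Proposition~\ref{mid1aaprecise}. The hard part is the purely combinatorial verification that the displayed maps really are mutually inverse bijections onto the correct subsets — the subtlety being the interplay between the splitting condition defining $\cala$ and the relative order of $d$, $d+1$ and $a$ — after which the homomorphism cancellations follow as above.
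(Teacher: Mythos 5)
Your sketch of parts (1) and (2) is broadly in the spirit of the paper, although a couple of details are off: in the $a=a$ case the extra $d+1$-entry sits at the end of row $d$ (as $b_d$), not at the end of row $d+1$ as in $\cala^{a,b}_{\arabic{dapbu}}$, so the mechanism is not literally ``exactly as'' Proposition~\ref{mid2abprecise}(\ref{mid2abdadpb1},\ref{mid2abdadpb2}); and for part~(2) the paper does not push a $\dd$ up with Proposition~\ref{Lemma7} at all, but instead observes that the surviving terms for $T$ and $T'$ produce literally the same tableaux, and then compares the $q$-power in Proposition~\ref{Lemma5} directly: since the one remaining $d+1$ is in row $l$ for $T$ and in row $k<l$ for $T'$, the exponent $T^d_{>j}(S^d_j-T^d_j)$ shifts by one, giving a sign $({-}1)$.

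There is, however, a genuine gap in your treatment of part~(3), and it is precisely the step you flag as ``the hard part''. You assert that the scalars ``combine to zero by an elementary identity of the same type as $({-}1)^x+[x]-[x-3]=0$''. That is not what happens. In fact one finds $\psi_{d,2}\circ\Theta(i,j,T)=\Theta_X$ on the nose (by Proposition~\ref{Lemma5} plus Lemma~\ref{toomany}; no use of Proposition~\ref{Lemma7} is required on this side), while for $T'\in\cala^{a,a}_{\arabic{ppaax}}$ one gets a three-term expansion $\mbinoq{\mu_d}2\Theta_X+[\mu_d-1]\Theta_Y+\Theta_Z$, and after simplifying $\Theta_Y,\Theta_Z$ via Proposition~\ref{Lemma7} the relevant identity is $\mbinoq x2-[x-1][x-3]-\mbinoq{x-3}2=1$, so that $\psi_{d,2}\circ\Theta(i,j,T')=\Theta_X$ as well. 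Thus the two compositions are \emph{equal}, not negatives. The cancellation therefore has to come from the signed sum, i.e.\ from the fact that $\sgn(T)=-\sgn(T')$, and this is the substantive combinatorial observation you omit entirely. It is not automatic: $T$ is split first at row $d-s-f+1$, whereas $T'$ (because the clause ``$a_{d-1}<d$ or $d=s+f+1$'' in the definition of $\cala^{a,a}_{\arabic{ppaax}}$ holds) is split first at row $d-s-f$, and one must check this opposite-parity statement. You also have the roles of $T$ and $T'$ reversed when you say the $\Theta_V$ for $T$ ``emerges only after pushing $\dd$s upward'': the Proposition~\ref{Lemma7} work is needed on the $T'$ side, not the $T$ side. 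Without the sign computation the argument cannot close, so as it stands part~(3) is not proved.
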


\begin{pfenum}
\item

If $T\in\cala^{a,a}_{\arabic{ddpp}}\cup \cala^{a,a}_{\arabic{dppa}}\cup \cala^{a,a}_{\arabic{ddpa}}\cup \cala^{a,a}_{\arabic{dapp}}\cup \cala^{a,a}_{\arabic{ppaay}}$, then $\psi_{d,2}\circ\Theta(i,j,T)=0$ by Proposition~\ref{Lemma5} and Lemma \ref{toomany}.

If $T\in\cala^{a,a}_{\arabic{dpaae}}$, then we must have $b_k=d+1$.  When we apply Proposition~\ref{Lemma5}, if we replace \pp{} with \dd{} in row $k$, then we get a factor of $[2]=0$.  On the other hand, if we replace two \pp s with \dd s in rows $d,d+1$, then the resulting homomorphism is zero by Lemma \ref{toomany}.

If $T\in\cala^{a,a}_{\arabic{dapau}}$, $\cala^{a,a}_{\arabic{dapad}}$ or $T\in\cala^{a,a}_{\arabic{dapae}}$, consider applying Proposition~\ref{Lemma5}.  If we replace two \pp s with \dd s in row $d+1$, then the resulting homomorphism is zero by Lemma \ref{toomany}; so we need only consider changing a single \pp{} into a \dd{} in row $d+1$.  Now when we apply Proposition~\ref{Lemma7} to the resulting tableau to move the \dd{} from row $d+1$ to row $d$, we get a factor of $[2]=0$.  And so we have $\psi_{d,2}\circ\Theta(i,j,T)=0$.
\item
Given $T$, we define $T'$ by exchanging the \dd{} in row $k$ with the \pp{} in row $l$.  Again, it is clear that this defines a bijection and that $\sgn(T)=\sgn(T')$.  Now consider applying Proposition~\ref{Lemma5} to $\Theta(i,j,T)$.  If we change replace two \pp s with \dd s in rows $d,d+1$, then the resulting homomorphism is zero by Lemma \ref{toomany}; so the only terms which can possibly be zero are those which involve replacing \pp{} with \dd{} in row $k$.  The same statement applies to $T'$, and it is then immediate from Proposition~\ref{Lemma5} that $\psi_{d,2}\circ\Theta(i,j,T)=-\psi_{d,2}\circ\Theta(i,j,T')$.
\item
Suppose $T\in\cala^{a,a}_{\arabic{ddaae}}$ or $\cala^{a,a}_{\arabic{ddaau}}$.  Because $T \in \cala$ we have either $d=s+f+1$ or $a_{d-1}<d$.  So if we define $T'$ by replacing the two \pp s above row $d$ with \dd s, and replacing the last two \dd s in row $d$ with \pp s, then $T'\in\cala^{a,a}_{\arabic{ppaax}}$.  This gives a bijection from $\cala^{a,a}_{\arabic{ddaae}}\cup\cala^{a,a}_{\arabic{ddaau}}$ to $\cala^{a,a}_{\arabic{ppaax}}$, and we claim that $\sgn(T)=-\sgn(T')$.  Clearly $T$ is split at row $d-s-f+1$ and not at any higher row, so $\sgn(T)=({-}1)^{d-s-f+1}$.  $T'$ is split at row $d-s-f$ and not at any higher row (because if there are any higher rows, then the first entry in row $d-s-f$ is less than $d-s-f$ by assumption), so $\sgn(T')=({-}1)^{d-s-f}$.

So all we need to do is show that $\psi_{d,2}\circ\Theta(i,j,T)=\psi_{d,2}\circ\Theta(i,j,T')$.  Using Proposition~\ref{Lemma5} and Lemma \ref{toomany}, we find that $\psi_{d,2}\circ\Theta(i,j,T)=\Theta_X$, where $X$ is obtained from $U(i,j,T)$ by replacing the two \pp s above row $d$ with \dd s.  On the other hand,
\[
\psi_{d,2}\circ\Theta(i,j,T')=\mbinoq{\mu_d}2\Theta_X+[\mu_d-1]\Theta_Y+\Theta_Z,
\]
where
\[
U(i,j,T')\sra d{d,d}X,\qquad U(i,j,T')\sra d{d,d+1}Y,\qquad U(i,j,T')\sra d{d+1,d+1}Z.
\]
Proposition~\ref{Lemma7} gives
\[
\Theta_Y=-[\mu_d-3]\Theta_X,\qquad \Theta_Z=-\mbinoq{\mu_d-3}2\Theta_X,
\]
and now the easy identity $\mbinoq x2-[x-1][x-3]-\mbinoq{x-3}2=1$ gives the result.
\end{pfenum}

\subsubs{The case $d=s+f,\ g\gs3,\  t=1,\ a<b$}

This case and the next are simpler than previous cases, so we spare the reader some of the details.  We define the following sets which partition $\cala^{a,b}$:
\setcounter{ct}0
\begin{align*}
\addtocounter{ct}1\setcounter{papb}{\value{ct}}\cala^{a,b}_{\arabic{papb}}&=\lset{T\in\cala^{a,b}}{(a_d,b_d,a_{d+1},b_{d+1})=(d+1,a,d+1,b)};\\
\addtocounter{ct}1\setcounter{pbpa}{\value{ct}}\cala^{a,b}_{\arabic{pbpa}}&=\lset{T\in\cala^{a,b}}{(a_d,b_d,a_{d+1},b_{d+1})=(d+1,b,d+1,a)};\\
\addtocounter{ct}1\setcounter{abpp}{\value{ct}}\cala^{a,b}_{\arabic{abpp}}&=\lset{T\in\cala^{a,b}}{(a_d,b_d,a_{d+1},b_{d+1})=(a,b,d+1,d+1)}.
\end{align*}

Now we have the following, from which Proposition \ref{bot1ab} follows in this case.

\begin{propn}\label{bot1abprecisesf}
Suppose $d=s+f$, and $i,j,a,b$ are as above with $a<b$.  There are bijections
\begin{alignat*}2
\cala^{a,b}_{\arabic{papb}}&\longrightarrow\cala^{a,b}_{\arabic{pbpa}}&\qquad\cala^{a,b}_{\arabic{papb}}&\longrightarrow\cala^{a,b}_{\arabic{abpp}}\\
T&\longmapsto T'&T&\longmapsto T''
\end{alignat*}
such that
\[
\psi_{d,1}\circ\left(\sgn(T)\Theta(i,j,T)+\sgn(T')\Theta(i,j,T')+\sgn(T)\Theta(i,j,T'')\right)=0\tag*{\textit{for each $T\in\cala^{a,b}_{\arabic{papb}}$.}}
\]
\end{propn}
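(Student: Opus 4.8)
The plan is to run the partition-and-pair argument of Propositions~\ref{mid1abprecise}--\ref{mid2aaprecise}; it is much shorter here because $d=s+f$ forces every entry equal to $d$ or $d+1$ into rows $d-1$, $d$, $d+1$ only.

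First I would pin down the two bijections. For $T\in\cala^{a,b}_{\arabic{papb}}$ the usable tableau $U(i,j,T)$ has the entry $a$ at the end of row $d$ and the entry $b$ at the end of row $d+1$; let $T'$ be obtained by interchanging these two entries, and let $T''$ be obtained by interchanging the entry $d+1$ in row $d$ with the entry $b$ in row $d+1$ (reordering row $d$ afterwards, so that rows $d,d+1$ of $U(i,j,T'')$ end in $a,b$ and in $d+1,d+1$ respectively). Reading off the defining conditions of $\cala$ shows that $T'\in\cala^{a,b}_{\arabic{pbpa}}$ and $T''\in\cala^{a,b}_{\arabic{abpp}}$, and that both maps are bijections whose inverses have the same shape. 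In each case the multiset of entries in rows $d,d+1$ of the underlying $(2^{g-1})$-tableau is unchanged and all lower rows are untouched, and none of $T,T',T''$ can be split at the first row of the bottom block (because $a,b>d+1$); hence the first split row is the same for all three, so $\sgn(T)=\sgn(T')=\sgn(T'')$. Thus the claim reduces to $\psi_{d,1}\circ(\Theta(i,j,T)+\Theta(i,j,T')+\Theta(i,j,T''))=0$.

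Next I would expand the three homomorphisms using Proposition~\ref{Lemma5}. In each case the entries equal to $d+1$ occur only in rows $d$ and $d+1$, and at most one of them lies in row $d$, so Proposition~\ref{Lemma5} contributes at most two terms: one from changing the $d+1$ in row $d$ (coefficient a quantum integer) and one from changing a $d+1$ in row $d+1$ (coefficient $1$). To each term of the second kind I would apply Proposition~\ref{Lemma7}(1) once, moving the newly created $d$ up from row $d+1$ into the long row $d$; this rewrites the term using the two homomorphisms $\Theta_{V_1},\Theta_{V_2}$ coming from the row-$d$ moves on $U(i,j,T)$ and $U(i,j,T')$, together with one further homomorphism $\Theta_{V'}$. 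Collecting terms, the coefficient of $\Theta_{V_1}$ and of $\Theta_{V_2}$ turns out to be $[g+s'-1]-[g-1]+(-1)^g$, and the coefficient of $\Theta_{V'}$ turns out to be $(-1)^{g-1}+(-1)^g$. The latter is $0$ trivially; the former is $0$ because $s'$ is odd, so $g+s'-1$ and $g-1$ have opposite parity and $[g+s'-1]-[g-1]=(-1)^{g-1}$ --- here one uses that $\bbf$ has characteristic $0$ and $q={-}1$, so that $[m]$ is $1$ or $0$ according as $m$ is odd or even. This yields the desired vanishing. (The hypothesis $g\gs3$ is used precisely to make $d+1$ a genuine row of the bottom block of $\mu$, so that Proposition~\ref{Lemma7}(1) applies with $r=d$; the alternative $d=s+f+g-2$ occurs only when $g=2$ and is handled elsewhere.)

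The conceptual content is small, so the real work --- and the step I expect to be most error-prone --- is the bookkeeping. One must check that $T\mapsto T'$ and $T\mapsto T''$ really do land in $\cala^{a,b}_{\arabic{pbpa}}$ and $\cala^{a,b}_{\arabic{abpp}}$ while respecting the splitting condition of $\cala$, and then track the powers of $q={-}1$ thrown off by Propositions~\ref{Lemma5} and~\ref{Lemma7}. The delicate point is that the two contributions to $\Theta_{V'}$ carry signs $(-1)^{g-1}$ and $(-1)^g$ rather than a common sign, the discrepancy arising exactly because $a<b$ --- the entry $b$ has one more entry below it in row $d+1$ than $a$ has --- which is what makes those two contributions cancel rather than reinforce.
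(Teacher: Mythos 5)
Your proposal is correct and follows essentially the same route as the paper: the same three obvious bijections (permuting the last entries of rows $d,d{+}1$), the same check that the sign is preserved, and the same expansion via Propositions~\ref{Lemma5} and~\ref{Lemma7} into the homomorphisms $\Theta_V,\Theta_W,\Theta_X$ (your $\Theta_{V_1},\Theta_{V_2},\Theta_{V'}$), with the key observation that the two contributions to $\Theta_X$ differ by a sign precisely because $a<b$. Your coefficients $[g{+}s'{-}1]-[g{-}1]+({-}1)^g$ and $({-}1)^{g-1}+({-}1)^g$ agree with the paper's $({-}1)^{\mu_d}+({-}1)^{\mu_{d+1}}$ and $({-}1)^{\mu_{d+1}-1}+({-}1)^{\mu_{d+1}}$ after substituting $\mu_d=g+s'$, $\mu_{d+1}=g$.
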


\begin{proof}
The bijections in question are the obvious ones; we get $\sgn(T)=\sgn(T')=\sgn(T'')$.

Applying Proposition~\ref{Lemma5} followed by Proposition~\ref{Lemma7}, we have
\begin{alignat*}4
\psi_{d,1}\circ\Theta(i,j,T)&=&({-}1)^{\mu_d}&\Theta_V&&+&\ ({-}1)^{\mu_{d+1}-1}&\Theta_X,\\
\psi_{d,1}\circ\Theta(i,j,T')&=&({-}1)^{\mu_d}&\Theta_W&&+&({-}1)^{\mu_{d+1}}&\Theta_X,\\
\psi_{d,1}\circ\Theta(i,j,T'')&=&\ ({-}1)^{\mu_{d+1}}&\Theta_V&&+&({-}1)^{\mu_{d+1}}&\Theta_W,
\end{alignat*}
where
\[
U(i,j,T)\sra ddV,\qquad U(i,j,T')\sra ddW
\]
and $X$ has $d,d+1$ at the end of row $d$, and $a,b$ at the end of row $d+1$.  Since $s'$ is odd, $\mu_d,\mu_{d+1}$ have opposite parities, and so we get
\[
\psi_{d,1}\circ\left(\Theta(i,j,T)+\Theta(i,j,T')+\Theta(i,j,T'')\right)=0.\tag*{\qedhere}
\]
\end{proof}

\subsubs{The case $d=s+f,\ g\gs3,\  t=1,\ a=b$}

Here we define the following sets, which partition $\cala^{a,b}$:
\setcounter{ct}0
\begin{align*}
\addtocounter{ct}1\setcounter{ppaa}{\value{ct}}\cala^{a,a}_{\arabic{ppaa}}&=\lset{\vbox to 10pt{}T\in\cala^{a,a}}{(a_d,b_d,a_{d+1},b_{d+1})=(d+1,d+1,a,a)};\\
\addtocounter{ct}1\setcounter{papa}{\value{ct}}\cala^{a,a}_{\arabic{papa}}&=\lset{\vbox to 10pt{}T\in\cala^{a,a}}{(a_d,b_d,a_{d+1},b_{d+1})=(d+1,a,d+1,a)};\\
\addtocounter{ct}1\setcounter{aapp}{\value{ct}}\cala^{a,a}_{\arabic{aapp}}&=\lset{\vbox to 10pt{}T\in\cala^{a,a}}{(a_d,b_d,a_{d+1},b_{d+1})=(a,a,d+1,d+1)}.
\end{align*}

Now Proposition \ref{bot1ab} in this case is given by the following.

\begin{propn}\label{bot1aaprecisesf}
Suppose $d=s+f$, and $i,j,a$ are as above.
\begin{enumerate}
\item
If $T\in\cala^{a,a}_{\arabic{ppaa}}$, then $\psi_{d,1}\circ\Theta(i,j,T)=0$.
\item
There is a bijection $T\mapsto T'$ from $\cala^{a,b}_2$ to $\cala^{a,b}_3$ such that
\[
\psi_{d,1}\circ\left(\sgn(T)\Theta(i,j,T)+\sgn(T')\Theta(i,j,T')\right)=0\tag*{\textit{for each $T\in\cala^{a,b}_2$.}}
\]
\end{enumerate}
\end{propn}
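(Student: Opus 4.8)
The plan is to follow the proof of Proposition~\ref{bot1abprecisesf} almost verbatim, accounting for the coincidences that occur when $a=b$. The one arithmetic fact driving everything is that, since $d=s+f$, we have $\mu_d=g+s'$ and $\mu_{d+1}=g$, so $\mu_d-\mu_{d+1}=s'$ is odd; hence $\mu_d$ and $\mu_{d+1}$ have opposite parity, and since $q={-}1$ (so that $[m]$ depends only on the parity of $m$) this gives both $({-}1)^{\mu_d}={-}({-}1)^{\mu_{d+1}}$ and $[\mu_d-1]=[\mu_{d+1}-2]$.

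For part~(1), let $T\in\cala^{a,a}_{\arabic{ppaa}}$, so that row $d$ of $U(i,j,T)$ ends in $(d+1,d+1)$ and row $d+1$ ends in $(a,a)$; the \pp s are then the two at the end of row $d$ together with the $\mu_{d+1}-2$ copies of $d+1$ at the start of row $d+1$. I would apply Proposition~\ref{Lemma5} to obtain $\psi_{d,1}\circ\Theta(i,j,T)=[\mu_d-1]\Theta_V+\Theta_W$ with $V\ars ddU(i,j,T)\sra d{d+1}W$, and then Proposition~\ref{Lemma7}(1) to carry the single \dd{} in row $d+1$ of $W$ up into row $d$, which gives $\Theta_W={-}[\mu_{d+1}-2]\Theta_V$. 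The identity $[\mu_d-1]=[\mu_{d+1}-2]$ then forces $\psi_{d,1}\circ\Theta(i,j,T)=0$; this is essentially the argument used in case~(1) of Proposition~\ref{mid1aaprecise}.

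For part~(2), the bijection $T\mapsto T'$ sends the end-of-row data $(d+1,a\mid d+1,a)$ of rows $d$ and $d+1$ to $(a,a\mid d+1,d+1)$ and fixes every other row; since $T$ and $T'$ agree outside rows $d$ and $d+1$, an easy check gives $\sgn(T)=\sgn(T')$. I would then compute, via Proposition~\ref{Lemma5} followed by Proposition~\ref{Lemma7}(1) exactly as in Proposition~\ref{bot1abprecisesf}, that each of $\psi_{d,1}\circ\Theta(i,j,T)$ and $\psi_{d,1}\circ\Theta(i,j,T')$ collapses to a scalar multiple of the single homomorphism $\Theta_V$, where $V$ has row $d$ equal to $(d^{\mu_d-1},a)$ and row $d+1$ equal to $((d+1)^{\mu_{d+1}-1},a)$; explicitly, $\psi_{d,1}\circ\Theta(i,j,T)=({-}1)^{\mu_d}\Theta_V$ and $\psi_{d,1}\circ\Theta(i,j,T')=({-}1)^{\mu_{d+1}}\Theta_V$. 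Since $\sgn(T)=\sgn(T')$ and $({-}1)^{\mu_d}+({-}1)^{\mu_{d+1}}=0$, the combination in the statement vanishes.

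The step I expect to require the most care is the Proposition~\ref{Lemma7}(1) reduction of the intermediate homomorphism $\Theta_W$ in part~(2) (and of the corresponding term for $T'$). In the $a<b$ case treated in Proposition~\ref{bot1abprecisesf}, this reduction leaves a genuine extra summand $\Theta_X$ — the tableau with $(d,d+1)$ ending row $d$ and $(a,b)$ ending row $d+1$ — and the three-term cancellation there is arranged precisely to kill it. When $a=b$, that summand must disappear on its own, and it does: moving one of the two equal entries $a$ out of row $d$ into a row $d+1$ that already contains an $a$ forces a quantum binomial coefficient $\gauss21=[2]=0$. So the point to verify carefully is that this $[2]$ really annihilates every term obstructing the collapse to $\Theta_V$, that the tableau $V$ produced from $T$ is literally the same as the one produced from $T'$, and that $T$ and $T'$ genuinely lie in $\cala$; with those in hand, bookkeeping the remaining $q$-powers and signs is routine.
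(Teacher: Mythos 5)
Your proposal is correct and follows essentially the same route as the paper: a direct $\psi_{d,1}$ computation via Proposition~\ref{Lemma5} followed by Proposition~\ref{Lemma7}(1), with the $a=b$ coincidence supplying the $\gauss21=[2]=0$ factor that makes the extra summand of the $a<b$ case vanish, and the cancellation in part~(2) resting on $\mu_d,\mu_{d+1}$ having opposite parities. If anything you are a shade more careful than the paper in part~(1): the paper simply points to case~(1) of Proposition~\ref{mid1aaprecise}, where the relevant identity is $[\mu_d-1]=[\mu_d-3]$ (trivial at $q={-}1$), whereas when $d=s+f$ one has $\mu_d-\mu_{d+1}=s'$ rather than $1$, so Lemma~\ref{Lemma7} really produces $-[\mu_{d+1}-2]\Theta_V$ and one must invoke that $s'$ is odd to get $[\mu_d-1]=[\mu_{d+1}-2]$ -- exactly as you do.
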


\begin{pfenum}
\item
This is essentially the same as one of the cases in Proposition \ref{mid1aaprecise}(\ref{mid1aa0}).
\item
The bijection is the obvious one, and we obtain
\begin{align*}
\psi_{d,1}\circ\Theta(i,j,T)&=({-}1)^{\mu_d}\Theta_V,\\
\psi_{d,1}\circ\Theta(i,j,T')&=({-}1)^{\mu_{d+1}}\Theta_V,
\end{align*}
where $U(i,j,T)\sra ddV$.  Since $\mu_d$ and $\mu_{d+1}$ have opposite parities, the result follows.
\end{pfenum}

To finish off the proof of Proposition \ref{bot1}, we just need to consider $d=s+f+g-2$.

\subsubs{The case $d=s+f+g-2,\ g\gs3$}\label{lastcase}

In this case there are just two \pp s in $U(i,j,T)$, and we do not have the integers $a,b$. We define the following sets, which partition $\cala$:
\setcounter{ct}0
\begin{align*}
\addtocounter{ct}1\setcounter{ppx}{\value{ct}}\cala_{\arabic{ppx}}&=\lset{\vbox to 10pt{}T\in\cala}{(a_d,b_d)=(d+1,d+1)\text{ and either $a_{d-1}<d$ or $g=3$}};\\
\addtocounter{ct}1\setcounter{ppy}{\value{ct}}\cala_{\arabic{ppy}}&=\lset{\vbox to 10pt{}T\in\cala}{(a_{d-1},b_{d-1},a_d,b_d)=(d,d,d+1,d+1)\text{ and }g>3};\\
\addtocounter{ct}1\setcounter{dpe}{\value{ct}}\cala_{\arabic{dpe}}&=\lset{\vbox to 10pt{}T\in\cala}{(a_d,b_d)=(d,d+1),\ a_k=d\text{ for some }k<d};\\
\addtocounter{ct}1\setcounter{dde}{\value{ct}}\cala_{\arabic{dde}}&=\lset{\vbox to 10pt{}T\in\cala}{(a_d,b_d)=(d,d),\ a_k=d+1\text{ for some }k<d};\\
\addtocounter{ct}1\setcounter{ddd}{\value{ct}}\cala_{\arabic{ddd}}&=\lset{\vbox to 10pt{}T\in\cala}{(a_d,b_d)=(d,d),\ b_k=d+1,\ b_l=d+1\text{ for some }k<l<d};\\
\addtocounter{ct}1\setcounter{dpu}{\value{ct}}\cala_{\arabic{dpu}}&=\lset{\vbox to 10pt{}T\in\cala}{(a_d,b_d)=(d,d+1),\ b_k=d,\ b_l=d+1\text{ for some }k<l<d};\\
\addtocounter{ct}1\setcounter{dpd}{\value{ct}}\cala_{\arabic{dpd}}&=\lset{\vbox to 10pt{}T\in\cala}{(a_d,b_d)=(d,d+1),\ b_k=d+1,\ b_l=d\text{ for some }k<l<d}.
\end{align*}

\begin{propn}\label{verybot1precise}
Suppose $i,j$ are as above, and $d=s+f+g-2$.
\begin{enumerate}
\item
If $T\in\cala_{\arabic{ppx}}$ or $\cala_{\arabic{ppy}}$, then $\psi_{d,1}\circ\Theta(i,j,T)=0$.
\item
There is a bijection $T\mapsto T'$ from $\cala_{\arabic{dpe}}$ to $\cala_{\arabic{dde}}$ such that
\[
\psi_{d,1}\circ\left(\sgn(T)\Theta(i,j,T)+\sgn(T')\Theta(i,j,T')\right)=0\tag*{\textit{for each $T\in\cala_{\arabic{dpe}}$.}}
\]
\item
There are bijections
\begin{alignat*}2
\cala_{\arabic{ddd}}&\longrightarrow\cala_{\arabic{dpu}}&\qquad\cala_{\arabic{ddd}}&\longrightarrow\cala_{\arabic{dpd}}\\
T&\longmapsto T'&T&\longmapsto T''
\end{alignat*}
such that
\[
\psi_{d,1}\circ\left(\sgn(T)\Theta(i,j,T)+\sgn(T')\Theta(i,j,T')+\sgn(T)\Theta(i,j,T'')\right)=0\tag*{\textit{for each $T\in\cala_{\arabic{ddd}}$.}}
\]
\end{enumerate}
\end{propn}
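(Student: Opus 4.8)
The plan is to follow the now-familiar template of Propositions~\ref{mid1abprecise}--\ref{bot1aaprecisesf}: for each $T$ in one of the seven sets partitioning $\cala$, expand $\psi_{d,1}\circ\Theta(i,j,T)$ by Proposition~\ref{Lemma5}, rewrite the resulting tableau homomorphisms in semistandard form by Proposition~\ref{Lemma7}, kill terms by Lemma~\ref{toomany}, and then either read off that the result is zero or match the surviving terms across the stated bijections so that they cancel. Two features make this case lighter than its predecessors. First, $d=\ell(\mu)$, so $\mu$ has no row $d+1$; hence every $\pp$ of $U(i,j,T)$ lies in one of rows $s+f,\dots,d$, and the manoeuvre used earlier of pushing a stray $d$ up from row $d+1$ into row $d$ never arises. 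Secondly, $\mu_d=3$, so at $q={-}1$ both $\gauss{\mu_d-1}{\mu_d-2}=[2]$ and $[\mu_d-3]=[0]$ vanish, and these two vanishings will carry most of the argument.

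For part (1) I would note that $T\in\cala_{\arabic{ppx}}\cup\cala_{\arabic{ppy}}$ forces $(a_d,b_d)=(d+1,d+1)$, so the two largest entries of $T$ both lie in its last row and the only $\pp$s of $U(i,j,T)$ are the two at the end of row $d$. Applying Proposition~\ref{Lemma5} collapses $\psi_{d,1}\circ\Theta(i,j,T)$ to a single term $c\,\Theta_V$ with $V\ars ddU(i,j,T)$, and the coefficient $c$ carries the factor $\gauss{\mu_d-1}{\mu_d-2}=[2]=0$, so it is zero.

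For parts (2) and (3) I would use the obvious bijections. In part (2), pass from $T\in\cala_{\arabic{dpe}}$ to its partner in $\cala_{\arabic{dde}}$ by turning the final $d+1$ of row $d$ into a $d$ and the $\dd$ recorded by $a_k$ (for the one $k<d$ with $a_k=d$) into a $d+1$. In part (3), pass from $T\in\cala_{\arabic{ddd}}$, which has $(a_d,b_d)=(d,d)$ and two $\pp$s in earlier rows $k<l$, to $T'\in\cala_{\arabic{dpu}}$ by sliding the lower $\pp$ onto the end of row $d$, and to $T''\in\cala_{\arabic{dpd}}$ by first interchanging the two earlier $\pp$s and then doing the same. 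One checks routinely that these maps are bijections and that none of them alters the first row at which $T$ splits, so $\sgn$ behaves as asserted. Running Propositions~\ref{Lemma5} and~\ref{Lemma7} on each homomorphism, any $\pp$ sitting above row $d$ next to a $\dd$ produces a factor $[2]=0$, any term that would put a $\dd$ in a row too short for it dies by Lemma~\ref{toomany}, and what is left is, as in Proposition~\ref{mid1abprecise}(\ref{mid1abdddpa}), the pair $\psi_{d,1}\circ\Theta(i,j,T)=({-}1)^{\mu_d}\Theta_V$ and $\psi_{d,1}\circ\Theta(i,j,T')=({-}1)^{\mu_d-1}\Theta_V$ for part (2), and, as in Proposition~\ref{mid1aaprecise}(\ref{mid1aathree}), the relations $\psi_{d,1}\circ\Theta(i,j,T)=({-}1)^{\mu_d}(\Theta_V+\Theta_W)$, $\psi_{d,1}\circ\Theta(i,j,T')=({-}1)^{\mu_d-1}\Theta_X+[\mu_d]\Theta_V-[\mu_d-3]\Theta_V$, $\psi_{d,1}\circ\Theta(i,j,T'')=({-}1)^{\mu_d}\Theta_X+[\mu_d]\Theta_W-[\mu_d-3]\Theta_W$ for part (3). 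In both cases cancellation is immediate from $({-}1)^{\mu_d-1}+({-}1)^{\mu_d}=0$ and $({-}1)^{\mu_d}+[\mu_d]-[\mu_d-3]=0$, the latter trivial here since $\mu_d-3=0$. Combining the three parts with the partition of $\cala$ then gives Proposition~\ref{bot1} in the remaining case $d=s+f+g-2$, $t=1$.

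The hard part will not be conceptual but the sheer bookkeeping: for each of the seven sets one must determine exactly which tableaux Proposition~\ref{Lemma5} produces, track every $q$-power through Proposition~\ref{Lemma7}, check that the three bijections in part (3) are mutually consistent so that the four $\Theta$-terms genuinely pair off, and confirm that each bijection preserves the split structure so that the signs come out right. This is the same mechanical case analysis as in the earlier propositions, and it is precisely where a sign error would hide, so that is where the care must go.
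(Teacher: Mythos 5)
Your plan matches the paper's proof almost exactly: for part (1) the single term from Proposition~\ref{Lemma5} acquires the factor $\gauss{2}{1}=[2]=0$ because $\mu_d=3$; for part (2) the computation is the one from Proposition~\ref{mid1aaprecise}(\ref{mid1aaddaa}) with the row-$d+1$ term $\Theta_W$ deleted, and it closes because $({-}1)^{\mu_d}+[\mu_d]=0$ at $\mu_d=3$; for part (3) it is Proposition~\ref{mid1aaprecise}(\ref{mid1aathree}) with the terms $\Theta_Y,\Theta_Z$ absent, again closing since $\mu_d=3$. That is the substance of the argument, and you have it.

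Two small inaccuracies, neither of which affects correctness. In part (2) the surviving coefficients are $[\mu_d]\Theta_V$ for $T\in\cala_{\arabic{dpe}}$ and $({-}1)^{\mu_d}\Theta_V$ for $T'\in\cala_{\arabic{dde}}$ (you have the two expressions interchanged, though they agree numerically at $\mu_d=3$); the cancellation you want is $[\mu_d]+({-}1)^{\mu_d}=0$ rather than $({-}1)^{\mu_d}+({-}1)^{\mu_d-1}=0$, which happens to be the same thing here. In part (3), the phrase ``first interchanging the two earlier $\pp$s'' is not a well-defined operation, since both earlier $\pp$'s carry the same entry $d+1$; what is meant (and what the paper does by analogy with Proposition~\ref{mid1aaprecise}(\ref{mid1aathree})) is that $T'$ swaps $b_d$ with the $\pp$ in row $k$ and $T''$ swaps $b_d$ with the $\pp$ in row $l$. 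Because $\sgn(T)=\sgn(T')=\sgn(T'')$, relabelling $T'\leftrightarrow T''$ is immaterial to the cancellation, so this is a defect of exposition rather than of proof.
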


\begin{pfenum}
\item
When we apply Proposition~\ref{Lemma5}, get a factor of $[2]=0$, since $\mu_d=3$.
\item
This is very similar to case (\ref{mid1aaddaa}) of Proposition \ref{mid1aaprecise}.  The difference here is that there is no tableau $W$; but in this case we have $\mu_d=3$, so that $({-}1)^{\mu_d}+[\mu_d]=0$ and the computation still works.
\item
This is very similar to case (\ref{mid1aathree}) of Proposition \ref{mid1aaprecise}.  In this case there are no tableaux $Y,Z$, but the computation goes through because $\mu_d=3$.
\end{pfenum}

\begin{propn}\label{verybot2precise}
Suppose $i,j$ are as above, and $d=s+f+g-2$.
\begin{enumerate}
\item
If $T\in\cala_{\arabic{ppy}}$ or $\cala_{\arabic{dpe}}$, then $\psi_{d,2}\circ\Theta(i,j,T)=0$.
\item
There is a bijection $T\mapsto T'$ from $\cala_{\arabic{dpu}}$ to $\cala_{\arabic{dpd}}$ such that
\[
\psi_{d,2}\circ\left(\sgn(T)\Theta(i,j,T)+\sgn(T')\Theta(i,j,T')\right)=0\tag*{\textit{for each $T\in\cala_{\arabic{dpu}}$.}}
\]
\item
There is a bijection $T\mapsto T'$ from $\cala_{\arabic{dde}}\cup\cala_{\arabic{ddd}}$ to $\cala_{\arabic{ppx}}$ such that
\[
\psi_{d,2}\circ\left(\sgn(T)\Theta(i,j,T)+\sgn(T')\Theta(i,j,T')\right)=0\tag*{\textit{for each $T\in\cala_{\arabic{ppx}}$.}}
\]
\end{enumerate}
\end{propn}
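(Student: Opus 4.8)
The plan is to prove all three parts by the method already used for the $\psi_{d,2}$ assertions in Propositions~\ref{mid2abprecise} and~\ref{mid2aaprecise}, and on the $t=1$ side in Proposition~\ref{verybot1precise}: expand each $\psi_{d,2}\circ\Theta(i,j,T)$ by Proposition~\ref{Lemma5}, discard the terms annihilated by Lemma~\ref{toomany}, use Proposition~\ref{Lemma7} to rewrite what survives, and then pair off the remaining tableau homomorphisms using explicit bijections between the sets $\cala_{\arabic{ppx}},\cala_{\arabic{ppy}},\cala_{\arabic{dpe}},\cala_{\arabic{dde}},\cala_{\arabic{ddd}},\cala_{\arabic{dpu}},\cala_{\arabic{dpd}}$. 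The special feature of this case is that $d=s+f+g-2=\ell(\mu)$, so $\mu_d=3$; consequently many of the $q$-quantities that appear collapse at $q={-}1$ (in particular $[\mu_d-1]=[2]=0$, $[\mu_d-3]=[0]=0$ and $\binoq{\mu_d-3}2=\binoq02=0$, while $\binoq{\mu_d}1=[3]=1$), and, since there is no row $d+1$, the expansions never produce the auxiliary tableaux called $Y$ and $Z$ in the proof of Proposition~\ref{mid2aaprecise}. Each of the three parts will thus be a slimmed-down version of a case already treated.

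For part~(1): if $T\in\cala_{\arabic{dpe}}$ then the two entries equal to $d+1$ in $U(i,j,T)$ are $b_d$ and the final entry $b_k$ of a row $k<d$ with $a_k=d$ (whence $b_k=d+1$); replacing the $d+1$ in row $k$ when applying Proposition~\ref{Lemma5} produces a factor $\binoq21=[2]=0$, so $\psi_{d,2}\circ\Theta(i,j,T)=0$. If $T\in\cala_{\arabic{ppy}}$ then $g>3$, so row $d-1$ has length $4$ and ends in two entries equal to $d$, while the two entries equal to $d+1$ both lie in row $d$; Proposition~\ref{Lemma5} then gives $\psi_{d,2}\circ\Theta(i,j,T)=\binoq31\Theta_X$, where $X$ has five entries equal to $d$, all lying in rows $d-1,d$ of lengths $4$ and $3$, so $\Theta_X=0$ by Lemma~\ref{toomany}.

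For parts~(2) and~(3) the idea is to choose the bijections so that $\psi_{d,2}$ applied to the two paired tableaux lands on one and the same homomorphism $\Theta_X$, and then to balance signs. In part~(2) one takes $T\mapsto T'$ to be the interchange of the $d$ at the end of row $k$ with the $d+1$ at the end of row $l$; this is a bijection $\cala_{\arabic{dpu}}\to\cala_{\arabic{dpd}}$ with $\sgn(T)=\sgn(T')$, and Proposition~\ref{Lemma5} reduces $\psi_{d,2}\circ\Theta(i,j,T)$ and $\psi_{d,2}\circ\Theta(i,j,T')$ to $q^2\binoq32\Theta_X$ and $q^3\binoq32\Theta_X$ respectively (the exponent increasing by one because the partial count $T^d_{>k}$ picks up the extra $d$ now sitting at the end of row $l$); since $q^2+q^3=q^2[2]$ vanishes at $q={-}1$, part~(2) follows. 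In part~(3), which copies case~(\ref{mid2aaddaa}) of Proposition~\ref{mid2aaprecise}, one defines $T'$ from $T\in\cala_{\arabic{dde}}\cup\cala_{\arabic{ddd}}$ by turning the two entries equal to $d+1$ that lie above row $d$ into $d$s and turning $a_d,b_d$ (both equal to $d$) into $d+1$s; using that any such $T$ has $a_{d-1}<d$ (or $g=3$) one checks that $T'\in\cala_{\arabic{ppx}}$ and that this gives a bijection onto $\cala_{\arabic{ppx}}$. Here $T$ is split precisely at the rows $\gs g-1$ and $T'$ precisely at the rows $\gs g-2$, so $\sgn(T)=-\sgn(T')$; and Proposition~\ref{Lemma5} gives $\psi_{d,2}\circ\Theta(i,j,T)=\Theta_X$ and $\psi_{d,2}\circ\Theta(i,j,T')=\binoq31\Theta_X$ for the same $X$, with $\binoq31=[3]=1$ at $q={-}1$, so the contributions cancel. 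Parts~(1)--(3) between them dispose of all seven sets $\cala_{\arabic{ppx}},\dots,\cala_{\arabic{dpd}}$.

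The routine-but-delicate point, and the main thing to get right, will be the combinatorial bookkeeping: checking that each proposed map really is a bijection onto the claimed set — especially that $\cala_{\arabic{dde}}$ and $\cala_{\arabic{ddd}}$ together map onto all of $\cala_{\arabic{ppx}}$ — that the signs $\sgn$ behave exactly as stated, and that the $q$-exponents emerging from Proposition~\ref{Lemma5} are those claimed, so that the cancellations $q^2[2]=0$ and $[3]=1$ genuinely occur. No idea beyond those already used for the middle rows should be needed.
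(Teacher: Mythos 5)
Your proposal is correct and follows essentially the same route as the paper, which simply refers the reader to Proposition~\ref{mid2aaprecise}; your argument is the spelled-out version of that reference, with the simplifications coming from $\mu_d=3$ and the absence of a row $d+1$ correctly identified and exploited. The minor loose phrasing (e.g.\ attributing the exponent shift from $q^2$ to $q^3$ in part~(2) solely to $T^d_{>k}$, when the change actually propagates through the $j=k$ and $j=l$ factors together, and the irrelevant observation about $\binoq{\mu_d-3}2$) does not affect the correctness of the computations or the cancellations.
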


\begin{pfenum}
\item
The proof here is very similar to the proof of Proposition \ref{mid2aaprecise}(\ref{mid2aa0}).
\item
The proof here is the same as for Proposition \ref{mid2aaprecise}(\ref{mid2aaddpaa}).
\item
The proof here is very similar to the proof of Proposition \ref{mid2aaprecise}(\ref{mid2aaddaa}), but simpler, because there are no tableaux $Y,Z$.  The calculation still goes through because $\mu_d=3$.
\end{pfenum}

\subsubs{The case $d=s+f,\  t=1,\ g=2$}\label{lastofsix}

%\Yboxdim{14pt}

In this case, $\cala$ consists of a single tableau $T=\yo{\bxr1\bxr1}$, and for any $i,j$ the last row of $U(i,j,T)$ consists of $\mu_d-2$ \dd s followed by two \pp s.  Applying Proposition~\ref{Lemma5}, we get a factor of $[\la_d-1]=[2+s'-1]=0$, since $s'$ is odd; so $\psi_{d,1}\circ\Theta(i,j,T)=0$.

\subs{Proof of Proposition~\ref{mainhom} when $s=2$, $s'$ is even and $f=0$} \label{S:MHRedEven}

%\Yboxdim{11pt}
We now address the cases where $s=2$, $s'$ is even and $f=0$.  We let $\cala$ be the set of tableaux of shape and type $(2^{g-1})$ described in the last section.  Given $T\in\cala$, construct a $(2^g)$-tableau by increasing each entry by $2$ and adding a row $\yo{\bxr1\bxr2}$ at the top.  Let $U(T)$ be the corresponding usable $\mu$-tableau, and $\Theta_{(T)}$ the corresponding homomorphism.

\begin{eg}
%\Yboxdim{14pt}
Suppose $s'=4$ and $g=5$, and
\[
T=\yo{\bxr1\bxd4\bxl3\bxd1\bxr2\bxd2\bxl4\bxr3}.
\]
Then $T\in\cala$, and
\[
U(T)=%\young(1111111112,2222222236,33335,4444,556)
\yo{\setxy\bxr1\bxr1\bxr1\bxr1\bxr1\bxr1\bxr1\bxr1\bxr1\bxd2\bxl6\bxl3\bxl2
\bxl2\bxl2\bxl2\bxl2\bxl2\bxl2\bxd2\bxr3\bxr3\bxr3\bxr3\bxd5\addtolength\xpos{-1cm}\bxl4\bxl4\bxl4\bxd4\bxr5\bxr5\bxr6}.
\]

\end{eg}

Now we claim that the sum
\[
\sum_{T\in\cala}\sgn(T)\Theta_{(T)}
\]
gives a non-zero homomorphism from $S^\mu$ to $S^\la$, which completes the proof of Proposition \ref{mainhom}.  The proof of this is very similar to the proof in the previous case, in particular the parts in Sections \ref{firstofsix}--\ref{lastofsix}.  We leave the reader to check the details.

We remark that Proposition \ref{mainhom} seems to be true more generally: one can take $f>0$ and $s'\gs s$ even, and it seems to be the case that there is still a non-zero homomorphism from $S^\mu$ to $S^\la$.  But it does not seem quite so easy to write this homomorphism down.

\section*{Acknowledgements}
Some of the results presented in this paper were proved by the authors at MSRI Berkeley in 2008, during the concurrent programmes `Combinatorial representation theory' and `Representation theory of finite groups and related topics'.  Further work was carried out at the Mini-Workshop `Modular Representations of Symmetric Groups and Related Objects' in Oberwolfach in 2011.  
The authors would like to thank the organisers of all these programmes.  

The second author was supported by EPSRC grant EP/H052003.

\end{document}